\newtheorem{main}{Theorem}
\newtheorem{mcor}[main]{Corollary}
\numberwithin{equation}{section}
\newtheorem{lemma}{Lemma}[section]
\newtheorem{prop}[lemma]{Proposition}
\newtheorem{theorem}[lemma]{Theorem}
\newtheorem{cor}[lemma]{Corollary}
\newtheorem{rem}[lemma]{Remark}
\newcommand{\re}{\begin{rem}\rm}
  \newcommand{\mar}{\end{rem}}
\newtheorem{defi}[lemma]{Definition}
\renewcommand{\for}{\begin{eqnarray*}}
\newcommand{\mel}{\end{eqnarray*}}
\newcommand{\kl}{\pl \le \pl}
\newcommand{\gl}{\pl \ge \pl}
\newcommand{\lel}{\pl = \pl}
\newcommand{\ez}{{\mathbb E}}
\newcommand{\nz}{{\mathbb N}}
\newcommand{\nen}{n \in \nz}
\newcommand{\rz}{{\mathbb R}}
\newcommand{\zz}{{\mathbb Z}}
\newcommand{\cz}{{\mathbb C}}
\newcommand{\ten}{\otimes}
\newcommand{\p}{\hspace{.05cm}}
\newcommand{\pl}{\hspace{.1cm}}
\newcommand{\qd}{\end{proof}\vspace{0.5ex}}
\newcommand{\Om}{\Omega}
\newcommand{\om}{\omega}
\newcommand{\al}{\alpha}
\newcommand{\si}{\sigma}
\renewcommand{\H}{{\mathcal H}}
\newcommand{\Si}{\Sigma}
\newcommand{\la}{\lambda}
\newcommand{\eps}{\varepsilon}
\newcommand{\F}{{\mathcal F}}
\newcommand{\E}{{\mathcal E}}
\newcommand{\A}{{\mathcal A}}
\newcommand{\K}{{\mathcal K}}
\newcommand{\N}{{\mathcal N}}
\newcommand{\pf}{\begin{proof}}
\newcommand{\xspace}{\hbox{\kern-2.5pt}}
\newcommand{\xyspace}{\hbox{\kern-1.1pt}}
\newcommand{\lan}{\langle}
\newcommand{\ran}{\rangle}
\definecolor{LightGray}{rgb}{0.94,0.94,0.94}
\definecolor{VeryLightBlue}{rgb}{0.9,0.9,1}
\definecolor{LightBlue}{rgb}{0.8,0.8,1}
\definecolor{DarkBlue}{rgb}{0,0,0.6}
\definecolor{LightGreen}{rgb}{0.88,1,0.88}
\definecolor{MidGreen}{rgb}{0.6,1,0.6}
\definecolor{DarkGreen}{rgb}{0,0.6,0}
\definecolor{DarkGrreen}{rgb}{0,0.8,0}
\definecolor{VeryLightYellow}{rgb}{1,1,0.9}
\definecolor{LightYellow}{rgb}{1,1,0.6}
\definecolor{MidYellow}{rgb}{1,1,0.5}
\definecolor{DarkYellow}{rgb}{0.8,1,0.3}
\definecolor{VeryLightRed}{rgb}{1,0.9,0.9}
\definecolor{LightRed}{rgb}{1,0.8,0.8}
\definecolor{DarkRed}{rgb}{0.8,0.2,0}
\definecolor{DarkRedb}{rgb}{0.6,0.2,0}
\definecolor{DarkLila}{rgb}{0.8,0,1}
\definecolor{Beige}{rgb}{0.96,0.96,0.86}
\definecolor{Gold}{rgb}{1.,0.84,0.}
\definecolor{Goldb}{rgb}{0.7,0.3,0.5}
\definecolor{MyYellow}{rgb}{1.,0.84,0.8}
\begin{document}

\title[]{Generalized $q$-gaussian von Neumann algebras with coefficients, I. Relative  strong solidity}

\author[Marius Junge]{Marius Junge}
\address{Department of Mathematics\\
University of Illinois, Urbana, IL 61801, USA} 
\email[Marius Junge]{junge@math.uiuc.edu}

\author[Bogdan Udrea]{Bogdan Udrea}
\address{Department of Mathematics\\
University of Iowa, Iowa City, IA 52242, USA and Institute of Mathematics ``Simion Stoilow'' of the Romanian Academy, P.O. Box 1-764, Bucharest, Romania} 
\email[Bogdan Udrea]{bogdanteodor-udrea@uiowa.edu}

\begin{abstract} We define $\Gamma_q(B,S \ten H)$, the generalized $q$-gaussian von Neumann algebras associated to a sequence of symmetric independent copies $(\pi_j,B,A,D)$ and to a subset $1 \in S = S^* \subset A$ and, under certain assumptions, prove their strong solidity relative to $B$. We provide many examples of strongly solid generalized $q$-gaussian von Neumann algebras. We also obtain non-isomorphism and non-embedability results about some of these von Neumann algebras.
\end{abstract}

\maketitle
\section{Background and statement of results}

\subsection{Normalizers in von Neumann algebras}
The study of normalizers in von Neumann algebras is nowadays an intensely active area of research within the field of von Neuman algebras. For a von Neumann algebra $M$, we denote by $\mathcal U(M)$ the group of unitaries in $M$. Recall that for an inclusion $A\subset M$ of von Neumann algebras, \emph{the normalizing group of} $A$ in $M$ is defined as $\mathcal N_M(A)=\{u\in \mathcal U(M)\mid uAu^*=A\}$ and \emph{the normalizer of} $A$ is the von Neumann algebra generated by the normalizing group, i.e. $\mathcal N_M(A)''\subset M$. When $A$ is a maximal abelian von Neumann subalgebra of a type II$_1$ factor $M$, $A$ is called a \emph{Cartan subalgebra} if its normalizer is the whole of $M$. While some results have been obtained in the early 80's (see e.g. \cite{CFW, CJ, JP}, notably \cite{CFW}), the first truly significant achievement in this area is Voiculescu's ground-breaking result about the absence of Cartan subalgebras in the free-group factors $L(\mathbb F_n)$ \cite{Voi96}. After this, in their seminal work \cite{OPCartanI}, Ozawa and Popa established the following result:
\begin{theorem}
Let $\mathbb F_n \curvearrowright B$ be a \emph{profinite} trace-preserving action of a free group on an amenable von Neumann algebra $B$. Then for every amenable von Neumann subalgebra $A\subset M=B\rtimes \mathbb F_n$, either $A \prec_M B$, or the normalizer of $A$ is amenable.
\end{theorem}
The notation $A \prec_M B$ reads ''a corner of $A$ embeds into $B$ inside $M$", in the sense of  Popa (see \cite{PoI}, Thm. 2.1), and it roughly means that $A$ can be conjugated into $B$ by a partial isometry in $M$. When $B$ is the scalars, this shows that the normalizer of any amenable diffuse von Neumann subalgebra of $L(\mathbb F_n)$ is itself amenable, not only reproving and strengthening Voiculescu's result, but also entailing a surprizingly far-reaching classification of normalizer algebras in the free group factors. More than merely proving the above theorem, \cite{OPCartanI} introduced an array of innovative ideas and techniques which remain all-pervasive and highly influential in the field to this day. The results in \cite{OPCartanI} were then extended to profinite actions of weakly amenable groups having proper 1-cocycles into (a multiple of) their left regular representations in \cite{OPCartanII}. Subsequent generalizations to the case of profinite actions of groups having quasi-cocycles or direct products of such have been obtained in \cite{CS} and \cite{CSU}. Recently, Popa and Vaes obtained the definitive form of these results, by completely removing any assumption on the action of the group. Specifically, they proved the following results (see Thm. 1.6 in \cite{PoVaI} and Thm. 1.4 in \cite{PoVaII}):
\begin{theorem} Let $\Gamma$ be a weakly amenable group having either a {\bf proper} 1-cocycle or a {\bf proper} 1-quasi-cocycle into a (representation which is weakly contained into) a multiple of its left regular representation. Let $\Gamma \curvearrowright B$ be any trace-preserving action of $\Gamma$ on the finite von Neumann algebra $B$, and let $A \subset M=B \rtimes \Gamma$ be a von Neumann subalgebra which is amenable relative to $B$ inside $M$. Then either $A \prec_M B$, or the normalizer of $A$ is amenable relative to $B$ inside $M$.
\end{theorem}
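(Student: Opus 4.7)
The plan is to prove the theorem via Popa's deformation/rigidity framework, as in the Popa-Vaes papers themselves. Three ingredients are needed: (a) a malleable deformation $\alpha_t : M \to \tilde M$ of $M = B \rtimes \Gamma$ fixing $B$ pointwise; (b) a weak containment of the ``moving part'' $L^2(\tilde M) \ominus L^2(M)$, viewed as an $M$-$M$-bimodule, in a countable amplification of the coarse bimodule $L^2(M) \otimes_B L^2(M)$; and (c) a $B$-bimodular relative CMAP for $M$, i.e.\ $B$-bimodular normal cb-multipliers $m_n : M \to M$ converging to $\mathrm{id}_M$ in the point-$\|\cdot\|_2$ topology with $\limsup \|m_n\|_{cb} = 1$.

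For (a), in the 1-cocycle case $c : \Gamma \to \mathcal H$ I would use the standard gaussian dilation: let $\tilde M = (B \bar\otimes \Gamma(\mathcal H)) \rtimes \Gamma$, with $\Gamma$ acting on $\Gamma(\mathcal H)$ via the second quantization of $\pi$ and on $B$ as originally, and set $\alpha_t(b u_g) = b\, w_t(g)\, u_g$, where $w_t(g)$ is the Weyl unitary associated with $t \cdot c(g)$. Together with the standard gaussian reflection this yields an $s$-malleable deformation in Popa's sense. In the 1-quasi-cocycle case $\alpha_t$ is only approximately a $*$-homomorphism, but Popa's transversality estimate, modified by an error term $o_t(1)$, still holds on the unit ball of $M$; this approximate transversality is the core technical input of the Chifan-Sinclair-Udrea and Popa-Vaes analysis of quasi-cocycles. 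For (b), the symmetric Fock-space decomposition of $L^2(\Gamma(\mathcal H))$ realises $L^2(\tilde M) \ominus L^2(M)$ as a direct sum over $n \geq 1$ of $M$-$M$-subbimodules of $L^2(M) \otimes_B (\mathcal H^{\otimes n} \otimes_{\mathbb C} L^2(M))$; since $\mathcal H$ is weakly contained in a multiple of the left regular representation of $\Gamma$, so are all of its tensor powers, and thus the whole bimodule is weakly contained in a countable amplification of $L^2(M) \otimes_B L^2(M)$. Ingredient (c) is a standard transference: if $\Gamma$ is weakly amenable, the Haagerup-type finitely supported scalar multipliers $\phi_n : \Gamma \to \mathbb C$ extend to $B$-bimodular cb-maps on $M$ with $\|m_n\|_{cb} \to 1$ and $m_n \to \mathrm{id}_M$ pointwise in $\|\cdot\|_2$.

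Once (a), (b) and (c) are established, the dichotomy follows from the standard Popa-Ozawa-Vaes argument. Let $A \subset M$ be diffuse and amenable relative to $B$ in $M$, and assume $A \not\prec_M B$. Relative amenability together with (b) implies, via a Hilbert-bimodule averaging argument on the basic construction $\langle M, e_B\rangle$, that $\alpha_t \to \mathrm{id}_M$ uniformly on the unit ball of $A$. The hypothesis $A \not\prec_M B$ together with Popa's intertwining theorem then propagates this uniform convergence to the normalizer $P = \mathcal N_M(A)''$, by a standard automorphism-extension argument (as in the work of Ioana-Peterson-Popa). Finally, combining the uniform $\alpha_t$-convergence on $P$ with the $B$-bimodular CMAP from (c) yields, through the Haagerup-property-style characterisation of relative amenability \cite{OPCartanI}, that $P$ is amenable relative to $B$ inside $M$. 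The main obstacle will be the quasi-cocycle case: both the transversality estimate in (a) and the propagation of uniform convergence to the normalizer require the ``array'' technology of Chifan-Sinclair-Udrea rather than a direct differentiation of $\alpha_t$, and this is exactly where the novelty of \cite{PoVaII} lies over the earlier cocycle-based results of \cite{OPCartanII} and \cite{CS}.
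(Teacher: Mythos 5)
The paper does not include a proof of this theorem; it is quoted from \cite{PoVaI,PoVaII} as background for the paper's Theorem 1.1, whose own proof (Sections 5--7) follows the same architecture in the $\Gamma_q(B,S\ten H)$ setting. Your three ingredients --- (a) a malleable deformation, (b) weak containment of the moving bimodule in a multiple of $L^2(M)\ten_B L^2(M)$, and (c) $B$-bimodular relative CMAP multipliers --- are exactly the right toolbox, both for Popa--Vaes and for this paper's $q$-gaussian analogue.

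The final combination is where the proposal breaks. You assume $A\not\prec_M B$ and claim that relative amenability of $A$ together with (b) forces $\alpha_t\to\mathrm{id}$ uniformly on the unit ball of $A$; you then propagate this to $P=\N_M(A)''$ and, combined with (c), conclude relative amenability of $P$. Both claimed implications fail, and the logic is inverted. Relative amenability of $A$ over $B$ does not give uniform convergence of the deformation on $A$: uniform convergence is a rigidity conclusion, the opposite side of Popa's dichotomy. For a concrete obstruction, a diffuse abelian subalgebra of $L(\mathbb{F}_2)$ is amenable relative to $\cz$, yet the free gaussian deformation is not uniform on its unit ball --- if it were, compactness of $T_t$ would force the subalgebra to have an atom. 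Moreover, were $\alpha_t$ to converge uniformly on $P$, compactness of $T_t$ over $B$ would give $P\prec_M B$ (hence $A\prec_M B$), contradicting your running hypothesis; it would not give relative amenability of $P$. The correct order of operations, sketched in the paper's introduction and carried out in Section 7, runs the other way: (c) produces approximately invariant vectors $\xi_n$ implementing weak compactness of $A\subset M$ relative to $B$; one applies the deformation $V_t$ to the $\xi_n$ and splits according to whether or not the deformation displaces them; in the displacing case (b) yields $P$ amenable relative to $B$, and in the non-displacing case compactness of $T_t$ over $B$ yields $A\prec_M B$. Neither branch starts from assuming $A\not\prec_M B$. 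Your step ``relative amenability $\Rightarrow$ uniform $\alpha_t$-convergence on $A$'' is the concrete gap, and the deduction has to be reorganized around this displacement dichotomy to be repaired.
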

\begin{theorem}
Let $\Gamma \curvearrowright B$ be a p.m.p. free ergodic action, where $B$ is abelian diffuse and $\Gamma$ is weakly amenable and admits an {\bf unbounded} (rather than proper) 1-cocycle into a mixing representation which is weakly contained into a multiple of the left regular representation of $\Gamma$. Then $M=B\rtimes \Gamma$ has a unique Cartan subalgebra, up to unitary conjugacy.
\end{theorem}
Popa and Vaes coined the phrase "relative strong solidity" to describe the situation in which the dichotomy in Thm. 1.2 holds. Namely, a von Neumann algebra $M$ is \emph{strongly solid relative to $B$}, for $B \subset M$ a subalgebra, if for every von Neumann subalgebra $A \subset M$ which is amenable relative to $B$ inside $M$ (see \cite{OPCartanI}), it is either the case that $A \prec_M B$ or that the normalizer of $A$ is amenable relative to $B$ inside $M$. In the case of $B$ abelian diffuse and of p.m.p. free ergodic actions $\Gamma \curvearrowright B$, the strong solidity of the von Neumann algebra $M=B \rtimes \Gamma$ relative to $B$ implies its uniqueness of Cartan subalgebra, up to unitary conjugacy. Strong solidity relative to the scalars is simply termed strong solidity. Strong solidity is in turn an enhancement of Ozawa's concept of \emph{solidity} (see \cite{OzawaSolid}). Ozawa called a von Neumann algebra $M$ solid if for every diffuse von Neumann subalgebra $\A \subset M$ one has that $\A' \cap M$ is amenable. It's easy to see that a non-amenable solid factor $M$ is automatically \emph{prime}, i.e. cannot be written as $M=M_1 \bar{\ten} M_2$, with $M_i$ an infinite dimensional factor for $i=1,2$. 
\par Further results pertaining to the classification of normalizers and relative strong solidity have been obtained by Sinclair (\cite{Si11}), Ioana (\cite{Ioana15}), Isono (\cite{Isono1,Isono2}), Avsec (\cite{Avsec}), Boutonnet, Houdayer and Vaes (\cite{BoHouVa, HouVa}), Caspers (\cite{Ca18}).

\subsection{Non-commutative probability}
Voiculescu introduced his highly influential free probability theory in the early 80's (see \cite{Voi83}), in order to tackle some problems related to the free group factors. Since then, the free probability theory has grown into an immense industry with far reaching ramifications. Very roughly speaking, in the realm of free probability classical probability spaces are replaced by C$^*$ or W$^*$-algebras endowed with distinguished states (normal in the W$^*$ case), classical random variables by operators in those algebras, classical independence by Voiculescu's free independence, and the classical distribution function by Voiculescu non-commutative distribution of a non-commutative random variable, or joint distribution in the case of a system of random variables. In particular, the normal (gaussian) distribution is replaced by Wigner's semicircular law.\\
1.2.1. {\bf Classical gaussian random variables.}
We briefly recall the construction in Section 1.1 of \cite{PetersonSinclair}. Let $H$ a real Hilbert space and, for $\xi\in H$, let $l_\xi$ be the creation operator on the symmetric Fock space of $H_\cz=H\oplus iH$. Then $s_1(\xi)=\frac{1}{2}(l_\xi+l_\xi^*)$ is an unbounded self-adjoint operator in the symmetric Fock space. The operators $s_1(\xi)$ and $s_1(\eta)$ commute for all $\xi$ and $\eta$ and are independent with respect to the vacuum state whenever $\lan \xi,\eta\ran=0$. Define $\Gamma_1(H)$ to be the abelian von Neumann algebra generated by the spectral projections of all the $s_1(\xi), \xi\in H$ (or equivalently by all the unitaries $\om(\xi_1,\ldots,\xi_k)=\exp(i\pi s(\xi_1)\cdots s(\xi_k))$), equipped with the trace given by the restriction of the vacuum state. For $\|\xi\|=1$, we have the following moment formula
\[\tau(s(\xi)^m)=\delta_{m\in 2\nz}\frac{m!}{2^{\frac{m}{2}}(\frac{m}{2})!}=|P_2(m)|=\sum_{\si \in P_2(m)} 1^{\rm cr(\si)},\]
where $P_2(m)$ is the collection of pair partitions on the set $\{1,\ldots,m\}$, and for $\si\in P_2(m)$, ${\rm cr(\si)}$ denotes the number of crossings of $\si$. These are exactly the moments of a classical gaussian random variable. By commutativity, independence and multi-linearity, the moment formula can be extended to 
\[\tau(s_1(\xi_1)\cdots s_1(\xi_m))=\sum_{\si\in P_2(m)} 1^{\rm cr(\si)}\prod_{\{l,r\}\in\si}\lan \xi_l,\xi_r\ran.\]
One also recalls
\begin{theorem}[Classical central limit theorem]
Let $\{X_n\}_{n\geq 1}$ be a sequence of independent, identically distributed random variables on a probability space $(\Om,\Si,P)$, all having mean equal to zero and variance equal to 1. Then the averages $S_n=n^{-\frac{1}{2}}\sum_{j=1}^n X_j$ converge in distribution to a normal (gaussian) random variable with mean zero and variance 1.
\end{theorem}
If $X_n$ are chosen such that $\sup_{n\geq 1}\|S_n\|_{\infty}<\infty$, then one can restate the central limit theorem by saying that the element $S=(S_n)_n\in (L^{\infty}(\Om,P)^{\om},\tau_{\om})$ has a gaussian (normal) distribution, where $\om$ is a free ultrafilter on $\nz$ and $(L^{\infty}(\Om)^{\om},\tau_{\om})$ is the ultraproduct von Neumann algebra. In other words, one could ''simulate" gaussian elements using an ultraproduct model.\\
1.2.2. {\bf Voiculescu's free semicircular random variables.} \\
In \cite{Voi83}, Voiculescu constructed a functor $\Phi$ from the category of real Hilbert spaces with contractions to the category of finite von Neumann algebras with completely positive maps. For $h\in H$, the element $\Phi(h)=s_0(h)$ is concretely realized as the real part of the creation operator on the full Fock space of $H_\cz$. Moreover, he proved that for an orthonormal set $\{h_1,\ldots,h_m\}\subset H$, the elements $s_0(h_1),\ldots,s_0(h_m)$ are freely independent, have semi-circular distributions given by $d\mu(t)=\frac{1}{\pi}\chi_{(-1,1)}(t)\sqrt{1-t^2}dt$, and generate a copy of the free group factor $L(\mathbb F_m)$. In particular, for a finite dimensional Hilbert space $H$, $\Phi(H)$ is *-isomorphic to the free group factor $L(\mathbb F_{{\rm dim}(H)})$. It is well-known that the moments of a semi-circular variable are given by
\[\tau(s_0(h)^m)=\delta_{m\in 2\nz}\frac{m!}{(\frac{m}{2}+1)((\frac{m}{2})!)^2}=\sum_{\si \in P_2(m)}0^{\rm cr(\si)}=\sum_{\si \in NCP_2(m)} 0^{\rm cr(\si)},\]
where we denote by $NCP_2(m)$ the collection of non-crossing pair partitions on the set $\{1,\ldots,m\}$ and with the convention $0^0=1$. By direct computation, the above formula can be extended to
\[\tau(s_0(h_1)\cdots s_0(h_m))=\sum_{\si\in P_2(m)} 0^{\rm cr(\si)}\prod_{\{l,r\}\in\si} \lan h_l,h_r\ran.\]
Let's recall the Voiculescu's central limit theorem in \cite{Voi83}:
\begin{theorem}[Voiculescu's central limit theorem]
Let $\{a_n\}_{n\geq 1}$ be a sequence of freely independent self-adjoint random variables in a C$^*$-probability space $(A,\varphi)$. Assume that $\varphi(a_n)=0$ for all $n$, $\sup_{n\geq 1}\|a_n\|<\infty$ and $\lim_{n\to\infty}n^{-1}\sum_{j=1}^n\varphi(a_n^2)=\frac{1}{4}$. Then the elements $S_n=n^{-\frac{1}{2}}\sum_{j=1}^n a_j$ converge in distribution to a semicircular element, i.e. their limit distribution is the semicircular law.
\end{theorem}
In particular, this says that, beside the Fock space construction, one could create semicircular random variables by taking elements of the form $S=(S_n)_n=(n^{-\frac{1}{2}}\sum_{j=1}^n a_j)_n \in 
(M^{\om},\tau_{\om})$, with $a_n$ as above in a finite W$^*$-probability space $(A,\varphi)=(M,\tau)$ and satisfying the additional condition $\sup_{n\geq 1}\|n^{-\frac{1}{2}}\sum_{j=1}^n a_j\|_{\infty}<\infty$. Let us also recall an informal statement of Voiculescu's matrix limit theorem (Theorem 2.2 in \cite{Voi91}):
\begin{theorem}[Voiculescu's matrix limit theorem]
Any family of random matrices with size going to infinity having independent normalized gaussian entries converges in distribution to a free semicircular family.
\end{theorem}
Again, we could interpret this as saying that one can create free semicircular families using elements of some suitable ultraproduct of matrix algebras over abelian von Neumann algebras.\\
1.2.3. {\bf q-gaussian von Neumann algebras.}\\
The $q$-gaussian von Neumann algebras $\Gamma_q(H)$, for $H$ a real Hilbert space, were introduced by Bo\.{z}ejko and Speicher and further studied, among others, by Bo\.{z}ejko, Speicher, Kummerer, Ricard, Sniady, Krolak, Nou, Shlyakhtenko, Nica, Dykema, Avsec, Dabrowski \cite{BoSpe1, BoSpe2, BoSpe3, BoSpe4, BoKuSpe, Ri, SniadyI, SniadyII, Kro1, Kro2, NouI, NouII, ShlyakhtenkoSEN, ShlyakhtenkoLEM, DykemaNica, Avsec, Dabrowski}. For $-1<q<1$, Bo\.{z}ejko and Speicher constructed a functor $\Gamma_q$ from the category of real Hilbert spaces with contractions to the category of finite von Neumann algebras with completely positive maps. $\Gamma_q(H)$ is called the $q$-gaussian von Neumann algebra associated to $H$. The generators $\Gamma_q(h)=s_q(h)$, for $h\in H$, admit a concrete representation as the real part of the creation operator by $h$ on the $q$-Fock space of $H$ (for details, see e.g. Section 2 of \cite{BoSpe1}). When $q=0$, the functor $\Gamma_q$ coincides with Voiculescu's functor $\Phi$, so $\Gamma_0(H)=L(\mathbb F_{{\rm dim}H})$. A direct computation using the concrete realization of the $s_q(h)$'s gives the moment formula
\[\tau(s_q(h_1)\cdots s_q(h_m))=\sum_{\si\in P_2(m)}q^{\rm cr(\si)}\prod_{\{l,r\}\in\si}\lan h_l,h_r\ran,\]
which is why, in view of the above, the $s_q(h)$'s can be called $q$-semicircular elements. The central limit theorem holds in the $q$-gaussian context as well, see e.g. Theorem 1 in \cite{SpeNCL}, Theorem 1 and 2 in \cite{SpeGSM}, \cite{Bo91} or Appendix A in \cite{JungeZeng}, but its statement is very technical and we omit it. Also, the $q$-gaussian von Neumann algebras admit random matrix models, see Thm. 3 in \cite{SniadyI}. We mention that, originally, the $q$-gaussian von Neumann algebras have been studied as concrete implementations of the canonical $q$-commutation relations, or as examples of non-classical Brownian motions (see e.g. \cite{BoSpe1, BoSpe2, BoSpe3}), but we choose to downplay these aspects in the present work. The central limit theorem suggests that the $q$-gaussians can be introduced via an ultraproduct model. In fact, a concrete ultraproduct embedding which holds a great heuristic value for us is given by
\[\Gamma_q(H)\ni s_q(h)\mapsto (n^{-\frac{1}{2}}\sum_{j=1}^n s_q(e_j\ten h))_n\in (\Gamma_q(\ell^2\ten H))^\om,\]
where $\{e_j\}_{j\in\nz}$ is the standard orthonormal basis of $\ell^2=\ell^2(\nz)$.
\subsection{Generalized q-gaussian von Neumann algebras with coefficients}
In this article we introduce a new class of von Neumann algebras and prove some structural results about them. Specifically, we introduce the \emph{generalized $q$-gaussian von Neumann algebras with coefficients} associated to a sequence of \emph{symmetric copies} $(\pi_j,B,A,D)$. A 4-tuple 
$(\pi_j,B,A,D)$ is called a sequence of symmetric copies (of $A$) if $B,A,D$ are finite tracial von Neumann algebras such that $B \subset A \cap D$ and $\pi_j:A \to D, j \in \nz$ are unital trace-preserving normal *-homomorphisms satisfying
\begin{enumerate}
\item $\pi_j |_B =id_B$, for all $j$;
\item $E_B(\pi_{j_1}(a_1)\ldots \pi_{j_m}(a_m))=E_B(\pi_{\si(j_1)}(a_1)\ldots \pi_{\si(j_m)}(a_m))$, for all finite permutations $\si$ on $\nz$, all indices $j_1,\ldots, j_m$ in $\nz$ and all $a_1,\ldots, a_m$ in $A$, where $E_B:D \to B$ is the canonical trace-preserving conditional expectation.
\end{enumerate}
We mention that our copies satisfy some additional independence conditions (see Definition 3.2). Let $-1<q<1$ be fixed. For $H$ an infinite dimensional (real) Hilbert space and $S$ a self-adjoint subset of $A$ containing $1$, the generalized $q$-gaussian von Neumann algebra 
\[\Gamma_q(B,S \ten H)\subset (\Gamma_q(\ell^2 \ten H) \bar{\ten} D)^{\om}\]
with coefficients in $B$ and associated to the symmetric copies $(\pi_j,B,A,D)$ is defined as the von Neumann subalgebra generated by the elements
\[s_q(a,h)=(n^{-\frac{1}{2}}\sum_{j=1}^n s_q(e_j \ten h) \ten \pi_j(a))_n, a \in BSB=\{b_1ab_2: b_1,b_2 \in B, a \in S\}, h \in H.\]
Here $\om$ is a free ultrafilter on the natural numbers and $\Gamma_q(\ell^2 \ten H)$ is the $q$-gaussian von Neumann algebra. When $H$ is finite dimensional, one needs to further apply a ''closure operation" (see Def.3.2 and Prop.3.14 for more details). The crucial observation here is that, since a Fock space model is not available, we are forced to introduce our generalized gaussians via an ultraproduct model. The generators $s_q(a,h)$ satisfy the following moment formula
\[\tau(s_q(a_1,h_1)\cdots s_q(a_m,h_m))=\delta_{m \in 2\nz} \sum_{\si \in P_2(m)} q^{\rm cr(\si)} \prod_{\{l,r\} \in \si} \lan h_l,h_r\ran \tau_D(\pi_{\phi_{\si}(1)}(a_1)\cdots \pi_{\phi_{\si}(m)}(a_m)),\]
as well as the $B$-valued moment formula
\[E_B(s_q(a_1,h_1)\cdots s_q(a_m,h_m))=\delta_{m \in 2\nz} \sum_{\si \in P_2(m)} q^{\rm cr(\si)} \prod_{\{l,r\} \in \si} \lan h_l,h_r\ran E_B(\pi_{\phi_{\si}(1)}(a_1)\cdots \pi_{\phi_{\si}(m)}(a_m)),\]
where for every pair partition $\si=\{\{k'_1,k''_1\},\ldots,\{k'_p,k''_p\}\} \in P_2(m)$, the function $\phi_{\si}:\{1,\ldots,m\}\to \{1,\ldots,p=\frac{m}{2}\}$ is chosen so that $\phi_{\si}(k'_1)=\phi_{\si}(k''_1)=1,\ldots,\phi_{\si}(k'_p)=\phi_{\si}(k''_p)=p$. In view of all of the above, the elements $s_q(a,h)$ could thus judiciously be called ``$B$-valued $q$-semicircular random variables having symmetric $B$-moments".  
\par When compared to pure $q$-gaussians, the generalized $q$-gaussian von Neumann algebras with coefficients can be viewed as an analogue of the cross-product von Neumann algebras $B \rtimes \Gamma$ as opposed to pure group von Neumann algebras $L(\Gamma)$. This analogy can be given some substance along the lines of \cite{ShlyakhtenkoAVS}. However, in the present work we do not pursue this insight and use this analogy merely as a guideline for the implementation of Popa's deformation-rigidity strategy.
The main result we prove about our generalized $q$-gaussian algebras is
\begin{main}\label{main1} Let $(\pi_j,B,A,D)$ be a sequence of symmetric independent copies, $H$ be a finite dimensional Hilbert space and $\mathcal A \subset M=\Gamma_q(B,S \ten H)$ be a diffuse von Neumann subalgebra which is amenable relative to $B$ inside $M$. For every $s \geq 0$, define $D_s(S)$ to be the following right $B$-submodule of $L^2(D)$:
\[\overline{\rm span}^{\|\cdot\|_2} \{E_{A_{\{1,\ldots,s\}}} (\pi_{\phi_{\si}(1)}(x_1)\cdots \pi_{\phi_{\si}(m)}(x_m)):m \geq 1, \si \in P_{1,2}(m), x_i \in BSB\},\]
where $m=s+2p$, $P_{1,2}(m)$ is the set of pair-singleton partitions of $\{1,\ldots,m\}$, $\si$ runs over all pair-singleton partitions in $P_{1,2}(m)$ having $s$ singletons and $p$ pairs and for such a partition $\si=\{\{k_1\},\ldots,\{k_s\},\{k'_1,k''_1\},\ldots,\{k'_p,k''_p\}\}$, the function $\phi_{\si}:\{1,\ldots,m\}\to \{1,\ldots,s+p\}$ satisfies $\phi_{\si}(k_1)=1,\ldots,\phi_{\si}(k_s)=s$ and $\phi_{\si}(k'_1)=\phi_{\si}(k''_1)=s+1,\ldots,\phi_{\si}(k'_p)=\phi_{\si}(k''_p)=s+p$. Assume that there exist constants $d, C>0$ such that $dim_B(D_s(S)) \leq Cd^s$ for all $s \geq 1$. Then at least one of the following statements is true:
\begin{enumerate} 
\item $\A \prec_M B$, or 
\item the von Neumann algebra $P=\N_M(\A)''$ generated by the normalizer of $\mathcal A$ in $M$ is amenable relative to $B$ inside $M$. 
\end{enumerate}
\end{main}
The technical condition on the dimension of the $B$-modules $D_k(S)$ implies in particular that the subspace of Wick words of length $k$ is finitely generated over $B$, for all $k\geq 1$ (see Thm. 3.16 and Prop. 3.20). This last condition in turn is the exact analogue of the group cocycle being proper in the case of cross-product von Neumann algebras.
\par As a consequence of our Theorem \ref{main1}, we find a number of examples of generalized $q$-gaussians which are strongly solid (when $B=\cz$ or finite dimensional) or strongly solid relative to $B$, for diffuse $B$. While the class of generalized $q$-gaussian von Neumann algebras with coefficients is huge (roughly speaking such a von Neumann algebra can be constructed starting from any action of the infinite symmetric group on another finite von Neumann algebra), the range of examples to which our Theorem A applies is greatly restricted by the technical assumptions we make. The examples in the corollary below are introduced in more detail in Section 4. 
\begin{mcor}\label{maincor1} The following von Neumann algebras are strongly solid relative to $B$:
\begin{enumerate}
\item (see 4.1) $B \bar{\ten} \Gamma_q(H)$, for $H$ a finite dimensional Hilbert space;
\item (see 4.3.2) $\Gamma_q(B,S \ten H)$ associated to the symmetric independent copies $(\pi_j,B,A,D)$ constructed in the following way: take a trace preserving action $\al$ of $\zz$ on a finite von Neumann algebra $N$. Let $\H=\lan g_j: j \geq 0 \ran $ be the Heisenberg group, take $\eta:\H \to \zz$ an onto group homomorphism and define $\beta:\H \curvearrowright N$ by
\[\beta_g(x)=\al_{\eta(g)}(x), g \in \mathcal H, x \in N.\]
Let $\mathcal{H}_1=\lan g_0,g_1 \ran$ and take $B=N \rtimes \zz=N \bar{\ten} L(\zz)$, $A=N\rtimes \mathcal{H}_1$ and $S=\{1,g_1,g_1^{-1}\}$. Define $\pi_j:A \to D$ by 
\[\pi_j(x u_{g_1})=\al_{\eta(g_j)}(x) u_{g_j}, \pi_j(x u_{g_0})=x u_{g_0}, x \in N, j,k \in \nz.\]
\item (see 4.4.1) $\Gamma_q(\cz,S \ten K)$ associated to the symmetric copies $(\pi_j,B=\cz,A=\Gamma_{q_0}(H),D=\Gamma_q(\ell^2 \ten H))$, where $\pi_j(s_{q_0}(h))=s_q(e_j \ten h)$ and $K$ is a finite dimensional Hilbert space;
\item (4.4.2) $\Gamma_q(B_d,S \ten H)$ associated to the symmetric copies $(\pi_j,B_d,A_d,D_d)$, where $B_d=L(\Si_{[-d,0]})$, $A_d=L(\Si_{[-d,1]})$, $D_d=L(\Si_{[-d,\infty)})=\{u_{\si}:\si \in \Si_{[-d,\infty)}\}''$ and $S=\{1,u_{(01)}\}$ for a fixed $d \in \nz \setminus \{0\}$; here $\Si_{\zz}$ is the group of finite permutations on $\zz$ and for a subset $F \subset \zz$, $\Si_F \subset \Si_{\zz}$ is the group of finite permutations on $F$ naturally embedded into $\Si_{\zz}$; the copies are defined by $\pi_j(a)=u_{(1j)}au_{(1j)}$, $a \in A_d$;
\item (see 4.2) $\Gamma_q(\cz, S \ten H)$ associated to the symmetric copies $(\pi_j,B=\cz,A=L(\zz)=\{u\}'',D=\ast_{\nz}L(\zz))$, where $u$ is a Haar unitary, the symmetric copies $\pi_j:A \to D$ are defined by the relations $\pi_j(u) =\ldots \ast 1\ast u\ast 1\ast \ldots$, and $S=\{1,u,u^*\}$.
\end{enumerate}
It follows that the examples in (3), (4) and (5) are strongly solid and hence solid non-amenable von Neumann algebras. In particular, they are prime von Neumann algebras. Note that when $q=0$ and $H$ is trivial, the example in (5) is *-isomorphic to $L(\mathbb F_\infty)$, thus reproving the strong solidity of the free group factors.
\end{mcor} 
Using Theorem \ref{main1} we also deduce the following
\begin{mcor}\label{maincor2} Let $M_i=\Gamma_{q_i}(B_i,S_i \ten H_i)$ be associated with two sequences of symmetric independent copies $(\pi_j^i,B_i,A_i,D_i)$ and two subsets $S_i \subset A_i$,  and $-1<q_i<1$, $i=1,2$. Assume that $ 2 \leq dim(H_i) < \infty$, $dim_{B_i}(D_k(S_i))\leq Cd^k$ for fixed constants $d,C>0$ and $B_i$ are amenable, for $i=1,2$. If $M_1 \subset M_2$, then $B_1 \prec_{M_2} B_2$. Moreover, if $M_1=M_2=M$, it follows that $B_1 \prec_M B_2$ and $B_2 \prec_M B_1$.
\end{mcor}
This result can be regarded as an analogue of the "uniqueness of Cartan subalgebra" results in the group measure space construction setting. Note however, that even when $B$ is abelian, it is not a MASA in $M = \Gamma_q(B,S \ten H)$. Indeed, $B$ always commutes with a copy of $\Gamma_q(H)$ inside $M=\Gamma_q(B,S \ten H)$, hence it can never be maximally abelian. Thus, even when $B_1$ and $B_2$ are both abelian diffuse, we cannot avail ourselves of Popa's results about unitary conjugacy of Cartan subalgebras (\cite{PoBe}, Appendix, Thm. A.1) to conclude that $B_1$ is unitarily conjugate to $B_2$, so this double intertwining result is optimal in our case. Finally, we deduce some non-isomorphism and non-embedability results for generalized $q$-gaussians.
\begin{mcor}\label{maincor3} Under the assumptions of Corollary \ref{maincor2}, if we moreover assume that 
\begin{enumerate}
\item $B_1$ is finite dimensional and $B_2$ is amenable diffuse, or
\item $B_1$ is abelian and $B_2$ is the hyperfinite $II_1$ factor, 
\end{enumerate}
then $M_2=\Gamma_{q_2}(B_2,S_2\ten H_2)$ cannot be realized as a von Neumann subalgebra of $M_1=\Gamma_{q_1}(B_1,S_1 \ten H_1)$. In particular $M_1$ and $M_2$ are not *-isomorphic.
\end{mcor}

\subsection{Comments on the proofs and structure of the article}
Finally, a couple of words about the main ideas behind the proof of Theorem A. We mention that actually Theorem A will be derived from the technical theorem 7.2 much along the lines of Thm. 3.1 in \cite{PoVaI}, whose statement and proof can be found in Section 7. We follow the approach of Popa and Vaes in \cite{PoVaI,PoVaII}, approach which in turn is a development of the original ground-breaking insight in \cite{OPCartanI, OPCartanII}. Let $\A \subset M=\Gamma_q(B,S\ten H)$ a diffuse von Neumann subalgebra which is amenable relative to $B$. The two main ingredients of the proof are, just as in \cite{PoVaI}
\begin{enumerate}
\item The fact that the embedding $\A \subset M$ is \emph{weakly compact relative to B}. This is the existence of a sequence of normal states viewed as unit vectors $\xi_n \in L^2(\N)$, where $\N \supset M$ is a suitable (in general non-tracial) von Neumann algebra, which are asymptotically invariant to the action of the ''tensor double" of the normalizer of $\A$ in $M$; the existence of these states is a consequence of the weak amenability (with Cowling-Haagerup constant 1) of the pure $q$-gaussian von Neumann algebras $\Gamma_q(H)$;
\item The existence of a 1-parameter group of *-automorphisms $(\al_{t})$ of a suitable dilation $\tilde{\N}$ of $\N$ having good properties.
\end{enumerate}
The proof proceeds by applying the deformation $\al_{t}$ to the vectors $\xi_n$. Then either the deformation significantly displaces the vectors, or it does not. The first case yields the amenability of $P=\N_M(\A)''$ relative to $B$, while the second implies that $\A \prec_M B$, via the fact that the maps $T_t$ (where $t \to T_t$ is the canonical semi-group of ucp maps on $M$) are compact over $B$, in the terminology of Popa and Ozawa. 
\par While it's true that conceptually we follow closely the approach of Popa and Vaes in \cite{PoVaI}, it has to be strongly emphasized that the technical difficulties of our approach are vastly larger. First of all, since our objects are much more elusive and complicated than cross-product von Neumann algebras, being defined as subalgebras of an ultraproduct to begin with, the proof of Theorem 5.1 (the existence of the invariant states), which is the key ingredient in the proof of the technical theorem, is ridden with daunting difficulties. Among these, constructing the von Neumann algebras involved in the deformation-rigidity argument (e.g. $\N$ and $\tilde \N$ above) and the spaces on which they act was a particularly challenging task. Also, the complete boundedness of certain maps used in the proof turns out to be surprisingly non-trivial and requires the use of delicate operator spaces techniques (in the pure Hilbert space setting, somewhat similar techniques have been used in \cite{Avsec, NouI, NouII}). Second, and just as important, we cannot use the reduction to the "trivial action case" (i.e. the tensor product case), as Popa and Vaes do. The reduction step plays a crucial role in their proof, because it is only in the tensor product setting that they are able to prove the relative weak compactness property and subsequently carry out the deformation-rigidity arguments. The reduction is essentially based on the use of the co-multiplication map in the cross-product case. Since we have no good substitute for the co-multiplication map, we cannot reduce to the tensor product case, and hence everything becomes much more complicated and technically involved, including the standard forms of the von Neumann algebras involved, which are in general non-tracial.
\par The article contains six sections beside the introduction, and is organized as follows: Section 2 contains some needed technical preliminaries. In Section 3 we introduce the generalized $q$-gaussian von Neumann algebras and prove their basic properties; among other things, we exhibit the canonical generators of $\Gamma_q(B,S\ten H)$ (the Wick words), prove that they actually belong to the algebra and prove a very useful reduction result about them. Section 4 lists a rather wide range of examples of generalized $q$-gaussian von Neumann algebras constructed from a variety of symmetric independent copies. We devote Section 5 to the proof of the relative weak compactness of the embedding $\A \subset M$; the second half of this section contains some technical results about the complete boundedness of certain multipliers used in the proof. In Section 6 we prove that under the assumption of sub-exponential growth of the dimensions of the modules $D_k(S)$ over $B$, the natural deformation bimodules used in the technical theorem are weakly contained in $L^2(M) \ten_B L^2(M)$, fact which will be further used in combination with the technical theorem to derive Theorem \ref{main1}. The proof is based on a novel and "non-deterministic" approach. Indeed, the calculation of the deformation bimodules in the $q$-gaussian setting is a real challenge even in the case of pure $\Gamma_q(H)$ von Neumann algebras, see \cite{Avsec}, and it becomes even more so when we allow $q$-gaussians with coefficients. Section 7 contains the proof of the main technical theorem and its applications. Beside many examples of strongly solid generalized $q$-gaussian von Neumann algebras, we also obtain some non-isomorphism and non-embedability results.
\section{Preliminaries}
\subsection{Popa's intertwining techniques.}
We will briefly review the concept of intertwining two subalgebras inside a finite von Neumann algebra, along with the main technical tools developed by Popa in \cite{PoBe,PoI}. Let $(M,\tau)$ be a finite von Neumann algebra, let $f \in \mathcal P(M)$ and $Q \subset fMf, B \subset M$ be two von Neumann subalgebras. We say that \emph{a corner of $Q$ can be intertwined into $B$ inside $M$} and denote it by $Q \prec_M B$ (or simply $Q \prec B$) if there exist two non-zero projections $q \in Q$, $p \in B$, a non-zero partial isometry $v\in qMp$, and a $*$-homomorphism $\psi:qQq \rightarrow pBp$ such that $v\psi(x)=xv$ for all $x\in qQq$. The partial isometry $v$ is called an intertwiner between $Q$ and $B$.
Popa proved in \cite{PoI} the following intertwining criterion:
\begin{theorem}[Corollary 2.3 in  \cite{PoI}]\label{intertwining-techniques} Let $M$ be a von Neumann algebra and let  $Q\subset fMf$, $B\subset M$ be diffuse subalgebras for some projection $f \in M$. Then the following are equivalent:
\begin{enumerate}
\item $Q \prec_M B$.
\item There exists a finite set $\mathcal F\subset fMf$ and $\delta>0$ such that for every unitary $v\in \mathcal U(Q)$ we have
\begin{equation*}\sum_{x,y\in\mathcal F} \|E_B(xvy^*)\|^2_2 \geq \delta. 
\end{equation*}  
\end{enumerate}
\end{theorem}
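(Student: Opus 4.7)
The plan is to prove the equivalence via a Jones basic construction argument: direction (1) $\Rightarrow$ (2) is a direct calculation using the intertwining partial isometry, while (2) $\Rightarrow$ (1) goes through a convex hull / minimum norm argument inside $\langle M, e_B\rangle$ followed by extracting a partial isometry from a $Q$-central finite-trace projection.

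For (1) $\Rightarrow$ (2): start from the intertwining data $q \in \mathcal{P}(Q)$, $p \in \mathcal{P}(B)$, a partial isometry $v \in qMp$, and a $*$-homomorphism $\psi : qQq \to pBp$ satisfying $xv = v\psi(x)$ for all $x \in qQq$. Taking adjoints yields $v^*y = \psi(y)\,v^*$ for $y \in qQq$, so for any unitary $u \in \mathcal{U}(Q)$ we get $v^*uv = v^*(quq)v = \psi(quq)\,v^*v$. After the standard normalization in which $v^*v$ is a projection in $pBp$, this gives $E_B(v^*uv) = \psi(quq)\,v^*v$. The contribution of a single term may vanish for unitaries $u$ with $quq$ small, so I would enlarge the test family: choose finitely many $u_1,\dots,u_n \in \mathcal{U}(Q)$ with $\sum_i u_i q u_i^* \geq f'$ for a non-zero projection $f' \in fMf$, and take $\mathcal{F} = \{u_i v : 1 \leq i \leq n\}$; summing $\sum_{i,j}\|E_B((u_iv) u (u_jv)^*)\|_2^2$ then picks up a contribution bounded below by $\tau(f')\|v^*v\|_2^2 > 0$ uniformly in $u$.

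For (2) $\Rightarrow$ (1): work in the Jones basic construction $\langle M,e_B\rangle$ with its canonical semifinite faithful normal trace $\mathrm{Tr}$, satisfying $\mathrm{Tr}(a e_B b) = \tau(ab)$ for $a,b \in M$. Replacing $\mathcal{F}$ by $\mathcal{F} \cup \mathcal{F}^*$, set $T := \sum_{x \in \mathcal{F}} x e_B x^* \in \langle M, e_B\rangle_+$; a direct computation using $e_B z e_B = E_B(z) e_B$ yields
\[
\mathrm{Tr}(T \cdot u T u^*) \;=\; \sum_{x,y \in \mathcal{F}} \|E_B(x^* u y)\|_2^2 \;\geq\; \delta
\]
for every $u \in \mathcal{U}(Q)$, while $\mathrm{Tr}(T) = \sum_x \|x\|_2^2 < \infty$. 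Form the weakly closed convex hull $K := \overline{\mathrm{conv}}^w\{uTu^* : u \in \mathcal{U}(Q)\}$ inside the Hilbert space $L^2(\langle M,e_B\rangle,\mathrm{Tr})$; since $u \mapsto uTu^*$ is an $\|\cdot\|_{2,\mathrm{Tr}}$-isometry, $K$ is bounded and admits a unique element $T_0$ of minimal norm. Uniqueness forces $uT_0u^* = T_0$ for all $u \in \mathcal{U}(Q)$, so $T_0 \in Q' \cap \langle M, e_B\rangle$, and weak lower-semicontinuity of $\mathrm{Tr}(T\,\cdot\,)$ gives $\mathrm{Tr}(TT_0) \geq \delta$; thus $T_0$ is a non-zero positive finite-trace element of $Q' \cap \langle M,e_B\rangle$.

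The final step — and the main obstacle — is to convert a non-zero finite-trace projection $q_0 \in Q' \cap \langle M,e_B\rangle$ (e.g.\ any spectral projection of $T_0$ with $0 < \mathrm{Tr}(q_0) < \infty$) into an actual intertwiner. I would exploit the fact that $q_0$ determines a Hilbert $Q$--$B$-bimodule $q_0\, L^2(\langle M,e_B\rangle\,e_B) \subset L^2(M)$ which, precisely because $\mathrm{Tr}(q_0) < \infty$, is finitely generated as a right $B$-module. A Pimsner--Popa type frame then provides a non-zero right-$B$-bounded vector $\xi$ in this bimodule, and the polar decomposition of the closable operator implemented by $\xi$ yields a non-zero partial isometry $v \in qMp$ together with a normal $*$-homomorphism $\psi : qQq \to pBp$ obeying $xv = v\psi(x)$, producing the required corner intertwiner. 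This conversion step is the most delicate part and requires the detailed module theory of the basic construction; both the trace identity and the minimum-norm fixed-point argument are formal by comparison.
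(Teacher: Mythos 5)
The paper does not prove this result; it simply cites Popa's Corollary 2.3 in [PoI], so there is no in-paper proof to compare against. Your proposal is a from-scratch reconstruction, and one of the two directions is solid while the other has a genuine gap.

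For (2)~$\Rightarrow$~(1), your outline is the standard Popa argument and is essentially correct: the trace identity $\mathrm{Tr}(TuTu^*)=\sum_{x,y}\|E_B(x^*uy)\|_2^2$ (after adjoining adjoints to $\mathcal F$), the minimum-norm element $T_0$ in the weakly closed convex hull of $\{uTu^*\}$, the non-vanishing via $\mathrm{Tr}(TT_0)\geq\delta$, and the extraction of a finite-trace projection $q_0 \in Q'\cap\langle M,e_B\rangle$ are all right. The final passage from $q_0$ to a partial-isometry intertwiner does require the Pimsner--Popa module machinery (finite right $B$-dimension of $q_0L^2(M)$, choice of a $B$-bounded cyclic vector, cut-down to a corner where the left $Q$-action lands in a corner of $B$); you are right to flag it as the delicate step, and as an outline it is fine.

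For (1)~$\Rightarrow$~(2) there is a concrete error. Your intertwining relation gives a formula for $v^*uv$, namely $v^*uv=\psi(quq)\,v^*v$, but with $\mathcal F=\{u_iv\}$ the terms you must estimate are $E_B\bigl((u_iv)\,u\,(u_jv)^*\bigr)=E_B(u_i\,(vuv^*)\,u_j^*)$, whose middle factor is $vuv^*$, not $v^*uv$. The relation $xv=v\psi(x)$ says nothing directly about $vuv^*$, so the formula you derived is never used and the asserted lower bound $\geq\tau(f')\|v^*v\|_2^2$ is not justified. The fix is to take $\mathcal F=\{v^*u_i^*\}$, so that $E_B\bigl((v^*u_i^*)\,u\,(v^*u_j^*)^*\bigr)=E_B\bigl(v^*(u_i^*uu_j)v\bigr)=\psi\bigl(q\,u_i^*uu_j\,q\bigr)\,E_B(v^*v)$; the point is that $u_i^*uu_j$ is again a unitary in $\mathcal U(Q)$, so the intertwining relation applies. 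One then computes, using $\tau(\psi(y))=\tau(E_Q(vv^*)\,y)$ for $y\in qQq$, that
\[
\sum_{i,j}\bigl\|\psi\bigl(qu_i^*uu_jq\bigr)\bigr\|_2^2
\;\geq\; \sum_j \tau\Bigl(c\;u_j^*u^*\bigl(\textstyle\sum_i u_i q u_i^*\bigr)uu_j\Bigr),
\qquad c=E_Q(vv^*)\in qQq,\ \tau(c)>0,
\]
and the lower bound becomes uniform in $u$ once $\sum_i u_iqu_i^*\geq\varepsilon z$ for a central projection $z\in\mathcal Z(Q)$ with $cz\neq 0$ and some $\varepsilon>0$, which can be arranged with finitely many $u_i$ (after possibly shrinking $q$ and passing to a suitable central spectral piece). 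As written, your argument never establishes this. A further, minor, issue: your normalization ``$v^*v$ is a projection in $pBp$'' is not automatic; what one can always assume is $v^*v\in\psi(qQq)'\cap pMp$, and the computation should therefore carry $E_B(v^*v)$ rather than $v^*v$ itself.
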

Let $(M,\tau)$ be a finite von Neumann algebra and $\Phi:M\to M$ a normal, completely positive map. We say that $\Phi$ is sub-tracial if $\tau \circ \Phi \leq \tau$. If $\Phi$ is sub-tracial, then, due to the Schwartz inequality, we automatically have
\[\|\Phi(x)\|_2^2=\tau(\Phi(x)^*\Phi(x))\leq \tau(\Phi(x^*x))\leq \tau(x^*x)=\|x\|_2^2,\]
i.e. $\Phi$ is automatically $\|\cdot\|_2$-contractive, and hence extends to a bounded operator on $L^2(M)$ defined by
\[T_{\Phi}:L^2(M)\to L^2(M), T_{\Phi}(\hat{x})=\widehat{\Phi(x)}, x \in M.\]
Let $B \subset (M,\tau)$ be an inclusion of finite von Neumann algebras. The basic construction (of $M$ with $B$) is defined by (see e.g. \cite{PoBe})
\[\lan M,e_B \ran =(M \cup \{e_B\})''=(JBJ)' \subset B(L^2(M)),\]
where $L^2(M)$ is the standard form of $M$ and $J:L^2(M) \to L^2(M)$ the associated conjugation. The definition of the \emph{compact ideal space} of the basic construction (more generally of any semi-finite von Neumann algebra) can be found in \cite{PoBe}, 1.3.3.
\begin{defi}Let $(M, \tau)$ be a finite von Neumann algebra, $B \subset M$ a von Neumann subalgebra and $\Phi:M \rightarrow M$ a normal, completely positive, $B$-bimodular, sub-unital, sub-tracial map. We say that $\Phi$ is compact over $B$ if the canonical operator $T_{\Phi}:L^2(M) \rightarrow L^2(M)$ belongs to the compact ideal space of the basic construction $\langle M, e_B \rangle$.
\end{defi}
The following result is Prop.2.7 in \cite{OPCartanI} (see also \cite{PoBe}, 1.3.3.).
\begin{prop}Let $(M,\tau)$ be a finite von Neumann algebra and let $B, P \subset M$ be two von Neumann subalgebras. Let $\Phi:M \rightarrow M$ be a normal, completely positive, sub-unital, sub-tracial map which is compact over $B$ and assume that
\[\inf_{u \in \mathcal{U}(P)} \|\Phi(u)\|_2 > 0. \]
Then $P \prec_M B$.
\end{prop}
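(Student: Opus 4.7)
The plan is to verify Popa's intertwining criterion (Theorem 2.1): I will produce a finite set $\mathcal{F} \subset M$ and $\delta > 0$ so that $\sum_{y \in \mathcal{F}} \|E_B(y^* u)\|_2^2 \geq \delta$ for every $u \in \mathcal{U}(P)$. The hypothesis on $\Phi$ provides a uniform lower bound $c := \inf_{u \in \mathcal{U}(P)} \|\Phi(u)\|_2 > 0$, which translates into $\|T_\Phi \hat{u}\|_2 \geq c$ for all $u \in \mathcal{U}(P)$, since $T_\Phi \hat{u} = \widehat{\Phi(u)}$.

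First I would exploit the assumption that $T_\Phi$ lies in the compact ideal space of $\langle M, e_B \rangle$. By definition, this is the norm closure of the two-sided ideal generated by the Jones projection $e_B$, which coincides with the norm closure of finite sums of the form $\sum_i x_i e_B y_i^*$ with $x_i, y_i \in M$. For $\epsilon := c/2$ I can therefore pick $K = \sum_{i=1}^n x_i e_B y_i^*$ with $\|T_\Phi - K\| < c/2$.

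Next I would compute $K \hat{u}$ explicitly using $e_B \hat{z} = \widehat{E_B(z)}$: this yields $K \hat{u} = \sum_i \widehat{x_i E_B(y_i^* u)}$. The triangle inequality combined with Cauchy--Schwarz then controls $\|K\hat{u}\|_2$ from above by $\bigl(\sum_i \|x_i\|^2\bigr)^{1/2} \bigl(\sum_i \|E_B(y_i^* u)\|_2^2\bigr)^{1/2}$. On the other hand, the approximation gives $\|K\hat{u}\|_2 \geq \|T_\Phi \hat{u}\|_2 - \|T_\Phi - K\| \geq c/2$, uniformly in $u$. Rearranging produces the required positive lower bound $\delta := (c/2)^2 \big/ \sum_i \|x_i\|^2$, and taking $\mathcal{F} := \{1, y_1, \ldots, y_n\}$ closes the argument via Theorem 2.1.

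I do not anticipate a substantial obstacle here: the argument is standard once the definition of the compact ideal space is unpacked. The only delicate point is that the approximation of $T_\Phi$ by $K$ must be in operator norm rather than in a weaker topology, since I need the bound $\|T_\Phi - K\|$ to beat the pointwise inequality $\|T_\Phi \hat{u}\|_2 \geq c$ uniformly over all unitaries $u \in \mathcal{U}(P)$. This is precisely what membership in the compact ideal space provides.
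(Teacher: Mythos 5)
Your argument is correct and is essentially the standard proof of this result (Prop.\ 2.7 in \cite{OPCartanI}), which the paper cites without reproving: approximate $T_\Phi$ in operator norm by a finite-rank element $K=\sum_i x_i e_B y_i^*$ of $\overline{\operatorname{span}}(Me_BM)$, compute $K\hat{u}$ via $e_B\hat z=\widehat{E_B(z)}$, and play the resulting Cauchy--Schwarz upper bound against the uniform lower bound $\|K\hat u\|_2\geq c/2$ to verify Popa's criterion. The only cosmetic slip is that the finite set should be $\mathcal F=\{1,y_1^*,\ldots,y_n^*\}$ (so that the term with $x=y_i^*$, $y=1$ in Theorem 2.1 produces $\|E_B(y_i^*u)\|_2^2$); this does not affect the argument.
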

\subsection{Bimodules over von Neumann algebras and weak containment.}
Let $M, Q$ be two von Neumann algebras. An $M-Q$ Hilbert bimodule $\mathcal K$ is simply a Hilbert space together with a pair of normal *representations $\lambda:M \to B(\mathcal K)$, $\rho:Q^{op} \to B(\mathcal K)$ with commuting ranges. To these one can associate a *-representation $\pi:M \ten_{\rm bin} Q^{op} \to B(\mathcal K)$ by
\[ \pi(\sum_k x_k \ten y_k^{op})\xi = \sum_k \lambda(x_k)\rho(y_k^{op})\xi, \quad x_k \in M, y_k \in Q, \xi \in \mathcal K.\]
\begin{defi} Let $M,Q$ be two von Neumann algebras and $\mathcal H, \mathcal K$ be two $M-Q$ bimodules. We say that \emph{$\mathcal K$ is weakly contained in $\mathcal H$} and denote it by $\mathcal K \prec \mathcal H$ if $\|\pi_{\mathcal K}(x)\|\leq \|\pi_{\mathcal H}(x)\|$ for all $x \in M \ten_{alg} Q$, where $\pi_{\mathcal H}, \pi_{\mathcal K}$ are the *-representations canonically associated to the left and right actions on $\mathcal H, \mathcal K$ respectively.
\end{defi}
Give an $M-Q$ bimodule $\mathcal K$ and an $Q-N$ bimodule $\mathcal H$ we will denote by $\mathcal K \ten_Q \mathcal H$ their Connes tensor product, which is an $M-N$ bimodule. For the definition and basic properties of the Connes tensor product, see sections 2.3, 2.4 in \cite{PoVaI}. The Connes tensor product is well behaved with respect to weak containment (see idem).
\begin{defi}[Def. 2.3 and Prop. 2.4 in \cite{PoVaI}] Let $(M,\tau_M)$ and $(Q,\tau_Q)$ be finite tracial von Neumann algebras and $P \subset M$ a von Neumann subalgebra. We say that an $M-Q$ bimodule $\mathcal K$ is \emph{left $P$-amenable} if one of the following equivalent conditions holds:
\begin{enumerate}
\item There exists a $P$-central state $\Om$ on $B(\mathcal K)\cap (Q^{op})'$ such that $\Om|_M=\tau_M$.
\item $L^2(M) \prec \mathcal K \ten_Q \overline{\mathcal K}$ as $M-P$ bimodules.
\end{enumerate}
\end{defi}
\begin{defi} Let $(M,\tau)$ be a tracial von Neumann algebra, and let $B, P \subset M$ be two von Neumann algebras. We say that \emph{$P$ is amenable relative to $B$ inside $M$} if one of the following equivalent conditions holds:
\begin{enumerate}
\item The $M-B$ bimodule $L^2(M)$ is left $P$-amenable;
\item $L^2(M) \prec L^2(M) \ten_B L^2(M)$ as $M-P$ bimodules.
\end{enumerate} 
\end{defi}
\begin{rem} Let $(M,\tau)$ be a finite von Neumann algebra and $B, P \subset M$ two von Neumann subalgebras. Let $\mathcal K$ be a left $P$-amenable $M-M$ bimodule such that $\mathcal K \prec L^2(M) \ten_B \mathcal H$ for some $B-M$ bimodule $\mathcal H$. Then $P$ is amenable relative to $B$ inside $M$. Indeed, we have that, as $M-P$ bimodules
\[L^2(M) \prec \mathcal K \ten_M \overline{\mathcal K} \prec (L^2(M) \ten_B \mathcal H) \ten_M (\bar{\mathcal H} \ten_B L^2(M)) \prec L^2(M) \ten_B L^2(M).\] 
\end{rem}
\subsection{Standard forms of non-tracial von Neumann algebras.}
In some instances we will have to consider non-tracial von Neumann algebras $M$ and their standard forms. 
Let us recall that a (hyper) standard form for a von Neumann algebra is given by $(M,H,J,P)$, where $J:H\to H$ is an antilinear unitary,  $P\subset  H$ is a self-dual cone such that
 \begin{enumerate}
 \item[i)] the map $ M \ni x \mapsto Jx^*J \in M'$ is a *-anti-isomorphism acting trivially on $\mathcal Z(M)$;
 \item[ii)] $J\xi=\xi$ for $\xi\in P$;
 \item[iii)] $xJxJ(P)\subset P$ for $x \in M$.
 \end{enumerate}
The standard form of $M$ is unique up to *-isomorphism, see e.g. \cite{HaagerupSF}. A particularly useful way of describing the standard form of $M$ is the abstract Haagerup $L^2(M)$ space, which we briefly describe below. The reader can find more details in \cite{HaagerupLP, Terp, HaagerupJungeXu}. Let $(M,\varphi)$ a von Neumann algebra endowed with a normal semi-finite faithful (n.s.f.) weight. Consider $\mathcal M = M \rtimes_{\si^{\varphi}} \rz$ the cross-product von Neumann algebra of $M$ with $\rz$ by the modular automorphism group $\si_t^{\varphi}$. Then $\mathcal M$ is semi-finite and there exists a n.s.f. trace $\tau$ on $\mathcal M$ such that
\[(D\hat{\varphi}:D\tau)_t=\lambda(t), \quad t \in \rz,\] 
where $\hat{\varphi}$ is the dual weight, $(D\hat{\varphi}:D\tau)_t$ is the Connes cocycle
and $\lambda(t)$ is the group of translations on $\rz$. Moreover, $\tau$ is the unique n.s.f. trace on $\mathcal M$ which satisfies 
\[ \tau \circ \hat{\si}_t^{\varphi} = e^{-t}\tau, \quad t \in \rz.\]
 Given another n.s.w. $\psi$ on $M$, denote by $h_{\psi}$ the Radon-Nikodym derivative of $\hat{\psi}$ with respect to $\tau$, i.e. the unique positive self-adjoint operator affiliated to $\mathcal M$ such that
\[\hat{\psi}(x)=\tau(h_{\psi}^{\frac{1}{2}}xh_{\psi}^{\frac{1}{2}}), \quad x \in \mathcal M_{+}.\]
Then the following condition holds:
\[\hat{\si}_t^{\varphi}(h_{\psi})=e^{-t}h_{\psi}, \quad t \in \rz.\]
Moreover, the map $\psi \mapsto h_{\psi}$ is a bijection from the set of n.s. weights on $M$ to the set of positive self-adjoint operators affiliated to $\mathcal M$ which satisfy the above condition. Let $L_0(\mathcal M, \tau)$ be the *-algebra consisting of all the operators on $L^2(\rz,H)$ which are measurable with respect to $(\mathcal M, \tau)$. For $p>0$, the Haagerup $L^p(M,\varphi)$ is defined by
\[L^p(M,\varphi)=\{x \in L_0(\mathcal M, \tau):\hat{\si}_t^{\varphi}(x)=e^{-\frac{t}{p}}x, \forall t \in \rz\}.\]
One can define a bi-continuous linear isomorphism from $M_*$ to $L^1(M,\varphi)$ as the linear extension of the map
\[M_*^+ \ni \psi \mapsto h_{\psi} \in L^1(M,\varphi).\]
The norm $\|\cdot\|_1$ on $L^1(M,\varphi)$ is defined by requiring that the above isomorphism be isometric. One can define a norm one linear functional $tr$ on $L^1(M,\varphi)$ by $ tr(h_{\psi})=\psi(1)$, and thus $\|h\|_1= tr(|h|), h \in L^1(M,\varphi)$. This "trace" is indeed tracial, i.e. 
\[tr(xy)=tr(yx), \quad \mbox{for} \quad x,y\in L^2(M).\]
Let $x=u|x|$ be the polar decomposition of an element $x \in L_0(\mathcal M,\tau)$. Then we have
\[x \in L^p(M,\varphi) \Leftrightarrow u \in M \quad \mbox{and} \quad |x| \in L^p(M,\varphi) \Leftrightarrow u \in M \quad \mbox{and} \quad |x|^p \in L^1(M,\varphi).\]
This allows one to introduce the $\|\cdot\|_p$-norm on $L^p(M,\varphi)$, by $\|x\|_p=\||x|^p\|_1^{\frac{1}{p}}$ for $x \in L^p(M,\varphi)$. Let's also remark that the weight $\varphi$ can be recovered from the trace. Define
\[N_{\varphi}=\{ x \in M:\varphi(x^*x)<\infty\}, \quad M_{\varphi} = N_{\varphi}^*N_{\varphi}=\mbox{span}\{y^*x: x,y \in N_{\varphi}\}.\] 
The dual weight $\hat{\varphi}$ has a Radon-Nikodym derivative with respect to $\tau$, which will be denoted $d_{\varphi}$. Then for every $x \in M_{\varphi}$ the operator $d_{\varphi}^{\frac{1}{2}}xd_{\varphi}^{\frac{1}{2}}$ is closable, its closure belongs to $L^1(M,\varphi)$ and we have the following relation
\[\varphi(x)=tr(d_{\varphi}^{\frac{1}{2}}xd_{\varphi}^{\frac{1}{2}}), \quad x \in M_{\varphi}.\]
If $\varphi$ is a bounded functional, then $d_{\varphi}\in L^1(M,\varphi)$ and the above identity becomes
\[\varphi(x)=tr(d_{\varphi}^{\frac{1}{2}}xd_{\varphi}^{\frac{1}{2}})=tr(xd_{\varphi}), \quad x \in M.\]
The Haagerup space $L^p(M,\varphi)$ does not depend on the choice of the n.s.f. weight $\varphi$ up to isomorphism, hence it can simply be denoted by $L^p(M)$. It's easy to see that $M$ is naturally represented in standard form on the Haagerup space $L^2(M)$ via the obvious left and right actions. When $M$ is finite and $\tau$ is a faithful trace on $M$, the Haagerup space $L^2(M) = L^2(M,\tau)$ coincides with the usual one.
\subsection{W*-Hilbert modules.}
We also have to recall some facts about (right) Hilbert $W^*$-modules. According to \cite{PaschkeI, PaschkeII} (see also \cite{JungeSherman}) a right Hilbert $C^*$- module $X$ over a \emph{von Neumann algebra} $M$ is self-dual if and only if admits a module basis, i.e. a family $\{\xi_{\al}\} \subset X$ such that 
\[X=\overline{\mbox{span}}\sum_{\al}\xi_{\al}M \quad \mbox{and} \quad \lan \xi_{\al}, \xi_{\beta} \ran=\delta_{\al \beta}e_{\al} \in \mathcal P(M).\] 
Here, $\lan \cdot,\cdot \ran$ denotes the $M$-valued inner product. In this situation, there exists an index set $I$, a projection $e\in B(\ell_2(I))\bar{\ten}M$,  and a right module isomorphism $u:X\to e(\ell_2(I)^c\bar{\ten}M)$. Indeed, for a basis $\xi_{\al}$ with $\langle \xi_{\al},\xi_{\al}\rangle =e_{\al}$ the map $u$ is given by $u(\sum_{\al}\xi_{\al}m_{\al})=[e_{\al}m_{\al}]$. Here
$\ell_2(I)^c\bar{\ten}M$ denotes the space of strongly convergent columns indexed by $I$. Then it is easy to see that the $C^*$-algebra $\mathcal{L}(X)$ of adjointable operators on $X$ is indeed a von Neumann algebra, and isomorphic to $e(B(\ell_2(I))\bar{\ten}M)e$. Moreover, the $M$-compact operators $\K(X)$ spanned by the maps $\Phi_{\xi,\eta}(\zeta)=\xi\langle \eta,\zeta\rangle$ are  weakly dense in $\mathcal{L}(X)$, because $K(\ell_2(I))\ten_{\min}M$ is weakly dense in $B(\ell_2(I))\bar{\ten}M$. With the help of a  normal faithful state, we can complete $X$ to the Hilbert space $L_2(X,\phi)$ with inner product $(\xi,\eta)=\phi(\langle \xi,\eta\rangle)$. Let $\iota_{\phi}:X\to L_2(X,\phi)$ the inclusion map. Then \[ \pi:\mathcal{L}(X) \to B(L_2(X,\phi))\pl,\pl 
\pi(T)(\iota_{\phi}(x)) \lel \iota_{\phi}(Tx) \] 
defines a normal faithful $^*$-homomorphism such that 
 \[ \pi(\mathcal{L}(X)) \lel B(L_2(X,\phi))\cap (M^{op})' \pl. \]
This is indeed very easy to check for $\mathcal{L}(X)=
e(B(\ell_2(I))\bar{\ten}M)e$. See \cite{PaschkeI, PaschkeII, JungeSherman} for more details and references. 
\section{The generalized gaussian von Neumann algebras with coefficients - definition and basic properties}
Throughout this section we will freely use the basic properties of the pure Hilbert space $q$-gaussian von Neumann algebras $\Gamma_q(H)$, as they can be found in Section 4 of \cite{JLU} (see also \cite {Avsec}).
The following result is due to Dykema, Nica and Voiculescu and can be found in \cite{VDN}.
\begin{prop} Let $(M,\varphi)$ and $(N,\psi)$ be two von Neumann algebras endowed with faithful normal tracial states. Let $(x_i)_{i=1}^{\infty}$ and $(y_j)_{j=1}^{\infty}$ be countable  systems of generators for $M$ and $N$, respectively. Assume that for every $m \geq 1$, every $i_1,\ldots,i_m \in \nz$ and every $\eps_i \in \lbrace 1, * \rbrace$ we have
\[\varphi(x_{i_1}^{\eps_1} \cdots x_{i_m}^{\eps_m})=\psi(y_{i_1}^{\eps_1}\cdots y_{i_m}^{\eps_m}).\]
Then there exists a *-isomorphism $\pi:M \to N$ such that $\psi \circ \pi =\varphi$ and $\pi(x_i)=y_i$ for all $i \geq 1$.
\end{prop}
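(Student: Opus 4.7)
The plan is to realize the isomorphism spatially through the GNS constructions. First I would introduce the unital $*$-subalgebras $A_0 \subset M$ and $B_0 \subset N$ generated algebraically by the $x_i$ and the $y_j$ respectively, and define a linear map $\pi_0 \colon A_0 \to B_0$ on monomials by $\pi_0(x_{i_1}^{\eps_1}\cdots x_{i_m}^{\eps_m}) = y_{i_1}^{\eps_1}\cdots y_{i_m}^{\eps_m}$ and extending by linearity. To see $\pi_0$ is well defined, suppose a $*$-polynomial $P$ satisfies $P(x)=0$ in $M$; then faithfulness of $\varphi$ gives $\varphi(P(x)^*P(x))=0$, and by hypothesis the corresponding moment for $y$ equals $\psi(P(y)^*P(y))=0$, so $P(y)=0$ in $N$ by faithfulness of $\psi$. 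The same computation shows that $\pi_0$ is an injective $*$-homomorphism and that $\psi\circ\pi_0 = \varphi|_{A_0}$.

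Next I would pass to the GNS Hilbert spaces $L^2(M,\varphi)$ and $L^2(N,\psi)$ with cyclic vectors $\Omega_M, \Omega_N$. The identity $\|\pi_0(a)\Omega_N\|_2^2 = \psi(\pi_0(a)^*\pi_0(a)) = \varphi(a^*a) = \|a\Omega_M\|_2^2$ shows that the prescription $U_0(a\Omega_M) = \pi_0(a)\Omega_N$ defines an isometry from $A_0\Omega_M$ onto $B_0\Omega_N$. Since the $x_i$ generate $M$ as a von Neumann algebra and $\varphi$ is faithful normal, $A_0$ is strongly dense in $M$, so $A_0\Omega_M$ is dense in $L^2(M,\varphi)$; the analogous fact holds on the $N$-side. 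Hence $U_0$ extends to a unitary $U \colon L^2(M,\varphi) \to L^2(N,\psi)$ satisfying $U\Omega_M = \Omega_N$.

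I would then define $\pi \colon B(L^2(M,\varphi)) \to B(L^2(N,\psi))$ by $\pi(T) = UTU^*$ and show that $\pi$ restricts to a $*$-isomorphism $M \to N$. By construction $\pi(a) = \pi_0(a) \in N$ for $a \in A_0$. Since $\pi$ is a spatial isomorphism it is automatically normal, so $\pi(M) = \pi(\overline{A_0}^{\mathrm{SOT}}) \subset \overline{\pi_0(A_0)}^{\mathrm{SOT}} \subset N$; conversely the same argument applied to the inverse spatial map $T \mapsto U^*TU$ on $N$ shows $\pi(M) \supset N$, yielding surjectivity. Injectivity is automatic from spatiality, and the trace is preserved via
\[
\psi(\pi(m)) = \langle \pi(m)\Omega_N, \Omega_N\rangle = \langle m\Omega_M, \Omega_M\rangle = \varphi(m),
\]
using $U^*\Omega_N = \Omega_M$. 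The only delicate point is the density claim $\overline{A_0\Omega_M} = L^2(M,\varphi)$; this is where normality of $\varphi$ and the hypothesis that the $x_i$ generate $M$ as a \emph{von Neumann} algebra (rather than merely as a $C^*$-algebra) are essential, but it follows from Kaplansky density applied to the $*$-algebraic span of the $x_i$'s.
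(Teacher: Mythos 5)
The paper does not provide a proof of Proposition 3.1 --- it simply cites the result to \cite{VDN} --- so there is nothing in the text to compare line by line. Your argument is the standard GNS / spatial-implementation proof and it is correct: you use equality of all mixed $*$-moments plus faithfulness to show the correspondence $x_{i_1}^{\eps_1}\cdots x_{i_m}^{\eps_m}\mapsto y_{i_1}^{\eps_1}\cdots y_{i_m}^{\eps_m}$ is a well-defined trace-preserving $*$-isomorphism $A_0\to B_0$ of the $*$-algebras generated by the $x_i$'s and $y_j$'s, you implement it by the unitary $U$ between the GNS spaces, and you use Kaplansky density plus normality of the spatial isomorphism to extend to the von Neumann closures and to obtain surjectivity by symmetry. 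All of these steps are sound; in particular your well-definedness argument (if $P(x)=0$ then $\varphi(P(x)^*P(x))=0$, hence by moment equality $\psi(P(y)^*P(y))=0$, hence $P(y)=0$ by faithfulness of $\psi$) is exactly right, and your observation that $U a U^* = \pi_0(a)$ on the dense set $B_0\Omega_N$ is the key identity that makes the spatial map agree with $\pi_0$ on $A_0$. This is precisely the proof one finds in the cited reference, so the approach and the paper's implicit one coincide.
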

\begin{defi} Let $A$ and $D$ be two finite tracial von Neumann algebras and $B$ a von Neumann subalgebra of $A \cap D$. Let $\pi_j:A \to D, j \in \nz$ be a countable family of unital, normal, faithful, trace-preserving *-homomorphisms. The 4-tuple $(\pi_j, B, A, D)$ is called a sequence of \emph{symmetric independent copies} of $A$ if the following properties hold:
\begin{enumerate}
\item $\pi_j |_B =id_B$, for all $j$;
\item $E_B(\pi_{j_1}(a_1)\ldots \pi_{j_m}(a_m))=E_B(\pi_{\si(j_1)}(a_1)\ldots \pi_{\si(j_m)}(a_m))$, for all finite permutations $\si$ on $\nz$, all indices $j_1,\ldots, j_m$ in $\nz$ and all $a_1,\ldots, a_m$ in $A$, where $E_B:D \to B$ is the canonical trace-preserving conditional expectation;
\item For $i \in \nz$ denote by $A_i=\pi_i(A) \subset D$ and for $I \subset \nz$, denote by $A_I=\bigvee_{i \in I} \pi_i(A)=\bigvee_{i \in I} A_i \subset D$ (by convention, set $A_{\emptyset}=B$); then, for any finite subsets $I \subset J \subset \nz$, $j \notin J$, $d \in A_I$ and $a, a' \in A$, we have 
\[E_{A_I}(\pi_j(a)d\pi_j(a')) =E_{A_J}(\pi_j(a)d\pi_j(a')),\]
 where $E_{A_I}:D \to A_I$ is the canonical conditional expectation;
\item for any finite subsets $I, J \subset \nz$, we have $E_{A_I}E_{A_J}=E_{A_{I \cap J}}$. Note that this automatically implies $E_{A_I}E_{A_J}=E_{A_J}E_{A_I}=E_{A_I \cap A_J}$ and in particular $A_I \cap A_J=A_{I\cap J}$.
\item $A_{\nz}=D$.
\end{enumerate}
If the 4-tuple $(\pi_j, B, A, D)$ only satisfies axioms (1) and (2), we call it a sequence of symmetric copies.
\end{defi}
The role played by the copies $\pi_j(A)$ is analogous to that of tensor copies in a classical product probability space, in fact such an infinite product probability space over a commutative or non-commutative base constitutes the first obvious example of symmetric independent copies. To be more precise, let $(A,\tau)$ a tracial von Neumann algebra and let $D=\bigotimes_{i\in\nz}(A_i,\tau_i)$, where $(A_i,\tau_i)=(A,\tau)$ for all $i\in\nz$, let $\pi_j$ be the obvious embedding of $A$ in $D$ as the $j$-th tensor copy and $B=\cz$. Then all the axioms (1) to (5) are satisfied. In particular, one could take $(A,\tau)=(L^{\infty}(X,\mu),\int_X d\mu)$ for a probability measure space $(X,\mu)$. Axiom (2), while convenient because it greatly simplifies some of our technical computations, doesn't seem to be indispensable to the development of a general theory of $B$-valued $q$-gaussian von Neumann algebras. Indeed, the generalized $q$-gaussian von Neumann algebras can still be introduced in the presence of a weaker ''sub-symmetry" assumption, but the technicalities become even more cumbersome, and it is unclear whether some of our results can still be obtained. Axioms (3) and (4) can both be viewed as describing some sort of independence of the copies over $B$, with (4) being the more obvious one, since for example it entails that for $I \cap J=\emptyset$, we have $E_{A_I}E_{A_J}=E_B$. In the case of an abelian $D$ and $B=\cz$, this amounts to classical probabilistic independence. Axiom (5), while added for completeness, can always be made redundant by shrinking the algebra $D$.\\
In what follows, the expectations $E_{A_I}$ will be denoted $E_I$.
\begin{prop} Let $(\pi_j,B,A,D)$ a sequence of symmetric copies. Let $\Si = \mathbb{S}(\infty)$ be the group of finite permutations on $\nz \setminus \{0\}$. Then for every $\si \in \Si$ there exist a trace preserving automorphism $\al_{\si}$ of $D_0=A_{\nz \setminus \{0\}} \subset D$ such that $\al_{\si}(\pi_{j_1}(x_1)\cdots \pi_{j_m}(x_m))=\pi_{\si(j_1)}(x_1)\cdots \pi_{\si(j_m)}(x_m)$, for all $x_1,\ldots,x_m \in A$ and $j_1,\ldots,j_m \in \nz$. Moreover
\[\Si \ni \si \mapsto \al_{\si} \in Aut(D_0,\tau)\]
is an action of $\Si$ on $D_0$ by trace-preserving automorphisms. Moreover, if the symmetric copies satisfy axiom 4, then the fixed points algebra of this action is $B$.
\end{prop}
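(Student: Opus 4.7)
The plan is twofold: construct $\alpha_\sigma$ from Proposition 3.1 using the moment invariance in axiom (2), then identify the fixed-point algebra under axiom (4) via a law-of-large-numbers argument.

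For the construction, applying $\tau_B$ to axiom (2) yields
\begin{equation*}
\tau_D(\pi_{j_1}(a_1)\cdots\pi_{j_m}(a_m)) = \tau_D(\pi_{\sigma(j_1)}(a_1)\cdots\pi_{\sigma(j_m)}(a_m))
\end{equation*}
since $\tau_D = \tau_B \circ E_B$. As $D_0$ is generated by the system $\{\pi_j(a) : j \geq 1,\, a \in A\}$, Proposition 3.1 (extended to arbitrary generating families by the standard GNS argument) produces a trace-preserving $^*$-isomorphism $\alpha_\sigma : D_0 \to D_0$ with $\alpha_\sigma(\pi_j(a)) = \pi_{\sigma(j)}(a)$. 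The action property $\alpha_{\sigma\tau} = \alpha_\sigma \circ \alpha_\tau$ holds because both sides agree on generators; axiom (1) forces $\alpha_\sigma|_B = \mathrm{id}_B$, so in particular $B \subset D_0^\Sigma$, and $\alpha_\sigma$ commutes with $E_B$ by uniqueness of the conditional expectation.

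To pin down $D_0^\Sigma$ under axiom (4), the key lemma is the following LLN: for any finite $I \subset \nz \setminus \{0\}$, any $x \in A_I$, and any $(\sigma_k) \subset \Sigma$ with $\{\sigma_k(I)\}_{k \geq 1}$ pairwise disjoint, $\tfrac{1}{n}\sum_{k=1}^n \alpha_{\sigma_k}(x) \to E_B(x)$ in $L^2$. Indeed, setting $z_k = \alpha_{\sigma_k}(x - E_B(x)) \in A_{\sigma_k(I)}$, axiom (4) gives for $k \neq l$,
\begin{equation*}
E_{A_{\sigma_k(I)}}(z_l) = E_{A_{\sigma_k(I)}} E_{A_{\sigma_l(I)}}(z_l) = E_{A_{\sigma_k(I) \cap \sigma_l(I)}}(z_l) = E_B(z_l) = 0,
\end{equation*}
so that $\tau_D(z_k z_l^*) = \tau_D(z_k E_{A_{\sigma_k(I)}}(z_l^*)) = 0$, and the norm of $\tfrac{1}{n}\sum_{k=1}^n z_k$ decays like $O(n^{-1/2})$.

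Given $y \in D_0^\Sigma$, replace $y$ by $\tilde{y} = y - E_B(y) \in D_0^\Sigma$, so that $E_B(\tilde{y}) = 0$. Approximate $\tilde{y}$ in $L^2$ by $\tilde{y}_J = E_{A_J}(\tilde{y}) \in A_J$ for finite $J$, which is possible since $\bigcup_{|J|<\infty} A_J$ is weakly dense in $D_0$. Because $\tilde{y}$ is $\Sigma$-fixed and $\alpha_{\sigma_k}$ is trace-preserving, $\|\alpha_{\sigma_k}(\tilde{y}_J) - \tilde{y}\|_2 = \|\tilde{y}_J - \tilde{y}\|_2$ uniformly in $k$. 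Applying the LLN to $\tilde{y}_J \in A_J$ (with $\sigma_k(J)$ chosen pairwise disjoint) gives $\tfrac{1}{n}\sum \alpha_{\sigma_k}(\tilde{y}_J) \to E_B(\tilde{y}_J) = E_B(\tilde{y}) = 0$, and the triangle inequality forces $\|\tilde{y}\|_2 \leq \|\tilde{y}_J - \tilde{y}\|_2$. Sending $J \nearrow \nz \setminus \{0\}$ yields $\tilde{y} = 0$, hence $y = E_B(y) \in B$. The main obstacle is that axiom (4) only controls orthogonality on finite index sets, so the identification of $D_0^\Sigma$ cannot proceed purely on generators and requires this two-step $L^2$-approximation; trace-preservation of $\alpha_\sigma$ is precisely what keeps the approximation error uniform in $k$ and lets the argument close.
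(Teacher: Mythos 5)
Your construction of $\alpha_\sigma$ is essentially the paper's argument in different clothing: the paper builds a unitary $V_\sigma$ on $L^2(D_0)$ directly and observes that axiom (2) makes it well defined, then sets $\alpha_\sigma=\mathrm{Ad}(V_\sigma)$; you invoke Proposition~3.1, whose proof is precisely this GNS construction, after first noting that composing axiom (2) with $\tau_B$ gives the needed invariance of mixed moments. Both routes hinge on the same fact, so this part is a faithful rendering. The caveat you flag about Proposition~3.1 being stated for countable generating families is appropriate but harmless in this tracial setting.

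Where your proposal genuinely goes beyond the paper is the identification $D_0^\Sigma=B$, which the paper dismisses as ``straightforward and left to the reader.'' Your law-of-large-numbers argument is correct and is a clean way to close that gap. The key points all check out: axiom (4) with $\sigma_k(I)\cap\sigma_l(I)=\emptyset$ forces $E_{A_{\sigma_k(I)}}(z_l)=E_{A_\emptyset}(z_l)=E_B(z_l)=0$ for $k\neq l$, which makes the $z_k$ pairwise orthogonal in $L^2$ and yields the $O(n^{-1/2})$ decay; the commutation $E_B\circ\alpha_\sigma=\alpha_\sigma\circ E_B$ (from uniqueness of the trace-preserving expectation together with $\alpha_\sigma|_B=\mathrm{id}$) ensures $E_B(z_l)=0$; the increasing union $\bigcup_J A_J$ is $L^2$-dense in $D_0$ so martingale convergence gives $\|\tilde y_J-\tilde y\|_2\to 0$; and trace preservation makes the error $\|\alpha_{\sigma_k}(\tilde y_J)-\tilde y\|_2=\|\tilde y_J-\tilde y\|_2$ uniform in $k$. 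Sandwiching $\|\tilde y\|_2$ between these estimates and sending $J\nearrow\nz\setminus\{0\}$ gives $\tilde y=0$. This is a valid and self-contained argument for a step the paper takes for granted.
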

\begin{proof} The map $V_{\si}:L^2(D_0)\to L^2(D_0)$ defined by
\[\sum \pi_{j_1}(x_1)\cdots \pi_{j_m}(x_m) \mapsto \sum \pi_{\si(j_1)}(x_1)\cdots \pi_{\si(j_m)}(x_m)\]
is easily seen to be a well-defined unitary because of axiom 2. Then $\al_{\si}=Ad(V_{\si})|_{D}$ is a trace preserving automorphism of $D$ which satisfies the required condition. The verification of the second statement is straightforward and we leave it to the reader.
\end{proof}
Symmetric copies can also be introduced in the following alternative way, which is a converse to the previous proposition: assume that $\al:\Si \to Aut(D,\tau)$ is a trace preserving action by *-automorphism of the finite von Neumann algebra $D$, where $\Si$ is now the finite permutation group on $\nz \cup \{0\}$ instead of $\nz$. Denote by $B=D^{\Si}$ the fixed points algebra of this action. Denote by $\Si_0=Stab_{\Si}(0)=\{\si \in \Si: \si(0)=0\}$. Set $A=D^{\Si_0}=\{d \in D:\al_{\si}(d)=d, \forall \si \in \Si_0\}$. Note that $\Si_0 \subset \Si$ is a subgroup isomorphic to $\mathbb{S}(\infty)$ and that $B \subset A \subset D$. For every $j \geq 1$, define $\pi_j:A \to D$ by the formula $\pi_j(a)=\al_{(0j)}(a), a \in A$, where $(0j) \in \Si$ is the transposition interchanging $0$ and $j$. Then $(\pi_j,B,A,D)$ represents a sequence of symmetric copies. Indeed, for any $j \geq 1$ and $b \in B$ we have $\pi_j(b)=\al_{(0j)}(b)=b$ because $B$ is the fixed points algebra of the action $\al$, so (1) is true. Note that $\al_{\si}(a)=a$ for every $\si \in \Si_0$ and $a \in A$ and $E_B \circ \al_{\si}=E_B$ for all $\si \in \Si$, due to (1) and the facts that $\al$ is trace preserving and the trace preserving conditional expectation $E_B:D \to B$ is unique. Then for every $\si \in \Si_0 \cong \mathbb{S}(\infty)$ and for all $j_1,\ldots,j_m \geq 1$ and $a_1,\ldots,a_m \in A$ we have
\begin{align*}
& E_B(\pi_{\si(j_1)}(a_1)\cdots \pi_{\si(j_m)}(a_m)) = E_B(\al_{(0\si(j_1))}(a_1)\cdots \al_{(0\si(j_m))}(a_m)) = \\
& E_B(\al_{\si (0j_1) \si^{-1}}(a_1)\cdots \al_{\si (0j_m) \si^{-1}}(a_m)) = E_B((\al_{\si} \circ \al_{(0j_1)} \circ \al_{\si^{-1}})(a_1)\cdots (\al_{\si} \circ \al_{(0j_m)} \circ \al_{\si^{-1}})(a_m))= \\
& E_B(\al_{\si}(\al_{(0j_1)}(a_1)\cdots \al_{(0j_m)}(a_m)))=E_B(\al_{(0j_1)}(a_1)\cdots \al_{(0j_m)}(a_m)) = E_B(\pi_{j_1}(a_1)\cdots \pi_{j_m}(a_m)),
\end{align*}
so (2) is also true. As noted before, we can also assume without loss of generality that $D=\bigvee_{j \geq 1} \pi_j(A)=\bigvee_{j \geq 1} A_j$, by simply replacing $D$ with a von Neumann subalgebra.
\par {\bf Notation.} Let $(j_1,\ldots,j_m)$ be an $m$-tuple with $1\leq j_k\leq n$, $1\leq k\leq m$. We denote by $P(m)$ the set of partitions of $\{1,\ldots,m\}$ and by $\dot{0}, \dot{1}$ the finest and the coarsest partition in $P(m)$, respectively. The notation $P_{1,2}(m)$ stands for the collection of all the partitions of $\{1,\ldots,m\}$ consisting only of singletons and pairs. For $\si\in P(m)$, we say that
\begin{enumerate}
\item $(j_1,\ldots,j_m)\leq \si$ if $j_i=j_k$ whenever $i,k\in A\in \si$;
\item $(j_1,\ldots,j_m)\geq\si$ if $j_i=j_k$ implies that there exists an $A\in\si$ with $i,k\in A$;
\item $(j_1,\ldots,j_m)=\si$ if $j_i=j_k$ exactly when there exists an $A\in\si$ such that $i,k\in A$.
\end{enumerate}
Given an $m$-tuple $(j_1,\ldots,j_m)=\dot{0}$ with $1\leq j_k\leq n$ for $1\leq k\leq m$, we denote by $\al_{j_1,\ldots,j_m}=\al_{\si_{j_1,\ldots,j_m}}$, where $\si_{j_1,\ldots,j_m}(i)=j_i, 1 \leq i \leq m$.

\begin{defi} Let $(\pi_j, B, A, D)$ be a sequence of symmetric independent copies, $S$ a subset of $A$ such that $1 \in S=S^*$, $H$ a Hilbert space and $\omega$ a free ultrafilter on $\nz$. Denote by $\lbrace e_j \rbrace$ the canonical orthonormal basis of $\ell^2=\ell^2(\nz)$. Let $-1<q<1$. Define 
\[\Gamma_q^0(B,S\ten H) = (B \cup \lbrace s_q(a,h):a \in S, h \in H \rbrace)'' \subset (\Gamma_q(\ell^2 \ten H) \bar{\ten} D)^{\omega},\] 
where
\[s_q(a,h)=(n^{-\frac{1}{2}}\sum_{j=1}^n s_q(e_j \ten h) \ten \pi_j(a))_n.\]
Finally define
\[\Gamma_q(B,S\ten H)=(E_{\Gamma_q(\ell^2_n \ten H)}\ten id)_n(\Gamma_q^0(B,S\ten K)),\]
 where $K$ is an infinite dimensional Hilbert space containing $H$, $\ell^2_n=$span$\lbrace e_1, \ldots, e_n \rbrace$ and for each $n$ 
 \[E_{\Gamma_q(\ell^2_n \ten H)}:\Gamma_q(\ell^2 \ten K) \to \Gamma_q(\ell^2_n \ten H)\] is the canonical conditional expectation.
\end{defi}
As $q$ will be fixed throughout this section, we will simply use the notation $s(x,h)$ instead of $s_q(x,h)$ from now on.
\begin{rem} Due to functoriality, the definition of $\Gamma_q(B,S \ten H)$ does not depend on the particular choice of $K \supset H$. When $H$ is infinite dimensional $\Gamma_q(B,S \ten H)=\Gamma_q^0(B,S \ten H)$.
\end{rem}
\begin{rem} $\Gamma_q^0(B,S \ten H)=(\lbrace s(a,h):a \in B \cup S, h \in H \rbrace)'' \subset (\Gamma_q(\ell^2 \ten H) \bar{\ten} D)^{\omega}$.
\end{rem}
\begin{rem} $\Gamma_q(B,S\ten H)$ is a von Neumann algebra. Indeed, since the map 
\[E=(E_{\Gamma_q(\ell^2_n \ten H)}\ten id)_n:(\Gamma_q(\ell^2 \ten K) \bar{\ten} D))^{\omega} \to (\Gamma_q(\ell^2 \ten H) \bar{\ten} D)^{\omega}\] 
is a normal linear projection (i.e. idempotent map) of norm one, it follows that $\Gamma_q(B, S\ten H)$ is an ultraweakly closed, self-adjoint subspace of $(\Gamma_q(\ell^2 \ten H)\bar{\ten}D)^{\omega}$ containing the identity. It's straightforward to see that the map $E$ has the following bimodularity property: $E(x)E(y)E(z)=E(E(x)yE(z))$, for all $x,y,z \in \Gamma_q^0(B,S\ten K)$. Thus, for $x, y \in \Gamma_q^0(B,S\ten K)$ we have $E(x)E(y)=E(E(x)y) \in \Gamma_q(B, S\ten H)$.
\end{rem}
The canonical generators $s_q(a,h)$ are not easy to work with in a variety of situations. The classical $q$-gaussians possess a system of generators, the so-called Wick words, whose linear span is an ultraweakly dense *-subalgebra. Generalized $q$-gaussians also have such a well-behaved system of linear generators, which will be called Wick words by analogy with the classical case.
In order to find these Wick words let us first define, for every $n \in \nz$, $x \in A$ and $h \in H$,
\[u_n(x,h)= n^{-\frac{1}{2}}(\sum_{j=1}^n s(e_j \ten h) \ten \pi_j(x)) \in \Gamma_q(\ell^2 \ten H) \bar{\ten} D.\]
It's easy to see that $s(x,h)=(u_n(x,h))_n \in (\Gamma_q(\ell^2 \ten H) \bar{\ten} D)^{\omega}$, for $x \in A, h \in H$. 
For $x_1,\ldots, x_m \in BSB=\lbrace b_1ab_2:b_1,b_2 \in B,a \in S \rbrace$ and $h_1,\ldots, h_m \in H$ we will analyze the product
 \begin{align*}
  & u_n(x_1,h_1)\cdots u_n(x_m,h_m)\\
 & =  n^{-\frac{m}{2}} \sum_{1\leq j_1,...,j_m \leq n} s(e_{j_1}\ten h_1)\cdots s(e_{j_m}\ten h_m)
 \ten \pi_{j_1}(x_1)\pi_{j_2}(x_2)\cdots \pi_{j_m}(x_m) \\
 &= \sum_{\si \in P(m)}  (n^{-\frac{m}{2}} \sum_{(j_1,...,j_m)=\si,1 \leq j_1,\ldots,j_m \leq n} s(e_{j_1}\ten h_1)\cdots s(e_{j_m}\ten h_m)\ten \pi_{j_1}(x_1)\cdots \pi_{j_m}(x_m)) \pl,
 \end{align*}
For $\si \in P(m)$ let's define
 \[ x_{\si}^n(x_1,h_1,\ldots ,x_m,h_m) \lel
 n^{-m/2}
 \sum_{1\leq j_1,\ldots,j_m \leq n,(j_1,...,j_m)=\si} s(e_{j_1}\ten h_1)\cdots s(e_{j_m}\ten h_m)\ten \pi_{j_1}(x_1)\cdots \pi_{j_m}(x_m) \pl ,\]
 and $x_{\si}(x_1,h_1,\ldots, x_m, h_m)=(x_{\si}^n(x_1,h_1,\ldots,x_m,h_m))_n \in (\Gamma_q(\ell^2 \ten H)\bar{\ten} D)^{\omega}$. To keep the notation less cumbersome, we will omit the parameters $x_k, h_k$ whenever they are clearly understood from the context. Next we see that 
 \[u_n(x_1,h_1)\ldots u_n(x_m,h_m)=\sum_{\si \in P(m)} x_{\si}^n,\]
 and also
 \[s(x_1,h_1)\ldots s(x_m,h_m)=(u_n(x_1,h_1)\ldots u_n(x_m,h_m))_n=\sum_{\si \in P(m)} x_{\si}.\]

\begin{lemma}\label{vn1} Let $(\pi_j,B,A,D)$ be a sequence of symmetric copies. Then 
 \begin{enumerate}
\item[o)] $\sup_n \|x_{\si}^n\|_{\infty} < \infty$ for all $m \geq 1$ and $\si \in P_{1,2}(m)$;
\item[i)] If $\si \notin P_{1,2}(m)$ and $0<p<\infty$ then
 \[ \lim_n \| x_{\si}^n\|_p \lel 0 \pl.\]
In particular $s(x_1,h_1)\ldots s(x_m,h_m)=\sum_{\si \in P_{1,2}(m)} x_{\si}$.
\end{enumerate}
\end{lemma}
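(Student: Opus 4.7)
My plan is a direct moment computation using the tensor trace $\tau = \tau_{\Gamma_q(\ell^2 \otimes H)} \otimes \tau_D$ on $\Gamma_q(\ell^2 \otimes H) \bar{\otimes} D$ together with the $q$-gaussian moment formula. For part (o), I would decompose $x_\sigma^n$ according to the block structure of $\sigma \in P_{1,2}(m)$: each pair-block in positions $(k,l)$ contributes (up to lower-order ``distinctness'' corrections) an empirical average $n^{-1} \sum_j s(e_j \otimes h_k) s(e_j \otimes h_l) \otimes \pi_j(x_k) \pi_j(x_l)$, bounded in $\|\cdot\|_\infty$ by the triangle inequality, while the singleton blocks assemble into a $q$-gaussian Wick word of length $s = m - 2p$ whose $\|\cdot\|_\infty$ is uniformly controlled by the classical norm estimate for $q$-gaussian Wick words. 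The distinctness corrections belong to non-pair-singleton partitions and vanish in the ultraproduct by part (i), so the product structure gives $\sup_n \|x_\sigma^n\|_\infty < \infty$.

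For part (i), I would first establish $\lim_n \|x_\sigma^n\|_2 = 0$ for $\sigma \notin P_{1,2}(m)$. Expanding $\|x_\sigma^n\|_2^2 = \tau((x_\sigma^n)^* x_\sigma^n)$ via the tensor trace and applying the $q$-gaussian moment formula, the $\tau_{\Gamma_q}$-factor becomes a sum over pair partitions $\pi \in P_2(2m)$, each $\pi$-pair imposing equality of the associated $j$- or $i$-indices. Let $p_{\mathrm{mix}}$ count the number of ``mixed'' $\pi$-pairs bridging the $j$- and $i$-halves. Since every within-half $\pi$-pair must lie inside a single $\sigma$-block, one has $p_{\mathrm{mix}} \geq \sum_{B \in \sigma} (|B| \bmod 2)$; and since each mixed pair identifies one $j$-side $\sigma$-block with one $i$-side $\sigma$-block, the cardinality $|\rho|$ of the induced partition on the joint tuple $(j_1,\ldots,j_m,i_1,\ldots,i_m)$ satisfies $|\rho| \leq 2|\sigma| - p_{\mathrm{mix}}$. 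An elementary block-size count yields
\[
m - |\rho| \;\geq\; \sum_{B \in \sigma} \bigl(|B| - 2 + (|B| \bmod 2)\bigr) \;\geq\; 2 \cdot \# \{B \in \sigma : |B| \geq 3\}.
\]
Hence if $\sigma$ has a block of size $\geq 3$, the number of contributing tuples is $O(n^{m-2})$ and $\|x_\sigma^n\|_2^2 = O(n^{-2}) \to 0$.

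For $0 < p \leq 2$, the tracial inequality $\|\cdot\|_p \leq \|\cdot\|_2$ gives the desired convergence. For $p > 2$, I would apply the same moment scheme to $\|x_\sigma^n\|_{2k}^{2k}$, now involving $2k$ factors of $\sigma$ and pair partitions on $2mk$ indices; analogous combinatorics across factors yields $\|x_\sigma^n\|_{2k}^{2k} \leq C_k n^{-2k}$, and $\|\cdot\|_p \leq \|\cdot\|_{2k}$ for $2k > p$ completes the argument. The identity $s(x_1,h_1) \cdots s(x_m,h_m) = \sum_{\sigma \in P_{1,2}(m)} x_\sigma$ then follows immediately from the already established expansion $s(x_1,h_1)\cdots s(x_m,h_m) = \sum_{\sigma \in P(m)} x_\sigma$ and the vanishing of $x_\sigma$ in the ultraproduct for $\sigma \notin P_{1,2}(m)$.

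The main technical obstacle I anticipate is the combinatorial inequality $m - |\rho| \geq 2 \cdot \#\{B \in \sigma : |B| \geq 3\}$ and its generalization across $2k$ factors. The key observation is that a $\sigma$-block of size $\geq 3$ has more ``leftover'' indices (those unpaired within the block) than any pair-singleton arrangement of the same total size, forcing more mixed pairs and hence more $j$-to-$i$ block identifications in every contributing $\pi$. For higher moments one must check that bunching cross-factor $\pi$-pairs between the same two $\sigma$-blocks cannot circumvent the deficit; this follows from the fact that the number of cross-factor pairs between two fixed $\sigma$-blocks is bounded by their sizes, leaving the overall deficit unchanged.
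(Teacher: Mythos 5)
Your argument for part (o) has a real gap, and (o) is the more delicate of the two claims. For a general $\sigma \in P_{1,2}(m)$ the pair positions interleave with singleton positions and the generators $s(e_{j_k}\otimes h_k)$ do not commute, so $x_\sigma^n$ does \emph{not} factor as (empirical pair averages) $\times$ (singleton Wick word): the proposed decomposition simply does not exist as an operator identity. Moreover, even if you write $x_\sigma^n$ as a ``free-index approximation'' plus ``distinctness corrections'', bounding $\|x_\sigma^n\|_\infty$ uniformly in $n$ requires bounding those corrections in $\|\cdot\|_\infty$, whereas part (i) only yields $\|\cdot\|_p \to 0$ for $p<\infty$; a sequence can vanish in every finite $L^p$ while its operator norm blows up, so the appeal to (i) is a non-sequitur for (o). Finally, a pure triangle-inequality bound breaks down whenever $\sigma$ has singletons: the number of index tuples is of order $n^{p+s}$ against the normalization $n^{-(p+s/2)}$, leaving an excess $n^{s/2}$ that must be absorbed by a genuine square-function/martingale estimate, exactly the mechanism of Lemma~\ref{apriori}; and the ``classical norm estimate for $q$-gaussian Wick words'' does not apply directly in any case, since the singleton part carries operator coefficients $\pi_{l}(x)\in D$ rather than scalar ones.

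Part (i) is essentially right in strategy, but your key combinatorial inequality is written with the wrong sign: from $p_{\mathrm{mix}}$ cross-pairs the induced joint partition satisfies $|\rho| \geq 2|\sigma| - p_{\mathrm{mix}}$, with equality precisely when the bipartite cross-pair graph $G$ is a forest, not $|\rho| \leq 2|\sigma| - p_{\mathrm{mix}}$. To reach $m - |\rho| \geq \sum_B(|B|-2 + (|B|\bmod 2))$ you should instead use $m - |\rho| = \sum_B(|B|-2) + \mathrm{rank}(G)$ (where $G$ has the $2|\sigma|$ half-blocks as vertices and the cross-pairs as edges) and then bound $\mathrm{rank}(G)$ from below by the number of odd $\sigma$-blocks: each odd block forces a non-isolated vertex on both halves, and the rank of a graph is at least half the number of its non-isolated vertices. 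The higher-moment step for $p>2$ is only a sketch, though the conclusion is plausible. (The paper defers this lemma entirely to Prop.~4.1 of \cite{JLU}, so a line-by-line comparison is not possible; the issues raised above are internal to your proposal.)
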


\begin{proof} The proof is the same as that of Prop. 4.1. in \cite{JLU}.
\qd
\begin{prop} We have the following convolution formula for the multiplication of Wick words:
\begin{align*}
& x_{\si}(x_1,h_1,\ldots,x_m,h_m)x_{\theta}(y_1,k_1,\ldots,y_{m'},k_{m'})=\\
& \sum_{\gamma \in P_{1,2}(m+m'), \gamma_p|_{1...m}=\si_p, \gamma_p|_{1...m'}=\theta_p} x_{\gamma}(x_1,h_1,\ldots,y_{m'},k_{m'}).
\end{align*}
Moreover, item i) in the lemma above shows that in the summation we can restrict ourselves to pair-singleton partitions whose only additional pairings are between the singletons of $\si$ and $\theta$. In particular, the linear span of the Wick words is a *-algebra.
\end{prop}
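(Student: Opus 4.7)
The plan is to establish the identity at the finite level $n$ and then pass to the ultraproduct, relying on Lemma 3.8 to discard the partitions that vanish in the limit. First I would expand the product $x_\si^n(x_1,h_1,\ldots,x_m,h_m)\cdot x_\theta^n(y_1,k_1,\ldots,y_{m'},k_{m'})$ by writing each factor as its defining normalized sum; the normalizations combine as $n^{-m/2}\cdot n^{-m'/2}=n^{-(m+m')/2}$, which is exactly the prefactor needed for a Wick word of length $m+m'$.

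Next, I would regroup the resulting terms according to the joint pattern $\gamma\in P(m+m')$ of the concatenated multi-index $(j_1,\ldots,j_m,j'_1,\ldots,j'_{m'})$. Every $\gamma$ appearing this way restricts to $\si$ on $\{1,\ldots,m\}$ and to $\theta$ on $\{m+1,\ldots,m+m'\}$, and conversely every such $\gamma$ arises exactly once. This yields the pre-limit identity
\[
x_\si^n \cdot x_\theta^n \;=\; \sum_\gamma x_\gamma^n(x_1,h_1,\ldots,y_{m'},k_{m'}),
\]
summed over $\gamma\in P(m+m')$ with the prescribed restrictions. Passing to the ultraproduct along $\om$, the left-hand side tends to $x_\si\cdot x_\theta$, while by Lemma 3.8(i) we have $\|x_\gamma^n\|_p\to 0$ for all $0<p<\infty$ whenever $\gamma\notin P_{1,2}(m+m')$, and Lemma 3.8(o) supplies the uniform $\|\cdot\|_\infty$-bounds needed to commute the (finite) regrouping with the ultralimit. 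Only pair-singleton $\gamma$'s survive.

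For the ``moreover'' statement I would argue combinatorially. A pair of $\gamma\in P_{1,2}(m+m')$ that lies entirely in one of the two halves must, by the restriction condition, already belong to $\si$ (respectively $\theta$); turning a singleton of $\si$ into a pair with another singleton of $\si$ would create a pair in $\si$ that is not there, and fusing a singleton of one side with an existing pair of the opposite side would produce a block of size three, which is forbidden in $P_{1,2}$. Hence the only admissible additional pairs are those matching a singleton of $\si$ with a singleton of $\theta$. The $*$-algebra assertion then follows from $s(x,h)^*=s(x^*,h)$ (so adjoints of Wick words are Wick words, with the order of the arguments reversed) combined with the multiplication formula just proved.

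The step I expect to be the main obstacle is the careful passage from the $n$-level identity to the ultralimit: one has to check that the uniform $\|\cdot\|_\infty$-bounds from Lemma 3.8(o) persist through the regrouping and that the $L^p$-vanishing provided by Lemma 3.8(i) really does eliminate the corresponding terms in $(\Gamma_q(\ell^2\ten H)\bar{\ten}D)^{\om}$ (rather than only in $L^p$). Once this ultraproduct bookkeeping is in place, the combinatorial classification of admissible $\gamma$'s is essentially immediate.
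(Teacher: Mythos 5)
Your proof is correct and follows essentially the same route as the paper: expand at level $n$ (noting the normalizations combine to $n^{-(m+m')/2}$), regroup by the pattern $\gamma$ of the concatenated multi-index, invoke Lemma~\ref{vn1}(i) to kill contributions from $\gamma\notin P_{1,2}(m+m')$, and then observe that the restriction constraints force the surviving $\gamma$'s to pair only singletons of $\sigma$ with singletons of $\theta$. Your explicit handling of the $*$-algebra assertion via $s(x,h)^*=s(x^*,h)$ is a small addition the paper leaves implicit, and your flagged ``obstacle'' about transferring the $L^p$-vanishing to the ultraproduct is resolved exactly as you anticipate: the non-$P_{1,2}$ block of the $n$-level sum is uniformly $\|\cdot\|_\infty$-bounded (being the difference of the uniformly bounded product and the uniformly bounded $P_{1,2}$-part) with $\|\cdot\|_2$-norm tending to zero, hence vanishes in the tracial ultraproduct.
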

\begin{proof} We have 
\begin{align*}
& x_{\si}(x_1,h_1,\ldots,x_m,h_m)x_{\theta}(y_1,k_1,\ldots,y_{m'},k_{m'})= \\
& (n^{-\frac{m+m'}{2}}\sum_{(j_1,\ldots,j_m)=\si,(l_1,\ldots,l_{m'})=\theta} s(e_{j_1}\ten h_1)\cdots s(e_{l_{m'}}\ten k_{m'}) \ten \pi_{j_1}(x_1)\cdots \pi_{l_{m'}}(y_{m'})) = \\
& \sum_{\gamma \in P_{1,2}(m+m'), \gamma_p|_{\lbrace 1,\ldots,m \rbrace}=\si_p, \gamma_p|_{\lbrace 1,\ldots,m' \rbrace}=\theta_p} (n^{-\frac{m+m'}{2}}\sum_{(j_1,\ldots,l_{m'})=\gamma} s(e_{j_1}\ten h_1)\cdots s(e_{l_{m'}}\ten k_{m'}) \\
& \ten \pi_{j_1}(x_1)\cdots \pi_{l_{m'}}(y_{m'})) = \\
& \sum_{\gamma \in P_{1,2}(m+m'), \gamma_p|_{\lbrace 1,\ldots,m \rbrace}=\si_p, \gamma_p|_{\lbrace 1,\ldots,m' \rbrace}=\theta_p} x_{\gamma}(x_1,h_1,\ldots,y_{m'},k_{m'}).
\end{align*}
Now if $\gamma \in P_{1,2}(m+m')$ connects a singleton in $\si$ with a leg of a pair in $\theta$ or the leg of pair in $\si$ with either a singleton or a leg of a pair in $\theta$, the resulting $x_{\gamma}$ is associated to a partition containing a 3-set or a 4-set and hence vanishes according to Lemma 3.8. So in the above sum we may only allow $\gamma$'s which preserve the pair sets of both $\si$ and $\theta$ and can only additionally pair singletons "on different sides of the marker", which ends the proof.
\end{proof}
Our next result provides a reduction method for the Wick words.
\begin{lemma}\label{elim} Let $\pi_j:A \to D$ be symmetric independent copies, and $1 \in S=S^* \subset A$. Let $x_1,\ldots, x_m \in BSB$, $\si \in P_{1,2}(m)$ having $s$ singletons and $p$ pairs and $\phi:\{1,...,m\}\to \{1,...,s+p\}$ which encodes $\si$, i.e. $\phi(k_t)=t$ for every singleton $\lbrace k_t \rbrace \in \si $, $1 \leq t \leq s$ and $\phi(k'_{t})=\phi(k''_{t})=t+s$, for every pair $\lbrace k'_t, k''_t \rbrace \in \si$, $1 \leq t \leq p$. Consider $(\varepsilon_k)$ a sequence of Rademacher variables, i.e. Bernoulli independent random variables on a probability space $(X,\mu)$ satisfying $\varepsilon_k :X \to \lbrace \pm 1 \rbrace$, $\mathbb E(\varepsilon_k=1)=\mathbb E(\varepsilon_k=-1)=\frac{1}{2}$. Then
 \begin{align*}
  &\|\sum_{(l_1,...,l_{s+p})=\dot{0}}
  \eps_{l_1}\cdots \eps_{l_s}\ten (
  \pi_{l_{\phi(1)}}(x_1)\cdots \pi_{l_{\phi(m)}}(x_m) -E_{l_1,...,l_s}(
  \pi_{l_{\phi(1)}}(x_1)\cdots \pi_{l_{\phi(m)}}(x_m))\|_2 \kl \\
  & C(m,x_j) n^{\frac{m-1}{2}} \pl.
  \end{align*}
In particular we have
 \begin{align*}
 & (n^{-\frac{m}{2}}\sum_{(l_1,...,l_s,l_{s+1},...,l_{s+p})=\dot{0}}
  \eps_{l_1}\cdots \eps_{l_s}\ten (\pi_{l_{\phi(1)}}(x_1)\cdots \pi_{l_{\phi(m)}}(x_m))\\
  &= (n^{-\frac{s}{2}}\sum_{(l_1,...,l_s)=\dot{0}}
   \eps_{l_1}\cdots \eps_{l_s}\ten E_{l_1,...,l_s}(\pi_{l_{\phi(1)}}(x_1)\cdots \pi_{l_{\phi(m)}}(x_m))= \\
& (n^{-\frac{s}{2}}\sum_{(l_1,...,l_s)=\dot{0}}
   \eps_{l_1}\cdots \eps_{l_s}\ten \al_{l_1,\ldots,l_s}(F_{\si}(x_1,\ldots,x_m))),
 \end{align*}
 where $F_{\si}(x_1,\ldots x_m)=E_{1,\ldots,s}(\pi_{\phi(1)}(x_1)\cdots \pi_{\phi(m)}(x_m))$
 and the last equality takes place in $(L^{\infty}(X)\bar{\ten} D)^{\omega}$.
\end{lemma}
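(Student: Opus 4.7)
Write $Z_l := \pi_{l_{\phi(1)}}(x_1)\cdots \pi_{l_{\phi(m)}}(x_m)$ and $Y_l := Z_l - E_{l_1,\ldots,l_s}(Z_l)$, so that $E_{l_1,\ldots,l_s}(Y_l)=0$ and $Y_l \in A_{\{l_1,\ldots,l_{s+p}\}}$. Setting $S_n := \sum_{(l)=\emptyset}\eps_{l_1}\cdots\eps_{l_s}\ten Y_l$, I expand $\|S_n\|_2^2=(\mathbb{E}\ten\tau)(S_n^*S_n)$. Independence of the $\eps_k$'s together with the fact that all coordinates within each tuple are distinct forces $\mathbb{E}(\eps_{l_1}\cdots\eps_{l_s}\eps_{l'_1}\cdots\eps_{l'_s})$ to vanish unless $\{l_1,\ldots,l_s\}=\{l'_1,\ldots,l'_s\}$ as sets, so only cross-terms $\tau(Y_l^*Y_{l'})$ with coinciding singleton-index sets survive.

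The decisive step is that $\tau(Y_l^*Y_{l'})=0$ whenever the pair-index sets $\{l_{s+1},\ldots,l_{s+p}\}$ and $\{l'_{s+1},\ldots,l'_{s+p}\}$ are disjoint. Setting $I:=\{l_1,\ldots,l_s\}\cup\{l'_{s+1},\ldots,l'_{s+p}\}$ we have $Y_{l'}\in A_I$ and $I\cap\{l_1,\ldots,l_{s+p}\}=\{l_1,\ldots,l_s\}$. Applying axiom (4) to $Y_l\in A_{\{l_1,\ldots,l_{s+p}\}}$ gives
$$E_I(Y_l)\;=\;E_I\,E_{l_1,\ldots,l_{s+p}}(Y_l)\;=\;E_{I\cap\{l_1,\ldots,l_{s+p}\}}(Y_l)\;=\;E_{l_1,\ldots,l_s}(Y_l)\;=\;0,$$
so $\tau(Y_l^*Y_{l'})=\tau(E_I(Y_l^*)\,Y_{l'})=0$ by the $A_I$-bimodule property. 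Since $\|Y_l\|_\infty\leq 2\prod_j\|x_j\|_\infty$, it remains to count $(l,l')$ with coinciding singleton sets and overlapping pair sets: fixing the common singleton set ($O(n^s)$ choices), choosing $(l_{s+1},\ldots,l_{s+p})$ freely ($O(n^p)$), and forcing $(l'_{s+1},\ldots,l'_{s+p})$ to share at least one entry with it ($O(n^{p-1})$ by inclusion-exclusion) yields $O(n^{s+2p-1})=O(n^{m-1})$ non-vanishing terms, giving the $L^2$-bound $C(m,x_j)\,n^{(m-1)/2}$.

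For the ``in particular'' identity, dividing the $L^2$-estimate above by $n^{m/2}$ yields an $O(n^{-1/2})$ error, which vanishes in the ultraproduct; hence the first expression equals the middle one involving $E_{l_1,\ldots,l_s}(Z_l)$. To pass to the third expression, pick any permutation $\rho\in\Si$ with $\rho(i)=l_i$ for $1\leq i\leq s+p$. Then $\al_\rho(\pi_{\phi(k)}(x_k))=\pi_{l_{\phi(k)}}(x_k)$, and since $\al_\rho$ is trace-preserving it intertwines $E_{1,\ldots,s}$ and $E_{l_1,\ldots,l_s}$. Therefore
$$E_{l_1,\ldots,l_s}(Z_l)\;=\;\al_\rho\!\bigl(E_{1,\ldots,s}(\pi_{\phi(1)}(x_1)\cdots\pi_{\phi(m)}(x_m))\bigr)\;=\;\al_{l_1,\ldots,l_s}\!\bigl(F_\si(x_1,\ldots,x_m)\bigr),$$
which is independent of $(l_{s+1},\ldots,l_{s+p})$. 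Summing over the $\frac{(n-s)!}{(n-s-p)!}\sim n^p$ admissible pair-index tuples and noting $n^{-m/2}\cdot\tfrac{(n-s)!}{(n-s-p)!}=n^{-s/2}(1+O(1/n))$ while the right-hand sum has $L^2$-norm $O(n^{s/2})$, the discrepancy is $O(n^{-1})$ in $\|\cdot\|_2$ and hence zero in the ultraproduct. The main technical obstacle is the bookkeeping between conditional expectations via axiom (4) and the $\Si$-equivariance used to localize $E_{l_1,\ldots,l_s}(Z_l)$; once organized, both claims reduce to the counting above.
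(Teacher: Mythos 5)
Your proof is correct, but it takes a genuinely different route from the paper's. Where the paper proves the $L^2$-estimate via a probabilistic averaging over the space $\Omega_n$ of ordered partitions $(C_1,\ldots,C_{s+p})$ of $\{1,\ldots,n\}$ followed by a martingale decomposition along the last block $C_{s+p}$ (and then iterates, using axioms (3) and (4) to telescope down to $E_{l_1,\ldots,l_s}$), you expand $\|S_n\|_2^2$ directly and isolate the surviving cross-terms through two orthogonality mechanisms: the Bernoulli factor $\mathbb E(\eps_{l_1}\cdots\eps_{l_s}\eps_{l'_1}\cdots\eps_{l'_s})$ forces the singleton index sets to coincide, and your clean application of axiom (4) to $I=\{l_1,\ldots,l_s\}\cup\{l'_{s+1},\ldots,l'_{s+p}\}$ shows $\tau(Y_l^*Y_{l'})=0$ whenever the pair index sets are disjoint. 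A counting argument then yields $O(n^{m-1})$ surviving terms of size $O(1)$, giving the same bound. Your argument is shorter and only invokes axiom (4), bypassing axiom (3) entirely for the estimate; the paper's martingale route is heavier but pays off because the same decomposition is reused verbatim in the $L^p$-bound of Lemma \ref{apriori}, which your $L^2$-specific orthogonality expansion does not yield. The derivation of the ``in particular'' identities, via $\Sigma$-equivariance $E_{l_1,\ldots,l_s}\circ\al_\rho=\al_\rho\circ E_{1,\ldots,s}$ to show $E_{l_1,\ldots,l_s}(Z_l)=\al_{l_1,\ldots,l_s}(F_\si)$ is independent of the pair indices, and the subsequent renormalization $n^{-m/2}\cdot\frac{(n-s)!}{(n-s-p)!}=n^{-s/2}(1+O(1/n))$, matches the paper's argument in substance.
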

\begin{proof}
Throughout the proof we endow $L^{\infty}(X) \bar{\ten} D$ with the natural trace $\mu \ten \tau$, where $\tau$ is the faithful trace on $D$. The $\|\cdot\|_2$ in the first statement is the one corresponding to $\mu \ten \tau$. The approach we take is somewhat similar to the one in \cite{JungeZeng}. 
\par {\bf Step 1.} Let $x_1,\ldots, x_m \in BSB$ and $n$ be fixed. Consider 
\[\Omega_n = \lbrace (C_1,\ldots,C_{s+p}):C_1 \sqcup \ldots \sqcup C_{s+p}=\lbrace 1, \ldots, n\rbrace, C_i \neq \emptyset, \forall i \rbrace.\]
Make $\Omega_n$ into a probability space with the normalized counting measure. For every $s+p$-tuple $(l_1,\ldots,l_{s+p})=\dot{0}$, consider the indicator function $\delta_{l_1,\ldots,l_{s+p}}:\Omega_n \to \lbrace 0,1 \rbrace$ which is $1$ if $l_i \in C_i$ for all $1 \leq i \leq s+p$ and $0$ otherwise. According to the proof of Lemma 3.6 in \cite{JungeZeng},
$\mathbb E(\delta_{l_1,\ldots,l_{s+p}})=(s+p)^{-s-p}=C$. Put

\[F(l_1,\ldots,l_{s+p})=\eps_{l_1}\cdots \eps_{l_s}\ten (\pi_{l_{\phi(1)}}(x_1)\cdots \pi_{l_{\phi(m)}}(x_m) -E_{l_1,...,l_s}(\pi_{l_{\phi(1)}}(x_1)\cdots \pi_{l_{\phi(m)}}(x_m)).\]
Then we have
\begin{align*}
& \|\sum_{(l_1,\ldots,l_{s+p})=\dot{0}} F(l_1,\ldots,l_{s+p})\|_2 = C^{-1} \|C\sum_{(l_1,\ldots,l_{s+p})=\dot{0}} F(l_1,\ldots,l_{s+p})\|_2 = \\
& C^{-1}\|\sum_{(l_1,\ldots,l_{s+p})=\dot{0}}C F(l_1,\ldots,l_{s+p})\|_2 = C^{-1} \|\sum_{(l_1,\ldots,l_{s+p})=\dot{0}}\mathbb E(\delta_{l_1,\ldots,l_{s+p}}) F(l_1,\ldots,l_{s+p})\|_2 =\\
& C^{-1}\|\frac{1}{|\Omega_n|}\sum_{(l_1,\ldots,l_{s+p})=\dot{0}} \sum_{(C_1,\ldots,C_{s+p}) \in \Omega_n} \delta_{l_1,\ldots,l_{s+p}}((C_1,\ldots,C_{s+p}))F(l_1,\ldots,l_{s+p})\|_2=\\
& C^{-1}\|\frac{1}{|\Omega_n|} \sum_{(C_1,\ldots,C_{s+p})\in \Omega_n} \sum_{l_1 \in C_1,\ldots,l_{s+p}\in C_{s+p}} F(l_1,\ldots,l_{s+p})\|_2 = C^{-1} \|\mathbb E(G)\|_2 \\
& \leq C \sup_{(C_1,\ldots,C_{s+p})\in \Omega_n}\|G((C_1,\ldots,C_{s+p}))\|_2=C\sup_{(C_1,\ldots,C_{s+p})\in \Omega_n} \|\sum_{l_1 \in C_1,\ldots,l_{s+p}\in C_{s+p}}F(l_1,\ldots,l_{s+p})\|_2,\\
\end{align*}
where we define $G:\Omega_n \to L^{\infty}(X) \bar{\ten} D$ by $G((C_1,\ldots,C_{s+p}))=\sum_{l_1 \in C_1,\ldots,l_{s+p} \in C_{s+p}} F(l_1,\ldots,l_{s+p})$.
\par {\bf Step 2.} It suffices thus to estimate $\|\sum_{l_1 \in C_1,\ldots,l_{s+p}\in C_{s+p}} F(l_1,\ldots,l_{s+p})\|_2$, for a fixed non-degenerate partition $C_1,\ldots,C_{s+p}$ of $\lbrace 1,\ldots,n \rbrace$. Fix such an arbitrary partition. We define the sets $I_l=C_1\cup \cdots C_{s+p-1} \cup(\{1,...,l\}\cap C_{s+p})$ and for $l \in C_{s+p}$
 \begin{align*}
  & d_{l} \lel
 \sum_{l_1\in C_1,...,l_s\in C_s,l_{s+1}\in C_{s+1},...,l_{p-1}\in C_{s+p-1}} \eps_{l_1}\cdots \eps_{l_s} \ten (\pi_{l_{\phi(1)}}(x_1)\cdots \pi_l(x_{k'_p})\cdots \pi_l(x_{k''_p}) \cdots \pi_{l_{\phi(m)}}(x_m)) \\
 & -E_{I_{l-1}}((\pi_{l_{\phi(1)}}(x_1)\cdots \pi_l(x_{k'_p})\cdots \pi_l(x_{k''_p}) \cdots \pi_{l_{\phi(m)}}(x_m))) \pl .
 \end{align*}
Note that $D_l=L^{\infty}(X) \bar{\ten} A_{I_l}$, $l \in C_{s+p}$, form an increasing finite sequence of von Neumann subalgebras of $L^{\infty}(X) \bar{\ten} D$. Now $d_l \in D_l$ and $E_{D_{l-1}}(d_l)=0$, for all $l \in C_{s+p}$. The orthogonality together with the Cauchy-Schwartz inequality yields
\[\|\sum_{l \in C_{s+p}} d_l\|_2 \leq n^{\frac{1}{2}}\sup_{l \in C_{s+p}}\|d_l\|_2.\]
On the other hand, since the products $\eps_{l_1}\cdots \eps_{l_s}$ are mutually orthogonal for different s-tuples $(l_1,\ldots,l_s)$, we see that
\begin{align*}
& \|d_l\|_2 = \|\sum_{l_1\in C_1,...,l_s\in C_s,l_{s+1}\in C_{s+1},...,l_{s+p-1}\in C_{s+p-1}} \eps_{l_1}\cdots \eps_{l_s} \ten \\
 &\quad    (\pi_{l_{\phi(1)}}(x_1)\cdots \pi_l(x_{k'_p})\cdots \pi_l(x_{k''_p}) \cdots \pi_{l_{\phi(m)}}(x_m)) -E_{I_{l-1}}((\pi_{l_{\phi(1)}}(x_1)\cdots \pi_l(x_{k'_p})\cdots \pi_l(x_{k''_p}) \cdots \pi_{l_{\phi(m)}}(x_m)))\|_2 \\
& = \|\sum_{l_1 \in C_1,\ldots,l_s \in C_s} \eps_{l_1}\cdots \eps_{l_s} \ten (\sum_{l_{s+1} \in C_{s+1},\ldots,l_{s+p-1}\in C_{s+p-1}}\\
& \quad (\pi_{l_{\phi(1)}}(x_1)\cdots \pi_l(x_{k'_p})\cdots \pi_l(x_{k''_p}) \cdots \pi_{l_{\phi(m)}}(x_m)) -E_{I_{l-1}}((\pi_{l_{\phi(1)}}(x_1)\cdots \pi_l(x_{k'_p})\cdots \pi_l(x_{k''_p}) \cdots \pi_{l_{\phi(m)}}(x_m)))\|_2 \\
& \leq n^{\frac{s}{2}}\|\sum_{l_{s+1} \in C_{s+1},\ldots,l_{s+p-1}\in C_{s+p-1}} (\pi_{l_{\phi(1)}}(x_1) \cdots \pi_{l_{\phi(m)}}(x_m)) -E_{I_{l-1}}((\pi_{l_{\phi(1)}}(x_1) \cdots \pi_{l_{\phi(m)}}(x_m)))\|_2 \leq \\
& \leq n^{\frac{s}{2}}n^{p-1}\|\pi_{l_{\phi(1)}}(x_1)\cdots \pi_{l_{\phi(m)}}(x_m)\|_{\infty} \leq n^{\frac{m-2}{2}}\|x_1\|_{\infty}\cdots \|x_m\|_{\infty}.
\end{align*}
According to axiom (3) we have
 \[ E_{I_{l-1}}((\pi_{l_{\phi(1)}}(x_1)..\pi_l(x_{k'_p})..\pi_l(x_{k''_p}).. \pi_{l_{\phi(m)}}(x_m))
 \lel E_{C_1,...,C_{s+p-1}}((\pi_{l_{\phi(1)}}(x_1)..\pi_l(x_{k'_p}).. \pi_l(x_{k''_p})..\pi_{l_{\phi(m)}}(x_m)))\]
hence
\begin{align*}
& \|\sum_{l_1 \in C_1,...,l_{s+p}\in C_{s+p}} \eps_{l_1}...\eps_{l_s} \ten (\pi_{l_{\phi(1)}}(x_1)...\pi_{l_{\phi(m)}}(x_m))-E_{C_1 \cup \cdots \cup C_{s+p-1}}(\pi_{l_{\phi(1)}}(x_1)...\pi_{l_{\phi(m)}}(x_m)))\|_2 = \\
& \|\sum_{l_1\in C_1,...,l_{s+p}\in C_{s+p}} \eps_{l_1}\cdots \eps_{l_s} \ten    (\pi_{l_{\phi(1)}}(x_1)\cdots \pi_{l_{\phi(m)}}(x_m)) -E_{I_{l-1}}((\pi_{l_{\phi(1)}}(x_1)\cdots \pi_{l_{\phi(m)}}(x_m)))\|_2 = \|\sum_{l \in C_{s+p}} d_l \|_2\\
& \leq \|x_1\|_{\infty}\cdots \|x_m\|_{\infty}n^{\frac{m-1}{2}}=C'(x_1,\ldots,x_m)n^{\frac{m-1}{2}}.
\end{align*}
Steps 1 and 2 so far imply that
\[ \|\sum_{l_1 \in C_1,...,l_{s+p}\in C_{s+p}} \eps_{l_1}...\eps_{l_s} \ten (\pi_{l_{\phi(1)}}(x_1)...\pi_{l_{\phi(m)}}(x_m))-E_{C_1 \cup \cdots \cup C_{s+p-1}}(\pi_{l_{\phi(1)}}(x_1)...\pi_{l_{\phi(m)}}(x_m)))\|_2 \leq C'n^{\frac{m-1}{2}}.\]
{\bf Step 3.} Now we may proceed inductively. Denote by $y=\pi_{l_{\phi(1)}}(x_1)\cdots \pi_{l_{\phi(m)}}(x_m)$. Then, using axiom (4) and because the conditional expectations commute, we see that
 \begin{align*}
 & y-E_{C_1\cup\cdots  \cup C_{s+p-2}}(y)\lel
    y-E_{C_1\cup \cdots  \cup C_{s+p-1}}(y)+
 E_{C_1\cup \cdots \cup C_{s+p-1}}(y)-E_{C_1\cup \cdots \cup  C_{s+p-2}}(y) \\
 &= y-E_{C_1\cup \cdots  \cup C_{s+p-1}}(y)
 +
  E_{C_1\cup \cdots \cup C_{s+p-1}}(y)-E_{C_1\cup \cdots \cup_{C_{s+p-2}} \cup  C_{s+p}}(
 E_{C_1\cup \cdots \cup  C_{s+p-1}}(y))=\\
 & = y-E_{C_1\cup \cdots  \cup C_{s+p-1}}(y) + E_{C_1\cup \cdots \cup C_{s+p-1}}(y - E_{C_1\cup \cdots \cup C_{s+p-2} \cup C_{s+p}}(y)).
 \end{align*}
Using the previous steps and the fact that the conditional expectations are $\|\cdot\|_2$-contractive, we obtain
\begin{align*}
& \|\sum_{l_1 \in C_1,\ldots,l_{s+p}\in C_{s+p}} \eps_{l_1}\cdots \eps_{l_s} \ten (\pi_{l_{\phi(1)}}(x_1)\cdots \pi_{l_{\phi(m)}}(x_m)-E_{C_1 \cup \cdots \cup C_{s+p-2}}(\pi_{l_{\phi(1)}}(x_1)\cdots \pi_{l_{\phi(m)}}(x_m)))\|_2 \leq \\
& \|\sum_{l_1 \in C_1,\ldots,l_{s+p}\in C_{s+p}} \eps_{l_1}\cdots \eps_{l_s} \ten (\pi_{l_{\phi(1)}}(x_1)\cdots \pi_{l_{\phi(m)}}(x_m)-E_{C_1 \cup \cdots \cup C_{s+p-1}}(\pi_{l_{\phi(1)}}(x_1)\cdots \pi_{l_{\phi(m)}}(x_m)))\|_2 + \\
& + \|(id \ten E_{C_1 \cup \cdots \cup C_{s+p-1}})(\sum_{l_1 \in C_1,\ldots,l_{s+p}\in C_{s+p}} \eps_{l_1}\cdots \eps_{l_s} \ten \\
& (\pi_{l_{\phi(1)}}(x_1)\cdots \pi_{l_{\phi(m)}}(x_m)-E_{C_1 \cup \cdots \cup C_{s+p-2}\cup C_{s+p}}(\pi_{l_{\phi(1)}}(x_1)\cdots \pi_{l_{\phi(m)}}(x_m))))\|_2 \leq \\
& \|\sum_{l_1 \in C_1,\ldots,l_{s+p}\in C_{s+p}} \eps_{l_1}\cdots \eps_{l_s} \ten (\pi_{l_{\phi(1)}}(x_1)\cdots \pi_{l_{\phi(m)}}(x_m)-E_{C_1 \cup \cdots \cup C_{s+p-1}}(\pi_{l_{\phi(1)}}(x_1)\cdots \pi_{l_{\phi(m)}}(x_m)))\|_2 + \\
& \|\sum_{l_1 \in C_1,\ldots,l_{s+p}\in C_{s+p}} \eps_{l_1}\cdots \eps_{l_s} \ten (\pi_{l_{\phi(1)}}(x_1)\cdots \pi_{l_{\phi(m)}}(x_m)-E_{C_1 \cup \cdots \cup C_{s+p-2}\cup C_{s+p}}(\pi_{l_{\phi(1)}}(x_1)\cdots \pi_{l_{\phi(m)}}(x_m)))\|_2 \leq \\
& \leq 2C'n^{\frac{m-1}{2}}.
\end{align*}
After using the triangle inequality $p$ times, we get
\begin{align*}
& \|\sum_{l_1 \in C_1,\ldots,l_{s+p}\in C_{s+p}} \eps_{l_1}\cdots \eps_{l_s} \ten (\pi_{l_{\phi(1)}}(x_1)\cdots \pi_{l_{\phi(m)}}(x_m)-E_{C_1 \cup \cdots \cup C_{s}}(\pi_{l_{\phi(1)}}(x_1)\cdots \pi_{l_{\phi(m)}}(x_m)))\|_2 \leq \\
& \leq pC'n^{\frac{m-1}{2}}=C''n^{\frac{m-1}{2}}.
\end{align*}
Now we claim that
 \[ E_{C_1\cup \cdots \cup C_s}(\pi_{l_{\phi(1)}}(x_1)\cdots \pi_{l_{\phi(m)}}(x_m)) \lel E_{\lbrace l_1,...,l_s \rbrace}(\pi_{l_{\phi(1)}}(x_1)\cdots \pi_{l_{\phi(m)}}(x_m)) \pl .\]
This can be established using axioms 3 and 4. Indeed, since $l_{s+p} \notin C_1 \cup \cdots \cup C_{s+p-1} \supset \lbrace l_1,\ldots,l_{s+p-1}\rbrace$, by applying axiom 3 we see that
\begin{align*}
& E_{\lbrace l_1,\ldots,l_{s+p-1}\rbrace}(\pi_{l_{\phi(1)}}(x_1)\cdots \pi_{l_{\phi(m)}}(x_m))=\pi_{l_{\phi(1)}}(x_1)\cdots E_{\lbrace l_1,\ldots,l_{s+p-1}\rbrace}(\pi_{l_{s+p}}(x_{k'_p})\cdots \pi_{l_{s+p}}(x_{k''_p}))\cdots \pi_{l_{\phi(m)}}(x_m)\\
& =\pi_{l_{\phi(1)}}(x_1)..E_{C_1 \cup \ \cdots \cup C_{s+p-1}}(\pi_{l_{s+p}}(x_{k'_p})\cdots \pi_{l_{s+p}}(x_{k''_p}))..\pi_{l_{\phi(m)}}(x_m)=E_{C_1\cup \cdots \cup C_{s+p-1}}(\pi_{l_{\phi(1)}}(x_1)\cdots \pi_{l_{\phi(m)}}(x_m)),
\end{align*}
and then
\begin{align*}
& E_{C_1\cup \cdots \cup C_s}(\pi_{l_{\phi(1)}}(x_1)\cdots \pi_{l_{\phi(m)}}(x_m))=E_{C_1\cup \cdots \cup C_s}(E_{C_1\cup \cdots \cup C_{s+p-1}}((\pi_{l_{\phi(1)}}(x_1)\cdots \pi_{l_{\phi(m)}}(x_m))))=\\
& E_{C_1\cup \cdots \cup C_s}(E_{l_1,\ldots,l_{s+p-1}}(\pi_{l_{\phi(1)}}(x_1)\cdots \pi_{l_{\phi(m)}}(x_m)))=E_{(C_1\cup \cdots \cup C_s)\cap \lbrace l_1,\ldots,l_{s+p-1} \rbrace}(\pi_{l_{\phi(1)}}(x_1)\cdots \pi_{l_{\phi(m)}}(x_m))=\\
& =E_{\lbrace l_1,\ldots,l_s \rbrace}(\pi_{l_{\phi(1)}}(x_1)\cdots \pi_{l_{\phi(m)}}(x_m)),
\end{align*} 
which proves the claim. Now the claim together with the last inequality entail
\[\|\sum_{l_1 \in C_1,\ldots,l_{s+p}\in C_{s+p}} \eps_{l_1}\cdots \eps_{l_s} \ten (\pi_{l_{\phi(1)}}(x_1)\cdots \pi_{l_{\phi(m)}}(x_m)-E_{\lbrace l_1,\ldots,l_s \rbrace}(\pi_{l_{\phi(1)}}(x_1)\cdots \pi_{l_{\phi(m)}}(x_m)))\|_2 \leq C''n^{\frac{m-1}{2}}.\] 
Step 1 now implies
\[\|\sum_{(l_1,\ldots,l_{s+p})=\dot{0}} \eps_{l_1}\cdots \eps_{l_s} \ten (\pi_{l_{\phi(1)}}(x_1)\cdots \pi_{l_{\phi(m)}}(x_m)-E_{\lbrace l_1,\ldots,l_s \rbrace}(\pi_{l_{\phi(1)}}(x_1)\cdots \pi_{l_{\phi(m)}}(x_m)))\|_2 \leq CC''n^{\frac{m-1}{2}},\] 
which proves the first statement in the lemma. For the second statement, we first note that $E_{\lbrace l_1,\ldots,l_s \rbrace}(\pi_{l_{\phi(1)}}(x_1)\cdots \pi_{l_{\phi(m)}}(x_m))$ only depends on $l_1,\ldots,l_s$, and not on $l_{s+1},\ldots,l_{s+p}$. Indeed, let $(l_1,\ldots,l_s,l'_{s+1},\ldots,l'_{s+p})=\dot{0}$ another $s+p$-tuple with the same first $s$ entries. Take a finite permutation $\si$ such that $\si(l_i)=l_i, i \leq s$ and $\si(l_{s+i})=l'_{s+i},i\leq p$. Then $\al_{\si}$ is the identity on $A_{l_1,\ldots,l_s}$, hence
\begin{align*}
& E_{\lbrace l_1,\ldots,l_s \rbrace}(\pi_{l_{\phi(1)}}(x_1)\cdots \pi_{l_{s+i}}(x_{k'_i}) \cdots \pi_{l_{s+i}}(x_{k''_i}) \cdots \pi_{l_{\phi(m)}}(x_m))=\\
& (E_{\lbrace l_1,\ldots,l_s \rbrace} \circ \al_{\si})(\pi_{l_{\phi(1)}}(x_1)\cdots \pi_{l_{s+i}}(x_{k'_i}) \cdots \pi_{l_{s+i}}(x_{k''_i}) \cdots \pi_{l_{\phi(m)}}(x_m))=\\
& E_{\lbrace l_1,\ldots,l_s \rbrace}(\pi_{l_{\phi(1)}}(x_1)\cdots \pi_{l'_{s+i}}(x_{k'_i}) \cdots \pi_{l'_{s+i}}(x_{k''_i}) \cdots \pi_{l_{\phi(m)}}(x_m)),
\end{align*}
which proves the claim. Now the first statement of the lemma together with an easy counting argument shows that
\begin{align*}
& (n^{-\frac{m}{2}}\sum_{(l_1,\ldots,l_{s+p})=\dot{0}} \eps_{l_1}\cdots \eps_{l_s} \ten \pi_{l_{\phi(1)}}(x_1)\cdots \pi_{l_{\phi(m)}}(x_m))=\\
& = (n^{-\frac{m}{2}}\sum_{(l_1,\ldots,l_{s+p})=\dot{0}} \eps_{l_1}\cdots \eps_{l_s} \ten E_{\lbrace l_1,\ldots,l_s \rbrace}(\pi_{l_{\phi(1)}}(x_1)\cdots \pi_{l_{\phi(m)}}(x_m)))=\\
& (n^{-\frac{s}{2}}\sum_{(l_1,\ldots,l_{s})=\dot{0}} \eps_{l_1}\cdots \eps_{l_s} \ten E_{\lbrace l_1,\ldots,l_s \rbrace}(\pi_{l_{\phi(1)}}(x_1)\cdots \pi_{l_{\phi(m)}}(x_m))).
\end{align*}
Finally, let's note that
\[E_{l_1,\ldots,l_s} \circ \al_{l_1,\ldots,l_{s+p}}=\al_{l_1,\ldots,l_s} \circ E_{1,\ldots,s},\]
which implies 
\begin{align*}
& E_{l_1,\ldots,l_s}(\pi_{l_{\phi(1)}}(x_1) \cdots \pi_{l_{\phi(m)}}(x_m)) = (E_{l_1,\ldots,l_s}\circ \al_{l_1,\ldots,l_{s+p}})(\pi_{\phi(1)}(x_1)\cdots \pi_{\phi(m)}(x_m))= \\
& (\al_{l_1,\ldots,l_s}\circ E_{1,\ldots,s})(\pi_{\phi(1)}(x_1) \cdots \pi_{\phi(m)}(x_m))= \al_{l_1,\ldots,l_s}(F_{\si}(x_1,\ldots,x_m)).
\end{align*}
\qd
\begin{theorem}Let $(\pi_j,B,A,D)$ be a sequence of symmetric independent copies, $x_1,\ldots,x_m \in A$, $\si \in P_{1,2}(m)$ having $s$ singletons and $p$ pairs and $\phi:\lbrace 1,\ldots,m \rbrace \to \lbrace 1,\ldots,s+p \rbrace$ which encodes $\si$. Then
\begin{align*}
& x_{\si}(x_1,h_1,\ldots,x_m,h_m)=(n^{-\frac{m}{2}}\sum_{(j_1,\ldots,j_m)=\si} s(e_{j_1} \ten h_1)\cdots s(e_{j_m} \ten h_m)\ten \pi_{j_1}(x_1)\cdots \pi_{j_m}(x_m))=\\
& f_{\si}(h_1,\ldots,h_m)(n^{-\frac{s}{2}}\sum_{(l_1,\ldots,l_s)=\dot{0}} s(e_{l_1} \ten h_{k_1}) \cdots s(e_{l_s} \ten h_{k_s}) \ten \al_{\lbrace l_1,\ldots,l_s \rbrace}(F_{\si}(x_1,\ldots,x_m)))= \\
& = f_{\si}(h_1,\ldots,h_m)W_{\si}(x_1,h_1,\ldots,x_m,h_m).
\end{align*}
where $F_{\si}(x_1,\ldots,x_m)=E_{\lbrace 1,\ldots,s \rbrace}(\pi_{\phi(1)}(x_1)\cdots \pi_{\phi(m)}(x_m))$, $f_{\si}(h_1,\ldots,h_m)= q^{\rm cr(\si)} \prod_{\{k,l\}\in \si} \lan h_k,h_l\ran $ and $\lbrace k_1,\ldots,k_s \rbrace$ are the singletons of $\si$. The elements
\[W_{\si}(x_1,h_1,\ldots,x_m,h_m)=(n^{-\frac{s}{2}}\sum_{(l_1,\ldots,l_s)=\dot{0}} s(e_{l_1} \ten h_{k_1}) \cdots s(e_{l_s} \ten h_{k_s}) \ten \al_{\lbrace l_1,\ldots,l_s \rbrace}(F_{\si}(x_1,\ldots,x_m)))\]
will be called reduced Wick words.
\end{theorem}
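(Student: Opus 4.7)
The plan is to combine two ingredients: the $q$-Wick combinatorics in $\Gamma_q(\ell^2 \ten H)$, which extracts the pair-partition factor $f_\si$, and the elimination Lemma~\ref{elim}, which replaces the $D$-part by its conditional expectation onto $A_{\{l_1,\ldots,l_s\}}$. The starting point is the identity
\[
x_\si^n \lel n^{-m/2}\sum_{(l_1,\ldots,l_{s+p})=\emptyset} s(e_{l_{\phi(1)}}\ten h_1)\cdots s(e_{l_{\phi(m)}}\ten h_m)\ten \pi_{l_{\phi(1)}}(x_1)\cdots \pi_{l_{\phi(m)}}(x_m),
\]
obtained from the definition of $x_\si^n$ by re-indexing the sum via $\phi$.

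First, for fixed pairwise distinct $(l_1,\ldots,l_{s+p})$, I compute the operator-side product $s(e_{l_{\phi(1)}}\ten h_1)\cdots s(e_{l_{\phi(m)}}\ten h_m)$ by the $q$-Wick formula. Since $\lan e_{l_{\phi(k)}}\ten h_k,\, e_{l_{\phi(k')}}\ten h_{k'}\ran = \delta_{\phi(k),\phi(k')}\lan h_k,h_{k'}\ran$ and the $l_t$'s are distinct, the only surviving inner-product contractions occur exactly across pairs of $\si$. The ``full'' contraction of every pair of $\si$ produces the scalar $f_\si(h_1,\ldots,h_m)$ multiplied by the ordinary product of the singleton factors $s(e_{l_1}\ten h_{k_1})\cdots s(e_{l_s}\ten h_{k_s})$. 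Every ``partial'' contraction (using only a strict subset of the pairs of $\si$) leaves a Wick product in $\Gamma_q(\ell^2\ten H)$ over strictly more than $s$ vectors; running the $L^2$-bookkeeping from the proof of Lemma~\ref{vn1} (and Prop.~4.1 of \cite{JLU}) one sees that once such a term is substituted back into $x_\si^n$ and summed over $(l_1,\ldots,l_{s+p})$, its $n$-prefactor is strictly sub-leading relative to $n^0$, so it vanishes in the $\om$-limit.

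Having isolated $f_\si$, the problem reduces to showing
\[
x_\si \lel f_\si\cdot\Bigl(n^{-m/2}\sum_{(l_1,\ldots,l_{s+p})=\emptyset} s(e_{l_1}\ten h_{k_1})\cdots s(e_{l_s}\ten h_{k_s})\ten \pi_{l_{\phi(1)}}(x_1)\cdots\pi_{l_{\phi(m)}}(x_m)\Bigr)_n.
\]
At this point Lemma~\ref{elim} applies with the Bernoulli weights $\eps_{l_t}$ replaced by the operators $s(e_{l_t}\ten h_{k_t})$: its proof uses only (a) orthogonality of distinct monomials in $L^2$ and (b) commutation with the $D$-factor, and both survive in the $q$-gaussian setting because for pairwise distinct $l$'s the vectors $e_{l_1}\ten h_{k_1}\ten\cdots \ten e_{l_s}\ten h_{k_s}$ sit in orthogonal summands of Fock space. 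The same martingale/axioms~(3)--(4) chain thus yields an $L^2$-estimate of order $n^{(m-1)/2}$ between the $D$-part and $E_{\{l_1,\ldots,l_s\}}(\pi_{l_{\phi(1)}}(x_1)\cdots\pi_{l_{\phi(m)}}(x_m))$, which disappears after dividing by $n^{m/2}$ and passing to the $\om$-limit. The residual sum over $l_{s+1},\ldots,l_{s+p}$ (now absent from the summand) produces a combinatorial factor asymptotic to $n^p$, rebalancing $n^{-m/2}$ into $n^{-s/2}$. Finally, Prop.~3.3 together with axiom~(1) gives $E_{\{l_1,\ldots,l_s\}}(\pi_{l_{\phi(1)}}(x_1)\cdots\pi_{l_{\phi(m)}}(x_m))=\al_{l_1,\ldots,l_s}(F_\si(x_1,\ldots,x_m))$, yielding the stated form of $W_\si$.

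The main obstacle is the $L^2$ control of the partial-contraction terms in the first step: one has to count singletons and pairs carefully so that the number of tuples $n^{s+p}$, the prefactor $n^{-m/2}$, and the $L^2$-size of the residual Wick product combine to a negligible quantity. A secondary, but technically delicate, point is verifying that the martingale argument of Lemma~\ref{elim} really does extend verbatim when $\eps_{l}$ is replaced by the non-commuting operator $s(e_l\ten h_{k_t})$; here the orthogonality of the basis $\{e_j\}$ and of distinct Fock vectors (rather than any commutativity) is what makes the estimates go through.
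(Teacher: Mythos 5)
Your plan and the paper's proof share the same two ingredients — the $q$-Wick extraction of $f_\sigma$ and Lemma~\ref{elim} — but you apply them in the opposite order, and this creates genuine gaps that the paper deliberately avoids.

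The paper does \emph{not} try to run Lemma~\ref{elim} with $q$-gaussian weights. Instead, it keeps the Bernoulli weights $\eps_{l}$ throughout: it forms the auxiliary symmetric copies $\hat\pi_j(s(h)\ten x)=s(e_j\ten h)\ten\pi_j(x)$ on $\hat A=\Gamma_q(H)\bar\ten A$, applies Lemma~\ref{elim} exactly as stated to the Bernoulli-randomized sequence, and only then computes the resulting $\hat F_\sigma = E_{\Gamma_q(\ell^2_s\ten H)\bar\ten A_{1,\ldots,s}}(s(e_{\phi(1)}\ten h_1)\cdots s(e_{\phi(m)}\ten h_m)\ten \pi_{\phi(1)}(x_1)\cdots)$. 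At that point the Wick expansion is taken inside a conditional expectation onto $\Gamma_q(\ell^2_s\ten H)$, where the survival of only $\theta=\sigma$ is immediate: any other $\theta$ leaves a singleton at a position $j$ with $\phi(j)>s$, whose basis vector $e_{\phi(j)}$ lies outside $\ell^2_s$, so the expectation kills it. This is cleaner than your version, where you must argue that partial-contraction terms vanish in $L^2$ \emph{after summing over the $n^{s+p}$ tuples} — a non-trivial counting that you assert but do not carry out. Finally, the paper transfers the Bernoulli identity back to the original generators via the observation $\mu(\eps_{l_1}\cdots\eps_{l_s})=\delta_{s=0}$: the tilded Wick words have the same joint moments as the originals, so Prop.~3.1 produces the needed $^*$-isomorphism.

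The concrete gap in your step that ``the same martingale/axioms (3)--(4) chain'' goes through with $s(e_{l_t}\ten h_{k_t})$ in place of $\eps_{l_t}$: the proof of Lemma~\ref{elim} uses, at the step $\|d_l\|_2\le n^{s/2}\sup\|\cdot\|_2$, the \emph{exact} mutual orthogonality of the products $\eps_{l_1}\cdots\eps_{l_s}$ for distinct $s$-tuples. Your justification — that the Fock vectors $e_{l_1}\ten h_{k_1}\ten\cdots\ten e_{l_s}\ten h_{k_s}$ ``sit in orthogonal summands'' — is false for $q\ne 0$: two tuples with the same underlying set $\{l_1,\ldots,l_s\}$ but different orderings give Wick words whose $q$-inner product is a nonzero sum over permutations weighted by $q^{\mathrm{inv}}$. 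One can replace exact orthogonality by the iterated martingale estimate of Lemma~\ref{apriori}, at the cost of a constant $c(p)^s$; this would salvage the inequality but is a genuine modification, not a verbatim transfer, and you would still need to re-verify the inductive Step~3 of Lemma~\ref{elim}. In short: your route is plausible but needs both the $n$-counting for the partial contractions and a reproved $q$-gaussian version of Lemma~\ref{elim}; the paper's Bernoulli randomization plus moment-matching sidesteps both.
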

\begin{proof}
We will use the previous lemma. Let $\hat{B}=B$, $\hat{A}=\Gamma_q(H)\bar{\ten} A$, $\hat{D}=\Gamma_q(\ell^2 \ten H) \bar{\ten} D$ and $\hat{\pi}_j:\hat{A}\to \Gamma_q(\ell^2 \ten H)\bar{\ten} D$ be the *-homomorphisms given by 
 \[ \hat{\pi}_j(s(h)\ten x) \lel s(e_j\ten h)\ten \pi_j(x) \pl .\] 
Then $(\hat{\pi}_j,B,\hat{A},\hat{D})$ represents a sequence of independent symmetric copies. Moreover, it is easy to see that  $\hat{A}_I = \Gamma_q(\ell^2(I) \ten H)\bar{\ten}A_I$. Now according to the previous lemma we have
\begin{align*}
& (n^{-\frac{m}{2}}\sum_{(j_1,\ldots,j_m)=\si} \eps_{j_{k_1}}\cdots \eps_{j_{k_s}} \ten s(e_{j_1} \ten h_1)\cdots s(e_{j_m} \ten h_m)\ten \pi_{j_1}(x_1)\cdots \pi_{j_m}(x_m))=\\
& (n^{-\frac{m}{2}}\sum_{(l_1,\ldots,l_{s+p})=\dot{0}} \eps_{l_1}\cdots \eps_{l_s} \ten s(e_{l_{\phi(1)}} \ten h_1)\cdots s(e_{l_{\phi(m)}} \ten h_m) \ten \pi_{l_{\phi(1)}}(x_1) \cdots \pi_{l_{\phi(m)}}(x_m)) =\\
& (n^{-\frac{m}{2}}\sum_{(l_1,\ldots,l_{s+p})=\dot{0}} \eps_{l_1}\cdots \eps_{l_s} \ten \hat{\pi}_{l_{\phi(1)}}(s(h_1)\ten x_1)\cdots \hat{\pi}_{l_{\phi(m)}}(s(h_m) \ten x_m))=\\
& (n^{-\frac{s}{2}}\sum_{(l_1,\ldots,l_s)=\dot{0}} \eps_{l_1}\cdots \eps_{l_s} \ten \hat{\al}_{l_1,\ldots,l_s}(\hat{F}_{\si}(s(h_1)\ten x_1,\ldots,s(h_m)\ten x_m)))=\\
& (n^{-\frac{s}{2}}\sum_{(l_1,\ldots,l_s)=\dot{0}} \eps_{l_1}\cdots \eps_{l_s} \ten \hat{\al}_{l_1,\ldots,l_s}(E_{\hat{A}_{1,\ldots,s}}(\hat{\pi}_{\phi(1)}(s(h_1)\ten x_1)\cdots \hat{\pi}_{\phi(m)}(s(h_m) \ten x_m)))= \\
& (n^{-\frac{s}{2}}\sum_{(l_1,\ldots,l_s)=\dot{0}} \eps_{l_1} \cdots \eps_{l_s} \ten \hat{\al}_{l_1,\ldots,l_s}(E_{\Gamma_q(\ell_{s}^2 \ten H) \bar{\ten} A_{1,\ldots,s}} (s(e_{\phi(1)} \ten h_1)\cdots s(e_{\phi(m)}\ten h_m) \ten \pi_{\phi(1)}(x_1) \cdots \pi_{\phi(m)}(x_m))) \\
& = f_{\si}(h_1,\ldots,h_m) (n^{-\frac{s}{2}} \sum_{(l_1,\ldots,l_s)= \dot{0}} \eps_{l_1} \cdots \eps_{l_s} \ten \hat{\al}_{l_1,\ldots,l_s}(s(e_{1} \ten h_{k_1})\cdots s(e_{s} \ten h_{k_s}) \ten E_{1,\ldots,s}(\pi_{\phi(1)}(x_1)\cdots \pi_{\phi(m)}(x_m))) \\
& =f_{\si}(h_1,\ldots,h_m)  (n^{-\frac{s}{2}} \sum_{(l_1,\ldots,l_s)= \dot{0}} \eps_{l_1} \cdots \eps_{l_s} \ten (s(e_{l_1} \ten h_{k_1})\cdots s(e_{l_s} \ten h_{k_s}) \ten \al_{l_1,\ldots,l_s}( E_{1,\ldots,s}(\pi_{\phi(1)}(x_1)\cdots \pi_{\phi(m)}(x_m)))) \\
& =f_{\si}(h_1,\ldots,h_m)(n^{-\frac{s}{2}}\sum_{(l_1,\ldots,l_s)=\dot{0}} \eps_{l_1} \cdots \eps_{l_s} \ten s(e_{l_1}\ten h_{k_1})\cdots s(e_{l_s}\ten h_{k_s}) \ten \al_{l_1,\ldots,l_s}(F_{\si}(x_1,\ldots,x_m))).
\end{align*}
To see why the equality on line 6 is true, note that 
\[s(e_{\phi(1)} \ten h_1)\cdots s(e_{\phi(m)} \ten h_m) =\sum_{\theta \in P_{1,2}(m)} f_{\theta}(e_{\phi(1)} \ten h_1 \ten \cdots \ten e_{\phi(m)} \ten h_m)W((e_{\phi(1)} \ten h_1 \ten \cdots \ten e_{\phi(m)} \ten h_m)_{\theta}),\]
where the notation $()_{\theta}$ means that the pair positions of $\theta$ have been removed. After the application of $E_{\Gamma_q(\ell_{s}^2 \ten H)}$, we see that the only surviving partition is $\theta=\si$ and 
\begin{align*} 
& E_{\Gamma_q(\ell_{s}^2 \ten H)}(s(e_{\phi(1)}\ten h_1)\cdots s(e_{\phi(m)}\ten h_m))=f_{\si}(h_1,\ldots,h_m) W(e_1 \ten h_{k_1} \cdots e_s \ten h_{k_s})= \\ 
& f_{\si}(h_1,\ldots,h_m)s(e_1 \ten h_{k_1})\cdots s(e_s \ten h_{k_s}).
\end{align*}
Now, let's define
\[\tilde s(x,h)=(n^{-\frac{1}{2}} \sum_{j=1}^n \eps_j \ten s(e_j\ten h)\ten \pi_j(x)) \in (L^{\infty}(X) \bar{\ten} \Gamma_q(\ell^2 \ten H) \bar{\ten} D)^{\om}.\] 
We claim that the new Wick words $\tilde x_{\si}$ associated to the variables $\tilde s(x,h)$ have the same moments as $x_{\si}$ and hence they generate an isomorphic von Neumann algebra. Indeed, fix $\si \in P_{1,2}(m)$. Note that for $(l_1,\ldots,l_s)=\dot{0}$, we have $\mu(\eps_{l_1}\cdots \eps_{l_s})=\mu(\eps_{l_1})\cdots \mu(\eps_{l_s})=\delta_{s=0}$, due to the fact that $\eps_j$ are mean-zero, independent random variables. Then
\begin{align*}
& \tau_{\om}(\tilde x_{\si}(x_1,h_1,\ldots,x_m,h_m)) = \\
& \tau_{\om}((n^{-\frac{m}{2}}\sum_{(l_1,\ldots,l_{s+p})=\dot{0}}\eps_{l_1}\cdots \eps_{l_s} \ten s(e_{l_{\phi(1)}} \ten h_1) \cdots s(e_{l_{\phi(m)}}\ten h_m) \ten \pi_{l_{\phi(1)}}(x_1)\cdots \pi_{l_{\phi(m)}}(x_m))= \\
& \lim_n (n^{-\frac{m}{2}}\sum_{(l_1,\ldots,l_{s+p})=\dot{0}}\mu(\eps_{l_1}\cdots \eps_{l_s}) \tau(s(e_{l_{\phi(1)}} \ten h_1) \cdots s(e_{l_{\phi(m)}}\ten h_m)) \tau_D(\pi_{l_{\phi(1)}}(x_1)\cdots \pi_{l_{\phi(m)}}(x_m))= \\
& \delta_{s=0} \lim_n (n^{-\frac{m}{2}}\sum_{(l_1,\ldots,l_{s+p})=\dot{0}} \tau(s(e_{l_{\phi(1)}} \ten h_1) \cdots s(e_{l_{\phi(m)}}\ten h_m)) \tau_D(\pi_{l_{\phi(1)}}(x_1)\cdots \pi_{l_{\phi(m)}}(x_m))= \\
& \delta_{\si \in P_2(m)} \lim_n (n^{-\frac{m}{2}}\sum_{(l_1,\ldots,l_{s+p})=\dot{0}}\tau(s(e_{l_{\phi(1)}} \ten h_1) \cdots s(e_{l_{\phi(m)}}\ten h_m)) \tau_D(\pi_{l_{\phi(1)}}(x_1)\cdots \pi_{l_{\phi(m)}}(x_m))= \\
& \tau_{\om} (x_{\si}(x_1,h_1,\ldots,x_m,h_m)).
\end{align*}
Define $\mathcal M \subset (\Gamma_q(\ell^2 \ten H) \bar{\ten} D)^{\om}$ to be the von Neumann algebra generated by all the Wick words $x_{\si}$. Also define $\tilde{\mathcal M} \subset (L^{\infty}(X) \bar{\ten} \Gamma_q(\ell^2 \ten H) \bar{\ten} D)^{\om}$ to be the von Neumann algebra generated by the elements $\tilde x_{\si}$. Using the claim, the convolution formula and Proposition 3.1 we see that the map
\[\mathcal M \ni \sum x_{\si} \mapsto \sum \tilde x_{\si} \in \tilde{\mathcal M}\]
is a *-isomorphism. Applying the inverse of this isomorphism to the equality
\begin{align*}
& (n^{-\frac{m}{2}}\sum_{(l_1,\ldots,l_{s+p})=\dot{0}} \eps_{l_1} \cdots \eps_{l_s} \ten s(e_{l_{\phi(1)}} \ten h_1)\cdots s(e_{l_{\phi(m)}} \ten h_m) \ten \pi_{l_{\phi(1)}}(x_1) \cdots \pi_{l_{\phi(m)}}(x_m))= \\
& =f_{\si}(h_1,\ldots,h_m)(n^{-\frac{s}{2}}\sum_{(l_1,\ldots,l_s)=\dot{0}}\eps_{l_1} \cdots \eps_{l_s} \ten s(e_{l_1}\ten h_{k_1})\cdots s(e_{l_s}\ten h_{k_s}) \ten \al_{l_1,\ldots,l_s}(F_{\si}(x_1,\ldots,x_m))),
\end{align*}
we obtain the desired identity.
\end{proof}
\begin{prop} Let $x_1,...,x_m\in A$, $h_1,\ldots,h_m \in H$. Then we have the following moment formula
\begin{align*}
 \tau(s(x_1,h_1)\cdots s(x_m,h_m))
 &= \delta_{m\in 2\nz}\sum_{\si\in P_2(m)} q^{\rm cr(\si)} \prod_{\lbrace l,r \rbrace \in \si}\lan h_l,h_r\ran \tau(\pi_{j^{\si}_1}(x_1) \cdots \pi_{j^{\si}_m}(x_m)), \pl
 \end{align*}
 as well as the $B$-valued moment formula
 \[ E_B(s(x_1,h_1)\cdots s(x_m,h_m)) = \delta_{m\in 2\nz}\sum_{\si\in P_2(m)} q^{\rm cr(\si)} \prod_{\lbrace l,r \rbrace \in \si}\lan h_l,h_r\ran E_B(\pi_{j^{\si}_1}(x_1) \cdots \pi_{j^{\si}_m}(x_m)), \pl\]
where for every $\si\in P_2(m)$, the $j_1^{\si},\ldots,j_m^{\si}$ are chosen such that $(j_1^{\si},\ldots,j_m^{\si})=\si$.
\end{prop}
\begin{proof} It's a straightforward application of the reduction formula.
\end{proof}
\begin{rem} Proposition 3.1 shows that $M=\Gamma_q^0(B,S\ten H)$ could be introduced abstractly as the tracial von Neumann algebra $(M,\tau)$ generated by elements $s(x,h), x \in BSB, h \in H$ which satisfy the above moment formula.
\end{rem}
\begin{prop}
Let $K$ be infinite dimensional and $x_1,\ldots,x_m \in BSB$, $h_1,\ldots,h_m \in K$, $\si \in P_{1,2}(m)$.  Then $x_{\si}(x_1,h_1,\ldots,x_m,h_m) \in \Gamma_q^0(B,S\ten K)$. For every Hilbert space $H$, all the Wick words $x_{\si}(x_1,h_1,\ldots,x_m,h_m),x_i \in BSB,h_i \in H$, are in $M=\Gamma_q(B, S\ten H)$. In particular, $M$ is the ultraweakly closed linear span of the (reduced) Wick words and $L^2(M)$ is the $\|\cdot\|_2$-closed span of the (reduced) Wick words.
\end{prop}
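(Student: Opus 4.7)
The plan is to establish the three claims in order, with the bulk of the work concentrated in the first. For the first claim (membership of Wick words in $\Gamma_q^0(B, S \ten K)$ when $K$ is infinite-dimensional), I proceed by induction on $m$. The base cases $m = 0$ and $m = 1$ are immediate: $x_\emptyset = 1 \in B$, and $x_{\{1\}}(x_1, h_1) = s(x_1, h_1) = b_1 s(a, h_1) b_2$ for $x_1 = b_1 a b_2 \in BSB$, which is already a generator of $\Gamma_q^0$.

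For the inductive step I split according to whether $\si$ contains any pair. The all-singletons case is handled by leveraging the infinite-dimensionality of $K$: fix a countable family $\{f_{i,j}\} \subset K$ of mutually orthonormal vectors (indexed by position $i$ and basis index $j$), and expand $h_i = \sum_j c_{ij} f_{ij}$. For any tuple $(j_1, \ldots, j_m)$ the vectors $f_{1,j_1}, \ldots, f_{m,j_m}$ are pairwise orthonormal, so the moment formula (via Proposition 3.12 and Theorem 3.11) forces $x_{\si'}(x_1, f_{1,j_1}, \ldots, x_m, f_{m,j_m}) = 0$ for every $\si'$ that is not all-singletons, because the scalar factor $f_{\si'}$ contains an inner product $(f_{k,j_k}, f_{l,j_l}) = 0$ for each pair $\{k,l\}$. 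Hence $\prod_i s(x_i, f_{i,j_i}) = x_{\text{all sing}}(x_1, f_{1,j_1}, \ldots, x_m, f_{m,j_m}) \in \Gamma_q^0$. Multilinearity of the Wick word in the Hilbert arguments, the uniform $\|\cdot\|_\infty$-bound in terms of $\prod \|x_i\|\|h_i\|$, and $L^2$-convergence of the expansion then put $x_{\text{all sing}}(x_1, h_1, \ldots, x_m, h_m)$ in $\Gamma_q^0$ for arbitrary $h_i$, using that the ambient ultraproduct is tracial so bounded $L^2$-limits of bounded elements of $\Gamma_q^0$ remain there.

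For $\si$ with at least one pair, Theorem 3.11 writes $x_\si = f_\si(h_1, \ldots, h_m) \cdot W_\si$, where the reduced Wick word $W_\si$ has the shape of an all-singletons Wick word of length $s = m - 2p < m$, but with a generalized operator coefficient $F_\si(x_1, \ldots, x_m) \in A_{\{1, \ldots, s\}}$ in place of a simple $\pi_l(y)$ with $y \in BSB$. This is the technical heart and main obstacle of the proof: the coefficient $F_\si$ need not factor through a single copy $\pi_l$. I handle it by first noting that $F_\si$ lies in the $\|\cdot\|_2$-closed linear span of finite products $\pi_{i_1}(a_1) \cdots \pi_{i_r}(a_r)$ with $i_k \in \{1, \ldots, s\}$ and $a_k \in BSB$. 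For each such product, the corresponding generalized reduced Wick word is, up to shorter-length corrections that are already in $\Gamma_q^0$ by the inductive hypothesis and Theorem 3.11 applied to fewer pairs, the ``all-different-indices'' piece of a product of generators $s(a_k, h_{i_k})$, isolated by convolution. An $L^2$-passage to the limit together with a uniform operator-norm control then gives $W_\si \in \Gamma_q^0$, and therefore $x_\si \in \Gamma_q^0$.

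The remaining two claims follow quickly. For an arbitrary Hilbert space $H$, the conditional expectation $E = (E_{\Gamma_q(\ell^2_n \ten H)} \ten \mathrm{id})_n$ from the definition of $M = \Gamma_q(B, S \ten H)$ acts as the identity on any Wick word whose Hilbert vectors lie in $H$, since each summand $s(e_{j} \ten h_i) \ten \pi_j(x_i)$ already lives in $\Gamma_q(\ell^2 \ten H) \bar{\ten} D$; applying the first claim to any infinite-dimensional $K \supset H$ therefore places the Wick word in $M$. Finally, the convolution formula of Proposition 3.9 shows that the linear span of Wick words (and, via Theorem 3.11, of reduced Wick words) is a self-adjoint subalgebra containing the generators $B$ and $\{s(a, h) : a \in S, h \in H\}$ of $M$; consequently, its ultraweak closure is $M$ and its $\|\cdot\|_2$-closure is $L^2(M)$.
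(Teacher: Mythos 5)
The paper's proof (given just below the statement) runs a single clean induction on $s$, the number of singletons in $\si$: the base case $s=0$ is trivial since then $x_\si$ is a scalar multiple of $F_\si\in B$; for the inductive step one replaces the pair entries $h_l,h_r$ by uniformly bounded sequences $\tilde h_l(k),\tilde h_r(k)$ that tend to $0$ weakly but preserve every pair inner product, so that $x_\si$ itself is unchanged while the "unwanted" $\theta$'s not containing $\si_p$ vanish weakly; one then subtracts off the $\theta$'s with strictly more pairs, already in $\Gamma_q^0$ by the induction hypothesis, and takes the weak-$*$ limit. Your proposal takes a genuinely different route (induction on the length $m$, split into the all-singletons and has-pairs cases), and both branches have real gaps.

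In the all-singletons branch you want to write $h_i=\sum_j c_{ij}f_{ij}$ with $\{f_{ij}\}_{i,j}$ a \emph{mutually} orthonormal family, so that for every tuple $(j_1,\ldots,j_m)$ the vectors $f_{1,j_1},\ldots,f_{m,j_m}$ are pairwise orthogonal. But if the $f_{ij}$ are all pairwise orthogonal, then $h_i$ lies in $\overline{\spann}\{f_{ij}:j\}$, and these spans are mutually orthogonal over $i$; hence the $h_i$ would have to be pairwise orthogonal themselves. For generic $h_1,\ldots,h_m$ (say $h_1=h_2\neq 0$) no such family exists, so this step breaks down. (The correct way to exploit infinite-dimensionality, as the paper does, is not to expand the original $h_i$'s but to \emph{replace} the pair entries by weakly null surrogates while keeping their inner products; this changes the unwanted $x_\theta$'s but leaves $x_\si$ itself fixed, thanks to the reduction formula.)

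In the has-pairs branch, the claim that $F_\si$ lies in the $\|\cdot\|_2$-closure of finite products $\pi_{i_1}(a_1)\cdots\pi_{i_r}(a_r)$ with $a_k\in BSB$ is not justified: $BSB$ is merely a self-adjoint subset of $A$ containing $1$, and nothing in the axioms forces it to generate $A$ (or $A_{\{1,\ldots,s\}}$) densely. Even setting that aside, after substituting such a product for $F_\si$ the object $(n^{-s/2}\sum_{(l_1,\ldots,l_s)=\emptyset}s(e_{l_1}\ten h_{k_1})\cdots s(e_{l_s}\ten h_{k_s})\ten \pi_{l_{i_1}}(a_1)\cdots\pi_{l_{i_r}}(a_r))$ is not in any evident way the ``all-different-indices piece'' of a product of generators $s(a_k,h_{i_k})$: there are $s$ gaussian factors but $r$ operator factors, and the indices $i_k\in\{1,\ldots,s\}$ may repeat, so there is no match-up with the convolution formula. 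This is precisely the difficulty the paper's weak-limit argument is designed to avoid: it never attempts to reduce the coefficient $F_\si$ but instead approximates the full Wick word $x_\si$ by bounded elements of $\Gamma_q^0(B,S\ten K)$ in the weak-$*$ topology. Your final two claims (passage from $K$ to arbitrary $H$ via the conditional expectation, and density of the span of Wick words via the convolution formula) are correct and agree with the paper's argument.
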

\begin{proof}
We need a basic fact about infinite dimensional Hilbert spaces.
\par {\bf Fact.} Let $K$ be an infinite dimensional Hilbert space and $\lambda_1,\ldots, \lambda_p \in \cz$. Then there exist norm bounded sequences $\xi_n^k, \eta_n^k \in K$, for $1\leq k \leq p$ such that $\xi_n^k \to 0$, $\eta_n^k \to 0$ weakly and $\lan\xi_n^k,\eta_n^k\ran=\lambda_k$, for all $1\leq k \leq p$, and moreover $\xi_n^k, \eta_n^k \perp \xi_n^j, \eta_n^j$ for $k \neq j$. Indeed, let $(e_n)$ be an orthonormal infinite sequence in $K$. Define $\xi_n^1=\lambda_1 e_n, \eta_n^1=e_n, \xi_n^2=\lambda_2 e_{n+1}, \eta_n^2=e_{n+1}, \ldots, \xi_n^p=\lambda_p e_{n+p-1}, \eta_n^p=e_{n+p-1}$.
\par To prove the proposition we will use induction on $s$, the numbers of singletons in $\si$. For $s=0$, $x_{\si}(x_1,h_1,\ldots,x_m,h_m)\in B$ due to the Wick word reduction formula, so the statement is trivial. For a given $\si$ with pairs $B_1,...,B_p$ and $B=\{l,r\}$ we use the Fact to find uniformly norm bounded vectors $h_{l,B}(k),h_{r,B}(k) \in K$ which converge to $0$ weakly and such that $\lan h_{l,B}(k),h_{r,B}(k)\ran =\lan h_l,h_r\ran$ for all pairs $B=\lbrace l,r \rbrace$, and such that the $h_{l/r,B}(k)$'s are orthogonal for different pairs $B$. Let us define $\tilde{h}_i(k)=h_i$ for any singleton $\lbrace i \rbrace \in \si$ and $\tilde{h}_i(k)=h_{l/r,B}(k)$ if $i\in B$ and $i=l$ or $i=r$. For every other Wick word $x'_{\si'}(y_1,f_1,\ldots,y_{m'},f_{m'})$, with $y_j \in BSB$, $f_j \in K$, we have
\begin{align*}
& \lim_{k \to \infty}\tau(s(x_1,\tilde{h}_1(k)) \cdots s(x_m,\tilde{h}_m(k))x'_{\si'})=\lim_{k \to \infty}\sum_{\theta \in P_{1,2}(m)} \tau(x_{\theta}(x_1,\tilde{h}_1(k),\ldots,x_m,\tilde{h}_m(k))x_{\si'}')= \\
& \tau(x_{\si}x'_{\si'})+\lim_{k \to \infty}\sum_{\theta_p \supset \si_p,|\theta_s|<s} \tau(x_{\theta}(x_1,\tilde{h}_1(k),\ldots,x_m,\tilde{h}_m(k))x_{\si'}').
\end{align*}
Indeed, for every $\theta \in P_{1,2}(m)$ which does not contain all the pairs of $\si$, we use the convolution and the moment formulas to obtain
\begin{align*}
& \tau(x_{\theta}x'_{\si'})= \sum_{\nu \in P_{2}(m+m')} \tau(x_{\nu}(x_1,\tilde{h}_1(k),\ldots,y_{m'},f_{m'})) = \\
& =\sum_{\nu \in P_{2}(m+m')} f_{\nu}(\tilde{h}_1(k),\ldots,\tilde{h}_m(k),f_1,\ldots,f_{m'})\tau(W_{\nu}(x_1,\tilde{h}_1(k),\ldots,y_{m'},f_{m'})),
\end{align*}
the sum being taken over all $\nu$ that preserve the pairs of $\theta$ and $\si'$ and additionally pair all the singletons of $\theta$ and $\si'$. Now since $\theta$ does not contain all the pairs of $\si$ there must be a leg $l$ of a pair $\lbrace l,r \rbrace=B \in \si$ which is connected by $\theta$ to something else than its other leg in $\si$. There are three possibilities:
\begin{enumerate}
\item $\theta$ connects $l$ to a leg $l'$ of another pair $B'=\lbrace l',r' \rbrace \in \si$. Then $\lan\tilde h_l(k),\tilde h_{l'}(k)\ran =0$, hence for every $\nu$ in the sum above we have $f_{\nu}(h_1,\ldots,f_{m'})=0$.
\item $\theta$ connects $l$ to a singleton $\lbrace i \rbrace \in \si$. Then, since $\tilde h_l(k) \to 0$ weakly, we have $\lan \tilde h_l(k),h_i\ran \to 0$, hence for every $\nu$ we also have that $f_{\nu}(h_1,\ldots,f_{m'})\to 0$ as $k \to \infty$.
\item $\lbrace l \rbrace$ is a singleton of $\theta$. In this case, every $\nu \in P_{1,2}(m+m')$ which appears in the sum has to connect $l$ to a singleton $j \in \lbrace 1,\ldots,m' \rbrace$. Thus, $\lan \tilde h_l(k),f_j\ran\to 0$ and again $f_{\nu}(h_1,\ldots,f_{m'})\to 0$ as $k \to \infty$.
\end{enumerate}
Summing up, we see that for every $\theta$ such that $\si_p \nsubseteq \theta_p$, we have $\tau(x_{\theta}(k)x'_{\si'}) \to 0$ as $k \to \infty$.
 Thus, when letting $k \to \infty$, only those $\theta$'s containing the pairs of $\si$ make a non-zero contribution. Among them, there is exactly one which has $s$ singletons, namely $\si$, all the others have more pairs and hence less than $s$ singletons.
We deduce that 
 \[x_{\si}= w-\lim_{k \to \infty}(s(x_1,\tilde{h}_1(k)) \cdots s(x_m,\tilde{h}_m(k))-\sum_{\theta_p \supset \si_p,|\theta_s|<s} x_{\theta}(x_1,\tilde{h}_1(k),\ldots,x_m,\tilde{h}_m(k))).\]
Since by the induction hypothesis all the $x_{\theta}$'s, with $|\theta_s|<s$, are in $\Gamma_q^0(B,S\ten K)$, this proves the statement. For the second statement, let $H$ be any Hilbert space and $K$ an infinite dimensional Hilbert space containing $H$. Let $x_i \in BSB$, $h_i \in H$ and $\si \in P_{1,2}(m)$. Then, by the first part, $x_{\si}(x_1,h_1,\ldots,x_m,h_m) \in \Gamma_q^0(B,S\ten K)$. But $x_{\si}=(E_{\Gamma_q(\ell^2_n \ten H)} \ten id)_n(x_{\si})$, hence $x_{\si} \in \Gamma_q(B,S\ten H)$.
\end{proof}
\begin{rem} The reader can now better appreciate why we needed the ''closure operation" in the definition of $\Gamma_q(B,S\ten H)$. Indeed, Definition 3.4 ensures that the Wick words belong to $M=\Gamma_q(B,S\ten H)$ for every Hilbert space $H$, finite or infinite dimensional. Also, Prop. 3.14 shows that $M=\Gamma_q(B,S\ten H)$ could have been defined as the ultra-weakly closed span of the Wick words.
\end{rem}
In the following we use the notation $L^2_k(M)$ for the $\|\cdot\|_2$-closed span of the Wick words of degree $k$ and $W_k(M)$ for the linear span of the Wick words of degree $k$.
\begin{theorem} Let $(\pi_j,B,A,D)$ be a sequence of symmetric independent copies, $1 \in S=S^* \subset A$, $H$ a Hilbert space and $M=\Gamma_q(B,S \ten H)$. Denote by $\tilde H=H \oplus H$. Take an infinite dimensional Hilbert space $K \supset H$ and denote by $\tilde K = K \oplus K$.
\begin{enumerate}
\item For every angle $\theta$, let $o_{\theta}$ be the canonical rotation on $\tilde K$. Then
\[\theta \mapsto \al_{\theta}=(\Gamma_q(id \ten o_{\theta}) \ten id)_n \in Aut((\Gamma_q(\ell^2 \ten \tilde K) \bar{\ten} D)^{\omega})\]
defines by restriction a one parameter group of automorphisms of $\tilde M=\Gamma_q(B,S\ten \tilde H)$. Moreover, for every Wick word $x_{\si}(x_1,\tilde{h}_1,\ldots,x_m,\tilde{h}_m) \in \tilde M$ we have
\[\al_{\theta}(x_{\si}(x_1,\tilde{h}_1,\ldots,x_m,\tilde{h}_m))=x_{\si}(x_1,o_{\theta}(\tilde{h}_1),\ldots,x_m,o_{\theta}(\tilde{h}_m)).\]
\item For every Wick word $x_{\si}(x_1,h_1,\ldots,x_m,h_m) \in M$, the following formula holds
\[(E_M \circ \al_{\theta})(x_{\si}(x_1,h_1,\ldots,x_m,h_m))=(\cos(\theta))^s x_{\si}(x_1,h_1,\ldots,x_m,h_m),\]
where $E_M:\tilde M \to M$ is the conditional expectation and $s$ is the number of singletons of $\si$.
\item For every $\theta \in [0,\frac{\pi}{2})$, let $t=-$ln$(\cos(\theta))$. Then $t \mapsto T_t = E_M \circ \al_{\theta}|_M$ defines a one parameter semi-group of normal, trace preserving, ucp maps on $M$. Moreover, for every Wick word $x_{\si}\in M$ we have $T_t(x_{\si})= e^{-ts}x_{\si}$, where $s$ is the number of singletons of $\si$. Hence, when viewed as a contraction on $L^2(M)$, we have $T_t=\sum_{s\geq 0} e^{-ts}P_s$, where $P_s$ is the orthogonal projection of $L^2(M)$ on $L^2_s(M)$ and the series is $\|\cdot\|_{\infty}$-convergent, for every $t>0$. In particular, if $L^2_s(M)$ is finitely generated as a right $B$ module for every $s$, then $T_t$ is compact over $B$ for every $t>0$.
\item The generator $N$ of $T_t$ is a positive, self-adjoint, densely defined operator in $L^2(M)=\bigoplus_{k=0}^{\infty} L^2_k(M)$, acting by
\[N(x_{\si}(x_1,h_1,\ldots,x_m,h_m))=kx_{\si}(x_1,h_1,\ldots,x_m,h_m),\]
for every $x_{\si}(x_1,h_1,\ldots,x_m,h_m) \in L^2_k(M)$. The spectrum of N is the set of non-negative integers $\nz$, all of which are eigenvalues. N is called the number operator.
\end{enumerate}
\end{theorem}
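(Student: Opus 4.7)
\medskip
\noindent\textbf{Proof plan.} The whole statement rests on the functoriality of second quantization together with the reduction formula proved in Theorem 3.11, so the plan is to read off each item from the behaviour of Wick words under $o_\theta$.

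First I would verify (1). Since $o_\theta$ is an isometry on $\tilde K$, the map $\mathrm{id}_{\ell^2}\otimes o_\theta$ is an isometry on $\ell^2\otimes\tilde K$, and second quantization gives a trace-preserving $^*$-automorphism $\Gamma_q(\mathrm{id}\otimes o_\theta)$ of $\Gamma_q(\ell^2\otimes\tilde K)$, tensoring with $\mathrm{id}_D$ and passing to the ultrapower gives a trace-preserving automorphism of $(\Gamma_q(\ell^2\otimes\tilde K)\bar\otimes D)^\omega$. On a generator $s_q(a,\tilde h)=(n^{-1/2}\sum_j s_q(e_j\otimes\tilde h)\otimes\pi_j(a))_n$ the automorphism acts by $s_q(a,\tilde h)\mapsto s_q(a,o_\theta\tilde h)$, so it preserves $\Gamma_q^0(B,S\otimes\tilde H)$, and the generator formula for the reduced Wick words in Theorem 3.11 shows that every Wick word transforms exactly by $x_\sigma\mapsto x_\sigma(x_1,o_\theta\tilde h_1,\ldots,x_m,o_\theta\tilde h_m)$, so $\tilde M$ is invariant and (1) follows. (For Wick words with vectors in a finite-dimensional subspace, invariance under the closure operation is automatic.)

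Next I would establish (2) by combining the reduction formula with the multilinearity of $W_\sigma$ in its singleton arguments. For $h_i\in H$, write $o_\theta h_i=(\cos\theta)h_i+(\sin\theta)h_i^{\perp}$ inside $\tilde H=H\oplus H$. Because $o_\theta$ preserves inner products, the scalar prefactor $f_\sigma(h_1,\ldots,h_m)$ is unchanged. Expanding each of the $s$ singleton arguments in the reduced Wick word $W_\sigma$ multilinearly produces $2^s$ terms indexed by $\varepsilon\in\{0,1\}^s$; the term with all entries $0$ contributes $(\cos\theta)^s\,x_\sigma(x_1,h_1,\ldots,x_m,h_m)$, and every other term is a Wick word whose vector data contains at least one vector in $H^\perp\subset\tilde H$. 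The conditional expectation $E_M\colon\tilde M\to M$ is the restriction of $\Gamma_q(\mathrm{id}_{\ell^2}\otimes P_H)\bar\otimes\mathrm{id}_D$ (i.e.\ the second quantization of the orthogonal projection), and the moment formula from Proposition 3.13 shows that any Wick word with at least one $H^\perp$ entry is orthogonal (hence killed by $E_M$) to every Wick word in $M$. Only the all-cosine term survives, giving (2).

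For (3) the semigroup law $T_{t_1}T_{t_2}=T_{t_1+t_2}$ reduces to the identity $e^{-t_1 s}e^{-t_2 s}=e^{-(t_1+t_2)s}$ applied separately on each degree subspace $L^2_s(M)$; normality, unitality, complete positivity and trace-preservation are inherited from $\alpha_\theta$ (automorphism) and $E_M$ (trace-preserving conditional expectation). From $T_t(x_\sigma)=e^{-ts}x_\sigma$ and the fact that $\{L^2_s(M)\}_{s\ge0}$ are mutually orthogonal (use the moment formula again to see that Wick words of different singleton-degree are orthogonal), one gets $T_t=\sum_{s\ge 0}e^{-ts}P_s$ with $\|\sum_{s\ge N}e^{-ts}P_s\|\le e^{-tN}$, hence $\|\cdot\|_\infty$-convergence for $t>0$. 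Under the finite-dimensionality hypothesis, each $P_s$ is the orthogonal projection onto a finitely generated right $B$-submodule of $L^2(M)$, which is easily seen to lie in the compact ideal space of $\langle M,e_B\rangle$ (finite $B$-rank operators are in the Hilbert--Schmidt ideal relative to $B$); the norm-convergent series then lies in the same ideal, establishing the compactness-over-$B$ claim.

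Finally (4) follows from the spectral representation in (3): on the dense domain $\bigoplus_{k\ge 0}L^2_k(M)$ (algebraic direct sum) one computes $\lim_{t\to 0^+}\frac{1}{t}(I-T_t)x_\sigma=k\,x_\sigma$ for $x_\sigma\in L^2_k(M)$, giving $N=\sum_{k\ge 0}k\,P_k$, which is densely defined, positive, self-adjoint, with spectrum $\mathbb N$ and eigenspaces $L^2_k(M)$. The main technical subtlety I expect is item (2): one must rigorously identify the conditional expectation $E_M$ as the second-quantized projection and justify that it annihilates Wick words with vectors outside $H$, which in turn relies on a careful application of Proposition 3.13 together with Proposition 3.14's description of $L^2(M)$ as the $\|\cdot\|_2$-closed span of Wick words; everything else is algebraic manipulation organized by the reduction formula.
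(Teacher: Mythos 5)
Your proof is correct and follows essentially the same strategy as the paper: establish (1) by entry-wise functoriality of second quantization, reduce (2) to the action of $E_M\circ\al_\theta$ on a reduced Wick word via Theorem~3.11, and then read off (3) and (4) by spectral bookkeeping.

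The one place where you take a slightly different path is (2). The paper applies the reduction formula and then observes directly that $E_M$ acts on the singleton Wick tensor $W(e_{l_1}\ten o_\theta(h_{k_1})\cdots e_{l_s}\ten o_\theta(h_{k_s}))$ by second quantization of the orthogonal projection $P_H$, giving $W(e_{l_1}\ten P_H o_\theta(h_{k_1})\cdots)=(\cos\theta)^s W(e_{l_1}\ten h_{k_1}\cdots)$ in one shot. You instead expand each $o_\theta h_{k_i}=(\cos\theta)h_{k_i}+(\sin\theta)h_{k_i}^\perp$ multilinearly into $2^s$ terms and argue that every term with an $H^\perp$ entry is orthogonal to $L^2(M)$ and hence annihilated by $E_M$. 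These are the same computation packaged differently; your version trades the direct functoriality of $\Gamma_q(P_H)$ on Wick tensors for an orthogonality argument, and both rely on identifying $E_M$ with $(E_{\Gamma_q(\ell^2\ten H)}\ten\mathrm{id}_D)$, as you correctly note. Your observation that the prefactor $f_\sigma$ is unchanged because $o_\theta$ is an isometry is also the right point, and the remaining items (3), (4) are handled exactly as in the paper.
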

\begin{proof} The formula $\al_{\theta}(x_{\si}(x_1,\tilde{h}_1,\ldots,x_m,\tilde{h}_m))=x_{\si}(x_1,o_{\theta}(\tilde{h}_1),\ldots,x_m,o_{\theta}(\tilde{h}_m))$, for $x_i \in BSB, \tilde{h}_i \in \tilde H$, is easily checked, due to entry-wise functoriality, and it shows that $\al_{\theta}$ restricts to a one -parameter group of automorphisms on $\tilde M=\Gamma_q(B,S\ten \tilde H)$. This proves (1). Then, using the reduction formula and the functoriality in each entry, we see that
\begin{align*}
& (E_M \circ \al_{\theta})(x_{\si}(x_1,h_1,\ldots,x_m,h_m))= \\
& f_{\si}(h_1,\ldots,h_m)(E_M \circ \al_{\theta})((n^{-\frac{s}{2}}\sum_{(l_1,\ldots,l_s)=\dot{0}} s(e_{l_1}\ten h_{k_1})\cdots s(e_{l_s}\ten h_{k_s}) \ten \al_{l_1,\ldots,l_s}(F_{\si}(x_1,\ldots,x_m)))= \\
& f_{\si}(h_1,\ldots,h_m)(E_M \circ \al_{\theta})((n^{-\frac{s}{2}}\sum_{(l_1,\ldots,l_s)=\dot{0}} W(e_{l_1}\ten h_{k_1}\cdots e_{l_s}\ten h_{k_s}) \ten \al_{l_1,\ldots,l_s}(F_{\si}(x_1,\ldots,x_m)))= \\
& f_{\si}(h_1,\ldots,h_m)((n^{-\frac{s}{2}}\sum_{(l_1,\ldots,l_s)=\dot{0}} W(e_{l_1}\ten P_H \al_{\theta}(h_{k_1})\cdots e_{l_s}\ten P_H \al_{\theta}(h_{k_s})) \ten \al_{l_1,\ldots,l_s}(F_{\si}(x_1,\ldots,x_m)))= \\
& (\cos(\theta))^s f_{\si}(h_1,\ldots,h_m)((n^{-\frac{s}{2}}\sum_{(l_1,\ldots,l_s)=\dot{0}} s(e_{l_1}\ten h_{k_1})\cdots s(e_{l_s}\ten h_{k_s}) \ten \al_{l_1,\ldots,l_s}(F_{\si}(x_1,\ldots,x_m)))= \\
& (\cos(\theta))^s x_{\si}(x_1,h_1,\ldots,x_m,h_m),
\end{align*}
which establishes (2). (3) is straightforward using (2). To obtain (4), we calculate \[\lim_{t \to 0} \frac{1}{t}(T_t(x_{\si})-x_{\si})=\lim_{t \to 0} \frac{e^{-st}-1}{t}x_{\si}=-sx_{\si},\] 
for any Wick word $x_{\si}$ of degree $s$. The rest of the statements are straightforward.
\end{proof}
\begin{rem} Due to (4), we have that for every $x \in M$, the function
\[[0,\frac{\pi}{2}) \ni \theta \mapsto \|\al_{\theta}(x)-x\|_2 \]
is increasing.
\end{rem}
\begin{defi} We denote by $D_k(S) \subset L^2(D)$ the $\|\cdot\|_2$-closed linear span of the expressions
 \[ F_{\si}(x_1,\ldots,x_m)\lel  E_{1,...,k}
 (\pi_{\phi(1)}(x_1)\cdots \pi_{\phi(m)}(x_m)), \]
for all $m \geq 1$, $x_1,\ldots,x_m \in BSB$, $\si \in P_{1,2}(m)$ having $k$ singletons and $\phi$ which encodes $\si$.
 \end{defi}
\begin{lemma}\label{apriori} Let $y(j_1,\ldots,j_k) \in L^p(D)$ be such that $\sup_{j_1,..,j_k} \|y(j_1,...,j_k)\|_p<\infty$ and $h_1,\ldots,h_k \in H$. Then
 \[ \sup_n \|n^{-k/2} \sum_{(l_1,\cdots ,l_k)=\dot{0}} s_{l_1}(h_1)\cdots s_{l_k}(h_k) \ten y(l_1,...,l_k)\|_p <\infty \pl. \]
\end{lemma}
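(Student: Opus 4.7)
The plan is to reduce to even integer exponents and then compute moments via the $q$-Wick formula. Since the ambient algebra $\Gamma_q(\ell^2\otimes H)\bar{\ten}D$ is tracial and finite with normalized trace, the $L^p$ norms are monotone, so it suffices to establish the bound for $p=2m$ with $m$ a positive integer; the general case (and the operator-norm/$L^\infty$ case, should it be needed) follows by interpolation and the fact that $L^{2m}$-bounds with constants growing at most geometrically in $m$ control the operator norm on bounded tuples.

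Writing $W_L=s_{l_1}(h_1)\cdots s_{l_k}(h_k)$ for a $k$-tuple $L=(l_1,\ldots,l_k)$ with distinct entries, and $X_n$ for the expression inside the norm, expand
\[
\|X_n\|_{2m}^{2m} \pl = \pl (\tau\ten\tau_D)\bigl((X_nX_n^*)^m\bigr) \pl = \pl n^{-km}\sum_{L_1,\ldots,L_{2m}} \tau\bigl(W_{L_1}W_{L_2}^*\cdots W_{L_{2m}}^*\bigr)\,\tau_D\bigl(y_{L_1}y_{L_2}^*\cdots y_{L_{2m}}^*\bigr).
\]
By the $q$-Wick moment formula for $\Gamma_q(\ell^2\ten H)$, each factor $\tau(W_{L_1}W_{L_2}^*\cdots W_{L_{2m}}^*)$ is a signed sum over pair partitions $\si\in P_2(2km)$ of the $2km$ positions, weighted by $q^{\rm cr(\si)}$ and by products of inner products of the form $\lan e_{l_i}\ten h_{?},e_{l_j}\ten h_{?}\ran$ (which vanish unless $l_i=l_j$ at paired positions). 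Crucially, the constraint $\lan l_1,\ldots,l_k\ran=\emptyset$ on each $L_i$ means that no pair of $\si$ may lie inside a single $W_{L_i}$-block: every pair must straddle two distinct blocks.

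For each admissible $\si\in P_2(2km)$, the matching identifies $km$ positions with $km$ other positions, so the number of index configurations $(L_1,\ldots,L_{2m})$ compatible with $\si$ is at most $n^{km}$. The accompanying factor of inner products is bounded by $\prod_i\|h_i\|^{2m}$, and the $\tau_D$ factor is bounded via H\"older by $(\sup_{j_1,\ldots,j_k}\|y(j_1,\ldots,j_k)\|_{2m})^{2m}$. Since $|P_2(2km)|$ is a finite constant independent of $n$, summing over $\si$ gives
\[
\|X_n\|_{2m}^{2m}\pl\leq\pl n^{-km}\cdot n^{km}\cdot C(k,m,q,h_1,\ldots,h_k)\cdot\sup_{j_1,\ldots,j_k}\|y(j_1,\ldots,j_k)\|_{2m}^{2m},
\]
which is $O(1)$ uniformly in $n$, as desired.

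The main obstacle is the combinatorial counting in the penultimate step: one must verify rigorously that the interaction between the Wick pairing constraint and the distinct-index constraint really does produce no more than $n^{km}$ admissible tuples per partition. This is essentially the noncommutative central limit argument of Speicher, and mirrors the reasoning used implicitly in Lemma~\ref{vn1} (where contributions of non-pair-singleton partitions are shown to vanish in the CLT scaling); the key observation is that the ``within-block distinctness'' rules out the degenerate pairings, leaving exactly the dimension-counting budget that the $n^{-k/2}$ normalization is designed to absorb.
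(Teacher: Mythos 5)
You take a genuinely different route from the paper: whereas the paper first reduces to a fixed partition $C_1\cup\cdots\cup C_k=\{1,\ldots,n\}$ and then iterates a noncommutative martingale inequality $k$ times to pick up the factor $c(p)^kn^{k/2}$, you expand the $2m$-th moment, invoke the $q$-Wick formula, and do the combinatorial counting directly. For $p=2m$ an even integer your argument is correct: the distinctness constraint inside each block kills all pairings with a pair internal to a block, each admissible $\sigma\in P_2(2km)$ contributes at most $n^{km}$ index tuples, which exactly cancels the $n^{-km}$ normalization, and the finitely many partitions and H\"older on the $\tau_D$ factor give a bound in terms of $\sup\|y\|_{2m}$. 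At $p=2$ (the one case the paper actually uses, in Prop.~\ref{fingen}) the two proofs essentially coincide -- orthogonality of martingale differences is the same computation as the $m=1$ moment expansion.

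However, the opening reduction is a genuine gap. Monotonicity of $L^p$-norms gives $\|X_n\|_p\le\|X_n\|_{2m}$, but your moment bound on $\|X_n\|_{2m}$ is in terms of $\sup_{j}\|y(j_1,\ldots,j_k)\|_{2m}$, whereas the hypothesis only controls $\sup_j\|y(j_1,\ldots,j_k)\|_p$ for the \emph{given} $p$. When $p$ is not an even integer (in particular when $p<2$), the quantity $\sup_j\|y\|_{2m}$ is not available, so the claimed reduction to $p=2m$ does not go through; the moment method proves a strictly weaker statement whose hypothesis requires higher integrability of the $y$'s. The paper's martingale argument works at the level of the given $p$ and avoids this issue entirely. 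The parenthetical claim about the $L^\infty$ case is also incorrect: the constant in the $L^{2m}$ bound involves $|P_2(2km)|^{1/(2m)}\sim (2km/e)^{k/2}$, which grows without bound as $m\to\infty$, so the moment method cannot give uniform control of the operator norm; this is consistent with part (o) of Lemma~\ref{vn1} requiring a separate argument.
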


\begin{proof} It suffices to consider
 \[ \|\sum_{l_1\in C_1,...,l_k\in C_k} s_{l_1}(h_1)\cdots s_{l_k}(h_1)\ten y(l_1,...,l_k) \| \]
with $C_1\cup \cdots\cup C_k=\{1,...,n\}$. Using the martingale decomposition from Lemma \ref{elim} we deduce
 \begin{align*}
 \|\sum_{l_1\in C_1,...,l_k\in C_k} s_{l_1}(h_1)\cdots s_{l_k}(h_k) \ten y(l_1,...,l_k) \|_p
 &\le c(p) \sqrt{n} \sup_{l\in C_k} \|\sum_{l_1,...,l_{k-1}} s_{l_1}(h_1) \cdots s_{l_k}(h_k) \ten y(l_1,...,l_k) \|_p \pl .
  \end{align*}
Iterating this  procedure we get
 \[ \|\sum_{l_1\in C_1,...,l_k\in C_k} s_{l_1}(h_1) \cdots s_{l_k}(h_k)\ten y(l_1,...,l_k)\|_p
  \kl c(p)^k n^{k/2} \sup_{l_1,...,l_k}
  \|s_{l_1}(h_1)\cdots s_{l_k}(h_k)\|_p \|y(l_1,...,l_k)\|_p \pl .\]
Since the products $s_{l_1}(h_1)\cdots s_{l_k}(h_k)$ are uniformly bounded in the $p$-norm, we obtain the assertion.
\qd
\begin{prop}\label{fingen} Let $(\pi_j,B,A,D)$ be a sequence of independent symmetric copies, $H$ a finite dimensional Hilbert space and $1 \in S=S^* \subset A$, and assume that $D_s(S)$ is finitely generated as a right $B$-module. Then $L^2_s(M)$ is finitely generated as a right $B$-module. In particular, when $D_s(S)$ is finitely generated over $B$ for every $s$, the maps $T_t$ are compact over $B$, for every $t>0$.
\end{prop}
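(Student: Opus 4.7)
The plan is to use the reduction formula of Theorem 3.11 to encode every reduced Wick word of degree $s$ as an expression $W(h_1,\ldots,h_s;y)$ parametrized by $h_i\in H$ and $y\in D_s(S)$, then to transfer finite generation from $D_s(S)$ to $L^2_s(M)$, and finally to deduce compactness of $T_t$ from a standard compact-ideal argument in the basic construction. Write
\[
W(h_1,\ldots,h_s;y) = \bigl(n^{-s/2}\sum_{(l_1,\ldots,l_s)=\emptyset} s(e_{l_1}\otimes h_1)\cdots s(e_{l_s}\otimes h_s)\otimes \alpha_{l_1,\ldots,l_s}(y)\bigr)_n;
\]
by Theorem 3.11 every reduced Wick word of degree $s$ has the form $W(h_{k_1},\ldots,h_{k_s};F_\sigma(x_1,\ldots,x_m))$ with $F_\sigma(x_1,\ldots,x_m)\in D_s(S)$, and Proposition 3.14 says that the $\|\cdot\|_2$-closed linear span of such elements is all of $L^2_s(M)$.

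The main technical step is to check that $y\mapsto W(h_1,\ldots,h_s;y)$ is a bounded right $B$-linear map from $D_s(S)\subset L^2(D)$ into $L^2_s(M)$. For this I would apply Lemma 3.17 with $y(l)=\alpha_l(y)$, whose $L^2$-norm equals $\|y\|_2$ for every multi-index because $\alpha$ is trace-preserving; this yields an estimate $\|W(h_1,\ldots,h_s;y)\|_2\leq C\,\|h_1\|\cdots\|h_s\|\,\|y\|_2$. Right $B$-linearity comes from axiom (1), which forces $B$ to be pointwise fixed by every $\alpha_\sigma$, so that $\alpha_l(yb)=\alpha_l(y)\,b$ for all $b\in B$. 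Now pick an orthonormal basis $e_1,\ldots,e_d$ of the finite-dimensional $H$ and generators $y_1,\ldots,y_N$ of the right $B$-module $D_s(S)$. Multilinearity in the $h_i$'s, combined with the continuity just established, shows that the finite family $\{W(e_{i_1},\ldots,e_{i_s};y_j):(i_1,\ldots,i_s)\in\{1,\ldots,d\}^s,\,1\leq j\leq N\}$ generates $L^2_s(M)$ as a right $B$-module.

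For the compactness assertion, assume each $L^2_s(M)$ is finitely generated over $B$. Then the orthogonal projection $P_s$ from Theorem 3.15(3) is the projection onto a finitely generated Hilbert $B$-submodule of $L^2(M)$ and hence, by standard Hilbert $W^*$-module theory (pick a Pimsner--Popa basis $\{\xi_k\}$ for $L^2_s(M)$ and write $P_s=\sum_k\xi_k e_B\xi_k^*$), lies in the compact ideal of $\langle M,e_B\rangle$. Since this ideal is norm-closed and the series $T_t=\sum_{s\geq 0}e^{-ts}P_s$ converges in operator norm for every $t>0$, it follows that $T_t$ is compact over $B$. The only step I expect to be genuinely non-routine is the $L^2$-continuity of $y\mapsto W(h_1,\ldots,h_s;y)$: because $M$ is defined inside an ultraproduct, it is not obvious that an $L^2(D)$-approximation $y\approx\sum_j y_jb_j$ transports to an $L^2(M)$-approximation of the corresponding Wick element, and this is exactly what Lemma 3.17 (together with the trace-preservation of $\alpha$) supplies.
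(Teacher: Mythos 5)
Your proposal is correct and is essentially the paper's own argument: decompose $F_\sigma$ using a finite right $B$-module basis $\{\xi_k\}$ of $D_s(S)$, push this through the reduction formula (using that $\alpha_{l_1,\ldots,l_s}$ fixes $B$ pointwise to get right $B$-linearity), restrict the singleton vectors to a finite basis of $H$, and invoke Lemma \ref{apriori} to get that the finitely many spanning candidates $W(e_{i_1},\ldots,e_{i_s};\xi_k)$ actually land in $L^2_s(M)$. Your Pimsner--Popa formula $P_s=\sum_k\xi_k e_B\xi_k^*$ together with the norm convergence of $\sum_s e^{-ts}P_s$ is exactly what makes the ``in particular'' clause and Theorem 3.15(3) ``straightforward''; the one thing worth flagging is that this step needs the $\xi_k$ to be bounded (not just in $L^2$), which is Remark 3.19 in the paper.
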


\begin{proof} Let $N$ be the dimension of $D_s$ as a right $B$-module, and let $\lbrace \xi_1, \ldots,\xi_N \rbrace$ be a basis of $D_s$ over $B$. Then, for every $\si \in P_{1,2}(m)$ having $s$ singletons, and every $x_1,\ldots,x_m \in BSB$, we can find coefficients $b_k(\si,x_1,\ldots,x_m)\in B$ such that
 \[  F_{\si}(x_1,\ldots,x_m) \lel \sum_{k=1}^{N} \xi_k b_k(\si,x_1,\ldots,x_m). \]
For every $(l_1,\ldots,l_s)=\dot{0}$ we have
 \[\al_{l_1,\ldots,l_s}(F_{\si}(x_1,\ldots,x_m)) \lel \sum_{k=1}^N
 \al_{l_1,\ldots,l_s}(\xi_k)b_k(\si,x_1,\ldots,x_m) \pl .\]
Fix a finite basis $\mathcal B$ of $H$. Then, for every $\si$ having $s$ singletons, every $x_1,\ldots,x_m \in BSB$ and every $h_1,\ldots,h_m \in \mathcal B$ we have, due to the reduction formula
\begin{align*}
 & x_{\si}(x_1,h_1,\ldots,x_m,h_m) = \\
 & \lel (n^{-s/2} \sum_{(l_1,...,l_s) =\dot{0}} s_{l_1}(h_{i_1})\cdots s_{l_s}(h_{i_s}) \ten \al_{l_1,\ldots,l_s}(F_{\si}(x_1,\ldots,x_m)))= \\
& \sum_{k=1}^N (n^{-s/2} \sum_{(l_1,...,l_s) =\dot{0}} s_{l_1}(h_{i_1})\cdots s_{l_s}(h_{i_s}) \ten \al_{l_1,\ldots,l_s}(\xi_k))b_k(\si,x_1,\ldots,x_m).
 \end{align*}
Thus $L^2_s(M)$ is spanned over $B$ by at most $N|\mathcal B|^s=N(dim(H))^s$ elements, namely
\[(n^{-s/2} \sum_{(l_1,...,l_s) =\dot{0}} s_{l_1}(h_{i_1})\cdots s_{l_s}(h_{i_s}) \ten \al_{l_1,\ldots,l_s}(\xi_k)),\]
with $h_i \in \mathcal B$ and $1 \leq k \leq N$. These elements belong to $L^2_s(M)$ by the previous lemma, and this finishes the proof.
\qd
\begin{rem} Since the dimension of $D_s(S)$ over $B$ is finite, the basis elements $\xi_k \in D_s \subset L^2(D)$ could be chosen in fact to be bounded, i.e. $\xi_k \in D$, due to \cite{PaschkeI, PaschkeII}. This implies that $L^2_s(M)$ admits a basis over $B$ consisting of elements in $M$. 
\end{rem}
\begin{cor} Assume moreover that the dimension $N_s$ of $D_s(S)$ over $B$ has sub-exponential growth, i.e. there exist constants $d,C>0$ such that $N_s \leq Cd^s$ for all $s$. Then the dimension of $L^2_s(M)$ over $B$ is less then $C(dim(H)d)^s$ for all $s$, i.e. the dimension of $L^2_s(M)$ over $B$ also has sub-exponential growth. 
\end{cor}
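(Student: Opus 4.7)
The corollary is a direct consequence of the proof of Proposition \ref{fingen}, so very little new work is needed; the plan is essentially to record the bound that proof already establishes and substitute the hypothesized growth rate.

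The plan is as follows. Fix $s \ge 1$ and inspect the spanning set produced in the proof of Proposition \ref{fingen}. That argument starts from an arbitrary finite right $B$-basis $\{\xi_1,\dots,\xi_{N_s}\}$ of $D_s(S)$, expands each $F_\sigma(x_1,\dots,x_m)$ in this basis, and concludes by means of the reduction formula of Theorem 3.11 that every Wick word of degree $s$ lies in the right $B$-span of the explicit family
\[
\Bigl(n^{-s/2}\sum_{\langle l_1,\dots,l_s\rangle=\emptyset} s_{l_1}(h_{i_1})\cdots s_{l_s}(h_{i_s})\otimes \alpha_{l_1,\dots,l_s}(\xi_k)\Bigr),
\]
indexed by $(h_{i_1},\dots,h_{i_s}) \in \mathcal{B}^s$ and $1\le k\le N_s$, where $\mathcal B$ is a fixed orthonormal basis of $H$. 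Since the Wick words of degree $s$ are $\|\cdot\|_2$-dense in $L^2_s(M)$ by Proposition 3.14, this family is a right $B$-spanning set of $L^2_s(M)$, so
\[
\dim_B L^2_s(M)\;\le\; N_s\,(\dim H)^s.
\]

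Inserting the hypothesis $N_s \le C d^s$ then yields
\[
\dim_B L^2_s(M)\;\le\; C d^s (\dim H)^s \;=\; C\bigl(d\cdot\dim(H)\bigr)^s,
\]
which is exactly the claimed sub-exponential bound. There is no serious obstacle; the only point that needs to be carefully invoked is that the elements displayed above genuinely belong to $L^2_s(M)$ (rather than to some larger closure in the ultraproduct), which is precisely what Lemma \ref{apriori} together with Proposition 3.14 guarantees. Once this is in hand, the corollary is immediate.
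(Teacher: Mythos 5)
Your proof is correct and takes essentially the same route the paper intends: the corollary is an immediate consequence of the explicit spanning set of size $N_s(\dim H)^s$ constructed in the proof of Proposition \ref{fingen}, together with Lemma \ref{apriori} and Proposition 3.14 to place those spanning elements inside $L^2_s(M)$. Substituting $N_s\le Cd^s$ gives the stated bound $C(d\cdot\dim H)^s$, exactly as you argue.
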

The following argument is essentially due to Sniady (see \cite{SniadyII}) and Krolak (see \cite{Kro2}). \\
\begin{prop}Let $M=\Gamma_q(B,S\ten H)$. There exists $d=d(q)$ such that for dim$(H)\geq d$ we have $\mathcal Z(M) \subset \mathcal Z(B)$. In particular, $M$ is a factor whenever $B$ is.
\end{prop}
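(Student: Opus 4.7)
The plan is to combine the classical Sniady--Krolak factoriality technique for pure $q$-gaussians with the particle-number decomposition $L^2(M) = \bigoplus_{k \geq 0} L^2_k(M)$ established in Theorem 3.17(4).

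First I would exhibit a distinguished copy of $\Gamma_q(H)$ inside $M$ which commutes with $B$. Since $1 \in S$, the element $s_q(1,h) = (n^{-1/2}\sum_{j=1}^n s_q(e_j \ten h) \ten 1)_n$ lies in $M$ for every $h \in H$, and specializing the moment formula of Proposition 3.13 to $x_1 = \cdots = x_m = 1$ yields exactly the pure $q$-gaussian moment formula. By Proposition 3.1, the subalgebra $\Gamma_q(H)_0 := \{s_q(1,h) : h \in H\}''$ is therefore $*$-isomorphic to $\Gamma_q(H)$. Since the representative sequence of $s_q(1,h)$ is constant in the $D$-factor while $B$ is embedded in that same factor, $\Gamma_q(H)_0$ commutes with $B$. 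Hence any $z \in \mathcal Z(M)$ satisfies $z \in B' \cap M$ together with $[z, s_q(1,h)] = 0$ for every $h \in H$, which is the input data for the Sniady--Krolak method.

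Next, I would decompose $z = \sum_{k \geq 0} z_k$ along $L^2(M) = \bigoplus_k L^2_k(M)$. The reduced Wick word formula of Theorem 3.11 at $s=0$ shows that a degree-zero Wick word equals $f_\sigma(h_1,\ldots,h_m)\,E_B(\pi_{\phi(1)}(x_1)\cdots\pi_{\phi(m)}(x_m)) \in B$, so $L^2_0(M) \subseteq L^2(B)$, and it suffices to prove $z_k = 0$ for every $k \geq 1$ once $\dim(H)$ is large enough. By the convolution formula of Proposition 3.9, left multiplication by $s_q(1,h)$ carries $L^2_k(M)$ into $L^2_{k-1}(M) \oplus L^2_{k+1}(M)$: the creation part adjoins a new singleton (degree $k+1$), while the annihilation part pairs the new index with an existing singleton of $\sigma$ (degree $k-1$). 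Thus $s_q(1,h) = \ell(h) + \ell(h)^\ast$ as operators on $L^2(M)$, with $\|\ell(h)\| \leq \|h\|/\sqrt{1-|q|}$, and equating degree-$k$ components of $[z, s_q(1,h)] = 0$ yields a recursion relating $z_{k-1}$, $z_k$, and $z_{k+1}$.

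The main step, and the principal obstacle, is to upgrade this recursion to a vanishing result via a Sniady--Krolak moment lower bound. Specifically, I would try to establish, for some orthonormal family $h_1,\ldots,h_N \in H$, an estimate of the shape
\[
\sum_{i=1}^N \|[z_k, s_q(1, h_i)]\|_2^2 \;\geq\; c(q)\,\|z_k\|_2^2 \qquad \text{whenever } k \geq 1 \text{ and } N \geq d(q),
\]
which together with $[z, s_q(1,h_i)] = 0$ and an induction on $k$ would force $z_k = 0$ for all $k \geq 1$. In the pure $q$-Fock setting this is the content of the arguments in \cite{SniadyII, Kro2}. The reason I expect it to transfer to our $B$-valued setting with the \emph{same} threshold $d(q)$ is the clean decoupling provided by Theorem 3.11: the operators $\ell(h_i), \ell(h_i)^\ast$ act only on the $q$-Fock singleton slots of reduced Wick words, while the coefficient $F_\sigma \in D_s(S)$ is carried along inertly as a right $B$-module tensor factor via the $\al_{l_1,\ldots,l_s}$-translation. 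Making this decoupling rigorous---in particular verifying $B$-bimodularity of the creation/annihilation commutators and tensoring the pure Sniady--Krolak lower bound over an arbitrary right $B$-module---is where the real technical work lies. Granting it, we obtain $z = z_0 \in B$, and combined with $z \in B' \cap M$ this gives $z \in \mathcal Z(B)$; in particular $M$ is a factor whenever $B$ is.
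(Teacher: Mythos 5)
Your big-picture strategy matches the paper's: exhibit the canonical copy $\{s(1,e_i)\}'' \cong \Gamma_q(H)$ inside $M$ commuting with $B$, use that any $z\in\mathcal Z(M)$ lies in $B'\cap M$ and commutes with each $s(1,e_i)$, and then deploy a Sniady--Krolak-type lower bound to force $z-E_B(z)=0$. You also correctly observe $L^2_0(M)=L^2(B)$ via the reduction formula. However, your proposal has a genuine gap at precisely the point you flag as "where the real technical work lies": you \emph{assume}, without proof, the central estimate
\[
\sum_{i=1}^N \|[z_k,s_q(1,h_i)]\|_2^2 \;\geq\; c(q)\,\|z_k\|_2^2,
\]
arguing only that you "expect it to transfer" from the pure Fock case. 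That estimate \emph{is} the proof. The paper establishes it directly, not degree-by-degree: it introduces the operator $T=\sum_{i=1}^k (L_{s(1,e_i)}-R_{s(1,e_i)})^2$ (so that $T(z)=0$ for $z\in\mathcal Z(M)$), rewrites $T-2k\,\mathrm{id}$ via the mixed term $V=\sum_i L_{s(1,e_i)}R_{s(1,e_i)}$ and the Nou norm estimate $\|\sum_i s(1,e_i)^2-1\|_\infty\leq c_q\sqrt k$, passes to the ultraproduct realization $\iota:L^2(M)\to(\mathcal F_q(\ell^2\ten H)\ten L^2(D))^\om$, and proves four explicit creation/annihilation operator norm bounds (Krolak's estimates adapted to the ultraproduct) to conclude
\[
\|(T-2k\,\mathrm{id})((\mathrm{id}-E_B)\xi)\|\leq\bigl(2qk + C_q\sqrt k\bigr)\|(\mathrm{id}-E_B)\xi\|.
\]
Applied to $z-E_B(z)$ this yields $\bigl(2k(1-q)-C_q\sqrt k\bigr)\|z-E_B(z)\|\leq 0$, giving $z=E_B(z)\in B$ for $k>C_q^2/(2(1-q))^2$.

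A secondary point: your proposed degree-by-degree induction via a three-term recursion $z_{k-1},z_k,z_{k+1}$ is an unnecessary complication and is in fact awkward to close, since $[z,s(1,h)]=0$ couples neighbouring degrees without any obvious decay to initiate the induction. The paper's single uniform estimate on all of $(\mathrm{id}-E_B)L^2(M)$ sidesteps this entirely, and you should do the same. Once you replace the degree-by-degree scheme by the direct estimate and actually carry out the operator norm bounds on the creation/annihilation parts in the ultraproduct, your outline converges to the paper's proof.
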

\begin{proof} Let $\{e_i\}_{1\leq i \leq k}$ be an orthonormal set in $H$. We consider the operator $T:L^2(M) \to L^2(M)$ given by
\[T=\sum_{i=1}^k (L_{s(1,e_i)}-R_{s(1,e_i)})^2.\]
Here $L_x$ and $R_x$, where $x \in M$ are the canonical left and right multiplication operators, respectively, on $L^2(M)$. We see that
\[T-2k{\rm id}=\sum_{i=1}^k (L_{s(1,e_i)^2-1}-R_{s(1,e_i)^2-1})-2\sum_{i=1}^k L_{s(1,e_i)}R_{s(1,e_i)}.\]
Since $s(1,e_i)^2-1$ is a mean zero element, we deduce from \cite{NouI} that
\[\|\sum_{i=1}^k s(1,e_i)^2 -1 \|_{\infty} \leq c_q \sqrt{k}.\]
Let us denote by $V=\sum_{i=1}^k L_{s(1,e_i)}R_{s(1,e_i)}$ and $\iota:L^2(M) \to (\F_q(\ell^2 \ten H) \ten L^2(D))^{\om}$ the natural embedding given by the definition. Then we see that
\[\iota(V\xi)=(V_n\iota(\xi)_n)_n, \xi \in L^2(M),\]
where
\[V_n=\frac{1}{n}\sum_{1\leq i \leq k, 1 \leq j,j' \leq n} L_{s(1,e_i\ten e_j)}R_{s(1,e_i\ten e_{j'})}.\]
Now we can easily modify the argument from \cite{Kro2} to show that
\begin{enumerate}
\item $\|\sum_{k,j,j'} l^+(e_i \ten e_j)R_{s(1,e_i\ten e_{j'})}\| \leq c_q\sqrt{kn^2}$;
\item $\|\sum_{k,j,j'} r^+(e_i \ten e_j)L_{s(1,e_i\ten e_{j'})}\| \leq c_q\sqrt{kn^2}$;
\item $\|\sum_{k,j\neq j'} l^-(e_i \ten e_j)R_{s(1,e_i\ten e_{j'})}\| \leq c_q\sqrt{kn^2}$;
\item $\|\sum_{k,j} l^-(e_i \ten e_j)r^+(e_i \ten e_j)|_{\cz^{\perp}}\| \leq q+c_q\sqrt{kn}$.
\end{enumerate}
Here $l^+,l^-,r^+,r^-$ are the left and right creation operators on the $q$-Fock space coming from the decomposition $L_{s(h)}=l^+(h)+l^-(h)$, $R_{s(h)}=r^+(h)+r^-(h)$. The main estimate is derived from
\[l^-(h)r^+(k)(\xi)=q^{|\xi|}\xi+l^-(h)(\xi)\ten k.\]
The second part can then be estimated via (2). This yields
\[\|(T-2k{\rm id})(({\rm id}-E_B)(\xi))\|\leq 2qk\|({\rm id}-E_B)(\xi)\|+2c_q\sqrt{k}\|({\rm id}-E_B)(\xi)\|.\]
Now take $z \in \mathcal Z(M)$ with $E_B(z)=0$. Thus $T(z)=0$ and also
\[0=\|T(z)\|=\|2kz-(T(z)-2kz)\|\geq 2k\|z\|-2qk\|z\|-C_q\sqrt{k}\|z\|=(2k(1-q)-C_q\sqrt{k})\|z\|.\]
Thus for $2k(1-q)-C_q\sqrt{k}>0$, i.e. $k>\sqrt{\frac{C_q}{2(1-q)}}$, we have that $z=0$. This implies $z=E_B(z)$, for all $z \in \mathcal Z(M)$, hence $\mathcal Z(M) \subset B$ and also $\mathcal Z(M) \subset \mathcal Z(B)$.
\end{proof}
\subsection{H-less generalized q-gaussians}
Finally, let us mention that there is an $H$-less version of the generalized $q$-gaussians, which can be described as follows: let $(\pi_j,B,A,D)$ a sequence of symmetric independent copies. For $1\in S=S^* \subset A$, define the von Neumann algebra $\Gamma_q(B,S) \subset (\Gamma_q(\ell^2) \bar{\ten} D)^{\om}$ as being generated by the elements $s_q(x)=(n^{-\frac{1}{2}}\sum_{j=1}^n s_q(e_j) \ten \pi_j(x))_n$, for $x \in BSB$. This is equivalent to taking $H$ to be 1-dimensional in the Def. 3.4 above, hence the $H$-less $q$-gaussians are a particular case of Def. 3.4. Surprisingly, the $H$ generalized $q$-gaussians can also be obtained as a particular case of this construction. Indeed, let $H$ be a (real) Hilbert space and $(\pi_j,B,A,D)$ a sequence of symmetric independent copies. Let $(X,\mu)$ be a standard probability measure space and define a new sequence of symmetric independent copies $(\tilde \pi_j,\tilde B,\tilde A,\tilde D)$ by taking $\tilde B=B$, $\tilde A=A\bar{\ten} L^{\infty}(X)$, $\tilde D=D\bar{\ten}(\overline{\bigotimes}_1^{\infty} L^{\infty}(X))$ and $\tilde \pi_j:\tilde A \to \tilde D$ by
\[\tilde{\pi}_j(a \ten f)= \pi_j(a) \ten (1\ten 1\ten \cdots \ten \underbrace{f}_{\mbox{\scriptsize{j-th position}}}\ten \cdots \ten 1 \ten \cdots), a \in A, f \in L^{\infty}(X).\]
Using Rademacher variables, we see that there exists a dense subspace $H_0 \subset H$ and an isometric embedding $\iota:H_0 \to L^{\infty}(X) \subset L^2(X)$. Take $\tilde S = S \ten \iota(H_0)=\{a \ten \iota(h): a \in S, h \in H_0\} \subset \tilde A$. The reader can check that
\[\Gamma_q(B,S\ten H)=\Gamma_q(\tilde B, \tilde S).\]

\section{Examples}

We will discuss several type of examples of generalized $q$-gaussian von Neumann algebras. The underlying idea in all these cases is that whenever we have a finite von Neumann algebra on which the symmetric group acts, we can construct a sequence of symmetric copies. In particular, countable tensor or (amalgamated) free products von Neumann algebras or the pure $q$-gaussian von Neumann algebras $\Gamma_q(H)$, for an infinite dimensional $H$, constitute obvious candidates, since the symmetric group acts naturally on them.

\subsection{Tensor products}

Let $B$ and $C$ be finite von Neumann algebras. Define $A=B \bar{\ten} C$ and $D=B \bar{\ten} C^{\nz} = B \bar{\ten} (\overline{\bigotimes}_{\nz} C)$. Define $\pi_j:A \to D$ by the formula
\[ \pi_j(b\ten a)\lel b\ten 1\ten 1\ten \cdots \ten \underbrace{a}_{\mbox{\scriptsize{j-th position}}}\ten \cdots \ten 1 \ten \cdots .\]
Then it's easy to check that $(\pi_j,B,A,D)$ is a sequence of symmetric independent copies. It's likewise easy to see that
\[ \Gamma_q(B,A\otimes H) \lel B \bar{\ten} \Gamma_q(L_{sa}^2(C)\ten H).\]
For any finite subset $S \subset L_{sa}^2(C)\ten H$, the space $D_k(S)$ has finite dimension over $B$.

\subsection{Free products with amalgamation}

Let $B \subset A$ be an inclusion of finite tracial von Neumann algebras. Take $D=\ast_B A_j$ the amalgamated free product of a countable number of copies $A_j, j \in \nz$ of $A$. Define $\pi_j:A \to D$ by the formula
\[ \pi_j(a)\lel 1\ast 1\ast \cdots \ast \underbrace{a}_{\mbox{\scriptsize{j-th position}}} \ast \cdots \ast 1 \ast \cdots .\]
Then $(\pi_j,B,A,D)$ represents a sequence of independent symmetric copies. To see why this is true it suffices to consider elements $a_i$ such that $E_B(a_i)=0$. Then we have to calculate
 \[ \tau_{\si}(a_1,...,a_m)\lel \tau(\pi_{j_1}(a_1)\cdots \pi_{j_m}(a_m))  \]
such that $(j_1,...,j_m)=\si$. If $\si$ has no crossings, we can inductively replace neighboring pairs by $E_B(\pi_{j_i}(a_i)\pi_{j_i}(a_{i+1}))=E_B(a_ia_{i+1})$ and finally find an element in $B$. For a non-crossing pair partition we can also join all the pairs, but then we find an expression of the form
 \[ \tau(b_1\pi_{j_{i_1}}(a_{i_1})b_2\pi_{j_{i_2}}(a_{i_2})\cdots \pi_{j_{i_k}}(a_{i_k})) \lel 0 \pl .\]
Thus in the moment formula we only have to expand over non-crossing pair partitions. Now take $S=\{1,u,u^*\}$, for $u\in A$ a Haar unitary such that $E_B(u^n)=0$, for all $n\neq 0$. It's easy to see that $D_k(S)$ is the closed linear span of all the expressions $b_1\pi_1(u^{\epsilon_1})b_2\ldots\pi_k(u^{\epsilon_k})b_{k+1}$, with $b_i\in B$ and $\epsilon_i\in\{0,1,\ast\}$. In particular, when $B=\cz$ or is finite dimensional, we have dim$_B(D_k(S))\leq C 2^{2k}$.

\subsection{Group actions}

\subsubsection{Second quantization}
Let $G \curvearrowright_{\al} C$ be a trace preserving action of the discrete group $G$ on the finite von Neumann algebra $C$. Also let $\nu:G \to \mathcal O(H_{\rz})$ be an orthogonal representation of $G$ on a real Hilbert space $H_{\rz}$. Let $(\Om,\mu)$ be the gaussian construction associated to $\nu$ (see e.g. \cite{PetersonSinclair}) . We also denote the corresponding action $G \curvearrowright L^{\infty}(\Om)$ by $\nu$. Then define $B=C \rtimes_{\al} G$, $A=(C \bar{\ten} L^{\infty}(\Om))\rtimes_{\rho} G$, $D=(C \bar{\ten} L^{\infty}(\Om^{\nz}))\rtimes_{\rho} G$ where the action $\rho$ is given by $\rho_g(d\ten f)=\al_g(d) \ten \nu_{g}(f)$. Define the *-homomorphisms $\pi_j:A \to D$ by
 \[ \pi_j((d\ten f)u_g) \lel (d \ten 1\ten 1\ten \cdots \ten \underbrace{f}_{\mbox{\scriptsize{j-th position}}} \ten \cdots \ten 1 \ten \cdots)u_g.\]
Then it is easy to see that the fixpoint algebra is $C \rtimes_{\al} G$. Again the moments only depend on the inner product. Moreover, the gaussian functor yields a map ${\rm Br}:H \to L^2(\Om)$. Then we find
 \[ M\lel \Gamma_q(C\rtimes G,{\rm Br}(H)) \lel (C \bar{\ten} \Gamma_q(H))\rtimes G \pl .\]
The spaces $D_k(S)$ are finite dimensional modules over $B=C\rtimes G$ if $L^2_k(H)\rtimes G$ has a finite basis over $G$. For $k=1$ this means that $H$ is finite dimensional. In a forthcoming paper we will also analyze the case of profinite actions and / or representations, i.e. when $H$ can be written as $H=\overline{\bigcup_i H_i}$ such that every $H_i$ is a finite dimensional $G$-invariant Hilbert subspace. However, discrete subgroups of $\mathcal O_n=\mathcal O(\rz^n)$ provide a large class of non-trivial, non-amenable  examples. The examples in \cite{JLU} are subalgebras of $M$.

\subsubsection{Symmetric group action}

Throughout this subsection $\Si$ will denote the group of finite permutations on $\nz$. Let us consider a countable discrete group $G$ on which $\Si$ acts by automorphisms. Examples for such a symmetric action are given by the natural action of $\Si$ on the free group with countably many generators, or by the natural action of $\Si$ on the direct product groups $\prod_{\nen}G$. More generally, let $R\subset \mathbb{F}_{\infty}$ be a set of generators which is invariant under the action of $\Si$, and assume that $\langle R \rangle \subset \mathbb{F}_{\infty}$ is a normal subgroup. Then $G=\mathbb{F}_{\infty}/\langle R \rangle$ is a group on which $\Si$ acts. A perfect example is given by an amalgamated free product $\ast_HG_j$ where $G_j=G$. To make things more concrete, we may consider the discrete Heisenberg group $\mathcal{H}=\langle \zz,\zz^{\infty} \rangle$ with generators $\{ g_k \}_{k \geq 0}$ such that $\zz = \langle g_0 \rangle$, $\zz^{\infty} = \langle g_k, k \geq 1 \rangle$ and the following relations hold
  \[ g_k^{-1}g_jg_k=g_0g_j, \quad k\neq j  \pl .\]
Then $\Si$ acts on $\mathcal H$ by permuting the generators $g_k$ for $k\gl 1$, and leaving $g_0$ fixed.
Now we assume that such a  $G$, with action $\Si \curvearrowright_{\beta} G$,  acts trace-preservingly on a finite von Neumann algebra $A$ and $B$ is the fixed points algebra of this action $\al$. Let $g \in G$ be an arbitrary element and $g_j=\beta_{(1j)}(g)$. We can then construct a sequence of symmetric copies $(\pi_j,B,A,D)$ by defining $\pi_j:A\to A$, via $\pi_j(x)=\al_{g_j}(x)$. Working in the crossed product $(A \rtimes_{\al} G)\rtimes_{\beta}\Si$ it is easy to see that the $\pi_j$'s are symmetric copies, and that $B$  is the fixed points algebra for these symmetric copies. In fact we  may and will always assume that $G$ is generated by the $g_j$'s and then $\pi_j(x)=x$ for all $j$ is exactly the fixed points algebra of the action.  In general $\pi_j(A)=A$ and hence we find an example of symmetric, but not necessarily independent copies. In general independent copies are obtained from considering a suitable subalgebra $B \subset A_1\subset A$. More generally for a subset $S\subset A$ we may however consider the algebras
  \[ A_j(S) \lel \{\pi_j(x)|x\in S, j\in A\}  \pl. \]
This is particularly interesting for a single selfadjoint $x$. Then independence depends on the mixing properties of the sequence $\pi_j(x)$, and has to be analyzed on a case by case basis. A more specific example can be constructed starting from a trace preserving action $\al$ of $\zz$ on a finite von Neumann algebra $N$. Take $D=N \rtimes_{\beta} \mathcal{H}$ where the action $\beta$ is obtained by lifting the action of $\zz$ via the group homomorphism $\pi:\mathcal H \to \zz$ given by $\pi(g_0)=0$ and $\pi(g_j)=1$ for $j \geq 1$. In other words,
\[\beta_g(x)=\al_{\pi(g)}(x), g \in \mathcal H, x \in N.\]
Let $\mathcal{H}_1$ be the group generated by $g_0$ and $g_1$ and take $B=N \rtimes \zz=N \bar{\ten} L(\zz)$ and $A=N\rtimes \mathcal{H}_1$. Define $\pi_j:A \to D$ by 
\[\pi_j(x u_{g_1})=\al_{\pi(g_j)}(x) u_{g_j}, \pi_j(x u_{g_0})=x u_{g_0}, x \in N, j,k \in \nz.\]
Then $(\pi_j,B,A,D)$ is a sequence of symmetric independent copies. In full generality the dimensions of the spaces $D_k(S)$ or $L^2_k(M)$, where $M=\Gamma_q(B,A \ten H)$, cannot be controlled. If we restrict ourselves to a small set of generators, e.g. $S=\{1,g_1,g_1^{-1}\}$, then we get a more well-behaved example. The space $D_k(S)$ is the closed linear span of the expressions of the form
 \[ \pi_{j_1}(u_{g_1})\cdots \pi_{j_k}(u_{g_k}) u_{g_0}^{l(\si)} \al_{n(\si)}(x).\]
Thus $\dim_B(D_k(S))\le (2\dim(H))^{2k}$. For more general group actions and $S \subset L(G)$, we find coefficients in $B=L([G,G]) \bar{\ten} N$ and finite dimension over $B$ as long as we have finite generating sets. Note however, that $L([G,G])$ is in general not invariant under the action of $\Si$, and hence a more detailed case by case analysis is required. Again a particularly nice class of examples comes from one step nilpotent groups with commutators in the center, such as the Heisenberg groups.

\subsection{Colored Brownian motion}

\subsubsection{Top up $q$-gaussians}

Let $H$ be a Hilbert space and $q_0\in [-1,1]$. Symmetric independent copies can be obtained from second quantization, or simply by defining $\pi(s_{q_0}(h))=s_{q_0}(e_j\ten h)$.  This provides symmetric copies of $A=\Gamma_{q_0}(H)$ into $D=\Gamma_q(\ell_2(H))$. By looking at Wick words it is easy to see that the fixpoint algebra is $\cz$. Moreover, independence follows from the moment formula for $q_0$-gaussian random variables. Let $S=\{x_1,\ldots,x_p\}$ be a finite, selfadjoint  subset, where $x_i=s_{q_0}(h_1(i))\cdots s_{q_0}(h_{l(i)}(i))$, for $1\leq i \leq p$. Then we see that
 \begin{align*}
 \tau(s_q(k_1,x_1)\cdots s_q(k_m,x_m))
 &= \sum_{\si \in P_2(m)} q^{{\rm cr}(\si)} f_{\si}(k_1,...,k_m)
 \tau(\pi_{j_1^\si}(x_1)\cdots \pi_{j_m^\si}(x_m)) \pl,
 \end{align*}
where for every $\si$, we choose an $m$-tuple $(j_1^\si,..,j_m^\si)$ depending on $\si$ such that $(j_1^\si,\ldots,j_m^\si)=\si$. Now we may use the formula for $q_0$-gaussians and find  for $L=\sum_{i=1}^p l(i)$ that
 \[ \tau(\pi_{j_1^\si} (x_1)\cdots \pi_{j_m^\si} (x_m))
 \lel \sum_{\si'\in P_2(L),\si'\leq\phi(\si) }q_0^{{\rm cr}(\si')}
  f_{\si'}(h_1(1),....,h_{l(1)}(1),....,h_{1}(m),...,h_{l(m)}(m)) \]
Here $\phi(\si)$ is the block partition which gives the same color to the union of two blocks in $\si$ connected via pairs in $\si'$. This means
 \[  \tau(s_q(k_1,x_1)\cdots s_q(k_m,x_m))\lel
  \sum_{\si \in P_2(m),\si'\le \phi(\si)}
  q^{{\rm cr}(\si')}q_0^{{\rm cr}(\si')}
  f_\si (k_1,...,k_m)f_{\si'} (h_1,...,h_{L}) \]
where $\si'$ runs over the partitions of $\{1,\ldots,L\}$ and $\{h_1,\ldots,h_L\}$ is a re-labeling of $\{h_j(i)\mid 1\leq i\leq p, 1\leq j\leq l(i)\}$. Note that $\Gamma_{q}(\cz,\Gamma_{q_0}(H) \otimes K)$ contains both $\Gamma_{q}(K)$ and $\Gamma_{q_0}(H)$ if $s_{q_0}(H)\subset S$.
Using a decomposition into minimal links, we deduce that the space $D_k(S)$ is the closed linear span of the elements
  \[ c(\si,x_1,...,x_r) \pi_{j_{i_1}}(x_{i_1})\cdots \pi_{j_{i_r}}(x_{i_r}) \pl, \]
where $c(\si,x_1,...,x_r)$ is a scalar. This means for a finite set $S$ of generators, the dimension of $D_k(S)$ over $B=\cz$ is less than $(|S|\dim(K))^{2k}$. One could call these algebras ``mixed'' gaussian algebras, but the reader should not mistake them for the mixed $Q$-gaussian algebras (introduced in \cite{JungeZeng}) which we use in 6.3.

\subsubsection{Actions of $\Si$ by conjugation}
Let us consider the finite permutations group $\Si_{\zz}$ acting on $\zz$ instead of $\nz \setminus \{0\}$. For every subset $F \subset \zz$ we can identify $\Si_F$, the permutations group on $F$, with a subgroup of $\Si_{\zz}$ by viewing the elements of $\Si_F$ as acting non-trivially only on $F$ and acting as the identity on $\zz \setminus F$. For convenience, we use interval notation for the subsets of $\zz$. In particular we have $\Si =\Si_{[1,\infty)} \subset \Si_{\zz}$ in this way. Let $\Si$ act on $\Si_{\zz}$ by conjugation. This gives rise to an action $\al$ of $\Si$ on the von Neumann algebra $L(\Si_{\zz})$ (which is in fact isomorphic to the hyperfinite factor). We denote the canonical unitaries generating $L(\Si_{\zz})$ by $u_{\si}, \si \in \Si_{\zz}$. The fixed points algebra of this action is $B=L(\Si_{(-\infty,0]})$. Take $A=L(\Si_{(-\infty,1]})=B \vee \{u_{(01)}\}''$, $D=L(\Si_{\zz})$ and define $\pi_j:A \to D$ by $\pi_j(a)=\al_{(j1)}(a)$ for $a \in A$ and $j \geq 2$, where $(j1)$ is the transposition interchanging $j$ and $1$, and $\pi_1=id$. Then $(\pi_j,B,A,D)$ is a sequence of symmetric independent copies. Indeed, we recall that $A$ is generated by transpositions $(k1)$, $k\le 0$ and that for $j \geq 2$ we have
 \[ (j1)(k1)(j1) \lel (kj) \pl .\]
This means $A_j=B \vee \{u_{(0j)}\}''$ and $A_{1,\ldots,j}=L(\Si_{(-\infty,j]})$. In particular, we have a coset representation $\si=\si'(j1)$ with $\si'\in \Si_{(-\infty,0]}$. The algebras $A_I$ are generated by $\Si_{I}$, $\Si_{(-\infty,0]}$ and one generator $(j1)$ for $j\in I$. This easily implies independence. We take $S =\{1, u_{(01)} \} \subset A$ and define $M=\Gamma_q(B,S\otimes H)$. Fix $\si \in P_{12}(m)$ having $k$ singletons and $p$ pairs, take $\phi:\{1,\ldots,m\}\to \{1,\ldots,k+p\}$ which encodes $\si$. This means $\phi(j_t)=t$, where $\{j_t\}, 1\leq t \leq k$ are the singletons of $\si$, and $\phi(j_t')=\phi(j_t'')=k+t$, where $\{j_t',j_t''\}, 1\leq t \leq p$ are the pairs of $\si$. Then $D_k(S)$ is the closed span of elements of the form
 \begin{align*}
 & E_{1,\ldots,k}(u_{(\phi(1)0)}u_{\gamma_1}u_{(\phi(2)0)}u_{\gamma_2}\cdots u_{\gamma_{m}}u_{(\phi(m)0)}u_{\gamma_{m+1}}) \\
 & =E_{1,\ldots,k}(u_{(\phi(1)0)}{\rm ad}(u_{\gamma_1})(u_{(\phi(2)0)})...
 {\rm ad}(u_{\gamma_1...\gamma_{m}}) (u_{(\phi(m)0)})u_{\gamma_1 \cdots \gamma_{m+1}})\\
 &= E_{1,\ldots,k}(u_{(\phi(1)\gamma_1(0))} u_{(\phi(2)\gamma_1\gamma_2(0))} \cdots u_{(\phi(m)(\gamma_1 \cdots \gamma_{m})(0))}u_{\gamma_1 \cdots \gamma_{m+1}}) \\
 & = E_{1,\ldots,k}(u_{(\phi(1)s_1)} u_{(\phi(2)s_2)} \cdots u_{(\phi(m)s_m)}) u_{\gamma_1 \cdots \gamma_{m+1}},
\end{align*}
where $\gamma_1,\ldots, \gamma_{m+1} \in \Si_{(-\infty,0]}$ are arbitrary. Here $s_1=\gamma_1(0), s_2=\gamma_1\gamma_2(0), \ldots, s_m=\gamma_1\gamma_2 \cdots \gamma_m(0)$ in $(-\infty,0]$ depend only the $\gamma_i$'s. In full generality the modules $D_k(S)$ do not have finite dimensions over $B$. If we however replace $B$ by $B_d=L(\Si_{[-d,0]}) \cong L(\mathbb{S}_{d+1})$, $A$ by $A_d=L(\Si_{[-d,1]})=B_d \vee \{u_{(01)}\}''$ and $D$ by $D_d=L(\Si_{[-d, \infty)})$ for a fixed $d \in \nz \setminus \{0\}$, then we obtain a new sequence of symmetric independent copies 
$(\pi_j,B_d,A_d,D_d)$ and in this case we have at most $(d+1)^k$ different choices for the $s_j$'s. After repeated conjugation with the unitaries on the pair positions, the above expression becomes
\[u_{(s_{j_1}'1)} \cdots u_{(s_{j_k}'k)}E_{1,\ldots,k}(u_{(s_{j_{k+1}}'k+1)}\cdots u_{(s_{j_{k+p}}'k+p)}),\] 
for some new indices $s_i' \in (-\infty,0] \cap \zz$ which in general depend on the $\gamma_i$'s and $\si$. Since for an inclusions of groups $H \subset G$ and $g \in G$ we have $E_{L(H)}(u_g)=\delta_{g \in H}u_g$, and the product $(s_{j_{k+1}}'k+1)\cdots (s_{j_{k+p}}'k+p)$ belongs to $\Si_{(-d,k]}$ only if it's equal to $1$, we see that a spanning set of $D_k(S)$ over $B_d$ is given by the elements
\[u_{(s_{j_1}'1)}u_{(s_{j_2}'2)} \cdots u_{(s_{j_k}'k)},\]
for all choices of $-d \leq s_i' \leq 0, 1 \leq i \leq k$, which in particular implies that the dimension of $D_k(S)$ over $B_d$ is at most $(d+1)^{2k}$. Note that $B_d$ and $A_d$ are finite dimensional von Neumann algebras. Thus, for the von Neumann algebras $M(d)=\Gamma_q(B_d,S \otimes H)$, the spaces $D_k(S)$ have sub-exponential growth of their dimensions over $B_d$. This remains true for any finite subset $1 \in S=S^* \subset A_d$.

\subsection{Operator-valued gaussians}

This example is motivated by Shlyahktenko's A-valued semicircular algebras and derived from the tensor product construction. Let $x_k\in N$ be selfadjoint operators and $X=\sum_k g_k x_k$. We consider $A_1=L^{\infty}(\rz)$ and  the independent symmetric copies over $N$ given by
 \[ \pi_j(f) \lel f(\sum_{k} g_{k,j}x_k) \pl ,\]
where  $g_{k,j}$ are i.i.d. gaussians (we could also work with $q$-gaussians). The copies are independent over $N$. Let $D$ be the von Neumann algebra generated by the $\pi_j(f)$'s and $B$ be the tail algebra
 \[  B\lel \bigcap_{m\geq 0} \bigvee_{j\geq m} \pi_j(L^{\infty}(\rz)).\]
One can show that the copies $\pi_j$ are independent symmetric in the sense of our definition 3.2. Note that $N$ is invariant under the shift from tensor product construction and hence $B\subset N$. Thus $M=\Gamma_q(B,S \otimes H)$ is a legitimate example where $S=\sum_k g_kx_k$  is obtained by approximating $X$ with bounded functions. Since $X\in \bigcap_{1\leq p<\infty} L^p(\rz)$ one can actually directly work with one generator $x$. The dimension of the $L_{k}^2(M)$ over $B$ is in general hard to determine. The case of $N=M_m(\cz)$ and $X \lel \sum_{r,s} g_{rs} (\frac{e_{rs}+e_{sr}}{2})$ has been considered by Avsec and Speicher.

\begin{rem} The examples in 41., 4.2, 4.4.1, 4.4.2 for $d=0$ and 4.4.3 are all factors if $B$ is a factor and dim$(H)\geq d(q)$. 
\end{rem}
\section{Weak amenability produces approximately invariant states}
Let $(\pi,B,A,D)$ a sequence of symmetric independent copies, $1 \in S=S^* \subset A$ and assume that $D_s(S)$ is finitely generated over $B$ for all $s \geq 1$. Let $M=\Gamma_q(B,S \ten H)$ for a finite dimensional space $H$, $\mathcal A \subset M$ be a von Neumann subalgebra which is amenable relative to $B$ inside $M$, and let $P=\mathcal{N}_M(\mathcal A)''$. Define $\mathcal{M}=(\Gamma_q(\ell^2 \otimes H)\bar{\otimes}D)\vee M \subset (\Gamma_q(\ell^2 \otimes H)\bar{\otimes}D)^{\omega}$, where $ \Gamma_q(\ell^2 \otimes H)\bar{\otimes}D$ is embedded as constant sequences. Let
\[\mathcal H \subset ((L^2(\mathcal M) \otimes_{\mathcal A} L^2(P))\ten \mathcal F_q(\ell^2 \otimes H))^{\omega}\]
 be the $\|\cdot\|$-closed span of the sequences
 \[(n^{-\frac{m}{2}}\sum_{(j_1,\ldots,j_m)=\si} (\pi_{j_1}(x_1)\cdots \pi_{j_m}(x_m)y \ten_{\mathcal A} z)\ten s(e_{j_1}\ten h_1)\cdots s(e_{j_m}\ten h_m)),\]
for all $m \geq 1$, $\si \in P_{1,2}(m)$, $x_i \in BSB$, $y \in M, z \in P$ and $h_1,\ldots,h_m \in H$.
Define two *-representations $\pi:M \to B(\mathcal H)$, $\theta:P^{op} \to B(\mathcal H)$ by
\begin{align*}
& \pi(x_{\si'}) (n^{-\frac{m}{2}}\sum_{(j_1,\ldots,j_m)=\si} (\pi_{j_1}(x_1)\cdots \pi_{j_m}(x_m)y \ten_{\mathcal A} z)\ten s(e_{j_1}\ten h_1)\cdots s(e_{j_m}\ten h_m))= \\
& (n^{-\frac{m+m'}{2}}\sum_{(i_k)=\si',(j_l)=\si} (\pi_{i_1}(y_1)\cdots \pi_{j_m}(x_m)y \ten_{\mathcal A} z) \ten s(e_{i_1}\ten k_1) \cdots s(e_{j_m}\ten h_m))
\end{align*}
and
\begin{align*}
& \theta(w^{op})(n^{-\frac{m}{2}}\sum_{(j_1,\ldots,j_m)=\si} (\pi_{j_1}(x_1)\cdots \pi_{j_m}(x_m)y \ten_{\mathcal A} z) \ten s(e_{j_1}\ten h_1)\cdots s(e_{j_m}\ten h_m))=\\
& (n^{-\frac{m}{2}}\sum_{(j_1,\ldots,j_m)=\si} (\pi_{j_1}(x_1)\cdots \pi_{j_m}(x_m)y \ten_{\mathcal A} zw) \ten s(e_{j_1}\ten h_1)\cdots s(e_{j_m}\ten h_m))
\end{align*}
where $x_{\si'}=(n^{-\frac{m'}{2}}\sum_{(i_1,\ldots,i_{m'})=\si'}\pi_{i_1}(y_1)\cdots \pi_{i_{m'}}(y_{m'})\ten s(e_{i_1} \ten k_1)\cdots s(e_{i_{m'}}\ten k_{m'}))\in M$ is a Wick word in $M$ and $w \in P$.
 Define $\mathcal N = \pi(M) \vee \theta(P^{op}) \subset B(\mathcal H)$. Note that $\pi(M)$ and $\theta(P^{op})$ commute.

\begin{theorem}There exists a sequence of normal states $\omega_n \in \mathcal{N}_*$ satisfying the following properties
\begin{enumerate}
\item $\omega_n(\pi(x)) \to \tau(x), x \in M$.
\item $\omega_n(\pi(a)\theta(\bar{a})) \to 1, a \in \mathcal{U}(\mathcal A)$.
\item $\|\omega_n \circ Ad(\pi(u)\theta(\bar{u})) - \omega_n\| \to 0, u \in \mathcal{N}_M(\mathcal A)$.
\end{enumerate}
\end{theorem}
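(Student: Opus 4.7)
The plan is to combine two ingredients: the weak amenability of the pure $q$-gaussian algebras $\Gamma_q(\ell^2 \otimes H)$ with Cowling--Haagerup constant $1$ (due to Avsec) and the relative amenability of $\mathcal{A}$ over $B$ inside $M$. The overall architecture mimics the construction of relatively weakly compact embeddings in Popa--Vaes, but because $M$ sits inside an ultraproduct rather than being a crossed product, the cb estimates have to be pushed through the Wick word machinery of Section 3.

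First I would use Avsec's weak amenability to fix a sequence of finitely supported normal unital completely bounded maps $\phi_k : \Gamma_q(\ell^2 \otimes H) \to \Gamma_q(\ell^2 \otimes H)$, with $\|\phi_k\|_{\mathrm{cb}} \to 1$ and $\phi_k \to \mathrm{id}$ pointwise in $\|\cdot\|_2$, of radial type on Wick words (multiplying each Wick word of degree $s$ by a scalar $c_k(s)$ with $c_k(s) \to 1$ as $k \to \infty$ for every fixed $s$). Tensoring with $\mathrm{id}_D$ and invoking the bimodularity of the projection $E$ from Definition 3.4, these extend to normal cb multipliers $\tilde\Phi_k : M \to M$ which on every reduced Wick word $W_\si$ with $|\si_s|=s$ act by the scalar $c_k(s)$; the reduction formula from Theorem 3.11 guarantees that such a scalar prescription defines a well-behaved map on all of $M$.

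Second, the relative amenability of $\mathcal{A}$ over $B$ gives, via Definition 2.6 and Remark 2.7, an $\mathcal{A}$-central state $\Omega$ on $B(L^2(M)) \cap (B^{\mathrm{op}})'$ with $\Omega|_M = \tau$. I would build the desired states $\omega_n$ on $\mathcal{N}$ by implementing each $\tilde\Phi_k$ as a Hilbert--Schmidt-like vector $\xi_k \in \mathcal{H}$ (via the Stinespring dilation naturally available for cb maps), and then defining $\omega_k$ as the composition of $\Omega$ with the vector functional $T \mapsto \langle T\xi_k, \xi_k \rangle$, suitably identified inside the bimodule structure of $\mathcal{H}$. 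Property (1) then follows from $c_k(s) \to 1$ on every degree together with $\Omega|_M = \tau$, while property (2) combines this same convergence with the $\mathcal{A}$-centrality of $\Omega$ applied to $\pi(a)\theta(\bar a)$ for $a \in \mathcal{U}(\mathcal{A})$.

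The normalizer-invariance property (3) is the subtle point, and I expect it to be the main obstacle. For $u \in \mathcal{N}_M(\mathcal{A})$, conjugation by $\pi(u)\theta(\bar u)$ preserves the $\mathcal{A}$-bimodule structure underlying $\Omega$, so the $\Omega$-part of $\omega_k$ is fixed. What remains is to control the commutator of $\mathrm{Ad}(\pi(u)\theta(\bar u))$ with $\xi_k$; because $\tilde\Phi_k$ acts centrally on each Wick word degree, this commutator is governed by $\|\tilde\Phi_k\|_{\mathrm{cb}} - 1 = o(1)$ in the relevant cb norm. Verifying that the lifted multipliers $\tilde\Phi_k$ really have asymptotically unit cb norm on $M$ cannot be done by a naive tensorial argument, because the reduced Wick words couple $\Gamma_q$ with $D$ through the copies $\pi_j$ and the cocycle $\al_{l_1,\ldots,l_s}$. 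One has to factor the maps through a Haagerup tensor product and estimate each factor separately, which is precisely the complete-boundedness analysis referred to in the introduction and forms the technical second half of Section 5.
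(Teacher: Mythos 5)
Your raw ingredients are the right ones: Avsec's weakly amenable approximating multipliers on $\Gamma_q$, extended to multipliers acting by a scalar $f_n(s)$ on degree-$s$ Wick words, combined with the relative amenability of $\mathcal{A}$ over $B$. This is indeed the architecture of Section 5. But there are two genuine gaps.

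First, your construction of the states $\omega_k$ is not precise enough to be checked, and it differs structurally from what is actually needed. You say you would implement $\tilde\Phi_k$ as a ``Hilbert--Schmidt-like vector $\xi_k \in \mathcal H$ via Stinespring'' and then compose $\Omega$ with $T \mapsto \langle T\xi_k,\xi_k\rangle$, but $\Omega$ lives on $B(L^2(M)) \cap (B^{\mathrm{op}})'$ while the target algebra is $\mathcal N = \pi(M) \vee \theta(P^{\mathrm{op}}) \subset B(\mathcal H)$; there is no obvious identification under which this composition defines a normal state on $\mathcal N$. The paper gets around this by building an intermediate $M$--$M$ bimodule $\mathcal K \subset (L^2(\mathcal M) \otimes_D L^2(\mathcal M))^\omega$ and a normal ucp map $\mathcal E:\mathcal N \to \mathcal S_{\mathcal A} = \lambda(M) \vee \rho(\mathcal A^{\mathrm{op}}) \subset B(\mathcal K)$, implemented by an explicit isometry $V:\mathcal K \to \mathcal H$; the functionals $\mu_n^{\mathcal A}$ are then written explicitly, $\mu_n^{\mathcal A}(T) = \sum_\alpha f_n(Y_\alpha)\langle T\iota(1\otimes_B 1),\iota(Y_\alpha^*\otimes_B Y_\alpha^*)\rangle$, using a $B$-basis $(Y_\alpha)$ of $L^2(M)$ adapted to the multipliers and the embedding $L^2(M)\otimes_B L^2(M)\hookrightarrow \mathcal K$ of Lemma 5.2, which is precisely where the finite-dimensionality of $D_s(S)$ over $B$ enters. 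None of this is implicit in the Stinespring-dilation sketch.

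Second, and more importantly, your treatment of property (3) is not just subtle but actually broken as stated. You argue that because $u \in \mathcal N_M(\mathcal A)$ normalizes $\mathcal A$, conjugation by $\pi(u)\theta(\bar u)$ ``preserves the $\mathcal A$-bimodule structure underlying $\Omega$, so the $\Omega$-part of $\omega_k$ is fixed.'' This does not follow: an $\mathcal A$-central state is certainly not automatically invariant under conjugation by unitaries normalizing $\mathcal A$ (any more than a $\mathcal Z(\mathcal A)$-invariant functional would be). The correct mechanism, which the paper uses, is to apply the $\limsup\|\mu_n^Q\| = 1$ estimate to the \emph{enlarged} algebra $Q = \langle\mathcal A, u\rangle \subset P$, which is still amenable over $B$. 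Then from $\mu_n^Q(1)\to 1$, $\mu_n^Q(\lambda(u)\rho(\bar u))\to 1$, and $\limsup\|\mu_n^Q\|\le 1$, one gets the asymptotic $u$-invariance of $\mu_n^Q$ by a soft functional-analytic argument, and restriction to $\mathcal S_{\mathcal A}$ gives the conclusion for $\mu_n^{\mathcal A}$. Without the enlargement $\mathcal A \rightsquigarrow \langle\mathcal A,u\rangle$ there is no way to derive (3) from (2), and this is the essential step your sketch is missing.
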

\begin{proof} Throughout the proof $m_n$ will be the completely contractive finite rank multipliers on $\Gamma_q(\ell^2 \otimes H)$ given by multiplication with a positive finitely supported function $f_n$ constructed by Avsec in \cite{Avsec} and $\varphi_n:=(m_n \otimes id):M \to M$ the corresponding cb map on $M$. Take
\[\mathcal K \subset (L^2(\mathcal M) \otimes_{D} L^2(\mathcal M))^{\omega}\]
to be the $\|\cdot\|$-closed span of the sequences
\[(n^{-\frac{m}{2}}\sum_{(j_1,\ldots,j_m)=\si} (\pi_{j_1}(x_1)\cdots \pi_{j_m}(x_m) \ten s(e_{j_1}\ten h_1) \cdots s(e_{j_m}\ten h_m)) \ten_D y))=(x_{\si}^n \ten_D y),\]
where $x_i \in BSB$ and $y \in M$.
Note that $\mathcal K$ is naturally an $M-M$ bimodule with the actions
\begin{align*}
& x_{\si'}\cdot (n^{-\frac{m}{2}}\sum_{(j_1,\ldots,j_m)=\si} (\pi_{j_1}(x_1)\cdots \pi_{j_m}(x_m) \ten s(e_{j_1}\ten h_1) \cdots s(e_{j_m}\ten h_m)) \ten_D y)) \cdot z = \\
& (n^{-\frac{m+m'}{2}}\sum_{(i_k)=\si',(j_l)=\si} (\pi_{i_1}(y_1)\cdots \pi_{j_m}(x_m) \ten s(e_{i_1}\ten k_1) \cdots s(e_{j_m}\ten h_m))\ten_D yz),
\end{align*}
where $x_{\si'}=x_{\si'}(y_1,k_1,\ldots,y_{m'},k_{m'})\in M$ and $z \in M$.
Denote by $\mathcal S_{\mathcal A} = \lambda(M)\vee \rho(\mathcal A^{op}) \subset B(\mathcal K)$, where $\lambda $ and $\rho$ are the representations of $M$ and $M^{op}$ canonically associated to the left and right actions on $\mathcal K$, respectively.
\par {\bf Step 1.} There exists a normal, unital, completely positive map $\mathcal E:\mathcal N \to \mathcal S_{\mathcal A}$ such that
\[\mathcal E (\pi(x)\theta(y^{op})) = \lambda(x)\rho(E_{\mathcal A}(y)^{op}), x \in M, y \in P.\]
Indeed, define an isometry $V: \mathcal K \to \mathcal H$ by
\begin{align*}
&(n^{-\frac{m}{2}}\sum_{(j_1,\ldots,j_m)=\si} (\pi_{j_1}(x_1)\cdots \pi_{j_m}(x_m) \ten s(e_{j_1}\ten h_1)\cdots s(e_{j_m}\ten h_m))\ten_D y) \mapsto \\
 & (n^{-\frac{m}{2}}\sum_{(j_1,\ldots,j_m)=\si} (\pi_{j_1}(x_1)\cdots \pi_{j_m}(x_m) y\ten_{\mathcal A} 1) \ten s(e_{j_1}\ten h_1)\cdots s(e_{j_m}\ten h_m)).
\end{align*}
Then $\mathcal E$ can be defined by $\mathcal E(z)=V^*zV, z \in \mathcal N$.
\par {\bf Step 2.} There exist normal functionals $\mu_n^{\mathcal A} : \mathcal S_{\mathcal A} \to \mathbb{C}$ such that
\[\mu_n^{\mathcal A}(\lambda(x)\rho(a^{op}))=\tau(\varphi_n(x)a), x \in M, a \in \mathcal A.\]
We need two lemmas. Recall the formulas for Wick words and reduced Wick words introduced in Thm. 3.11.
\begin{lemma}\label{12} $L^2(M) \otimes_B L^2(M)$ embeds as an $M-M$ bimodule into $\mathcal K$.
\end{lemma}
\begin{proof} The map
\begin{align*}
& L^2(M) \ten_B L^2(M) \ni (n^{-\frac{m}{2}}\sum_{(j_1,\ldots,j_m)=\si} \pi_{j_1}(x_1)\cdots \pi_{j_m}(x_m)\ten s(e_{j_1}\ten h_1) \cdots s(e_{j_m}\ten h_m))\ten_B y \mapsto \\
& (n^{-\frac{m}{2}}\sum_{(j_1,\ldots,j_m)=\si} (\pi_{j_1}(x_1)\cdots \pi_{j_m}(x_m)\ten s(e_{j_1}\ten h_1) \cdots s(e_{j_m}\ten h_m))\ten_D y) \in \mathcal K,
\end{align*}
or in other words $(x_{\si}^n)\ten_B y \mapsto (x_{\si}^n \ten_D y)$, is an $M-M$ bimodular isometry. The bimodularity is obvious, so it remains to check that it preserves inner products, in other words that
\[\lan (x_n)\ten_B y,(x_n')\ten_B y'\ran=\lan (x_n \ten_D y),(x_n' \ten_D y')\ran.\]
Let's denote by $E_D:\mathcal M \to D$  and by $E_{D \ten 1}: \Gamma_q(\ell^2 \ten H)\bar{\ten}D\to D \ten 1$ the canonical conditional expectations. Since $D= D \ten 1 \subset \mathcal M \subset (\Gamma_q(\ell^2 \ten H) \bar{\ten} D)^{\om}$ is embedded as constant sequences, for every $(x_n) \in \mathcal M$ we have
\[E_D((x_n))=w-\lim_{n \to \om} E_{D \ten 1}(x_n).\]
We now claim that for any $(x_n)\in M \subset \mathcal M$ we have $E_B((x_n))=E_D((x_n))$. It suffices to prove this for $(x_n)=W_{\si} \in M$ a reduced Wick word. Let $s$ be the number of singletons in $\si$. Let
\[W_{\si}=(n^{-\frac{s}{2}}\sum_{(l_1,\ldots,l_s)=\dot{0}} \al_{l_1,\ldots,l_s}(F_{\si}(x_1,\ldots,x_m))\ten  s(e_{l_1}\ten h_{k_1})\cdots s(e_{l_s}\ten h_{k_s})).\]
We have two possibilities. If $s=0$, then $W_{\si}=F_{\si}(x_1,\ldots,x_m)=E_B(\pi_{\phi(1)}(x_1)\cdots \pi_{\phi(m)}(x_m))\in B$, hence $E_D(W_{\si})=W_{\si}=E_B(W_{\si})$. If $s >0$, then $E_B(W_{\si})=0$. On the other hand, according to our previous remark, we have
\begin{align*}
& E_D(W_{\si})=w-\lim_n E_{D \ten 1}(n^{-\frac{s}{2}}\sum_{(l_1,\ldots,l_s)=\dot{0}} \al_{l_1,\ldots,l_s}(F_{\si}(x_1,\ldots,x_m)) \ten s(e_{l_1}\ten h_{k_1})\cdots s(e_{l_s}\ten h_{k_s})) \\
&= w-\lim_n n^{-\frac{s}{2}}\sum_{(l_1,\ldots,l_s)=\dot{0}} \tau(s(e_{l_1}\ten h_{k_1}) \cdots s(e_{l_s}\ten h_{k_s})) \al_{l_1,\ldots,l_s}(F_{\si}(x_1,\ldots,x_m))\\
&= w-\lim_n n^{-\frac{s}{2}}\sum_{(l_1,\ldots,l_s)=\dot{0}} \tau(W(e_{l_1}\ten h_{k_1} \cdots e_{l_s}\ten h_{k_s})) \al_{l_1,\ldots,l_s}(F_{\si}(x_1,\ldots,x_m))=0.
\end{align*}
This proves our claim. Now, for $(x_n),(x'_n),y,y' \in M$ we have
\begin{align*}
& \lan (x_n) \ten_B y,(x'_n)\ten_B y' \ran = \tau_M(E_B((x_n'^*x_n))yy'^*)= \tau_{\mathcal M}(E_D((x_n'^*x_n))yy'^*)=\\
& \lim_n \tau_{\mathcal M}(E_{1 \ten D}(x_n'^*x_n)yy'^*) = \lim_n \lan x_n \ten_D y,x'_n \ten_D y' \ran = \lan (x_n \ten_D y), (x'_n \ten_D y') \ran,
\end{align*}
which finishes the proof of the lemma.
\end{proof}
\begin{lemma} There exists an orthonormal basis $Y_{\alpha}$ of $L^2(M)$ over $B$ such that for every $n$, $f_n(Y_{\alpha})=0$ for all but finitely many $\alpha$'s, where we denote somewhat abusively $f_n(Y_{\al})=f_n(s)$, $s$= the degree of $Y_{\al}$.
\end{lemma}
\begin{proof} Since $D_s$ is finitely generated over $B$ for all $s$, according to Proposition 3.20, for every $s \geq 0$ we can find a finite orthonormal basis $(Y_{\beta}^s)$ of $L^2_s(M)$ over $B$. The union $(Y_{\al})$ of all the $Y_{\beta}^s$'s is a basis of $L^2(M)$ over $B$. For a fixed $n$, there exists $s=s(n)$ such that $f_n(\xi)=0$ for all $\xi \in H^{\ten k}$, for $k>s(n)$. For any $t\geq 0$ and $Y_{\alpha}\in L_t^2(M)$ we have $f_n(Y_{\alpha})=f_n(t)$ and also for every      $Y_{\al} \in \bigoplus_{k>s(n)} L^2_k(M)$ we have $f_n(Y_{\al})=0$, both due to the reduction formula. On the other hand, the set of those $Y_{\al} \in \bigoplus_{k=0}^{s(n)} L^2_k(M)$ is finite, which finishes the proof.
\end{proof}
Denote by $\iota$ the $M$-bimodular embedding in Lemma 5.2 and define
\[\mu_n^{\mathcal A}(T)=\sum_{\alpha} f_n(Y_{\alpha}) \langle T \iota(1 \otimes_B 1), \iota(Y_{\alpha}^* \otimes_B Y_{\alpha}^*)\rangle, T \in \mathcal S_{\mathcal A}.\]
Then $\mu_n^{\mathcal A} \in (\mathcal S_{\mathcal A})_*$ satisfies all the required properties.
\par {\bf Step 3.} Set $\gamma_n = \mu_n^{\mathcal A} \circ \mathcal E \in \mathcal N_*$, and $\omega_n = \|\gamma_n\|^{-1} |\gamma_n|$. We will prove that the $\omega_n$'s satisfy all the required properties. First note that, by construction,
\[\gamma_n(\pi(x)\theta(y^{op}))=\tau(\varphi_n(x)E_{\mathcal A}(y)), x \in M, y \in P.\]
Toward proving the required properties of the $\omega_n$'s, we will first establish the following two claims: \\
{\bf Claim 1.} $\limsup_n \|\mu_n^{\mathcal A}\|=1$;\\
{\bf Claim 2.} $\lim_n \|\mu_n^{\mathcal A} \circ Ad(\lambda(u)\rho(\bar{u}))-\mu_n^{\mathcal A}\|=0, u \in \mathcal{N}_M(\mathcal A)$.\\
\emph{proof of the first claim}. Fix a von Neumann subalgebra $Q \subset P$ which is amenable over $B$. Just as in Step 2 above one can construct normal functionals $\mu_n^Q$ on $\mathcal S_Q=\lambda(M) \vee \rho(Q^{op}) \subset B(\mathcal K)$ satisfying $\mu_n^Q(\lambda(x)\rho(y^{op}))=\tau(\varphi_n(x)y)$, for $ x \in M, y \in Q$. We will show that $\limsup \|\mu_n^Q\|=1$, and this will help us establish both claims. Since $\mu_n^Q$ is normal, it suffices to estimate its norm on an ultraweakly dense C$^*$-subalgebra of $\mathcal S_Q$. Denote by $S_Q$ the ultraweakly dense C$^*$-subalgebra of $\mathcal S_Q$ generated by $\lambda(x_{\si})$, for $x_{\si} \in M$ the Wick words and $\rho(Q^{op})$. First we note that there exist cb maps $\tilde \varphi_n: S_Q \to S_Q$ such that
\[\tilde \varphi_n(\lambda(x_{\si})\rho(y^{op}))=\lambda(\varphi_n(x_{\si}))\rho(y^{op}), x_{\si} \in M, y \in Q,\]
and $\|\tilde \varphi_n\|_{cb}=\|\varphi_n\|_{cb}$. To prove this take $\tilde{\mathcal K} \subset L^2((\mathcal M \bar{\ten} \Gamma_q(\ell^2 \ten H))^{\om})$ to be the $\|\cdot\|_2$-closed linear span of the sequences
\[(n^{-\frac{m}{2}}\sum_{(j_1,\ldots,j_m)=\si} \pi_{j_1}(x_1)\cdots \pi_{j_m}(x_m)y \ten s(e_{j_1}\ten h_1)\cdots s(e_{j_m}\ten h_m))=(x_{\si}^n(y \ten 1))\]
for all $x_i \in BSB, h_i \in H, y \in M$.
Now define an unitary operator $U:\mathcal K \to \tilde{\mathcal K}$ by
\[(x_{\si}^n \ten_D y) \mapsto (x_{\si}^n(y \ten 1)).\]
We can then define
\[\tilde \varphi_n(z) = U^*(id \otimes m_n)^{\om}(UzU^*)U, z \in S_Q.\]
Then the maps $\tilde \varphi_n$ satisfy all the required properties. The complete boundedness of the $\tilde \varphi_n$ is a delicate matter and it will be addressed in the subsection 5.1 below. On the other hand, since $Q$ is amenable relative to $B$, we see that the $M-Q$ bimodule $L^2(M)$ is weakly contained in $L^2(M) \otimes_B L^2(M)$, which in turn is contained in $\mathcal K$. This produces a *-homomorphism $\Theta:S_Q \to B(L^2(M))$ such that $\Theta(\lambda(x)\rho(y^{op}))=\lambda_M(x)\rho_M(y^{op})$, where $\lambda_M, \rho_M$ are the natural actions of $M$ on $L^2(M)$. But then
\[\mu_n^Q(z)= \langle \Theta(\tilde \varphi_n(z))1,1 \rangle, z \in S_Q,\]
and this implies that $\limsup \|\mu_n^Q\|=1$. Then by taking $Q=\mathcal A$ we get $\limsup \|\mu_n^{\mathcal A}\|=1$, which finishes the proof of the first claim.\\
\emph{proof of the second claim}. Fix a unitary $u \in \mathcal{N}_M(\A)$. The algebra $Q=\langle \A, u \rangle \subset P$ is amenable relative to $B$, so by the proof of Claim 1 $\limsup \|\mu_n^Q\|=1$. Now since $\mu_n^Q(1)=\tau(\phi_n(1))\to 1$ and $\mu_n^Q(\lambda(u)\rho(\bar{u}))=\tau(\phi_n(u)u^*) \to 1$, we see that $\|\mu_n^Q \circ Ad(\lambda(u)\rho(\bar{u}))-\mu_n^Q\| \to 0$, hence by restricting to $S_{\A}$ we get $\|\mu_n^{\A} \circ Ad(\lambda(u)\rho(\bar{u})) - \mu_n^{\A}\| \to 0$. Using the fact that $Ad(\lambda(u)\rho(\bar{u})) \circ \mathcal E = \mathcal E \circ Ad(\pi(u)\theta(\bar{u}))$ and the fact that $\gamma_n = \mu_n^{\A} \circ \mathcal E$ we see at once that $\|\gamma_n \circ Ad(\pi(u)\theta(\bar{u}))-\gamma_n\| \to 0$. But since $\gamma_n(1)=\tau(\phi_n(1))\to 1$ and $\limsup \|\gamma_n\|=1$ we see that $\|\gamma_n -\omega_n\| \to 0$. This further implies $\|\omega_n \circ Ad(\pi(u)\theta(\bar{u})) - \omega_n\| \to 0$, which establishes the third required property, and the other two follow easily.
\end{proof}

\subsection{CB-estimates for the multipliers}

Here we will prove that some multipliers defined on certain $C^*$-algebras or von Neumann algebras are completely bounded. The first case is that of the maps $\tilde \varphi_n$ which were used in the proof of Thm. 5.1. above. In the second case we prove the cb boundedness of some normal multipliers on the von Neumann algebra $\mathcal N$ introduced above, which are needed to construct a concrete standard form for $\mathcal N$. We recall some notation. 
\par {\bf Notation:} $\mathcal{M}=(\Gamma_q(\ell^2 \ten H) \bar{\ten} D)\vee M \subset (\Gamma_q(\ell^2 \ten H)\bar{\ten} D)^{\om}$, where we regard $\Gamma_q(\ell^2 \ten H)$ and $D$ as constant sequences.
Let $K=L^2(\mathcal M)$ or $K=L^2(\mathcal M) \ten_{\mathcal A} L^2(P)$. We introduce the subspace
 \[\mathcal L \subset (K \ten \mathcal{F}_q(\ell_2 \ten H))^{\om} \]
as the $\|\cdot\|$-closed linear span of the sequences
\[(n^{-\frac{m}{2}}\sum_{(j_1,\ldots,j_m)=\si} \pi_{j_1}(x_1)\cdots \pi_{j_m}(x_m)y \ten s(e_{j_1}\ten h_1) \cdots s(e_{j_m}\ten h_m))=(x_{\si}^n(y\ten 1))\in (K \bar{\ten} \Gamma_q(\ell^2 \ten H))^{\om},\]
for $m \geq 1, \si \in P_{12}(m), x_i \in BSB, h_i \in H, y \in M$. Let's define the extended Wick words $x_{\si}=x_{\si}(x_1,h_1,\ldots,x_m,h_m,y^{op})$ by
 \[x_{\si} \lel (n^{-\frac{m}{2}}\sum_{(j_1,\ldots,j_m)=\si} \pi_{j_1}(x_1)\cdots \pi_{j_m}(x_m)y^{op} \ten s(e_{j_1}\ten h_1)\cdots s(e_{j_m}\ten h_m)), \]
where $m \geq 1, \si \in P_{1,2}(m), x_i \in BSB, h_i \in H, y \in P$, viewed as operators in $B(\mathcal K)$, i.e. acting naturally on sequences in $\mathcal L$. The reader can check that
\begin{enumerate}
\item[a)] $\mathcal L$ is invariant to the natural action of the extended Wick words;
\item[b)] $\mathcal{L}=\overline{\rm span}\lbrace \lambda(x_{\si})\rho(y^{op})(1 \ten 1), x_{\si} \in M, y \in M \rbrace$ when $K=L^2(\mathcal M)$ and $\mathcal L=\overline{\rm span} \lbrace \pi(x_{\si})(1 \ten y)\theta(z^{op})((1 \ten_{\mathcal A}1)\ten 1), x_{\si} \in M, y \in M, z \in P \rbrace$ when $K=L^2(\mathcal M)\ten_{\mathcal A}L^2(P)$;
\item[c)] $\mathcal L$ is invariant to the natural action by orthogonal transformations of $H$ given by
\[\mathcal{O}(H) \ni o \to \al_o=(id \ten \Gamma_q(id \ten o)) \in Aut((\mathcal M \ten \Gamma_q(\ell^2 \ten H))^{\om}).\]
\end{enumerate}
Let $C(H) \subset B(\mathcal L)$ be the $C^*$-algebra generated by the elements
\[(n^{-\frac{m}{2}}\sum_{(j_1,\ldots,j_m)=\si} \pi_{j_1}(x_1)\cdots \pi_{j_m}(x_m)y^{op} \ten s(e_{j_1}\ten h_1) \cdots s(e_{j_m}\ten h_m))=(x_{\si}^n(y^{op}\ten 1)),\]
where $x_i \in BSB, h_i \in H, y \in M, \si \in P(m)$.
 Also let $\hat{C}(H)\subset (B(K) \ten_{\rm min} \Gamma_q(\ell^2 \ten H))^{\om}$ be the C*-algebra generated by the elements
\[(n^{-\frac{m}{2}}\sum_{(j_1,\ldots,j_m)=\si} \pi_{j_1}(x_1)\cdots \pi_{j_m}(x_m)y^{op} \ten s(e_{j_1}\ten h_1) \cdots s(e_{j_m}\ten h_m))=(x_{\si}^n(y^{op} \ten 1)),\]
where $x_i \in BSB, y \in M, h_i \in H, \si \in P(m)$, the ultraproduct being the $C^*$-algebra ultraproduct.
\begin{rem} Let $m_{\al}$ be the multipliers on $\Gamma_q(H)$ associated to the non-negative finite support functions $f_{\al}:\nz \to \rz$.
\begin{enumerate}
 \item One may assume that for every $k$, $f_{\al}(k)=1$ for $\al$ large enough and that $\limsup_{\al} \|m_{\al}\|_{cb} \lel 1$;
 \item $(id \ten m_{\al}):\hat{C}(H)\to (B(K) \ten_{\rm min} \Gamma_q(\ell^2 \ten H))^{\om}$ are completely bounded, and the restriction of a normal map.
 \end{enumerate}
\end{rem}
\begin{lemma} Let $\hat{C}(H)$, $C(H)$ and $m_{\al}$ be defined as above.
\begin{enumerate}
\item Let $\rho:(B(K) \ten_{min} \Gamma_q(\ell^2 \ten H))^{\om} \to B((K \ten \mathcal F_q(\ell^2 \ten H))^{\om})$ be the *-homomorphism defined by $\rho((T_n))(\xi_n)=(T_n \xi_n)$. Then $\rho(\hat{C}(H))(\mathcal L)\subset \mathcal L$, so $[\rho(\hat{C}(H)),P_{\mathcal L}]=0$.
\item The map $\Phi:\hat{C}(H) \to C(H)$ defined by $\Phi(T)=\rho(T)P_{\mathcal L}$ is a surjective *-homomorphism.
\item If $\si \notin P_{1,2}(m)$, then $\Phi((x_{\si}^n(y^{op}\ten 1))=0$. In particular, $C(H)=\Phi(\hat{C}(H))$ is spanned by the elements $\Phi((x_{\si}^n(y^{op}\ten 1)))$, for $m \geq 1, \si \in P_{1,2}(m)$.
\item If $(x_n)=(x'_n) \in M$, then $\Phi((x_n(y^{op}\ten 1)))=\Phi((x'_n(y^{op}\ten 1)))$. In particular, $C(H)$ is spanned by the elements $\Phi((W_{\si}(y^{op}\ten 1)))$, where $W_{\si} \in M, \si \in P_{1,2}(m)$ are the reduced Wick words.
\end{enumerate}
\end{lemma}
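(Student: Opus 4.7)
The whole lemma is about transferring statements from the sequences $(x_\si^n(y^{op}\ten 1))$ generating $\hat C(H)$ to their compressions to $\mathcal L$, and the key technical fact is just the convolution expansion of a product of two extended Wick words together with Lemma \ref{vn1}. First, for (1) I would observe that $\{T\in\hat C(H):\rho(T)\mathcal L\subset\mathcal L\}$ is a norm-closed $*$-subalgebra of $\hat C(H)$, so it suffices to check invariance on the generators and their adjoints. Given $T=(x_\si^n(y^{op}\ten 1))$ and a spanning vector $\xi=(x_{\si'}^n(z\ten 1))\in\mathcal L$ with $\si'\in P_{1,2}(m')$, the fact that $y^{op}$ commutes with the left action and the bilinearity of the index sum give $\rho(T)\xi=(x_\si^n x_{\si'}^n(zy\ten 1))$, and the same combinatorial expansion as in Prop.~3.9 (now for arbitrary $\si$) yields $x_\si^n x_{\si'}^n=\sum_\gamma x_\gamma^n$, where $\gamma$ ranges over partitions of $\{1,\dots,m+m'\}$ restricting to $\si$ and $\si'$ on the two halves. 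Lemma \ref{vn1} kills the $\gamma\notin P_{1,2}(m+m')$ summands in the $\|\cdot\|_2$-ultralimit, so $\rho(T)\xi=\sum_{\gamma\in P_{1,2}}(x_\gamma^n(zy\ten 1))\in\mathcal L$. Adjoints are handled identically, proving (1).

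For (2), item (1) gives $\rho(T)P_{\mathcal L}=P_{\mathcal L}\rho(T)P_{\mathcal L}$ for every $T\in\hat C(H)$, so $\Phi$ inherits the $*$-homomorphism property from $\rho$. Surjectivity is immediate because by construction each generator $(x_\si^n(y^{op}\ten 1))$ of $C(H)\subset B(\mathcal L)$ is the image of the namesake generator of $\hat C(H)$.

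For (3), suppose $\si\notin P_{1,2}(m)$, so $\si$ contains a block of size $\geq 3$. Then any $\gamma$ arising in the expansion of $x_\si^n x_{\si'}^n$ inherits that block from its restriction to $\{1,\dots,m\}$, so every $\gamma$ lies outside $P_{1,2}(m+m')$. By Lemma \ref{vn1} each term $(x_\gamma^n(zy\ten 1))$ has $\|\cdot\|_2\to 0$ along $\omega$, hence vanishes in the Hilbert-space ultrapower. This gives $\rho(T)\xi=0$ on a dense subspace of $\mathcal L$, and by boundedness $\Phi(T)=0$. Consequently $C(H)$ is the closed linear span of the $\Phi((x_\si^n(y^{op}\ten 1)))$ with $\si\in P_{1,2}(m)$.

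For (4), the identification $(x_n)=(x_n')$ in the tracial ultrapower $M\subset(\Gamma_q(\ell^2\ten H)\bar{\ten}D)^\omega$ amounts to $\|x_n-x_n'\|_2\to 0$ along $\omega$. For any spanning $\xi=(x_{\si'}^n(z\ten 1))\in\mathcal L$ and using the uniform bound $\sup_n\|x_{\si'}^n\|_\infty<\infty$ from part (o) of Lemma \ref{vn1},
\[\|(x_n-x_n')x_{\si'}^n(zy\ten 1)\|_2\leq\|x_n-x_n'\|_2\,\|x_{\si'}^n\|_\infty\,\|z\|_\infty\,\|y\|_\infty\longrightarrow 0,\]
so $\Phi(((x_n-x_n')(y^{op}\ten 1)))\xi=0$ on a dense subspace, hence vanishes. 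Combining this with (3) and the fact established in Theorem 3.11 that every element of $M$ is a norm limit of linear combinations of reduced Wick words $W_\si$, we conclude that $C(H)$ is spanned by the $\Phi((W_\si(y^{op}\ten 1)))$.

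The only genuinely delicate point is the passage from the algebraic convolution expansion of $x_\si^n x_{\si'}^n$ to a vanishing statement in the Hilbert-space ultrapower in (1) and (3); the trick is that the individual summands $(x_\gamma^n(zy\ten 1))$ remain bounded in $\|\cdot\|_\infty$ by axiom (o) of Lemma \ref{vn1}, so they are admissible sequences in the Hilbert ultrapower, and then their $\|\cdot\|_2$-vanishing from Lemma \ref{vn1} means they represent the zero vector there.
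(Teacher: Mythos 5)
Your proposal is correct and follows essentially the same route as the paper: use the convolution expansion of $x_\si^n x_{\si'}^n$ together with Lemma \ref{vn1} to kill the non-$P_{1,2}$ terms in the Hilbert-space ultralimit (giving (1) and (3)), observe that $\Phi$ inherits the $*$-homomorphism property from $\rho$ once invariance of $\mathcal L$ is known (giving (2)), and use the $\|\cdot\|_2$-$\|\cdot\|_\infty$ estimate $\|(x_n-x_n')\eta_n\|_2\leq\|x_n-x_n'\|_2\|\eta_n\|_\infty$ on uniformly $\|\cdot\|_\infty$-bounded vectors for (4). The only minor imprecision is in the last sentence of (4): what the reduction formula (Theorem 3.11) actually supplies is the pointwise identity $x_\si=f_\si W_\si$ in $M$ (not merely a density statement), and it is this identity, fed into (4), that converts generators $\Phi((x_\si^n(y^{op}\ten 1)))$ into reduced ones $\Phi((W_\si(y^{op}\ten 1)))$.
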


\begin{proof} Take $(x_{\si}^n(y^{op}\ten 1))\in \hat{C}(H)$, $(x_{\si'}^n(z \ten 1))\in \mathcal L$. Due to the convolution rule we have
\[\Phi((x_{\si}^n(y^{op}\ten 1)))(x_{\si'}^n(z \ten 1))=(x_{\si}^nx_{\si'}^n(zy\ten1))=\sum_{\gamma \in P(m+m')}(x_{\gamma}^n(zy\ten 1)),\]
the summation being taken over all those $\gamma$'s which preserve the connections of both $\si$ and $\si'$, i.e. if some indices are connected by $\si$ or $\si'$, they will remain connected in $\gamma$. Now for all $\gamma \notin P_{1,2}(m+m')$, the corresponding term vanishes, because $\|x_{\gamma}^n(zy\ten 1)\|_2 \leq \|zy\|_{\infty}\|x_{\gamma}^n\|_2 \to 0$. Thus
\[\Phi((x_{\si}^n(y^{op}\ten 1)))(x_{\si'}^n(z \ten 1))=\sum_{\gamma \in P_{1,2}(m+m')}(x_{\gamma}^n(zy\ten 1)) \in \mathcal L,\]
which proves 1. Also, if $\si \notin P_{1,2}(m)$ to begin with, every $\gamma$ in the sum will also not be in $P_{1,2}(m+m')$, hence the whole sum vanishes, which proves 3. The second statement is trivial. If $(x_n),(x'_n) \in M$ such that $\lim \|x_n - x'_n \|_2 =0$, then for every $(y_{\si}^n(z\ten 1))\in \mathcal K$, we have
$\|x_ny_{\si}^n(zy\ten 1)-x'_ny_{\si}^n(zy \ten 1)\|_2 \leq \|y_{\si}^n\|_{\infty}\|zy\|_{\infty}\|x_n-x'_n\|_2 \to 0$, i.e. $\Phi((x_n(y^{op}\ten 1)))=\Phi((x'_n(y^{op}\ten 1)))$. The last statement then follows from the reduction formula.
\qd

Our goal is to prove that under certain conditions the maps $(id \ten m_{\al})$ descend to a multiplier on the quotient algebra, namely $C(H)$. This is done via  a careful analysis of $\Phi_*$.

\begin{lemma}\label{opdual} There exists a complete contraction
 \[ \psi: (\overline{K\ten L^2(\Gamma_q(\ell^2 \ten H)))}^r\ten_h  (K\ten L^2(\Gamma_q(\ell^2 \ten H))^c\to \overline{L^1(B(K)\ten \Gamma_q(\ell^2 \ten H))} \]
such that
  \[  \psi((h\ten a)\ten (k\ten b)) \lel  (h\ten \bar{k}) \ten ab^* \]
and $tr((S\ten T)(\psi(   (h\ten a)\ten (k\ten b))^*)=((S\ten T)(k\ten b),h\ten a)$. Here
$(k\ten h)$ is the  rank one operator with entries $(k_ih_j)$ in a given basis and $\ten_h$ denotes the Haagerup tensor product of operator spaces.
\end{lemma}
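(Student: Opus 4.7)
My plan is to construct $\psi$ by factoring it through a sequence of standard complete contractions from operator-space theory and then composing with the canonical embedding of the operator-space projective tensor product $L^1(B(K)) \hat{\ten} L^1(N)$ into $L^1(B(K) \bar{\ten} N)$, where $N = \Gamma_q(\ell^2 \ten H)$. The three building blocks I would invoke are: (a) the completely isometric identification $\overline{K}^r \ten_h K^c \cong S_1(K) = L^1(B(K))$, under which $\overline{k} \ten h$ corresponds to the rank-one operator $|h\rangle\langle k|$ (classical, see e.g.\ Effros--Ruan or Pisier); (b) the noncommutative Cauchy--Schwarz inequality in operator-space form, which gives a complete contraction $\overline{L^2(N)}^r \ten_h L^2(N)^c \to L^1(N)$ via $\overline{a} \ten b \mapsto ab^*$; and (c) the completely isometric identifications $(K \ten L^2(N))^c \cong K^c \ten_h L^2(N)^c$ and $\overline{(K \ten L^2(N))}^r \cong \overline{K}^r \ten_h \overline{L^2(N)}^r$ that realize column and row Hilbert spaces of tensor products as Haagerup tensor products of columns and rows.

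Using (c) together with the associativity of the Haagerup tensor product, the domain of $\psi$ identifies completely isometrically with
\[ \overline{K}^r \ten_h \overline{L^2(N)}^r \ten_h K^c \ten_h L^2(N)^c. \]
I would then reorganize this as $(\overline{K}^r \ten_h K^c) \hat{\ten} (\overline{L^2(N)}^r \ten_h L^2(N)^c)$ by a shuffle map, after which applying (a) on the first factor and (b) on the second, followed by the embedding $L^1(B(K)) \hat{\ten} L^1(N) \hookrightarrow L^1(B(K) \bar{\ten} N)$, produces the desired complete contraction $\psi$ with the correct action on elementary tensors.

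The main obstacle is precisely this shuffle step, because the Haagerup tensor product is not symmetric in general; exchanging the middle two factors $\overline{L^2(N)}^r$ and $K^c$ needs a genuine argument. I would handle it either by appealing to the commutation theorem of Pisier for Haagerup tensor products of row and column Hilbertian operator spaces (which allows one to permute column and row factors completely isometrically when both are Hilbert spaces), or, failing that, by a direct matrix-amplified verification using the factorization characterization $\|x\|_h = \inf\{\|X\|\,\|Y\| : x = X \cdot Y\}$ of the Haagerup norm, checking that the bilinear extension of the formula for $\psi$ satisfies the required matrix-norm bound on simple tensors. Once $\psi$ is known to be a complete contraction, the trace identity follows by direct computation on elementary tensors: $tr((S \ten T)\psi((h \ten a) \ten (k \ten b))^*)$ splits as $tr_{B(K)}(S |k\rangle\langle h|) \cdot tr_N(T b a^*) = \langle Sk, h\rangle \langle Tb, a\rangle_{L^2(N)} = \langle (S \ten T)(k \ten b), h \ten a\rangle$, where the bar on the target $L^1$ accounts for the sesquilinearity in the pairing, and the identity extends by density and continuity to all of the domain.
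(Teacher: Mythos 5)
Your proposal shares the same two building blocks as the paper's proof: the completely isometric identification $\overline{K}^{r}\ten_h K^{c}\cong L^1(B(K))$ and the Cauchy--Schwarz complete contraction $\overline{L^2(N)}^{r}\ten_h L^2(N)^{c}\to L^1(N)$, $\bar a\ten b\mapsto ab^*$. You also correctly put your finger on the real difficulty: reorganizing $\overline{K}^{r}\ten_h \overline{L^2(N)}^{r}\ten_h K^{c}\ten_h L^2(N)^{c}$ so that the two row factors flank the two column factors appropriately.

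The problem is that your primary route for the shuffle does not go through. The Haagerup tensor product of Hilbertian row and column spaces is genuinely non-symmetric: $H^{r}\ten_h K^{c}$ is completely isometrically a trace-class-type space, while $K^{c}\ten_h H^{r}$ is of compact/bounded-operator type, and these carry drastically different operator space structures. There is therefore no commutation theorem that allows you to exchange $\overline{L^2(N)}^{r}$ and $K^{c}$ completely isometrically (at best you get a complete contraction in one direction, and even then you would still need to compare $\ten_h$ with $\hat\ten$ on the reorganized expression, which introduces a further non-contractive step). Your second fallback --- the hands-on matrix-amplified verification of the Haagerup norm bound --- would work in principle, but if you carry it out you will essentially rediscover what the paper does in one stroke: it never decomposes the domain into four Haagerup factors at all. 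Instead it treats the whole Hilbert space $\mathcal H = K\ten L^2(N)$ as one object, invokes $S_1(\mathcal H) = \mathcal H^{r}\ten_h \mathcal H^{c}$ once, and precomposes with the antilinear isometry $V(k\ten b)=\bar k\ten v(b)$, where $v$ is the basis-conjugation map on $L^2(N)$. The resulting complete contraction $\omega(V\ten id)$ is then checked on elementary tensors to send $(h\ten a)\ten(k\ten b)\mapsto (h\ten\bar k)\ten ab^*$, which lands in $\overline{L^1(B(K)\ten N)}$ by inspection, and the trace identity is the same duality computation you outline at the end. So the conceptual core of your plan is right, but the shuffle step is a genuine gap as stated; the paper's one-shot application of the row-column identification to the full tensor Hilbert space is the clean way to sidestep it.
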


\begin{proof} We recall that for a semifinite von Neumann algebra $M$ the space $M=\overline{L^1(M,tr)}^*$ is the antilinear dual with respect to trace $\lan T,\rho\ran=tr(T\rho^*)$.  Moreover, for $M=B(H)$ one usually considers linear duality with respect to the transposed $\rho^t$ of a density $\rho$:
 \[ \lan\lan T,\rho\ran\ran_{B(H),S_1(H)} \lel tr(T\rho^t) \lel tr(T\overline{\rho}^*) \lel \lan T,\bar{\rho}\ran_{B(H),\overline{S_1(H)}} \pl .\]
Using the description of $S_1(H)=H^r\ten_h H^c$ as a Haagerup tensor product, we find a natural map $\om:
H^r\ten_h H^c\to B(H)^*$ given by
 \[ \om(h\ten k)(T)
 \lel tr(T(\sum_{ij} h_{i} k_je_{ij})^t)
 \lel tr(T(\sum_{ij} h_{i} k_je_{ji}))
 \lel \sum_{ij} T_{ij}h_ik_j \lel (T(k),\bar{h}) \pl .\]
Let $M$ be a semifinite von Neumann algebra and $(\xi_j)$ be an orthonormal basis. Then we may define the antilinear map $v(a)=\sum_j \langle \xi_j,a\ran \xi_j$ and observe that
  \begin{align*}
  (b,\overline{v(a)})&= \sum_j \lan b,\xi_j\ran \lan v(a),\xi_j\ran
  \lel \sum_j \lan b,\xi_j\ran \lan \xi_j,a\ran
  \lel \tau(ba^*) \pl .
  \end{align*}
Therefore $\bar{m}=\om(v\ten id):\overline{L^2_r(M)}\ten_h L^2_c(M)\to B(L^2(M))^*$ satisfies
 \[ m(a\ten b)(T) \lel (T(b),\overline{v(a)})
 \lel \tau(Tba^*) \lel \tau(T(ab^*)^*))
 \lel \lan T,(ab^*)\ran  \]
for all $T\in M$. This shows that $m(a\ten b)=ab^*$ is a complete contraction from $\overline{L^2_r}(M)\ten_h L^2_c(M)\to \overline{L^1(M)}$. Now we repeat the argument for $H=K\ten L^2(M)$  and $V(h\ten b)=\bar{k}\ten v(b)$. Then we obtain a complete contraction $\psi=\om(V\ten id):\overline{(K\ten L^2(M))}^r\ten_h K\ten L^2(M)\to B(K\ten L^2(M))^*$ such that for $S\in B(K)$ and $T\in M$
 \begin{align*}
 & \psi((h\ten a)\ten (k\ten b))(S\ten T))
   (S(k),\overline{ \bar{h} }) \tau(Tba^*)
 \lel (S(k),h) (Tb,a)
 \lel (S\ten T(k\ten b),h\ten a)  \\
 &=  (tr\ten \tau)((S\ten T) (k\ten \bar{h})
\end{align*}
Here $(\al\ten \beta)=\sum_{ij} e_{ij} \al_i \beta_j$ is the density of the corresponding rank one operator.
Therefore the map $\psi((k\ten a)\ten (h\ten b))= (h\ten \bar{k})\ten  ab^*$ does the job. \qd

\begin{cor} Let $\mathcal{L}\subset (K\ten L^2(\Gamma_q(\ell^2 \ten H))^{\om}$ be defined as above. Then there exists a completely contractive map
 \[ \Psi: \overline{\mathcal L}^r \ten_h \mathcal L^c \to (\overline{L_1(B(K)\ten \Gamma_q(\ell^2 \ten H)}))^{\om} \]
and a complete contraction $q: (\overline{L_1(B(K)\ten \Gamma_q(\ell^2 \ten H)})^{\om}\to [(B(K)\bar{\ten}\Gamma_q(\ell^2 \ten H)^{\om}]^*$ such that
 \[   (q \circ \Psi)(k\ten h)(T) \lel  \lan T(k),h\ran
  \pl .\]
In particular $\Psi^*|_{\hat{C}(H)}=\Phi$.
\end{cor}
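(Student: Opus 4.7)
The plan is to lift $\psi$ from Lemma 5.5 entrywise to the ultraproduct in order to define $\Psi$, then construct $q$ via the natural trace pairing, and finally read off $\Phi$ via the transpose.

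For the construction of $\Psi$, I would set, for representative sequences $(\xi_n), (\eta_n)\subset K\ten L^2(\Gamma_q(\ell^2 \ten H))$ of elements of $\mathcal L$,
$$\Psi\big((\xi_n)\ten (\eta_n)\big) \lel (\psi(\xi_n \ten \eta_n))_n.$$
Since $\psi$ is a complete contraction on $\overline{(K\ten L^2(\Gamma_q(\ell^2 \ten H)))}^r \ten_h (K\ten L^2(\Gamma_q(\ell^2 \ten H)))^c$, entrywise application produces a complete contraction, and the standard fact that the canonical map
$$(\prod\nolimits_\om X_n)^r \ten_h (\prod\nolimits_\om Y_n)^c \nach \prod\nolimits_\om (X_n^r \ten_h Y_n^c)$$
is a complete contraction guarantees that $\Psi$ descends to the subspace $\overline{\mathcal L}^r \ten_h \mathcal L^c$ on the left and lands in $(\overline{L^1(B(K)\ten \Gamma_q(\ell^2 \ten H))})^\om$. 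For $q$, I would use the canonical pairing: given $\tau = (\rho_n)$ and $T = (T_n) \in (B(K) \bar\ten \Gamma_q(\ell^2 \ten H))^\om$, set
$$q(\tau)(T) \lel \lim_{n\to \om} \lan T_n, \rho_n\ran;$$
the estimate $|\lan T_n, \rho_n\ran|\leq \|T_n\|_\infty\|\rho_n\|_1$ makes this a complete contraction into the operator-space dual of $(B(K)\bar\ten \Gamma_q(\ell^2 \ten H))^\om$. The formula for $\psi$ from Lemma 5.5 applied entrywise then yields $(q\circ \Psi)((k\ten h))(T) = \lim_\om (T_n(k_n), h_n) = (T(k), h)$.

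For the identification with $\Phi$, recall that $(\overline{\mathcal L}^r \ten_h \mathcal L^c)^* \cong B(\mathcal L)$ completely isometrically, with the pairing $\lan S, k\ten h\ran = (S(k), h)$ for $S \in B(\mathcal L)$ and $k, h \in \mathcal L$. Via $q$, each $T \in (B(K)\bar\ten \Gamma_q(\ell^2 \ten H))^\om$ induces, through $\omega \mapsto (q\circ\Psi)(\omega)(T)$, a functional on $\overline{\mathcal L}^r \ten_h \mathcal L^c$, hence an operator $\Psi^*(T) \in B(\mathcal L)$ characterized by $\lan \Psi^*(T)(k), h\ran = (T(k), h)$ for all $k,h \in \mathcal L$. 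But by definition $\Phi(T) = \rho(T) P_{\mathcal L}$, and Lemma 5.6(1) ensures $\mathcal L$ is invariant under $\rho(\hat C(H))$, so $\lan \Phi(T) k, h\ran = (\rho(T) k, h) = (T(k), h)$ for all $T \in \hat C(H)$ and $k, h \in \mathcal L$. This gives $\Psi^*|_{\hat C(H)} = \Phi$.

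The main obstacle is verifying that the Haagerup tensor product is compatible with the operator-space ultraproduct in the way used above, since $\mathcal L$ carries the row/column structure inherited from an ultraproduct Hilbert space. Once the entrywise map $(\psi_n)$ is seen to glue to a complete contraction on the tensor product — which reduces to the uniform complete contractivity of $\psi$ on each coordinate — the rest is a formal computation using the explicit formula $\psi((h\ten a)\ten (k\ten b)) = (h\ten \bar k)\ten ab^*$ and its trace duality from Lemma 5.5.
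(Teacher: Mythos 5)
Your proposal is correct and follows the paper's own proof in all essentials: defining $\Psi$ entrywise via $\psi$ from Lemma 5.5, invoking the compatibility of the Haagerup tensor product with ultraproducts (the paper cites $M_m((X_n)^\om)=(M_m(X_n))^\om$, which underlies the complete contraction $(\prod_\om X_n)^r \ten_h (\prod_\om Y_n)^c \to \prod_\om(X_n^r\ten_h Y_n^c)$ you use), defining $q$ by the ultralimit of the trace pairings, and then reading off $\Phi = \rho(\cdot)P_{\mathcal L}$ from the duality formula and the invariance of $\mathcal L$ under $\rho(\hat C(H))$. You supply slightly more detail on the last identification $\Psi^*|_{\hat C(H)}=\Phi$ than the paper's terse closing sentence, but the argument is the same.
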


\begin{proof} For $\xi, \eta \in  \mathcal{L}$ given by $\xi=(\xi_n)_n$, $\eta=(\eta_n)_n$ we may define
 \[ \Psi(\xi \ten \eta) \lel (\psi(\xi_n\ten \eta_n))_n \pl ,\]
where $\psi$ is the map from Lemma \ref{opdual}. Now $\Psi$ obviously extends by linearity, thanks to the definition of the Haagerup tensor product and the well-known fact that $M_m((X_n)^{\om})=(M_m(X_n))^{\om}$ (see \cite{PisierOS}).  The map $q$ is given by the limit
 \[ q((\overline{\zeta_n})_n)(T_n)_n \lel \lim_{n\to\om} (tr\ten \tau)(T_n \zeta_n^*) \]
Now the assertion follows from Lemma \ref{opdual} and the fact that the duality pairing is given by the limit along the ultraproduct. \qd

\begin{rem}\label{Hinff} {\rm Let $H$ be an infinite Hilbert space and $H\subset H'$. Thanks to the definition of the $C^*$-algebra $\hat{C}(H)$ as a subalgebra of the ultraproduct, we clearly have an isometric inclusion $\hat{C}(H)\subset \hat{C}(H')$.
The $C^*$-algebra $C(H)\subset B(\mathcal L(H))$ depends on our minimalistic definition of $\mathcal L(H)$. Certainly, $\mathcal L(H)\subset \mathcal L(H')$ and hence the tautological map $\iota(x_{\si}) =x_{\si}$, $\iota(y^{op})=y$ produces  a larger norm on $\mathcal L(H')$ than on $\mathcal L(H)$. Let us consider a noncommutative polynomial $p$ in a finite number of $x_{\si}$'s and $y^{op}$'s, and we may  assume that the $x_{\si}$ only contain vectors from a finite dimensional subspace $H_0 \subset H$. Then we can find norm attaining vectors $\xi,\eta\in \mathcal L(H')$ for $p$. Then we write $H'=H_0 \oplus H_0^{\perp}$ and may also assume that the $\xi$ and $\eta$ are linear combination of elements in $\mathcal L(H_0)$ and $\mathcal L(H_1)$ where $H_1\subset H_0^{\perp}$ is a finite dimensional subspace. Using the moment formula, we see that  the inner product remains unchanged after applying an orthogonal transformation $o$ which sends $H_1$ to a finite dimensional subspace of $H$ orthogonal to $H_0$ and leaves $H_0$ invariant. This implies that
 \[ \|p\|_{C(H')} \lel \sup_{\|\xi\|\leq 1,\|\eta\|\leq 1}  |\lan \xi,p\eta \ran |
 \lel \sup_{\|\xi\|\leq 1,\|\eta\|\leq 1} |\lan \al_o(\xi),p\al_o(\eta)\ran |
 \kl \|p\|_{C(H)} \pl .\]
Let us denote by $q_H=\Phi|_{\hat{C}(H)}:\hat{C}(H)\to C(H)$ the quotient map. Then we obtain a commutative diagram
 \[ \begin{array}{ccc}
     \hat{C}(H) & \stackrel{q_H}{\to} & C(H) \\
     \downarrow & & \downarrow \\
   \hat{C}(H') & \stackrel{q_{H'}}{\to} & C(H')
   \end{array} \pl ,\]
where the left hand downward arrow is the natural ultraproduct inclusion and the right hand downward arrow is the tautological inclusion (which is well-defined and injective). This allows us to identify elements in the kernel of  $q_H$ by considering $q_{H'}$.
}\end{rem}

We recall that thanks to Avsec's result, the orthogonal projection $P_k:\Gamma_q(H)\to \Gamma_q(H)$ onto Wick words of length $k$ is a normal completely bounded map. We use the same notation $P_k:L^1(\Gamma_q(H))\to L^1(\Gamma_q(H))$  and $id\ten P_k:\overline{L^1(B(K)\ten \Gamma_q(H))}\to
\overline{L^1(B(K)\ten \Gamma_q(H))}$. Let us note that one can take $P_k^{\om}:\prod \overline{L^1(B(K)\ten \Gamma_q(H))}$, the extension to the ultraproduct of $L^1$ spaces, which satisfies
 \[\langle (id\ten P_k)((T_n)),(\xi_n)\rangle
 \lel \lan (T_n),(id \ten P_k)((\xi_n))\ran \]
with respect to the anti-linear bracket given by the ultraproduct trace (see also \cite{Raynaud}).

\begin{lemma}\label{kernel} The kernel of $\Phi\circ P_k^{\om}$ contains the kernel of $q_H$.
\end{lemma}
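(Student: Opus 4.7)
The plan is to prove the lemma by a duality argument based on Corollary 5.6, which gives $\Psi^{*}|_{\hat{C}(H)}=\Phi$. By that corollary, for $T\in\hat{C}(H)$ the condition $q_H(T)=0$ is equivalent to
\[
 \langle T,\Psi(\xi\otimes\eta)\rangle \;=\; \lim_{n\to\omega}(T_{n}\eta_{n},\xi_{n})_{K\otimes L^{2}(\Gamma_{q}(\ell^{2}\otimes H))}\;=\;0 \qquad\text{for every }\xi,\eta\in\mathcal{L}.
\]
Since $P_{k}$ is the orthogonal projection on $L^{2}(\Gamma_{q}(\ell^{2}\otimes H))$ onto the Wick words of length $k$, it commutes with the $\ast$-operation and is self-adjoint with respect to the trace pairing, so its pre-adjoint on the $L^{1}$-predual is again $id\otimes P_{k}$. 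Consequently, the desired inclusion $\ker q_H\subset\ker\Phi P_k$ is equivalent to
\[
 (id\otimes P_{k})\bigl(\Psi(\mathcal{L}\otimes_{h}\mathcal{L})\bigr)\;\subset\;\overline{\Psi(\mathcal{L}\otimes_{h}\mathcal{L})}^{\|\cdot\|_{1}}.
\]

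By linearity and continuity of $\Psi$, it suffices to verify this inclusion on a pair of generating sequences $\xi=(x_{\sigma_{1}}^{n}(y\otimes 1))$ and $\eta=(x_{\sigma_{2}}^{n}(z\otimes 1))$ of $\mathcal{L}$, with $\sigma_{i}\in P_{1,2}(m_{i})$ and $y,z\in M$. Expanding the sums indexing $\xi_{n}$ and $\eta_{n}$ and applying $\psi$ entrywise yields
\[
 \psi(\xi_{n}\otimes\eta_{n}) \;=\; \sum_{(j)=\sigma_{1},\,(i)=\sigma_{2}}(u_{j,n}\otimes\overline{v_{i,n}})\otimes a_{j,n}b_{i,n}^{*},
\]
where $u_{j,n}, v_{i,n}$ are the associated $D$-valued products (ending in $y$, respectively $z$) and $a_{j,n}, b_{i,n}$ are the matching monomials in the $s(e_{\cdot}\otimes h_{\cdot})$'s in $\Gamma_q(\ell^2\otimes H)$.

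The main obstacle, and the heart of the argument, is to compute $P_{k}(a_{j,n}b_{i,n}^{*})$ explicitly and to recognize $(id\otimes P_{k})(\psi(\xi_{n}\otimes\eta_{n}))$ as belonging to $\Psi(\mathcal{L}\otimes_{h}\mathcal{L})$. By the $q$-Wick multiplication formula, $a_{j,n}b_{i,n}^{*}$ decomposes as a sum over pair-singleton partitions $\tau'$ of the combined leg set $\{1,\dots,m_{1}+m_{2}\}$, each contributing a Wick word whose degree equals $m_{1}+m_{2}$ minus twice the number of pairs of $\tau'$, weighted by products of $H$-inner products and a power of $q$ counting the crossings. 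The projection $P_{k}$ retains only the partitions producing a Wick word of length exactly $k$, namely those with precisely $(m_{1}+m_{2}-k)/2$ pairs. By the reduction formula of Theorem 3.11, each such surviving term, together with its accompanying factor $u_{j,n}\otimes\overline{v_{i,n}}$, regroups into the $\psi$-image of a pair of sequences of the form $(x_{\tau_{1}}^{n}(y\otimes 1))$ and $(x_{\tau_{2}}^{n}(z\otimes 1))$, where $\tau_{i}\in P_{1,2}$ records which legs of $\sigma_{i}$ are retained as singletons and which ones enter an inter-side contraction. Summing over the finitely many admissible partitions realizes $(id\otimes P_{k})(\Psi(\xi\otimes\eta))$ as an element of $\Psi(\mathcal{L}\otimes_{h}\mathcal{L})$, which completes the proof.
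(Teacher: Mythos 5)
Your opening is exactly right: the duality setup via $\Psi^*|_{\hat C(H)}=\Phi$, the reduction to showing that $(id\otimes P_k)(\Psi(\xi\otimes\eta))$ annihilates $\ker q_H$, and the expansion of $\psi(\xi_n\otimes\eta_n)$ as a sum over admissible pair--singleton partitions $\sigma'$ of the combined leg set. But the step you yourself flag as ``the heart of the argument''---that each surviving term, together with its $u_{j,n}\otimes\overline{v_{i,n}}$ factor, ``regroups into the $\psi$-image of a pair of sequences $(x_{\tau_1}^n(y\otimes 1))$, $(x_{\tau_2}^n(z\otimes 1))$''---is asserted rather than proved, and it is in fact false as stated over $H$. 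If a leg $l$ of $\xi$ is paired by $\sigma'$ with a leg $r$ of $\eta$, the inner product $\langle e_{j_l}\otimes h_l, e_{i_r}\otimes h'_r\rangle=\delta_{j_l,i_r}\langle h_l,h'_r\rangle$ \emph{links} the index $j_l$ to $i_r$. The resulting constrained sum over $(j)$ and $(i)$ no longer splits as a product of a sum over $j$'s and a sum over $i$'s, so the expression does not factor through $\psi$ of a Haagerup tensor of two elements of $\mathcal{L}(H)$. This is precisely the obstruction the paper's proof is built around.

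The paper's route is genuinely different and works harder at this point. It first uses Haagerup-tensor estimates to show that $\sigma'$'s which link \emph{pairs} of one side to anything on the other side contribute $O(n^{f(\sigma,\tilde\sigma,\sigma')})$ with $f<(m+\tilde m)/2$ and thus vanish in the ultraproduct, so only singleton--singleton links matter. Then, to undo the index constraint, it invokes Pisier's M\"obius inversion: replacing $s_j(h)$ by $S_j(h)=s_j\otimes\lambda_j^{\sigma'}$ in a product of free group factors and introducing extra coordinate labels $e_p$ for each pair $p\in\sigma'$, the constrained sum is rewritten as an alternating sum of \emph{unconstrained} expressions $a(\pi)=(id\otimes E)\,\psi(\tilde X_{\tilde\sigma}^\pi\otimes X_\sigma^\pi)$, which now \emph{do} lie in $\operatorname{Im}\psi_{H'}$ but only over the \emph{enlarged} Hilbert space $H'=H\otimes\ell_2$. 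Finally, each $a(\pi)$ is recovered as a weak limit $\lim_j\psi(\alpha_{o_j}(\tilde X)\otimes\alpha_{o_j}(X))$ along orthogonal transformations $o_j$ fixing $e_0$ and converging weakly to $e_0^\perp$, and the commutative diagram of Remark \ref{Hinff} guarantees $\ker q_H\subset\ker q_{H'}$, so the pairing with $x\in\ker q_H$ vanishes. Your proof proposal is missing all three of these ingredients: the vanishing estimate for the bad $\sigma'$'s, the M\"obius/free-product decoupling trick, and the passage to $H'$ plus Remark \ref{Hinff}. Without them the claimed inclusion $(id\otimes P_k)(\Psi(\mathcal{L}\otimes_h\mathcal{L}))\subset\overline{\Psi(\mathcal{L}\otimes_h\mathcal{L})}^{\|\cdot\|_1}$ is not established and, over $H$ itself, is not what the paper proves.
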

\begin{proof} The map $\Phi \circ P_k^{\om}$ is normal. According to Remark \eqref{Hinff} it therefore suffices to show that for $\xi,\eta\in \mathcal L$ we have
 \[ (id\ten P_k^{\om})(\Psi(\xi\ten \eta)) \in {\rm Im}( \psi_{H'})  \]
for some potentially larger Hilbert space $H'$. Let us now consider Wick words $(x_{\si}^n)_n$, $\tilde{x}_{\si}^n$, and $y^{op}$, $\tilde{y}^{op}$. We have to consider
 \begin{align*}
  \Psi( (\tilde{x}_{\si}\tilde{y}^{op})_n \ten  (x_{\si}y^{op})_n)
  &= (\psi(\tilde{x}_{\si}^ny\tilde{y}^{op} \ten x_{\si}^ny^{op}))_n \pl .
  \end{align*}
For fixed $\nen$ we see that
 \begin{align*}
  &\Psi((\tilde{x}_{\tilde{\si}}^n y)\ten  (x_{\si}^ny^{op}))
  \lel n^{-(m+\tilde{m})/2} \sum_{(\tilde{j_k})=\tilde{\si},(j_k)=\si}
   (\overline{\vec{\pi}(a)y^{op}}\ten
 \vec{\pi}_{\tilde{j}_1}(\tilde{a})\tilde{y}^{op})\ten  \vec{s}_{\tilde{j}}(\tilde{h})\vec{s}_{j}^{\p *}
  \\
 & \lel \sum_{\si'\in P(m+m')} \Psi^{\si'}(\tilde{x}_{\tilde{\si}}\tilde{y}^{op}\ten
 x_{\si}y^{op})
 \pl,
  \end{align*}
where
\[\Psi^{\si'}(\tilde{x}_{\tilde{\si}}\tilde{y}^{op}\ten
 x_{\si}y^{op})=\sum_{(\tilde{j}_1,...,\tilde{j}_{\tilde{m}},j_m,...,j_1)=\si'}  (\overline{\vec{\pi}(a)y^{op}}\ten
 \vec{\pi}_{\tilde{j}_1}(\tilde{a})\tilde{y}^{op})\ten  \vec{s}_{\tilde{j}}(\tilde{h})\vec{s}_{j}^{\p *}.\]

Note also that $\si'$ has to be obtained by joining singletons from $\tilde{\si}$ and  $\si$. In this context we observe again that is enough to consider $\si'\in P_{1,2}(\tilde{m}+m)$. In the following example we see that
 \begin{align*}
  &\|\sum_{j_1} (\overline{\pi_{j_1}(a_1)\pi_{j_1}(a_2)y^{op}}\ten  \pi_{j_1}(\tilde{a}_1)\tilde{y}^{op})\ten s_{j_1}^2s_{j_1}\|_1 \\
  &\le    \|\sum_{j_1}  \pi_{j_1}(a_1)\pi_{j_1}(a_2)y^{op} \ten s_{j_1}^2 \ten e_{1,j_1} \|
  \|\sum_{j_1} \pi_{j_1}(\tilde{a}_1)\tilde{y}^{op}\ten s_{j_1} e_{j_1,1}\|  \\
  &\le c_q(a,\tilde{a}) n \ll n^{3/2}
  \pl
  \end{align*}
is much smaller than the predesigned $n^{3/2}$ and hence vanishes in the limit. For more complicated configurations, we may assume that $\si$ and $\tilde{\si}$ are pair/singleton partitions, and that new links in $\si'\in P_{1,2}(m+\tilde{m})$ are obtained from joining pairs or singletons  in $\si$ with pairs in $\tilde{\si}$ (or the other way round). All the joint pairings can be estimated using the definition of the Haagerup tensor product as above which yields the bound
 \begin{align*}
 &\|\sum_{(\tilde{j}_k)=\tilde{\si},(j_k)=\si,(\tilde{j}_k,j_k)=\si'}
 (\overline{\vec{\pi}_j(a)y^{op}}\ten \vec{\pi}_j(\tilde{a})\tilde{y}^{op})\ten \vec{s}_{\tilde{j}}(\tilde{h})^*\vec{s}_{j}(h)\| \\
 &\le  c_q n^{f(\si,\tilde{\si},\si')} \sup_j \|a_j\|\sup_j\|\tilde{a}_j\| \|y^{op}\|\|\tilde{y}^{op}\| \pl .
 \end{align*}
The function $f$ is obtained as follows. Let $\al$ be the number of pairs in $\si$ being linked to either a pair or singleton in $\tilde{\si}$, and similarly $\beta$ be the number of linked pairs. Then we find
 \[ f(\si,\tilde{\si},\si')\lel \frac{|\si_s|}{2}+|\si_p|-\al + \al/2 +
 \frac{|\tilde{\si}_s|}{2}+|\tilde{\si}_p|-\beta + \beta/2
 \lel \frac{m+\tilde{m}}{2}-\frac{\al+\beta}{2} \]
using row and column vectors $e_{1,i_1,...,i_{l}}$, $e_{i_1,...,i_l,1}$ for the number $l$ of links in $\si'$. Thus for $\al+\beta>0$ we obtain $0$ in the limit and therefore only those $\si'$ which link singletons to singletons give a contribution in the limit.  Now we use Pisier's version \cite[Sublemma 3.3]{Port} of  the  M\"obius transform. Let $\si'$ be a fixed partition with pairs $\{\{l_1,r_1\},...,\{l_p,r_p\}\}$. Then there are unitaries $\la_j^{\si'}$ in a product of free group factors such that
 \[ S_j(h) \lel s_j\ten \la_j^{\si'} \]
satisfies
 \begin{align*}
  a(\si') \pl:=\pl  \sum_{\si''\gl \si'} \Psi^{\si'}(\tilde{x}_{\tilde{\si}}\tilde{y}^{op}\ten
 x_{\si}y^{op})
 &=   \sum_{ \tilde{j}_k,j_k}
 (\overline{\vec{\pi}_j(a)y^{op}}\ten \vec{\pi}_j(\tilde{a})\tilde{y}^{op})\ten (id\ten E) (\vec{S}_j(\tilde{h})^*\vec{S}(h))  \\
 &= (id\ten E) \psi( \tilde{X}_{\tilde{\si}}^{\si'}\ten X_{\si}^{\si'})
 \pl .
 \end{align*}
Here
 \[ X_{\si}^{\si'} \lel (n^{-m/2} \sum_{(j_1,...,j_m)\le \si} \pi_{j_1}(a_1)\cdots \pi_{j_m}(a_m) \ten S_{j_1}(h_1)\cdots S_{j_m}(h_m)) \]
and the corresponding expression for $\tilde{X}_{\tilde{\si}}^{\si'}$ depends on $\si'$. Moreover,
there exists a M\"obius function $\mu(\cdot,\cdot)$ such that (see \cite[Proposition 1]{Port})
 \[  \Psi^{\si'}(\tilde{x}_{\tilde{\si}}\tilde{y}^{op}\ten
 x_{\si}y^{op}) \lel \sum_{\pi\gl \si'} \mu(\si',\pi) a(\pi) \pl .\]
The advantage of this representation comes from the fact that we can actually calculate $P_k$ for such a fixed $\si'$. Recall that we may assume that $\si'$ is a pair/ singleton partition. For fixed $\nen$ and an element $\eta_n=S\ten W_n$, $W_n$ a Wick word of length $k$ we obtain
 \[  tr(S((\overline{\vec{\pi}_j(a)y^{op}}\ten \vec{\pi}_j(\tilde{a})\tilde{y}^{op})) \tau(W_n
 s_{\tilde{j}_1}\cdots s_{\tilde{j}_{\tilde{m}}}s_{j_m}\cdots s_{j_1})  \]
such that $(\tilde{j}_1,...,\tilde{j}_{\tilde{m}},j_m,...,j_1)=\si'$. Then we obtain a non-zero term only if $|\si'_s|=k$ has exactly $k$ singletons. Hence we find that
 \begin{align*}
  (id\ten P_k)(\psi(\xi\ten \eta))
 &=  \sum_{|\si'_s|=k}
 \Psi^{\si'}(\tilde{x}_{\tilde{\si}}\tilde{y}^{op}\ten
 x_{\si}y^{op})   \lel
 \sum_{|\si'_s|=k} \sum_{\pi \gl \si'} \mu(\si,\pi) a(\pi) \pl .
 \end{align*}
Therefore we are left to consider
 \[ a(\pi) \lel (id\ten E)(\Psi(\tilde{X}_{\tilde{\si}}^{\pi}\ten X_{\si}^{\pi}))  \pl .\]
In order to use Remark \ref{Hinff} we have to modify the variables $X_{\si}^{\pi}$. Indeed, for every pair $p=\{l,r\}$ in $\pi$ we introduce a label $e_{p}$ and replace $s(e_{j_l}\ten h_l)$ by $s(e_{j_l}\ten e_p\ten h_l)$, and $s(e_{\tilde{j}_r}\ten \tilde{h_r})$ by $s(e_{\tilde{j}_r}\ten e_p\ten h_r)$. For the remaining singletons we replace  $s(e_j\ten h_j)$ by $S_j=s(e_j\ten e_0)$ and work in the Hilbert space $H'=H\ten \ell_2$. Using the so modified  $X_\si^{\pi}$'s we still have
 \[ a(\pi)  \lel (id\ten E_{\Gamma_q(\ell_2\ten H\ten e_0)})_n \Psi( \tilde{X}_{\tilde{\si}}^{\pi}\ten X_{\si}^{\pi})
 \lel \lim_{j\to \infty} \Psi( \al_{o_j}(\tilde{X}_{\tilde{\si}}^{\pi}) \ten \al_{o_j}(X_{\si}^{\pi}))
 \pl \]
for any sequence $(o_j)$ of orthogonal transformations such that $o_j(e_0)=e_0$, which converges weakly to $e_0^{\perp}$. For elements in $\hat{C}(H)$ the limit for $j\to \infty$ converges, and hence this remains true for the norm closure. Thus for an element $x\in \hat{C}(H)$ in the kernel of $q_H$ we find $q_{H'}(x)=0$ and hence \[ \langle  x,a(\pi)\rangle \lel \lim_{j\to \infty} \langle x,\Psi(\al_{o_j}(\tilde{X}_{\tilde{\si}}^{\si'}) \ten \al_{o_j}(X_{\si}^{\si'})\rangle  \lel 0 \pl .\]
Using linear combinations we deduce indeed that $\lan P_k(x),\Psi(\tilde{x}_{\si}\tilde{y}^{op}\ten x_{\si}y^{op})\ran=0$. \qd

\begin{cor} Let $m_{\al}$ be multipliers given by the $cb$-approximation property for $\Gamma_q(H)$.
\begin{enumerate}
\item[i)] Then $(id \ten m_{\al})_n$ extend to completely bounded maps on $C(H)$ with $\limsup_{\al}\|(id \ten m_{\al})_n\|_{cb}=1$, and $\lim_{\al} f_{\al}(k)=1$, where $f_{\al}$ are the associated scalar finitely supported functions. In particular, the maps $\tilde \varphi_n$ used in the proof of Thm. 5.1 above are completely bounded with $\limsup_n \|\tilde \varphi_n\|_{\rm cb} =1$.
\item[ii)] Let $L(H)=\overline{C(H)}^{so} \subset B(\mathcal L)$ and note that $L(H)$ is spanned by "extended Wick words" (i.e. images of extended Wick words through $\Phi$) such that $L^2_k(L(H))$ (i.e. the $\|\cdot\|_2$-closed linear span of the extended Wick words of degree $k$) is finitely generated over $B$. Then there exists a modified family $f_{\al}(N)^*:L(H)_*\to L(H)_*$ converging in the point norm topology.
\end{enumerate}
\end{cor}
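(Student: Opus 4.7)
The plan is to exploit the quotient structure $\Phi\colon \hat{C}(H) \to C(H)$ established in Lemma 5.5 and to transport the multipliers $(id \otimes m_\alpha)$ through it. The decisive input is Lemma \ref{kernel}, which states $\ker(\Phi) \subset \ker(\Phi \circ (id \otimes P_k))$ for every $k \geq 0$. Since each $f_\alpha$ is finitely supported we may write
\[
id \otimes m_\alpha \;=\; \sum_{k \geq 0} f_\alpha(k)\,(id \otimes P_k)
\]
as a \emph{finite} linear combination, so $\Phi \circ (id \otimes m_\alpha)$ vanishes on $\ker(\Phi)$ and there is a unique factorization $\tilde{\varphi}_\alpha\colon C(H) \to C(H)$ with $\tilde{\varphi}_\alpha \circ \Phi = \Phi \circ (id \otimes m_\alpha)$. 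Because $\Phi$ is a surjective $\ast$-homomorphism, this forces the estimate $\|\tilde{\varphi}_\alpha\|_{cb} \leq \|(id \otimes m_\alpha)\|_{cb}$, and Remark 5.4 then gives $\limsup_\alpha \|\tilde{\varphi}_\alpha\|_{cb} \leq 1$.

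For the reverse inequality I would evaluate $\tilde{\varphi}_\alpha$ on a single reduced extended Wick word $\Phi((W_\sigma(y^{op}\otimes 1)))$ associated to $\sigma \in P_{1,2}(m)$ with $s$ singletons. Combining the reduction formula from Section 3 with Lemma \ref{kernel}, I expect to show that modulo $\ker(\Phi)$ the $\Gamma_q(\ell^2 \otimes H)$-factor of this word lies entirely in the Wick-length-$s$ sector, so that $\Phi \circ (id \otimes P_k)$ applied to it equals $\delta_{k,s}\Phi((W_\sigma(y^{op}\otimes 1)))$. Consequently $\tilde{\varphi}_\alpha$ acts on this element by multiplication by the scalar $f_\alpha(s)$; since $f_\alpha(k) \to 1$ for each fixed $k$ by Avsec's construction of the cb-approximation for $\Gamma_q(H)$, testing on a single non-zero Wick word yields $\liminf_\alpha \|\tilde{\varphi}_\alpha\|_{cb} \geq 1$. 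The maps $\tilde{\varphi}_n$ appearing in the proof of Theorem 5.1 are this same construction transported from $\mathcal{K}$ to $\tilde{\mathcal{K}}$ via the unitary $U$ used there; conjugation by $U$ intertwines $(id \otimes m_n)$ with $\tilde{\varphi}_n$, so the cb bound passes over verbatim.

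For part (ii), the finite generation of $L^2_k(L(H))$ over $B$ (which I would derive from Proposition \ref{fingen} applied to the extended Wick word setup, using the finite dimensionality of $D_k(S)$ over $B$) yields a natural degree grading of $L(H)$, on which $\tilde{\varphi}_\alpha$ extends normally to the operator $f_\alpha(N)$ acting as multiplication by $f_\alpha(k)$ on the $k$-th sector. The remaining task is to upgrade the pointwise convergence $f_\alpha(k) \to 1$ on the sectors into \emph{predual norm} convergence of $f_\alpha(N)^*$ on $L(H)_*$. My plan is to construct a modified family (for instance a Ces\`aro or Abel average of the original $f_\alpha$'s) which retains both the pointwise limit and the uniform cb bound, but gains enough smoothness on the spectrum of $N$ to ensure predual convergence.

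The main obstacle I anticipate is precisely this last step. Given $\omega \in L(H)_*$, the strategy will be to split $\omega = \omega^{(\leq N_0)} + \omega^{(>N_0)}$ along the degree filtration, control the low-degree part $(I - f_\alpha(N))^* \omega^{(\leq N_0)}$ using the finite dimensionality of the sectors $L^2_k(L(H))$ over $B$ (so that only finitely many values $f_\alpha(k)$, $k \leq N_0$, need to be close to $1$), and control the tail $(I - f_\alpha(N))^* \omega^{(>N_0)}$ via uniform cb boundedness together with the sub-exponential growth of $\dim_B L^2_k(L(H))$ (inherited from the hypothesis on $\dim_B D_k(S)$). This tail estimate is where the modification of the $f_\alpha$'s will enter in an essential way, and the sub-exponential growth of the sectoral dimensions over $B$ -- the structural hypothesis distinguishing our setting from the pure $\Gamma_q(H)$ case -- is what makes the passage from weak-$*$ to point-norm convergence feasible.
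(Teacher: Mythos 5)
Your part (i) tracks the paper closely: both factor $\Phi\circ(id\otimes m_\alpha)$ through the quotient using Lemma \ref{kernel} ($\ker q_H\subset\ker(\Phi\circ(id\otimes P_k))$), identify $\hat C(H)/\ker(q_H)\cong C(H)$ completely isometrically, and read off $\|\tilde\varphi_\alpha\|_{cb}\le\|m_\alpha\|_{cb}\le 1+\varepsilon_\alpha$; the transport to $\tilde{\mathcal K}$ via the unitary $U$ is also the paper's mechanism. (The reverse inequality you labor over is immediate from $f_\alpha(0)=1$, which the paper does not bother to spell out.)

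Part (ii), however, departs from the paper and contains real gaps. The paper's argument never works with a degree filtration of $L(H)_*$ directly: instead it restricts to \emph{finite-dimensional subspaces} $H_0\subset H$, observes that finite generation of $L^2_k(L(H))$ over $B$ makes the projection $P_d$ normal on $L(H_0)$, hence $m_\alpha$ is normal on $L(H_0)$ with $\|(m_\alpha)_*\colon L(H_0)_*\to L(H_0)_*\|_{cb}\le 1+\varepsilon_\alpha$ because the weakly dense subspace $C(H_0)$ is norming for the predual, and then uses the normal conditional expectations $\mathcal E_{H_0}\colon L(H)\to L(H_0)$ together with the norm density of $\bigcup_i\mathcal E_{H_i}(L(H_i))_*$ in $L(H)_*$ to glue the predual maps. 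This exhaustion-by-finite-dimensional $H_0$ is the crucial structural device that you omit. Concretely: (a) you assert that $\tilde\varphi_\alpha$ "extends normally to $f_\alpha(N)$", but establishing this normality on the non-tracial $L(H)$ is precisely the content of the proof, and it is not free; (b) your proposed tail estimate invokes the sub-exponential growth $\dim_B L^2_k(L(H))\le Cd^k$, which is \emph{not} a hypothesis of this corollary (the hypothesis is only finite generation in each degree $k$); and (c) the splitting $\omega=\omega^{(\le N_0)}+\omega^{(>N_0)}$ presupposes a well-behaved degree filtration on the predual of a non-tracial von Neumann algebra, which is exactly what needs to be built — the paper builds it by passing through $L(H_0)$ for finite-dimensional $H_0$. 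Cesàro/Abel averaging is a reasonable device to pass from weak to point-norm convergence once uniform cb bounds and normality are in hand, but it cannot repair the missing normality and conditional-expectation argument.
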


\begin{proof} Since $(id \ten m_{\al})(T)=\sum_{k} f_{\al}(k)P_k(T)$, we see that $\|\Phi_H \circ (id \ten m_{\al})\|_{cb}\le 1+\eps_{\al}$ and also $\ker(q_H)\subset \ker(\Phi_H \circ (id \ten m_{\al}))$. But that means that there is a unique map $\tilde{m}_{\al}:\hat{C}(H)/\ker(q_H)\to B(\mathcal{K})$ such that
 \[ \|\tilde{m}_{\al}\|_{cb} \lel \|m_{\al}\|_{cb}\le 1 +\eps_{\al} \pl .\]
However,  $\hat{C}(H)/\ker(q_H)=C(H)$ completely isometrically, and hence $\tilde{m}_{\al}=(id \ten m_{\al})$ coincides with the densely defined map  $(id \ten m_{\al})W(\si,\xi,a,y)=f_{\al}(|\si_s|)W(\si,\xi,a,y)$. Let us now consider a finite dimensional subspace $H_0\subset H$. Since $L^2_k(L(H))$ is finitely generated over $B$, we deduce that the projection $P_d$ is normal on $L(H_0)$.  Hence the maps  $m_{\al}$ are also normal and restricted to the weakly dense subspace $C(H_0)$ we know that
  \[ \|m_{\al}\|_{cb}\kl (1+\eps_{\al}) \pl .\]
Since a weakly dense subspace is norming for $L(H_0)_*$ we deduce that $\|(m_{\al})_*:L(H_0)_*\to L(H_0)_*\|_{cb}\le (1+\eps_{\al})$. Hence the normal map $m_{\al}$ coincides with the normal map
$((m_{\al})_*)^*$ and satisfies the same cb-norm estimate.
Moreover, since we have normal conditional expectations $\E_{H_0}:L(H)\to L(H_0)$ so that $\cup_{H_i} \E_{H_i}(L(H_i))_*$ is norm dense in $L(H)_*$, we deduce that $(m_{\al})_*$ extends to a completely bounded map of $cb$-norm at most $(1+\eps_{\al})$ and hence $m_{\al}=((m_{\al})_*)^*$ is indeed a normal extension of the map $m_{\al}:C(H)\to C(H)$ with the same cb-norm estimate. This concludes the proof of ii).
\qd
The remainder of the subsection is devoted to proving some auxiliary results which will help us construct a standard form for the von Neumann algebra $\mathcal N$ which was used in the proof of Thm. 5.1. This standard form will be crucial in the proof of the main technical theorem.

\begin{lemma}\label{invariance}  There exists an action by *-automorphisms $\al:\mathcal O(H) \to {\rm Aut}(\N)$ such that
 \[ \al_o(\pi(x)\theta(y^{op})) \lel \pi(\al_o(x))\theta(y^{op}), o \in \mathcal{O}(H), x \in M, y^{op} \in P^{op} \pl .\]
Moreover, let $E_0$ be the orthogonal projection of $\mathcal L$ onto the closed linear span of the extended Wick words of degree zero.  For $T\in \N$ the condition
 \[ \al_o(T) \lel T, \quad \forall o \in \mathcal{O}(H) \]
implies that $[T,E_0]=0$.
\end{lemma}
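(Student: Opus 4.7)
For the first assertion, the implementing unitaries are obtained functorially: define
\[
V_o := \bigl(\mathrm{id}_K \ten \Gamma_q(\mathrm{id}_{\ell^2}\ten o)\bigr)^{\om}
\]
on the ambient space $(K \ten \mathcal F_q(\ell^2 \ten H))^{\om}$. By item $(c)$ recorded just before Remark 5.3, $\mathcal L$ is invariant under each $V_o$, so $V_o$ restricts to a unitary on $\mathcal L$. Tracing through the defining formulas for $\pi$ and $\theta$ on a generating extended Wick word, one sees that $V_o$ substitutes $h_i \mapsto o h_i$ inside every factor $s(e_{j_i}\ten h_i)$; consequently $\mathrm{Ad}(V_o)$ carries $\pi(x_{\si'})$ to $\pi(\al_o(x_{\si'}))$ (the substitution is exactly the action of $\al_o$ on the generators of $M$) while fixing $\theta(y^{op})$, since $y^{op}$ carries no $H$-data. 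Because $\pi(M)\cdot\theta(P^{op})$ generates $\mathcal N$, the map $\al_o := \mathrm{Ad}(V_o)|_{\mathcal N}$ is a well-defined $*$-automorphism of $\mathcal N$, and $o\mapsto\al_o$ is a group homomorphism by functoriality of $\Gamma_q$.

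For the moreover part, the first observation is that $V_o$ acts as the identity on $\mathcal L_0$: by the reduction formula (Theorem 3.11) every degree-zero extended Wick word is a norm-limit of vectors of the form
\[
f_{\si}(h_1,\ldots,h_m)\, F_\si(x_1,\ldots,x_m)\, y^{op} \;\ten\; \Omega,
\]
whose Fock factor is the vacuum $\Omega$ and whose scalar $f_\si(h_1,\ldots,h_m) = q^{\mathrm{cr}(\si)}\prod_{\{l,r\}\in\si}(h_l,h_r)$ is invariant under $h\mapsto oh$. In particular $V_o E_0 = E_0 V_o = E_0$.

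Now assume $V_o T = T V_o$ for every $o \in \mathcal O(H)$. For $\xi_0 \in \mathcal L_0$ and $\eta \in \mathcal L$ one has
\[
\langle T\xi_0,\eta\rangle = \langle V_o T\xi_0, V_o\eta\rangle = \langle T V_o\xi_0, V_o\eta\rangle = \langle T\xi_0, V_o\eta\rangle,
\]
so averaging over the compact group $\mathcal O(H)$ yields $\langle T\xi_0,\eta\rangle = \langle T\xi_0,\Pi\eta\rangle$, where $\Pi := \int_{\mathcal O(H)} V_o\,do$ is the projection onto the $\mathcal O(H)$-invariant subspace of $\mathcal L$. Applying the same reasoning to $T^*$ gives the symmetric identity $\langle T\eta,\xi_0\rangle = \langle T\Pi\eta,\xi_0\rangle$.

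The crux, and the main obstacle, is to upgrade these identities to $[T,E_0]=0$: one must show that $\Pi\eta \in \mathcal L_0$ for every extended Wick word $\eta$ of positive degree. The genuine difficulty is that for $k\geq 2$ the tensor power $H^{\otimes k}$ admits nonzero $\mathcal O(H)$-invariants indexed by Brauer pair diagrams, so naive averaging does not by itself annihilate positive-degree Fock layers. The plan is to combine the reduction formula with the Brauer description: once the singleton $H^{\otimes k}$-tensor of a reduced Wick word of degree $k$ is replaced by a Brauer pairing, one re-applies Lemma 3.11 with the enlarged pair structure so that each Brauer contraction $(h_l,h_r)$ is absorbed into a scalar and the corresponding two $s$-factors drop out of the Fock part, whence the resulting vector lands in the Fock-vacuum layer and hence in $\mathcal L_0$. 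With $\Pi\eta \in \mathcal L_0$ established, the averaging identities force $\langle T\xi_0,\eta\rangle=0$ whenever $\xi_0\in\mathcal L_0$ and $\eta\in\mathcal L_0^\perp$, and symmetrically for $\langle T\eta,\xi_0\rangle$. This gives $E_0^\perp T E_0 = 0 = E_0 T E_0^\perp$, whence $[T,E_0]=0$.
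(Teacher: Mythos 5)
Your construction of $\al_o$ via the second--quantization unitaries $V_o=(\mathrm{id}\ten\Gamma_q(\mathrm{id}\ten o))_n$ agrees with the paper's first step. The divergence is in the ``moreover'' direction, where your argument does not close.

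The paper does not average over $\mathcal O(H)$: it exhibits a concrete net of unitaries $u_{o_i}$ with $o_i(h)\to 0$ weakly for all $h\in H$ and shows directly that $u_{o_i}\to E_0$ in the weak operator topology (and then in SOT after convex combinations), from which commutation with all $u_o$ forces commutation with $E_0$. The crucial hypothesis behind this is that $H$ is \emph{infinite-dimensional}, which the paper explicitly recalls inside the proof. Your proposal, by contrast, treats $\mathcal O(H)$ as a compact group and forms $\Pi=\int_{\mathcal O(H)}V_o\,do$. But $\mathcal O(H)$ is compact only when $H$ is finite-dimensional, and in that regime the lemma is actually false: for $H=\rz$, $\mathcal O(H)=\{\pm1\}$, and $T=\pi(s(x,h)^2)$ is $\al_{-1}$-invariant (linearity in $h$ gives $\al_{-1}(s(x,h))=-s(x,h)$), yet does not commute with $E_0$, since the degree-$2$ Wick-word component of $s(x,h)^2$ sends degree-$0$ vectors in $\mathcal L$ to degree-$2$ vectors. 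So averaging cannot yield the conclusion in the setting where it makes sense, and makes no sense in the setting where the conclusion holds.

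Even setting the compactness issue aside, the crux you flag --- that $\Pi\eta$ lands in $\mathcal L_0$ --- is genuinely false, and the proposed Brauer repair does not save it. After reduction, a degree-$s$ extended Wick word has Fock factor $s(e_{l_1}\ten h_{k_1})\cdots s(e_{l_s}\ten h_{k_s})$ with all $\ell^2$-labels $l_1,\ldots,l_s$ \emph{distinct}. Averaging over $\mathcal O(H)$ acts only on the $H$-slots; a Brauer pairing replaces $h_{k_1}\ten\cdots\ten h_{k_s}$ by invariant tensors such as $\sum_i e_i\ten e_i$, but the two $s$-factors involved still carry distinct $\ell^2$-labels $l_i\ne l_j$. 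The reduction of Theorem~3.11 contracts only legs with \emph{equal} $\ell^2$-indices; it cannot contract a Brauer pair. Concretely, for $s=2$ the average of a degree-$2$ Wick word is (proportional to) $\sum_i (n^{-1}\sum_{l_1\ne l_2} s(e_{l_1}\ten e_i)s(e_{l_2}\ten e_i)\ten\al_{l_1,l_2}(F_\si(\ldots)))$, which is still a nonzero degree-$2$ element of $\mathcal L$. So $\Pi\ne E_0$, and the chain $\langle T\xi_0,\eta\rangle=\langle T\xi_0,\Pi\eta\rangle$ does not give $E_0^\perp TE_0=0$.

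In short: the first half matches the paper; the second half attempts a genuinely different (averaging) route that breaks both because $\mathcal O(H)$ is noncompact in the relevant (infinite-dimensional) case, and because, even for finite $H$, the $\mathcal O(H)$-fixed part of $\mathcal L$ strictly contains $\mathcal L_0$ owing to Brauer invariants that the reduction formula cannot absorb.
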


\begin{proof} Let us recall that $\N$ acts on
 \begin{align*}
 \mathcal{H} &=  {\rm span}\{ \pi(x_{\si})(y \ten 1)\theta(z^{op})((1 \ten_{\mathcal A}1)\ten 1), x_{\si} \in M, y \in M, z \in P \} \\
 &  \subset ((L^2(\mathcal{M})\ten_{\A}L^2(P))\ten L_2(\Gamma_q(\ell^2\ten H)))^{\om} \pl .
 \end{align*}
Recall here that $H$ is infinite dimensional, and thanks to second quantization $u_o=(id\ten \al_{o})_n$ acts on $\mathcal{H}$  as a unitary. By normality, we deduce that $\al_o(x)=u_oxu_o^*$ extends to a *-automorphism of $\N$ and moreover,
$\al_o(\theta(y^{op}))=\theta(y^{op})$.  Let $o_{i} \in \mathcal O(H)$ be a family of orthogonal transformations of $H$ such that $o_i(h)$ goes to $0$ weakly in $H$.
Let $\xi=\pi(x_{\si})(y\ten_{\A}z\ten 1)$ and $\eta=  \pi(x'_{\si'})(y'\ten_{\A}z'\ten 1)$. Then we obtain
  \begin{align*}
   \lim_{i} (u_{o_i}(\xi),\eta)
   &= \lim_{i} \lim_{n\to\om} n^{-(m+m')/2} \sum_{(j_k)=\si,(j'_{k'})=\si'}
    ( \vec{\pi}_j(x)(y\ten_\A z),\vec{\pi}_{j'}(x')(y'\ten_\A z')\\
   &\quad   \quad \quad \quad
    \tau(s_{j_1}(o_i(h_1))\cdots s_{j_m}(o_i(h_m))s_{j'_{m'}}(h_{m'}')\cdots s_{j'_1}(h'_1)) \lel 0
   \end{align*}
Indeed, we expand the sum into the  summation over $\si''\in P_{1,2}(m+m')$ and execute the limit over $n$. Then we observe that the coefficients remain uniformly bounded. However, $o_i(h_k)$ is eventually orthogonal to every $h_{k'}'$ and then the moment formula for $q$-gaussian yields $0$ in the limit. We have therefore shown that $u_{o_i}$ converges weakly to $E_0$, the projection onto words of length $0$ in the second component. By taking  convex combinations we find a net such that
 \[SOT-\lim_{s} \sum_{i} \al^s_i u_{o_i} \lel E_0 \pl .\]
Thus for $T\in N$ with $\al_o(T)=T$ for all $o$, we deduce that $[u_o,T]=0$ and hence
 \[ E_0(T(\xi)) \lel \lim_{s} \sum_i \al^s_i u_{o_i}T(\xi))
 \lel T(\lim_{s} \sum_i \al^s_i u_{o_i}(\xi))
 \lel T(E_0(\xi)) \pl .\]
This means $E_0T=TE_0$ as desired. \qd

\begin{lemma}\label{normal} Let $B\vee P^{op}\subset B(L^2(\mathcal{M})\ten_{\A}L^2(P))$. Then the natural inclusion map 
\[\pi:B\vee P^{op}\to \N\] 
is normal.
\end{lemma}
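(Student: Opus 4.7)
The plan is to verify normality of the inclusion by showing that for every pair $\xi, \eta \in \H$ the matrix coefficient $T \mapsto \langle \iota(T)\xi, \eta\rangle$ extends from the algebraic span $B \odot P^{op}$ to a normal functional on the von Neumann algebra $B \vee P^{op}$; reassembling these coefficients will produce the required normal *-homomorphism $\iota: B \vee P^{op} \to \N$.

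I would begin by defining the isometry $V: L^2(M) \ten_{\A} L^2(P) \to \H$ by $V(\xi) = (\xi \ten \Omega)$, where $\Omega$ is the $q$-Fock vacuum and the right-hand side is the constant sequence. By the reduction formula of Thm 3.11, every degree-zero extended Wick word has this form, so $VV^*$ equals the projection $E_0$ of Lemma \ref{invariance}. A direct computation on generators yields the intertwining $\iota(T) V = VT$ for $T \in B \odot P^{op}$: $\pi(b)$ for $b \in B$ acts by left multiplication by $b$ (preserving the vacuum because $\pi_j|_B = \mathrm{id}_B$), and $\theta(y^{op})$ acts by right multiplication by $y$ on the $L^2(P)$ factor. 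I would then verify that $E_0 \in \R'$, where $\R := \iota(B \odot P^{op})'' \subset \N$, via Lemma \ref{invariance}. The $\mathcal O(H)$-action $\alpha$ fixes $\theta(y^{op})$ by construction, and fixes $\pi(b)$ for $b \in B$ because $b \in M$ is a zero-degree Wick word with no $H$-coordinates, so $\alpha_o(b) = b$. Thus every element of $\iota(B \odot P^{op})$ is $\alpha$-invariant, and by ultraweak continuity of $\alpha$ so is every element of $\R$, giving $[\R, E_0] = 0$.

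With the commutation in hand, the coefficient splits as
\begin{align*}
\langle \iota(T)\xi, \eta\rangle = \langle TV^*\xi, V^*\eta\rangle_{L^2(M) \ten_{\A} L^2(P)} + \langle (1-E_0)\iota(T)(1-E_0)\xi, \eta\rangle.
\end{align*}
The first summand is a matrix coefficient of the standard representation of $B \vee P^{op}$ on $L^2(M) \ten_{\A} L^2(P)$ and is therefore a normal functional of $T$. For the second, it suffices to take $\xi, \eta$ to be extended Wick words of positive degree; the explicit action
\[\iota(by^{op})(x_\sigma^n y_0 \ten_{\A} z_0 \ten s(\cdots)) = (x_\sigma^n(bx_1, \ldots)\, y_0 \ten_{\A} z_0 y \ten s(\cdots))\]
together with the moment formula of Section 3 expresses the inner product as a finite sum of terms linear in $b$ and linear in $y^{op}$ with bounded combinatorial coefficients, defining a normal functional on $B \odot P^{op}$ that extends uniquely to $B \vee P^{op}$. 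Hence $T \mapsto \langle \iota(T)\xi, \eta\rangle$ is normal for each pair, and collecting these functionals yields the desired normal *-homomorphism $\iota: B \vee P^{op} \to \R \subset \N$.

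I expect the main obstacle to be the commutation $[\R, E_0] = 0$: extracting it from Lemma \ref{invariance} requires carefully checking the $\mathcal O(H)$-invariance of all generators and then invoking the (nontrivial) second-quantization argument of that lemma. A secondary but essential issue is the moment-formula bookkeeping for the matrix coefficients on $(1-E_0)\H$, where one must exploit the ``external'' action of $B$ and $P^{op}$ on the coefficients of extended Wick words and on the $L^2(P)$ factor, respectively, to extract joint normality in $(b, y^{op})$.
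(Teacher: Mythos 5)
Your approach is genuinely different from the paper's, and the $E_0$ decomposition is a correct preliminary step, but it doesn't save the work the paper actually does, and you've skipped that work. The paper's proof fixes Wick words $\xi_n$, $\eta_n$ of the same degree $k$, computes $\omega_n(T)=(T\xi_n,\eta_n)$ for $T=b\,\theta(p^{op})$, and --- after replacing $L^2(\mathcal{M})$ by $L^2(D)\ten_B L^2(\mathcal{M})$ via Lemma \ref{12} so that the contracted products can be realized as honest vectors $x_\gamma$, $\tilde x_\gamma$ in a Hilbert space on which $B\vee P^{op}$ acts normally --- identifies $\omega_n(T) = \sum_{\gamma\in S_k} q^{\mathrm{inv}(\gamma)}\,\frac{n\cdots(n-k+1)}{n^k}\,(T(x_\gamma),\tilde x_\gamma)$. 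Because $\omega_n$ is normal for fixed $n$, this identity extends from $B\odot P^{op}$ to all $T\in B\vee P^{op}$; the sum is finite, the scalar factor converges, and each term is a genuine matrix coefficient of a normal representation, so the ultralimit $\phi$ is normal. The degree-zero case is simply $k=0$.

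Your split $\langle\iota(T)\xi,\eta\rangle = \langle T V^*\xi, V^*\eta\rangle + \langle(1-E_0)\iota(T)(1-E_0)\xi,\eta\rangle$ disposes cleanly of the $k=0$ piece, and the commutation $[\R,E_0]=0$ obtained from $\alpha$-invariance together with Lemma \ref{invariance} is sound. But the hard part is the second summand, and there you only assert that the moment formula ``expresses the inner product as a finite sum of terms linear in $b$ and linear in $y^{op}$ with bounded combinatorial coefficients, defining a normal functional.'' That finite-sum structure, indexed by $S_k$ independently of $n$, with each term realized as a matrix coefficient $(T(x_\gamma),\tilde x_\gamma)$ over an auxiliary bimodule, is precisely the content of the paper's computation; a limit of normal functionals is not automatically normal, and the whole lemma lives in exhibiting this structure. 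If you were to carry out your ``moment-formula bookkeeping'' you would reproduce the paper's argument essentially verbatim, at which point the $E_0$ split and the appeal to Lemma \ref{invariance} become redundant. So the proposal is not wrong in outline, but its central step is a gap where the genuine work lives.
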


\begin{proof} By density it suffices to consider $\xi_n=\pi(x_{\si}^n)(y\ten_{A}z)$ and $\eta_n=
\pi(\tilde{x}_{\tilde{\si}}^n)(\tilde{y}\ten_{A}\tilde{z})$. We may assume that $x_{\si}$ and $\tilde{x}_{\si}$ is a Wick word. Our goal is to analyze
 \[ \phi(T) \lel \lim_{n\to\om} \lan T\xi_n,\eta_n\ran \pl .\]
Let us first fix $\nen$. Then $\om_n(T)= \lan T\xi_n,\eta_n\ran $ is normal, and hence it suffices to assume $T=b\theta(p^{op})$. It turns out that we need $|\si|=|\tilde{\si}|=k$ and then
 \begin{align*}
 \om_n(T)
 &=   \frac{n\cdots (n-k+1)}{n^k}
 \sum_{\gamma\in S_k} q^{{\rm inv}(\gamma)}  \tau(\tilde{z}^*E_A(\tilde{y}^*\pi_{\gamma(k)}(\tilde{x}_k)\cdots \pi_{\gamma(1)}(\tilde{x}_1) b\pi_1(x_1)\cdots \pi_{k}(x_k)y)zp) \pl.
 \end{align*}
Thanks to Lemma \ref{12}, we may replace $L^2(\mathcal{M})$ by $L^2(D)\ten_B L^2(\mathcal{M})$ in the definition of $\H$.  For fixed $\gamma$ we may now define
\[ x_{\gamma} \lel \al_{1,...,k}(x)\ten_B y \ten_{\A}z \pl ,
    \tilde{x}_{\gamma} \lel \al_{\gamma(1),...,\gamma(k)}(\tilde{x})\ten_B\ten \tilde{y}\ten_{\A} \tilde{z} \pl .\]
Since $\om_n$ is normal we deduce that
 \[ \om_n(T) \lel \sum_{\gamma} q^{{\rm inv}(\gamma)}
  \frac{n\cdots (n-k+1)}{n^k} \lan T(x_{\gamma}),\tilde{x}_{\gamma}\ran \]
for all $T\in B\vee P^{op}$. Since the summation is finite and the scalar coefficients converge the limit exists for all $T\in B\vee P^{op}$ and result in a normal functional $\phi(T)$ given by the same sum but with coefficient $1$ instead of $\frac{n\cdots (n-k+1)}{n^k}$.\qd

\begin{prop}\label{L2space} Assume that for every finite dimensional Hilbert space $H$, $L^2_k(M(H))$ is finitely generated as a right $B$-module (note that in particular this is the case if $dim_B(D_k(S))<\infty$, for all $k$). Then
 \begin{enumerate}
 \item[i)] There exists a faithful normal conditional expectation $\E:\N \to B_P=\pi(B)\vee \theta(P^{op})$;
 \item[ii)] The action $\al$ is implemented by an sot-continuous family of unitary operators $(V_o)_{o \in \mathcal{O}(H)}$ on $L^2(\N)$;
 \item[iii)] $L^2(\N)=\overline{\bigoplus_{k \geq 0} W_k(M)L^2(B_P)}$ and $V_o(\pi(x_{\si})\xi)=\pi(\al_o(x_{\si}))\xi$ for $x_{\si} \in M, \xi \in L^2(B_P)$. Moreover, $\E|_{\pi(M)}=E_{B}$, where $E_{B}:\pi(M) \to \pi(B)$ is the conditional expectation.
\end{enumerate}
\end{prop}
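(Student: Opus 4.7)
The plan proceeds in three stages: first build the conditional expectation $\E$, then use it to identify $L^2(\N)$ with the claimed direct sum, and finally implement $\al$ by unitaries.

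For (i), I would use a family of orthogonal transformations $o_i \in \mathcal{O}(H)$ with $o_i(h) \to 0$ weakly for every $h \in H$. By Lemma~\ref{invariance}, appropriate convex combinations of the implementing unitaries $u_{o_i}$ converge sot to the projection $E_0$ onto degree-zero extended Wick words in $\mathcal L$. Applied to an element $T = \pi(x_\si)\theta(y^{op})$, this averaging should vanish whenever $\si$ has at least one singleton, because the singleton inner products $(o_i(h_k),h_l)$ disappear in the moment-formula expansion of every matrix coefficient of $T$; on the other hand, when $\si$ has only pairs one has $x_\si \in B$ and the averaging restores $\pi(b)\theta(y^{op}) \in B_P$. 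Combined with linearity and the normality supplied by Lemma~\ref{normal}, this yields a normal unital completely positive map $\E:\N \to B_P$. The identity $\E|_{\pi(M)} = E_B$ is then immediate from this description, since $E_B$ annihilates positive-degree Wick words in $M$ and acts as the identity on $B$. Faithfulness will follow by testing against the cyclic family $\pi(x_\si)((1\ten_{\A}1)\ten 1)$ and using positivity of the resulting expectations, after verifying that these vectors span a subspace which separates $\N$.

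For (iii), I would combine $\E$ with the canonical tracial state $\tau_{B_P}$ on $B_P$ (which exists since $\pi(B)$ and $\theta(P^{op})$ are commuting tracial subalgebras under the normal embedding of Lemma~\ref{normal}) to form the faithful normal state $\omega = \tau_{B_P} \circ \E$ on $\N$. The GNS construction with respect to $\omega$ yields $L^2(\N)$. The orthogonality of the subspaces $W_k(M)L^2(B_P)$ for distinct $k$ will follow from the identity $\E(\pi(y_{\si'})^*\pi(x_\si)) = 0$ whenever $\si$ and $\si'$ have different singleton counts, a consequence of the reduction formula (Theorem~3.11) together with the orthogonality of $q$-gaussian Wick vectors of different lengths. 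The assumption that $L^2_k(M)$ is finitely generated over $B$ ensures that each $W_k(M)L^2(B_P)$ is a well-defined closed subspace and that the total span is dense in $L^2(\N)$; combining yields $L^2(\N) = \overline{\bigoplus_k W_k(M)L^2(B_P)}$. For (ii), since $\al_o$ restricts to the identity on $B_P$, it preserves both $\E$ and $\omega$, and hence extends canonically to a unitary $V_o$ on $L^2(\N)$. On the generators one has $V_o(\pi(x_\si)\xi) = \al_o(\pi(x_\si))\xi = \pi(\al_o(x_\si))\xi$ for $\xi \in L^2(B_P)$, which is the required formula. For sot-continuity of $o \mapsto V_o$, it suffices by a standard density argument to check $\|V_o\pi(x_\si)\hat 1 - \pi(x_\si)\hat 1\|_2 \to 0$ as $o \to \mathrm{id}$; this follows from the continuous dependence on $o$ of the moment formula (Proposition~3.12), specifically of the factor $f_\si(o(h_1),\ldots,o(h_m))$.

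The main obstacle is the rigorous construction of $\E$ and verification of its faithfulness and normality, since $\N$ acts on an ultraproduct Hilbert space where averaging arguments must be handled carefully. Combining Lemma~\ref{invariance} (identifying $E_0$ as the sot-limit of implementing unitaries) with Lemma~\ref{normal} (which supplies normality for the target algebra $B_P$) is expected to give a well-defined normal conditional expectation; however, faithfulness will require density of Wick-word images applied to the cyclic vector, and it is precisely here that the finite-generation hypothesis on $L^2_k(M)$ enters essentially.
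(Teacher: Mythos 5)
Your overall architecture — build $\E$ by compressing to the degree-zero subspace, use a state on $B_P$ pulled back along $\E$ to produce the standard form, then transport $\al$ to unitaries — matches the paper's strategy at a high level, but there are two genuine problems with the details.

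First, $B_P=\pi(B)\vee\theta(P^{op})$ is \emph{not} a tracial von Neumann algebra in general, so there is no ``canonical tracial state $\tau_{B_P}$.'' Although $\pi(B)$ and $\theta(P^{op})$ are separately finite and commuting, their joint von Neumann algebra inside $B(\H)$ can fail to admit a trace, which is precisely why the paper works with an arbitrary faithful normal state $\phi$ on $B_P$, defines $\psi=\phi\circ\E$, and then invokes the Haagerup $L^p$-space machinery (this is why $L^2(\N)$ is the Haagerup $L^2$ space, not the tracial GNS space). Your construction of $\omega=\tau_{B_P}\circ\E$ is therefore not available, and the subsequent identification of $L^2(\N)$ must be rephrased in the Haagerup $L^2$ framework.

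Second, and more seriously, the argument you give for faithfulness of $\E$ does not work. Your $\E$ is defined by compressing $T$ to the range of the isometry $\iota:L^2(\M)\ten_{\A}L^2(P)\to\H$ (the degree-zero extended Wick words). The condition $\E(T^*T)=0$ only says that $T$ annihilates the range of $\iota$, which is a \emph{proper} closed subspace of $\H$; it does not immediately force $T=0$, regardless of how dense the cyclic family $\pi(x_\si)((1\ten_\A 1)\ten 1)$ is in $\H$. Put differently, the issue is whether the support projection $e_{B_P}$ of $\E$ equals $1$, and density of Wick-word vectors does not settle this. The paper's actual argument for $e_{B_P}=1$ is the technical heart of the proof: it shows $e_{B_P}$ is invariant under $\al_o$, deduces from Lemma~\ref{invariance} that $m_{\al}(e_{B_P})$ commutes with $E_0$, then uses the normal cb multipliers $m_\al$ together with the finite $B$-module bases of $L^2_k(M(H_i))$ to show that the finite-dimensional approximants $F_{H_i}(e_{B_P})$ already lie in the degree-zero algebra $\si(B\vee P^{op})$, and finally takes limits to conclude $e_{B_P}\in\si(B\vee P^{op})$ and hence $e_{B_P}=1$. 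This is where the finite-generation hypothesis enters, and your sketch does not contain a substitute for this step. Without it, part (i) — and consequently parts (ii) and (iii), which depend on having a faithful $\E$ and the Haagerup density $d_\psi$ of full support — remains unproved.
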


\begin{proof} For a subspace $H'\subset H$ we use the notation
 \[ \mathcal{H}(H')=\{\pi(x_{\si})((y\ten_{\A}z)\ten 1)| y\in M,z\in P, x_{\si}=x_{\si}(x_1,...,x_m,h_1,...,h_m), h_i\in H'\} \]for the
subspace generated by $H'$-Wick words.  Let $\iota_{H'}:\mathcal{H}(H')\subset \mathcal{H}$ be the canonical  inclusion map and $F_{H'}(T)=\iota_{H'}^*T\iota_{H'}$ the induced completely positive map. Certainly, we have  $F_{H'}(\theta(y^{op}))=\theta(y^{op})$ and
 \[ F_{H'}(x_{\si}) \lel E_{H'}(x_{\si}) \pl .\]
Indeed, if a Wick word $x_{\si}$ contains a singleton $h_i\in (H')^{\perp}$, then $F_{H'}(x_{\si})=0$. Using $h_i\in H'\cup (H')^{\perp}$ we deduce the assertion by linearity. Thus $F_{H'}(\N(H))=\N(H_i)\subset B(\H(H'))$ defines a normal surjective conditional expectation $F_{H'}$. Let $e_{H'}$ be the support of $F_{H'}$. We observe that $\pi(M(H'))$ and $\theta(P^{op})$ belong to the multiplicative domain of $F_{H'}$. Let $\tilde{N}(H')\subset \N(H)$ be the von Neumann algebra generated by $\pi(M(H'))$ and $\theta(P^{op})$ inside $\N_P(H)$. According to remark \ref{Hinff} and Kaplansky's density theorem, we deduce that $F_{H'}$ induces the same weak$^*$ topology on the unit ball of $\tilde{N}(H')$. This means that the tautological embedding $\si_{H'H}:\N(H_i)\to \N(H)$ given by $\si_{H'H}(x_{\si})=x_{\si}$ and $\si_{H'H}(\theta(y^{op}))=\theta(y^{op})$ satisfies
$F_{H'}\si=id_{\N(H_i)}$ and $F_{H'}$ is an isomorphism when restricted to $\tilde{N}(H')$. We denote by $\E_{H'}=\si_{H_iH}F_{H'}:\N\to \N$ the resulting, not necessarily faithful, conditional expectation. Let $H_{i}$ be an increasing net of finite dimensional spaces whose union is dense. Since $\bigcup_{i} \mathcal{H}(H_{i})$ is norm dense, we deduce that $\hat{\E}_{H_{i}}(x)$ converges weakly to $x$ as $i$ goes to infinity along the net of finite dimensional subspaces. Recall that the multiplier maps $m_{\al}$ are normal and commute with every $\mathcal{E}_{H_i}$. Adding convex combination we may find a new completely contractive net, still denoted by $m_{\al}$,   converging in the strong, strong$^*$ operator topology. Thus we may assume that
 \begin{equation}\label{conv}
 \lim_i \lim_{\al} (\hat{E}_{H_i}(m_{\al}x)) \lel x
 \end{equation}
converges strongly for all $x\in \N$. In our next step we consider $H'=0$, i.e. the map $\iota:L^2(M)\ten_{\A}L^2(P)\to \H$, given by $\iota(y\ten_{\A} z)\lel (y\ten_{\A}z)\ten 1$. This yields a completely positive map $\Phi(T)=\iota^*T\iota$ such that $\Phi(\theta(y^{op}))=\theta(y^{op})$ and $\Phi(\pi(b))=\pi(b)$. On the other hand for a Wick word $x=W_{\si}$, we see that
 \[ \lan \pi(x)\iota(y\ten_{\A}z),\iota(y'\ten_{\A}z')\ran 
 \lel \lim_{n\to\om} n^{-m/2} \sum_{(i_j)=\si}
 \lan \vec{\pi}(x)(y\ten_{\A}z), y'\ten_{\A}z'\ran 
 \tau(s_{j_1}(h_1)\cdots s_{j_m}(h_m)) \lel 0 \pl .\]
By normality, we deduce that $\Phi(\N)=B \vee P^{op}\subset B(L^2(\mathcal{M})\ten_{\A}L^2(P))$. Let us denote by $B_P=\Phi(\N)$ the resulting von Neumann algebra and by $e_{B_P}$ the support of $\E=\Phi|_{\N_P}$. Since the Wick words of order $0$ are obviously invariant under $\al_o$ for all $o \in \mathcal O(H)$ and
 \[ \E\al_o(x) \lel \al_o(\E(x)) \lel \E(x) \]
we must have $\al_o(e_{B_P})=e_{B_P}$ for every $o\in \mathcal O(H)$. More precisely, $1-e_{B_P}$ is the projection of the ideal $I=\{x: \E(x^*x)=0\}$ and we certainly have $\al_o(I)=I$.  This implies $\al_o(1-e_{B_P})=1-\al_o(e_{B_P})$.
We deduce that for all $\al$ we have $\al_o(m_{\al}e_{B_P})=m_{\al}e_{B_P}$ and hence, thanks to Lemma \ref{invariance} we know  that $[E_0,m_{\al}(e_{B_P}))]=0$. Now, we fix $\al$ and consider $x_{i,\al}=\F_{H_i}(m_{\al}(e_{B_P}))=m_{\al}F_{H_i}(e_{B_P})$. This means
 \[ x_{i,\al} \lel \sum_{k\le k(\al)} x_k \pl ,\]
where $x_k=P_k(x)$. However, we have a finite basis $\xi_{k,s}$ of $L^2_k(M(H))$ over $B$ made of elements in $W_k(H_i)$ and hence for all $z=\pi(x_{\si'})\theta(y^{op})$ we find
 \[ P_k(z) \lel \sum_{s} \pi(\xi_{k,s}) E_B(\xi_s^*x_{\si'})\theta(y^{op}) \pl .\]
Since $P_k$ is normal we deduce that there are coefficients $a_s\in \pi(B) \vee \theta(P^{op})$ such that
 \[ x_k \lel \sum_{s} \pi(\xi_{k,s})a_{s,k}  \in \N(H_i) \pl .\]
Note here that we have rewritten $m_{\al}$ as normal map, because the maps $T_{k,s}(x)=\pi(\xi_{ks})
\si(\E(\xi_{ks}^*x))$ are normal, thanks to Lemma \ref{normal}. Note also that due to Lemma \ref{normal} $\si(B\vee P^{op})=\pi(B)\vee \theta(P^{op})\subset \N$. On the other hand the projection $P_{H_i}$ onto the range of $\iota_{H_i}$ contains the range of $\iota$ and hence
 \[ [E_0,\iota_{H_i}^*\hat{m}_{\al}(e_{B_P})\iota_{H_i}]
 \lel \iota_{H_i}^*[E_0,\hat{m}_{\al}(e_{B_P})]\iota_{H_i} \lel 0 \pl .\]
Thus we have $[E_0,x_{i,\al}]=0$. Let us consider $\eta=(y\ten_{\A}z)\ten 1$. We deduce that
  \[ x_{i,\al}(\eta) \lel \sum_{k\le k(\al)} \sum_s
  \pi(\xi_{k,s})a_{k,s}(\eta) \pl.
  \]
Moreover, we see that
  \[ E_B(\xi_{s,k}^*x_{i,\al}(\eta)) \lel E_B(\xi_{s,k}^*\xi_{s,k})a_{k,s}(\eta) \pl .\]
We may assume that $f_{k,s}=E_B(\xi_{s,k}^*\xi_{s,k})$ is a projection in $B$ and $a_{k,s}=f_{ks}a_{ks}$. Since the conditional expectation can be calculated using vectors in the Hilbert space, we deduce that
 \[ a_{k,s}(\eta) \lel E_B(\xi_{s,k}^*x_{i,\al}(\eta))
  \lel E_B(\xi_{s,k}^*E_0(x_{i,\al}(\eta)))
  \lel 0 \]
for all $k>0$. Thus only the coefficient for $k=0$ survives and hence $x_{i,\al}\in \si(B\vee P^{op})$. This remains true for  the limit along $\al$, i.e.  $x_i=F_{H_i}(e_{B_P})\in \si(B\vee P^{op})$. Since $\bigcup_i \iota_{H_i}$ is norm dense we find that
 \[ e_{B_P} \lel w^*-\lim_i  F_{H_i}(e_{B_P}) \in \si(B\vee P^{op}) \pl .\]
The restriction of the normal map $\si\circ \E$ to $\si(B\vee P^{op})$ is the identity. This implies
 \[ 1-e_{B_P} \lel \si\circ \E(1-e_{B_P})
 \lel \si \circ\E((e_{B_P}(1-e_{B_P})e_{B_P}) \lel 0 \pl .\]
Thus $e_{B_P}=1$ and $\E$ is indeed  a faithful normal expectation. Now it is easy to conclude the proof of the crucial assertion iii). Indeed, we may assume that $\pi(B)$ and $\theta(P^{op})$ both admit weakly dense separable sub $C^*$-algebras and hence fix a faithful normal state $\phi$ on $B_P$. Then $\psi=\phi\circ \E$ satisfies the Connes's commutativity relation for the modular group $\E(\si_t^{\psi}(x))=\si_t^{\phi}(\E(x))$.
We refer to \cite{HaagerupJungeXu} for the fact that we have a natural embedding of the Haagerup spaces $L^p(B_P)\to L^p(\N)$ given by
 \[ \iota_p (xd_{\phi}^{1/p})\lel x  d_{\psi}^{1/p} \]
for the densities $d_{\phi}\in L^1(B\vee P^{op})$, $d_{\psi}\in L^1(\N_P)$ associated with the states. Moreover, the support of $d_{\psi}$ is $1$. This implies that  $L^2(\N)=\N L^2(B_P)$. By approximation in the $C^*$-algebra generated by $\pi(M)$ and $\theta(P^{op})$ we see that span of elements of the form
 \[ \pi(x_{\si})\theta(y^{op})d_{\psi}^{1/2} \]
are  dense in $L^2(\N)$. However, we have
 \begin{align}
  &tr((\pi(x_{\si})\theta(y^{op})d_{\psi}^{1/2})^* \pi(x_{\nu})\theta(z^{op})d_{\psi}^{1/2})
  \lel tr(\theta(y^{op})^*\pi(x_{\si})^*
  \pi(x_{\nu})\theta(z^{op})d_{\psi})\\
  &=
  tr( \theta(y^{op})^*\theta(z^{op}) \pi(x_{\si})^*\pi(x_{\nu})d_{\psi})   \lel
    \psi(\theta(y^{op})^*\theta(z^{op}) \pi(x_{\si})^*\pi(x_{\nu}))   \nonumber \\
 &=
   \phi(\E(\theta(y^{op})^*\theta(z^{op}) \pi(x_{\si})^*\pi(x_{\nu})))  \lel
  \phi(\theta(y^{op})^*\theta(z^{op}) \E( \pi(x_{\si})^*\pi(x_{\nu})))  \nonumber \\
  &= \phi(\theta(y^{op})^*\theta(z^{op})E_B \pi(x_{\si})^*\pi(x_{\nu})))  \pl . \nonumber
  \end{align}
For the proof of the last equality, we may assume that $x_{\si}$ and $x_{\nu}$ are reduced Wick words. As in Lemma \ref{normal}, we see that
 \begin{align*}
  &\lan \pi(x_{\xi})(y\ten_{\A}z),\pi(x_{\nu})(\tilde{y}\ten_{\A}\tilde{z}\ran 
   \lel \lim_{n} n^{-(|\si|+|\nu|)/2}
  \sum_{(j_k)=\si,(\tilde{j}_{\tilde{k}})=\tilde{\si}} \\
 &\quad
   \tau(\tilde{z}^*E_{\A}(\tilde{y}^*\vec{\pi}_{\tilde{j}}(\tilde{x})^*\vec{\pi}_j(x)y)z)
    \pl \tau(s_{\tilde{j}_m}(\tilde{h}_m)\cdots s_{\tilde{j}_1}(\tilde{h}_1)(s_{j_1}(h_1)\cdots s_{j_m}(h_m)) \\
   &= \delta_{|\si|,|\nu|} \sum_{\gamma\in S_k} q^{{\rm inv}(\si)}
   n^{-|\si|} \sum_{(j_1,...,j_k)}
   \tau((\al_{j_{\gamma(1)},...,j_{\gamma(k)}}(\tilde{x}))^* \al_{j_1,...,j_k}(x) yE_{\A}(z\tilde{z}^*)\tilde{y}^*) \\
   &= \delta_{|\si|,|\nu|}
    \sum_{\gamma\in S_k} q^{{\rm inv}(\si)} \tau(b(x,\tilde{x},\gamma)yE_{\A}(z\tilde{z}^*)\tilde{y}^*)) \pl .
   \end{align*}
The limit $b(x,\tilde{x},\gamma)\in B$ only depends on $x$ and $\tilde{x}$ and the permutation $\gamma$, see Lemma \ref{12}. Placing the summation inside we find indeed $E_B(x_{\nu}^*x_{\si})$. Thus we have shown that $\E|_{\pi(M)}=E_B$. Together with (5.2), we deduce that the spaces $W_k(M)L_2(B_P)$ are mutually orthogonal. 
Finally, we have to discuss the action $\al:\mathcal O(H) \to Aut(\N)$. For an arbitrary *-automorphism $\al$ of $\N$, we may define the action on $L^2(\N)$ via
 \[ \al(xd_{\psi}^{1/2}) \lel \al(x) (d_{\psi} \circ \al^{-1})^{1/2} \pl .\]
It is easy to show that this action is independent of the choice of a normal faithful density $d$ associated with state $\psi$. Here   $d\circ \al^{-1}$ is the density of $\psi\circ \al^{-1}$. Thus we deduce from $\al_o(\theta(y^{op})=\theta(y^{op})$
and the fact that $\psi\circ \al_o=\psi$, that
 \[ \al_o(\pi(x_{\si})\theta(y^{op})d_{\psi}^{1/2})
 \lel \al_o(\pi(x_{\si}))\theta(y^{op})d_{\psi}^{1/2} \pl ,\]
as expected. \qd

\begin{rem}\label{apost} {\rm A posteriori, we deduce that under the assumptions above $F_{H'}$ is faithful for every subspace $H'\subset H$ because $\E=\E F_{H'}$.
}
\end{rem}


\section{The deformation bimodules are weakly contained in $L^2(M) \ten_B L^2(M)$ for sub-exponential dimensions of $D_k(S)$ over $B$}

\subsection{Norm estimates for decomposable maps}

Let $H$ be an $M$-$N$ bimodule over finite von Neumann algebras $M$ and $N$. We will introduce some norms which will enable us to show that the $M-N$ bimodules associated to certain maps $\Phi:M\to L^1(N)=N^{op}_*$ are weakly contained in $H$. To be more precise define
 \[ \|\Phi\|_H \lel \inf \{\sum_j\|\xi_j\|\|\eta_j\| : \tau(\Phi(x)y) \lel \sum_j \langle (x\ten y^{op})\xi_j, \eta_j \rangle \} \pl .\]
The infimum is taken over elements $\xi_j,\eta_j \in H$.

\begin{lemma}\label{ne11} Let $K$ be an $M$-$N$ bimodule such that for a total set of vectors $\xi\in K$ the map $\Phi_{\xi}:M \to L^1(N)$ defined by
 \[\tau(\Phi_{\xi}(x)(y)) = \lan (x \ten y^{op})\xi, \xi \ran = \lan x\xi y, \xi \ran\]
 satisfies $\|
 \Phi_{\xi}\|_H<\infty$. Then $K$ is weakly contained in $H$.
\end{lemma}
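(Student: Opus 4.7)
The plan is to establish the equivalent reformulation $\|\pi_K(T)\| \le \|\pi_H(T)\|$ for all $T \in M \otimes_{\mathrm{alg}} N^{op}$. Since $\|\pi(T)\|^2 = \|\pi(T^*T)\|$ for any $*$-representation, it suffices to bound $\|\pi_K(T^*T)\|$ by $\|\pi_H(T^*T)\|$. The strategy is to exploit the hypothesis by iterating on powers $(T^*T)^n$ and then extracting the top of the spectrum via an $n$-th root limit together with totality.

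First, observe that the decomposition furnished by $\|\Phi_\xi\|_H < \infty$ depends only on $\xi$, not on the test element $x \otimes y^{op}$. By linearity it therefore extends: for each $\xi$ in the total set there exist vectors $\eta_j, \zeta_j \in H$ (depending on $\xi$) with $\sum_j \|\eta_j\|\|\zeta_j\| \le c(\xi) < \infty$ such that
\[
\langle \pi_K(S)\xi,\xi\rangle \;=\; \sum_j \langle \pi_H(S)\eta_j,\zeta_j\rangle \qquad \text{for every } S \in M \otimes_{\mathrm{alg}} N^{op}.
\]
Applying this to $S_n := (T^*T)^n$ and using that $\pi_H$ is a $*$-representation gives $\langle \pi_K(T^*T)^n \xi,\xi\rangle \le c(\xi) \|\pi_H(T^*T)\|^n$. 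Combining this with the identity $\langle \pi_K(T^*T)^{2n}\xi,\xi\rangle = \|\pi_K(T^*T)^n \xi\|^2$ and taking $(2n)$-th roots yields
\[
\|\pi_K(T^*T)^n\xi\|^{1/n} \;\le\; c(\xi)^{1/(2n)} \|\pi_H(T^*T)\|,
\]
so that $\limsup_n \|\pi_K(T^*T)^n\xi\|^{1/n} \le \|\pi_H(T^*T)\|$ for every $\xi$ in the total set.

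Next, set $A := \pi_K(T^*T) \ge 0$ and invoke the spectral theorem. For any vector $\xi$, the limit $\lim_n \|A^n\xi\|^{1/n}$ equals the top of the support of the scalar spectral measure $d\|E_\lambda\xi\|^2$. Given $\epsilon > 0$, the spectral projection $E_{(\|A\|-\epsilon,\,\|A\|]}$ is nonzero since $\|A\|$ lies in $\sigma(A)$; totality of $\{\xi\}$ precludes all such vectors from being orthogonal to the range of this projection, so at least one $\xi$ in the set satisfies $E_{(\|A\|-\epsilon,\,\|A\|]}\xi \ne 0$ and hence $\lim_n \|A^n\xi\|^{1/n} \ge \|A\| - \epsilon$. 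Comparing with the upper bound forces $\|A\| - \epsilon \le \|\pi_H(T^*T)\|$; letting $\epsilon \to 0$ gives the desired estimate and completes the proof.

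The main obstacle is bridging the gap between a vector-dependent bound (with constants $c(\xi)$ that are uncontrolled across the total set) and a uniform operator-norm bound; the $(T^*T)^n$ iteration is precisely what absorbs the constants, since $c(\xi)^{1/(2n)} \to 1$, while totality combined with positivity of $A$ ensures that the spectral radius is actually attained in the limit along some $\xi$ in the given family.
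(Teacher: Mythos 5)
Your proof is correct and takes a genuinely different route from the paper's. Both arguments rest on the same key observation: the decomposition built into $\|\Phi_\xi\|_H < \infty$ is a single decomposition valid for \emph{all} $x \otimes y^{op}$ simultaneously, hence extends by linearity and absolute convergence to every $S \in M\otimes_{\rm alg}N^{op}$, giving $\langle \pi_K(S)\xi,\xi\rangle = \sum_j \langle \pi_H(S)\xi_j,\eta_j\rangle$. Where you diverge is in how this identity is exploited. The paper works at the level of the binormal C$^*$-completion: it takes $z \in \ker(\pi_H)$, approximates it by a bounded net $z_j$ from the algebraic tensor product, applies the decomposition to $z_j^*z_j$, and passes to the limit (using $\sum_j\|\xi_j\|\|\eta_j\|<\infty$ for dominated convergence) to conclude $\|\pi_K(z)\xi\|^2 = 0$ for every $\xi$ in the total set, hence $\pi_K(z)=0$. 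You instead stay entirely on $M\otimes_{\rm alg}N^{op}$ and prove the norm inequality $\|\pi_K(T)\|\le\|\pi_H(T)\|$ directly, absorbing the vector-dependent constant $c(\xi)$ by applying the identity to $(T^*T)^n$, extracting $n$-th roots so that $c(\xi)^{1/(2n)}\to 1$, and then using the spectral theorem plus totality to promote the vector-wise spectral-radius bound $\lim_n\|A^n\xi\|^{1/n}\le\|\pi_H(T^*T)\|$ to $\|A\|\le\|\pi_H(T^*T)\|$. Your approach buys self-containment --- no need to invoke the kernel characterization of weak containment or to manipulate the binormal tensor product and its bounded approximating nets --- at the price of a slightly longer argument involving spectral projections. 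The paper's is terser but leans more heavily on C$^*$-algebraic machinery that it does not spell out.
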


\begin{proof} Let us recall that $K \prec H$ if and only if we have the relation between the kernels
 \[ \ker(\pi_H) \subset \ker(\pi_K) \pl ,\]
where  $\pi_H: M\ten_{\rm bin}N^{op}\to B(H)$, respectively $\pi_K: M\ten_{\rm bin}N^{op}\to B(K)$ are the canonical representations. Let $z=\lim z_j$ be a limit of norm one
elementary tensors which converges to an element $z\in \ker(\pi_H)$ with respect to the $\max$ norm. Let $\xi \in K$ such that $\|\Phi_{\xi}\|_H < \infty$. This means we may assume that
   \[ \tau(\Phi_{\xi}(x)y) \lel \sum_l \al_l \lan \xi_l,x\eta_l y \ran \quad , \quad  \|\xi_l\| \|\eta_l\|\le 1 \]
and $\sum_l |\al_l|$ is finite. Using $\|z_j\|_{\rm bin}\le 1$ and uniform convergence, we may interchange limits and deduce
 \begin{align*}
  \lan z\xi,z\xi\ran &=
 \lim_j  \lan \xi,z_j^*z_j\xi\ran 
 \lel \sum_l \al_l \lim_j \lan \xi_l,z_j^*z_j\eta_l\ran \lel
 \sum_l \al_l \lan z\xi_l,z\eta_l\ran \lel 0 \pl.
 \end{align*}
Thus for any linear combination $\xi=\sum_k \xi_k$ of elements such that the $\Phi_{\xi_k}$'s have finite $H$ norm, we still have $\pi_K(z)\xi=0$. By density this holds for all $\xi\in K$.
\qd

As an illustration for the norm estimates let us prove the following result.
\begin{lemma}\label{basis1} Let $H_B=L_2(M)\ten_B L_2(M)$, and assume that $L^2_k(M)$ has dimension $d_k$ over $B$. Let $P_k:L^2(M) \to L^2_k(M)$ be the orthogonal projection. Then
 \[ \|P_k\|_{H_B}\le d_k  \pl.\]
\end{lemma}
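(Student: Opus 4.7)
The plan is to exhibit an explicit decomposition realizing $\tau(P_k(x)y)$ as a sum of $d_k$ inner products in $H_B$. First, I would invoke the finite dimensionality hypothesis together with the W*-module theory from Section 2.4 (together with Remark 3.18) to choose a basis $\xi_1,\dots,\xi_{d_k} \in L^2_k(M)$, where each $\xi_l$ can in fact be taken in $M$, satisfying $E_B(\xi_l^*\xi_{l'}) = \delta_{l,l'}e_l$ for projections $e_l \in B$ of trace at most one. A routine check shows that the orthogonal projection then admits the formula
\[
P_k(x) \;=\; \sum_{l=1}^{d_k} \xi_l\, E_B(\xi_l^*x), \qquad x \in L^2(M).
\]

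Next, I would compute $\tau(P_k(x)y)$ using this expansion and the trace property:
\[
\tau(P_k(x)y) \;=\; \sum_{l=1}^{d_k} \tau\bigl(\xi_l\, E_B(\xi_l^*x)\, y\bigr).
\]
The crucial observation is that each term can be interpreted as an $H_B$-inner product. Indeed, for vectors $a \otimes_B b, c \otimes_B d \in L^2(M)\otimes_B L^2(M)$ the inner product is $\tau(d^* E_B(c^*a) b)$, which gives
\[
\bigl\langle (x \otimes y^{op})(1 \otimes_B 1),\; \xi_l \otimes_B \xi_l^*\bigr\rangle_{H_B}
\;=\;
\langle x \otimes_B y,\, \xi_l \otimes_B \xi_l^*\rangle_{H_B}
\;=\;
\tau\bigl(\xi_l\, E_B(\xi_l^* x)\, y\bigr).
\]
Hence the decomposition required in the definition of $\|P_k\|_{H_B}$ holds with $d_k$ terms, where the left vector is $1 \otimes_B 1$ and the right vectors are $\xi_l \otimes_B \xi_l^*$.

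Finally, I would estimate the norms. One has $\|1 \otimes_B 1\|_{H_B}^2 = \tau(E_B(1)) = 1$, while
\[
\|\xi_l \otimes_B \xi_l^*\|_{H_B}^2 \;=\; \tau\bigl(\xi_l\, E_B(\xi_l^*\xi_l)\, \xi_l^*\bigr) \;=\; \tau(\xi_l e_l \xi_l^*) \;=\; \tau(e_l E_B(\xi_l^*\xi_l)) \;=\; \tau(e_l) \;\le\; 1.
\]
Summing, $\|P_k\|_{H_B} \le \sum_{l=1}^{d_k} \|1 \otimes_B 1\|_{H_B}\,\|\xi_l \otimes_B \xi_l^*\|_{H_B} \le d_k$, as required. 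There is no real obstacle here; the only delicate point is choosing the basis to have the $B$-orthonormality property $E_B(\xi_l^*\xi_{l'}) = \delta_{l,l'}e_l$, which is precisely what the W*-module framework guarantees, and then observing that the "twisted" vector $\xi_l \otimes_B \xi_l^*$ has squared norm $\tau(e_l) \le 1$ rather than something growing with the size of $\xi_l$.
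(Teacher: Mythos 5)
Your proof is correct and follows essentially the same route as the paper's: both choose a $B$-orthonormal basis $\xi_l$ of $L^2_k(M)$ (with bounded representatives), expand $P_k(x)=\sum_l \xi_l E_B(\xi_l^*x)$, realize $\tau(P_k(x)y)$ as $\sum_l\langle (x\otimes y^{op})(1\otimes_B 1),\xi_l\otimes_B\xi_l^*\rangle$, and then bound each $\|\xi_l\otimes_B\xi_l^*\|^2$ by $\tau(e_l)\le 1$. The only cosmetic difference is that you compute the last norm via $\tau(\xi_l e_l\xi_l^*)=\tau(e_l)$ whereas the paper uses the equivalent form $\tau(E_B(\xi_l^*\xi_l)^2)$.
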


\begin{proof} We recall that
 \begin{equation}\label{modform}
 \langle x\ten y^{op}(c\ten d),a\ten b\rangle
\lel   \tau(b^*E_B(a^*xc)dy)\lel \tau(E_B(a^*xc)E_B(dyb^*)) \pl .
 \end{equation}
Assuming that $\xi_j$ is a basis with $E_B(\xi_j^*\xi_i)=\delta_{ij}e_i$, $e_i$ a projection, we see that
 \[ \tau(yP_k(x))
 \lel \sum_j  \tau(y\xi_jE_B(\xi_j^*x))
 \lel \sum_j \langle x\ten y^{op}(1\ten 1),\xi_j\ten \xi_j^*\rangle \pl.\]
Since $\langle \xi_j\ten \xi_j^*,\xi_j\ten \xi_j^*\rangle=\tau(E(\xi_j^*\xi_j)E(\xi_j^*\xi_j))\le \tau(e_j)$, we deduce the assertion.
\qd

\subsection{Configurations}

Our main goal here is to analyze the operators
$\Phi_{\xi,\eta}:M \to L_1(M)$ given by $\Phi_{\xi,\eta}(x) \lel E_M(\xi x \eta)$, where $\xi, \eta$ are elements in $\Gamma_q(B,A\otimes (H\oplus H))$. We will start with monomials
  \[ \xi \lel s(x_1,h_1)\cdots s(x_m,h_m) \pl ,\pl \eta \lel s(x_{m'}',h'_{m'})\cdots s(x_1',h'_1) \]
where $h_i, h'_{i'}\in H\times \{0\}\cup \{0\}\times H$.  Although our goal is to obtain estimates for arbitrary $x$, we will first assume that $x=\zeta$ is a reduced Wick word from $M$ and only contains singletons from $H\times\{0\}$. By considering the moment formula we can reorganize the trace using configurations
 \[ \tau(\zeta'\xi \zeta \eta) \lel \sum_{\al \mbox{ \scriptsize configuration}} \tau(\zeta'  \Phi_\al(\zeta))      \]
whenever $\zeta'$ is another reduced Wick. Here a
configuration
$\al=(\si_{0\times H},\si_{H\times 0},I_{\xi,\zeta},I_{\zeta,\eta})$ is given by
 \begin{enumerate}
  \item[i)] A pair partition $\si_{0 \times H}$ of $\{1,...,m\}\stackrel{\cdot}{\cup}\{m',...,1\}$ so that all the pairs $\{l,r\}$ have indices in $0\times H$;
 \item[ii)] A pair partition $\si_{H \times 0}$ of $\{1,...,m\}\stackrel{\cdot}{\cup}\{m',...,1\}$ so that all the pairs $\{l,r\}$ have indices from $H \times 0$;
 \item[iii)] Subsets  $I_{\xi,\zeta}\subset \{1,...,m\}$, $I_{\zeta,\eta}\subset \{m',...,1\}$ disjoint from the support $\cup\si_{0 \times H} \cup \cup \si_{H \times 0}$ of the partitions above.
 \end{enumerate}
Indeed, using the moment formula for $\tau(\zeta'\xi \zeta \eta)$ we know that we have to take the sum over all pair partitions of length $m+m'+k+k'$, $k=|\zeta|$, $k'=|\zeta'|$. Every such pair partition has to respect the pairs of $0\times H$ and that defines our $\si_{0\times H}$. Some pairs can combine elements from $\xi$ and $\eta$ with coefficients in $H\times 0$. This defines $\si_{H\times 0}$. Some partitions connect $\xi$ and $\zeta$ and some $\zeta$ with $\eta$. The left hand sides of the pairs between $\xi$ and $\eta$ define the set $I_{\xi,\zeta}$ and the right hand sides of the pairs from $\zeta$, $\eta$ define $I_{\zeta,\eta}$. All the remaining pairs will connect $\zeta'$ and $\zeta$.
Since $\zeta$ and $\zeta'$ are themselves Wick words, there are no pairs connecting elements from $\zeta$ ($\zeta'$) with itself. We see that indeed, the sum over all partitions can be regrouped into first summing over all configuration (which only depend on $\xi$ and $\eta$), and then sum over all partitions supported by these configurations. Let us note that once a configuration $\al$ is known we can determine exactly how many crossings will be produced by pairs in $0\times H$. Indeed, we know that $|I_{\xi,\zeta}|+|I_{\zeta,\eta}|$ many singletons will be removed from $\zeta$. According to the position of the left legs in $\si_{0\times H}$ some extra crossing will be produced from the set $I_{\xi,\zeta}$. The same applies for $I_{\xi,\zeta}$. Here is an example
 \[ \begin{array}{ccccc|cccccc|ccccc}
     a_1&a_2&b_1&a_3&b_3& c_1&c_2&a_3&d_1&c_3&c_4&d_2&b_3&d_1&a_1&b_1
     \end{array} \]
Here $\si_{0\times H}$ are given by the positions of $b_1$ and $b_3$. The set $I_{\xi,\zeta}$ is given by the position of $a_3$ and $\si_{H\times 0}$ is given by the positions of $a_1$.  The $b$'s are responsible for $8+1+1+1$ crossings, $8$ crossing with $c$'s,
one crossing among themselves, one crossings coming from $a$'s and $b$'s, one crossing from the $b$ and $d$'s.  Thus  $k(\al)=
2\times (6-2)+1+2$.

In our next step we replace the monomials $\xi$ and $\eta$ by Wick words. This means we only have to sum over those configurations such that $\si_{0\times H}$ and $\si_{H\times 0}$ connect $\xi$ and $\eta$ and no pairs $\xi$ and $\eta$ with itself. In addition the reduction procedure produces scalars and new operator valued expression $\al_{j_1,..,j_l}(\beta)$ with $\beta\in D_k(S)$. We have proved the following simple combinatorial fact:

\begin{lemma}  Let $\xi$ and $\eta$ be Wick words obtained by reduction and $\zeta\in M$ be a Wick word of length $k=|\zeta|$. For a fixed configuration $\al$ there is a number
$k(\al)$ such that for all $-1\le q\le 1$
 \[  \Phi_{\al}(\zeta) \lel q^{k(\al)}\tilde{\zeta} \pl ,\]
where $\tilde{\zeta}$ is a linear combination of reduces words with smaller length $k-|I_{\xi,\zeta}|-|I_{\zeta,\eta}|$. Moreover, if $k\gl m+m'$ is the length of $\zeta$, and $L$ is the cardinality of $\si_{0\times H}$, then
 \[ k(\al)\gl  (k-m-m')L \pl .\]
\end{lemma}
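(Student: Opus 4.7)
My plan is to unwind the moment formula of Proposition 3.13 applied to $\tau(\zeta'\xi\zeta\eta)$ for an arbitrary reduced Wick word $\zeta'$, and to read off the contribution of each fixed configuration $\al$.

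First I would iterate the convolution rule for reduced Wick words (Proposition 3.9) to expand $\xi\cdot\zeta\cdot\eta$. Because $\xi,\zeta,\eta$ are already reduced, a generic contribution is indexed by a pair--singleton partition of the concatenated singletons such that, upon discarding any pairs to $\zeta'$, its remaining pairs split into the data $(\si_{0\times H},\si_{H\times 0},I_{\xi,\zeta},I_{\zeta,\eta})$ of a configuration $\al$: the pairs within $\xi\cup\eta$ with both legs in the $0\times H$ subspace form $\si_{0\times H}$, those with both legs in the $H\times 0$ subspace form $\si_{H\times 0}$, and the remaining $\xi$-$\zeta$ and $\zeta$-$\eta$ pairings form $I_{\xi,\zeta}$ and $I_{\zeta,\eta}$. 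The singletons of $\zeta$ left untouched by $I_{\xi,\zeta}\cup I_{\zeta,\eta}$ number $k-|I_{\xi,\zeta}|-|I_{\zeta,\eta}|$; after applying Lemma 3.10 to the $I_{\xi,\zeta}$ and $I_{\zeta,\eta}$ contractions, they become the singletons of a reduced Wick word $\tilde\zeta$ of that length.

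Second, I would extract the scalar prefactor. Each pair $\{l,r\}$ in $\si_{0\times H}\cup\si_{H\times 0}$ contributes an inner product $(h_l,h_r)$, while the remaining $q$-weight is $q^{\mathrm{cr}(\pi)}$. Splitting the crossings of $\pi$ as
\[\mathrm{cr}(\pi)=\mathrm{cr}(\al)+\mathrm{cr}(\pi|_{\tilde\zeta,\zeta'})+\mathrm{cr}_{\mathrm{mix}}(\al,\pi|_{\tilde\zeta,\zeta'}),\]
I would observe that every $\si_{0\times H}$-arc (and likewise every $\si_{H\times 0}$-arc) spans the entire block of $\zeta$-positions, so it crosses each singleton of $\tilde\zeta$ exactly once, independent of which position of $\zeta'$ that singleton is subsequently paired with. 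The same uniformity holds for the $I$-arcs, whose positions enclose a specific, $\al$-determined subset of $\tilde\zeta$-singletons. Hence $\mathrm{cr}_{\mathrm{mix}}$ depends only on $\al$, and the entire $q$-power attributable to $\al$ pulls out as $q^{k(\al)}$. What remains inside the $\zeta'$-sum is precisely $\tau(\zeta'\tilde\zeta)$, establishing $\Phi_\al(\zeta)=q^{k(\al)}\tilde\zeta$ with $\tilde\zeta$ of the claimed length.

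Third, for the lower bound on $k(\al)$, the key geometric fact is that every pair $\{l,r\}\in\si_{0\times H}$ has one leg in $\xi$ and one leg in $\eta$: pairs internal to $\xi$ (or $\eta$) cannot appear after reduction because $\xi$ (or $\eta$) is already a reduced Wick word (Lemma 3.8). In the linear order $\xi\zeta\eta$, such an arc encloses all $k$ positions of $\zeta$, of which at most $|I_{\xi,\zeta}|\leq m$ are absorbed into the reduction with $\xi$ and at most $|I_{\zeta,\eta}|\leq m'$ with $\eta$, so at least $k-m-m'$ surviving $\tilde\zeta$-singletons sit under each of the $L$ arcs. Each such incidence contributes one to $\mathrm{cr}_{\mathrm{mix}}$ and hence to $k(\al)$, yielding $k(\al)\geq L(k-m-m')$. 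The main obstacle will be the bookkeeping in the second step: the factorization $\Phi_\al(\zeta)=q^{k(\al)}\tilde\zeta$ is only useful if the $q$-power truly decouples from the subsequent pairing of $\tilde\zeta$ with $\zeta'$, and this must be verified separately for the crossings of each of $\si_{0\times H}$, $\si_{H\times 0}$, $I_{\xi,\zeta}$, $I_{\zeta,\eta}$ with the free $\tilde\zeta$-$\zeta'$ arcs. The planarity of the crossing count combined with the fact that every $\tilde\zeta$-singleton lies uniformly under every $\si$-arc makes the decoupling automatic, but the argument must be stated carefully enough to cover all four families of $\al$-pairs simultaneously.
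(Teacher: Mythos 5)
Your plan is correct and matches the paper's own (informal) argument, which is the discussion of configurations preceding the lemma statement: regroup the pair partitions in the moment formula for $\tau(\zeta'\xi\zeta\eta)$ by configurations, observe that the crossings of the $\al$-arcs with each other and with the free $\tilde\zeta$-$\zeta'$ arcs depend only on $\al$ (since the $\zeta'$-endpoint of a free arc always lies to the left of every $\al$-arc), and get the lower bound from the fact that each of the $L$ arcs of $\si_{0\times H}$ crosses every surviving $\tilde\zeta$-$\zeta'$ arc. The decoupling of $\mathrm{cr}_{\mathrm{mix}}$ is spelled out more carefully in your version than in the paper, which is a small improvement; the only minor inaccuracy is the citation of Lemma 3.8 for why internal pairs of $\xi$ or $\eta$ vanish, whereas the real reason is the orthogonality of the distinct basis vectors $e_{l_i}$ in a reduced Wick word.
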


We will give more precise information about $\tilde{\zeta}$ in the next paragraph.

\subsection{Generalized $Q$-gaussians}
As a tool we will use a slight generalization of the von Neumann algebra $\Gamma_q(B,A\ten H)$. This generalization is based on matrix models of the ordinary $q$-gaussian von Neumann algebras. This approach was invented by Speicher (\cite{SpeNCL, SpeGSM}) and has been applied in many situations, see e.g. \cite{Biane, JungeAW, JPPPR, JungeZeng, NouI, NouII}.
Let $Br:H\to \bigcap_p L^p(\Om,\Si,\mu)$ the standard brownian motion so that $Br(h)$ is a normal random variable and and $(Br(h),Br(h'))=(h,h')$. The $\si$-algebra is chosen minimal. This construction is well-known as the gaussian measure space construction. Given a selfadjoint matrix $\eps_{ij}$ with values $\{-1,1\}$ there are symmetries $v_j\in M_{2^n}(\cz)$ such that
 \[  v_iv_j\lel \eps_{ij}v_jv_i \pl. \]
Speicher's important idea is to choose the matrix $\eps_{ij}$ independently at random for all pairs. We will work with double indices  $\eps_{(j,t),(k,s)}$, which  are independent as functions of the pairs $\{(j,t),(k,s)\}$  whenever $t\neq s$ or $j\neq k$ and satisfy
 \[ P( \eps_{(jt),(ks)}=1)  \lel \frac{1-Q_{s,t}}{2} \pl \]
as along as $(j,t)\neq (ks)$ for a given matrix $Q_{s,t}$. This allows us to construct matrix models
\[ u(t,h) \lel (\frac{1}{\sqrt{n}}\sum_{j=1}^n v_{j,t}\ten g_j(h))_n \in \prod_{n,\om}(M_{2^n}(\cz) \ten L^{\infty}(\Om))_n \]
which  satisfy
 \[ \tau(u(t_1,h_1)\cdots u(t_m,h_m))
 \lel \sum_{\si\in P_2(m)}  \prod_{\{a,b\}\in \si, \{c,d\}\in \si, a<c<b<d} Q_{t_at_c}       \prod_{\{a,b\}\in \si} \lan h_a,h_b\ran \pl .\]
 In other words the constant term $q^{{\rm inv}(\si)}$ is replaced by the product of the crossing inversions weighted according to $Q$.  Indeed, by independence
  \[
  \prod_{\{a,b\}\in \si, \{c,d\}\in \si, a<c<b<d} Q_{t_at_c} \lel \ez \tau(v_{j_1,t_1}\cdots v_{j_m,t_m}) \]
for $(j_1,...,j_m)\le \si$. In particular for a fixed $t$ and $\|h\|=1$ the random variable $u(t,h)$ is just an ordinary $q$ gaussian. This central limit theorem is well-known and goes back to \cite{SpeNCL, SpeGSM}, see also \cite{JungeAW}, \cite{JungeZeng}.

We may easily generalize this to the $A$-valued situation by considering a sequence of symmetric independent copies $(\pi_j,B,A,D)$ and defining
 \[ u(t,h,a) \lel
 (\frac{1}{\sqrt{n}}\sum_{j=1}^n v_{j,t}\ten g_j(h)\ten \pi_j(a))_n \in \prod_{n,\om}(M_{2^n}(\cz) \ten L^{\infty}(\Om) \bar{\ten} D)_n \pl .\]
For a subset $1 \in S=S^* \subset A$, we denote the von Neumann algebra generated by the elements $u(t,h,a), t\in Q, h \in H, a \in S$ by $\Gamma_Q^0(B,S \ten H)$. Then define the von Neumann algebra $\Gamma_Q(B,S \ten H)$ by the same procedure as in Def. 3.4. A look at the moment formula allows us to state the following fact.

\begin{lemma}\label{Q} Let $T_0\subset T$ be a non-empty subset such that $Q_{st}=q$ for all $s,t\in T_0$. Then $\Gamma_q(B,S \ten H)$ embeds into $\Gamma_Q(B,S\ten H)$ in a trace preserving way. \end{lemma}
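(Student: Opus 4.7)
The strategy is a direct moment comparison using Proposition 3.1. First I would fix any $t_0 \in T_0$ and consider the system $\{u(t_0,h,a) : a \in BSB,\, h \in H\} \cup B$ inside $\Gamma_Q(B,S \ten H)$. The matrix-model central limit computation reviewed immediately before the lemma gives, with all time indices equal to $t_0$,
\begin{align*}
&\tau\bigl(u(t_0,h_1,a_1) \cdots u(t_0,h_m,a_m)\bigr) \\
&\quad = \sum_{\sigma \in P_2(m)} Q_{t_0 t_0}^{{\rm cr}(\sigma)} \prod_{\{l,r\} \in \sigma} \langle h_l, h_r \rangle\, \tau_D\bigl(\pi_{j_1^\sigma}(a_1) \cdots \pi_{j_m^\sigma}(a_m)\bigr),
\end{align*}
and the hypothesis $Q_{t_0 t_0}=q$ makes this identical to the moment formula of Proposition 3.17 for $\tau(s_q(a_1,h_1)\cdots s_q(a_m,h_m))$ in $\Gamma_q(B,S \ten H)$. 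Elements of $B$ enter both products by absorption into the adjacent $\pi_{j_k^\sigma}(a_k)$ factors in exactly the same way, and the $*$-moments reduce to ordinary moments since $s_q(a,h)^* = s_q(a^*,h)$, $u(t_0,h,a)^* = u(t_0,h,a^*)$, and $BSB$ is $*$-closed (because $S=S^*$).

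Next I would invoke Proposition 3.1 --- applied to countable subsystems and then extended by normality to handle the cardinality of $BSB \times H$ --- to produce a normal, trace-preserving, unital $*$-isomorphism from $\Gamma_q^0(B,S \ten H)$ onto the von Neumann subalgebra of $\Gamma_Q^0(B,S \ten H)$ generated by $B$ together with the $u(t_0,h,a)$'s, sending $s_q(a,h) \mapsto u(t_0,h,a)$ and fixing $B$ pointwise. Composing with the inclusion into $\Gamma_Q^0(B,S \ten H)$ already gives the desired trace-preserving embedding at the $\Gamma^0$-level.

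When $H$ is finite dimensional and the closure operation in Definition 3.4 is nontrivial, I would finally note that the Wick words $x_\sigma(x_1,h_1,\ldots,x_m,h_m)$ and the reduction identity of Theorem 3.11 are defined by purely combinatorial formulas involving the $s$-generators and the maps $\alpha_{l_1,\ldots,l_s}$, $E_{1,\ldots,s}$ on $D$; the parameter $q$ (or $Q$) enters only through the moment weights already matched above. Hence the isomorphism above carries each Wick word on the $\Gamma_q$-side to its analogue on the $\Gamma_Q$-side and intertwines the projections $(E_{\Gamma_q(\ell^2_n \ten H)} \ten \mathrm{id})_n$ with their $\Gamma_Q$-counterparts, so the two closure operations correspond and we obtain the embedding $\Gamma_q(B,S \ten H) \hookrightarrow \Gamma_Q(B,S \ten H)$. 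The main (mild) obstacle is bookkeeping around the ultraproduct definitions of both algebras and the countable-generation hypothesis of Proposition 3.1; the mathematical content is the moment matching, which is immediate from $Q_{t_0 t_0}=q$.
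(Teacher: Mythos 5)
Your proof is correct and takes essentially the same approach the paper intends: the paper offers only the remark that ``a look at the moment formula allows us to state the following fact,'' and your argument --- comparing moments at a single fixed $t_0 \in T_0$ (where $Q_{t_0 t_0}=q$ collapses the crossing weight to $q^{\mathrm{cr}(\sigma)}$) and then invoking Proposition 3.1 --- is exactly the way to make that observation rigorous. The treatment of the closure operation is compressed but sound, since the normal $*$-isomorphism at the $\Gamma^0$-level for an infinite-dimensional $K \supset H$ sends Wick words to Wick words (both arise as the weak limits of generator products described in Proposition 3.14), hence carries the ultraweak closure of the $H$-labeled Wick words on the $q$-side onto the corresponding subalgebra of $\Gamma_Q(B,S\ten H)$.
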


\begin{rem}{\rm As observed in \cite{JungeZeng} the reduction procedure still works in the generalized $Q$-gaussian setting.}
\end{rem}

Let us return to a configuration $\al$ as in 6.2. above. We replace the Wick words $W_q(\vec{a},\vec{h})$ and $W_q(\vec{a'},\vec{h'})$ by new Wick words $W_Q(\vec{a},\vec{h})$ $W_Q(\vec{a'},\vec{h'})$ as follows. For a configuration $\al$ with a partition $\si_{0\times H}$ of the indices labeled with $0\times H$, we define a new matrix
  \[  Q_{st}(\tilde{q})\lel \begin{cases} \tilde{q} &
  \mbox{if
  $h_s$ and $k_s$ are both in $ 0\times H$} \\
  q& \mbox{else.}
  \end{cases} \]
Note that the matrix only depends on the first component  $\si_{0\times H}$ of a configuration. For every pair $p=\{l,r\}\in \si_{0\times H}$ we introduce a label $e_p$ and replace $h_l$ and $h'_r$ by $h_l\ten e_p$ and $h'_r\ten e_p$ to avoid over-counting. We denote by $H_s$, $H'_t$ the modified vectors. Starting from
  \[ \xi_{Q(\tilde{q})} \lel s_{Q(\tilde{q})}(H_1,a_1)\cdots s_{Q(\tilde{q})}(H_m) \pl ,\pl \eta_{Q(\tilde{q})} \lel s_{Q(\tilde{q})}(H'_{m'},a'_{m'})\cdots s_{Q(\tilde{q})}(H'_1,a'_1) \]
we apply the same reduction procedure (eliminating all the pairs from the non-reduced words $X_{\si}(\vec{h},\vec{a})$)  for the $W_q$'s and obtain the reduced Wick words $W_{Q(\tilde{q})}(\vec{H},\vec{a})$, $W_{Q(\tilde{q})}(\vec{H'},\vec{a'})$.

\begin{lemma} Fix $\si_{0\times H}$. The function
 \[ F(\tilde{q})  \lel \sum_{\al,\al_1=\si_{0 \times H}} E_M(W_{Q(\tilde{q})}(\vec{H},\vec{a})
 \zeta W_{Q(\tilde{q})}(\vec{H'},\vec{a'}))
  \]
is a polynomial in $\tilde{q}$ with lowest degree at least  $(|\zeta|-m+m')L$ and largest degree at most $(|\zeta|+m+m')L$.
\end{lemma}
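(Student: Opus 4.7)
The plan is to expand both Wick words via the $Q(\tilde q)$-moment formula, track how $\tilde q$ enters each resulting monomial, and then bound the exponents using the combinatorial information already captured in the configurations $\alpha$ with $\alpha_1=\si_{0\times H}$.

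First, I would expand $E_M\bigl(W_{Q(\tilde q)}(\vec H,\vec a)\,\zeta\,W_{Q(\tilde q)}(\vec H',\vec a')\bigr)$ using the $Q$-moment formula for products of $Q$-gaussians (the $Q$-analogue of the reduction procedure of Lemma~\ref{elim}, which by the remark following Lemma~\ref{Q} still applies). The labels $e_p$ ensure that the only pair partitions giving nonzero coefficients are precisely those extending $\si_{0\times H}$ to a full pair/singleton partition of the positions in $W_{Q(\tilde q)}\,\zeta\,W_{Q(\tilde q)}$; these are exactly parametrized by the configurations $\alpha$ with $\alpha_1=\si_{0\times H}$. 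By definition of $Q(\tilde q)$ each term comes with a weight that is a product of $q$'s and $\tilde q$'s, the exponent of $\tilde q$ being the number of crossings at which both colors lie in $0\times H$. This immediately identifies
\[F(\tilde q)\;=\;\sum_{\alpha\,:\,\alpha_1=\si_{0\times H}}\,c(\alpha)\,\tilde q^{\,n(\alpha)}\,\Phi_{\alpha}(\zeta),\]
where $c(\alpha)$ and $\Phi_{\alpha}(\zeta)$ are independent of $\tilde q$, exhibiting $F(\tilde q)$ as a polynomial in $\tilde q$.

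Next I would bound $n(\alpha)$. The upper bound is the easy direction: every pair in $\si_{0\times H}$ contributes to $\tilde q$ only through crossings with another pair that is itself in $\si_{0\times H}$ (this is what ``both colors in $0\times H$'' means). Each of the $L$ pairs can cross at most all the other $|\zeta|+m+m'-2$ positions, and each such crossing is counted at most twice, so $n(\alpha)\le (|\zeta|+m+m')L$. For the lower bound I would reuse the geometric observation from the configuration lemma preceding this statement: every pair of $\si_{0\times H}$ has one leg in $\xi$ and one leg in $\eta$, hence its arc necessarily spans across the entire $\zeta$-block. After reduction at most $|I_{\xi,\zeta}|+|I_{\zeta,\eta}|\le m+m'$ singletons of $\zeta$ are consumed by pairings with $\xi$ or $\eta$, so at least $|\zeta|-m-m'$ positions of $\zeta$ survive under the spanning arcs of $\si_{0\times H}$. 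Each such surviving position must itself be paired (to another surviving singleton of $\zeta$, or into one of the remaining arcs of $\si_{H\times 0}$ or of the $I$-pairings), and in any such pairing the resulting arc is forced to cross every one of the $L$ $\si_{0\times H}$-arcs. This yields at least $(|\zeta|-m-m')L$ crossings of the required type, hence $n(\alpha)\ge (|\zeta|-m-m')L$ (the bound being trivial when negative).

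The main obstacle I expect is bookkeeping: disentangling which crossings in the final pair partition genuinely pick up a factor of $\tilde q$ rather than $q$, after the reduction procedure has absorbed the internal pairs of the Wick words into their operator coefficients. Once it is verified that the ``$\tilde q$-bookkeeping'' exactly tracks crossings among the $L$ arcs of $\si_{0\times H}$ (together with whatever further arcs share the $0\times H$ label after reduction), the two bounds become purely combinatorial statements about crossings of $L$ parallel arcs spanning the $\zeta$-block, and both estimates follow from the same counting used in the configuration lemma. The polynomial structure of $F(\tilde q)$ then follows formally from the finiteness of the index set $\{\alpha:\alpha_1=\si_{0\times H}\}$.
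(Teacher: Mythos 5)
Your overall route---expand the $Q(\tilde q)$-moment formula, exhibit $F(\tilde q)=\sum_\alpha c(\alpha)\,\tilde q^{\,n(\alpha)}\Phi_\alpha(\zeta)$ as a polynomial, and bound $n(\alpha)$---is the same as the paper's, which simply observes that each configuration's term differs from the $q$-case by the factor $(\tilde q/q)^{k(\alpha)}$ and invokes the lower bound $k(\alpha)\ge(|\zeta|-m-m')L$ already established in the configuration lemma of Section~6.2. (Your $(|\zeta|-m-m')L$ also corrects what is evidently a sign typo in the lemma statement, and agrees with the exponent actually used in Corollary~\ref{polyfor}.)

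There is, however, a genuine inconsistency in how you read the weight assigned by $Q(\tilde q)$, and it shows up between your upper- and lower-bound paragraphs. You declare the $\tilde q$-exponent to be ``the number of crossings at which both colors lie in $0\times H$'' and interpret this in the upper-bound paragraph as crossings between two pairs that are \emph{both} in $\si_{0\times H}$. If that were the correct reading, the exponent would be at most $\binom{L}{2}$, a quantity independent of $|\zeta|$, which contradicts the lower bound $(|\zeta|-m-m')L$ for large $|\zeta|$. Your own lower-bound argument also contradicts it: the crossings you produce there are between a $\si_{0\times H}$-arc and an arc through a surviving $\zeta$-singleton, whose vector lies in $H\times 0$, not $0\times H$, so they do not satisfy ``both colors in $0\times H$.'' The reading that makes the arithmetic consistent---and the one the paper actually uses, since $k(\alpha)$ was defined in Section~6.2 as the number of crossings \emph{produced by} pairs in $\si_{0\times H}$---is that a crossing picks up a factor $\tilde q$ whenever \emph{at least one} of the two crossing pairs belongs to $\si_{0\times H}$. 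With this corrected bookkeeping, the geometric counting you gave for both bounds goes through essentially as written, and the argument matches the paper's.
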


\begin{proof} Let $\al$ be a configuration  which contains $\si_{0\times H}$. Comparing the terms in the  moment formula for
 \[ \tau(\zeta' W_q(\vec{h},\vec{a})
 \zeta W_q(\vec{h'},\vec{a'})
 ) \quad \mbox{and} \quad
 \tau(\zeta' W_{Q(\tilde{q})}(\vec{h},\vec{a})
\zeta W_{Q(\tilde{q})}(\vec{h'},\vec{a'}))
  \]
we see that they differ by the factor $(\frac{\tilde{q}}{q})^{k(\al)}$ number of pairs. Note however, that $k(\al)$ only depends on $\al$. This implies the assertion.
\qd

\subsection{Weak containment} We need a simple fact about polynomials:

\begin{lemma}\label{interval} Let $[a,b]$ be an interval,  $\mathcal{P}_d(a,b)$ the set of polynomials of degree $d$,and $a<t_0<t_1<\cdots<t_d<b$ distinct points. Then the map  $\Phi:P_d(a,b)\to \cz^{d+1}$, $\phi(f)=f(t_j)$ is injective. Moreover, there exists a matrix $a_{i,j}$ such that for every polynomial
 \[ p(t) \lel \sum_{0\le k\le d} \al_kt^k \]
of degree $\le d$  we have
 \[ \al_k \lel \sum_{j} a_{k,j} f(t_j) \pl .\]
\end{lemma}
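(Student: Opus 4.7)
The plan is to reduce this to the standard fact that the Vandermonde matrix associated with distinct nodes is invertible. The map $\Phi$ is clearly linear, so it suffices to prove injectivity; since $\mathcal{P}_d(a,b)$ and $\mathbb{C}^{d+1}$ both have dimension $d+1$, injectivity will automatically give bijectivity and hence the existence of the claimed matrix $a_{k,j}$ as the entries of the inverse map.

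For injectivity, I would argue directly: if $f \in \mathcal{P}_d(a,b)$ satisfies $f(t_j) = 0$ for $j = 0, 1, \ldots, d$, then $f$ is a polynomial of degree at most $d$ with at least $d+1$ distinct roots, so by the fundamental theorem of algebra (or rather the elementary factorization lemma) $f \equiv 0$. Hence $\ker \Phi = \{0\}$.

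For the second statement, express $\Phi$ in the monomial basis $\{1, t, t^2, \ldots, t^d\}$ of $\mathcal{P}_d(a,b)$ and the standard basis of $\mathbb{C}^{d+1}$. The matrix of $\Phi$ is the Vandermonde matrix $V = (t_j^k)_{0 \le j, k \le d}$, with determinant $\prod_{0 \le j < k \le d}(t_k - t_j) \neq 0$ by the assumption that the $t_j$ are distinct. Thus $V$ is invertible, and the matrix $(a_{k,j}) = V^{-1}$ gives the required inversion formula: if $p(t) = \sum_k \alpha_k t^k$, then $(f(t_j))_j = V (\alpha_k)_k$, so $(\alpha_k)_k = V^{-1} (f(t_j))_j$, i.e., $\alpha_k = \sum_j a_{k,j} f(t_j)$.

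There is no serious obstacle here; this is a textbook polynomial interpolation statement, and the only thing to keep in mind is distinguishing ``degree $d$'' from ``degree $\le d$''—the statement as written implicitly uses the space of polynomials of degree at most $d$, which is consistent with the formula $p(t) = \sum_{0 \le k \le d} \alpha_k t^k$.
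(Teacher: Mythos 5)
Your proof is correct, and it takes a mildly different route from the paper for the injectivity step. The paper constructs the Lagrange basis polynomials $p_j(t)=\bigl(\prod_{i\neq j}(t_j-t_i)\bigr)^{-1}\prod_{i\neq j}(t-t_i)$, observes $p_j(t_i)=\delta_{ij}$, deduces that they form a basis of $\mathcal{P}_d(a,b)$, and reads off the interpolation formula $p(t)=\sum_j p(t_j)\,p_j(t)$, from which injectivity is immediate. You instead argue injectivity directly by root-counting (a nonzero polynomial of degree $\le d$ cannot vanish at $d+1$ distinct points), and then use the dimension count $\dim\mathcal{P}_d(a,b)=d+1=\dim\cz^{d+1}$ to upgrade injectivity to bijectivity. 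Both approaches then identify the matrix representation of $\Phi$ in the monomial basis with the Vandermonde matrix $C_{j,k}=t_j^k$ and set $(a_{k,j})=C^{-1}$. Your injectivity argument is a bit more elementary since it avoids constructing the Lagrange polynomials; the paper's approach, on the other hand, produces the explicit interpolating polynomials $p_j$ as a byproduct, which is the classical way to exhibit the inverse concretely. Either is entirely adequate for the lemma, and your remark about the implicit ``degree $\le d$'' convention matches the paper's usage.
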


\begin{proof} For $0\le j\le d$ we define  the polynomial $p_j(t)=(\prod_{i\neq j} (t_j-t_i))^{-1}\prod_{i\neq j} (t-t_i)$ which has degree $d$. Then we see that $p_j(t_j)=1$ and $p_j(t_i)=0$ for $i\neq j$. In particular, the polynomials $(p_j)_{0\le j\le d}$ are linearly independent and hence $P_d(a,b)={\rm span}\{p_j|0\le j\le d\}$. This implies
 \[ p(t) \lel \sum_{0\le j\le d} p(t_j)p_j(t) \]
and in particular $\Phi$ is injective. Since moreover, the monomials are linearly independent in $C_{\infty}(a,b)$, we see that the linear map $\Psi(\al_0,...,\al_d)=\Phi(\sum_{k}\al_k t_j^k)$ is invertible and can be represented by the matrix $C_{j,k}=t_j^k$, the well known Vandermonde matrix.
Then $A=C^{-1}$ does the job.\qd

From now on we fix $\si=\si_{0\times H}$,
Wick words $\xi= W_q(\vec{H},\vec{a})$, $\eta=W_q(\vec{H'},\vec{a'})$ which are obtained after reduction from  possible longer terms $s_q(h_1,a_1)\cdots s_q(h_m,a_m)$ and $s_{q}(h'_{m'},a'_{m'})\cdots s_{q}(h'_1,a'_1)$. This allows us to define
 \[ F_{\si}(t) \lel  E_M(W_{Q(t)}(\vec{H},\vec{a})\zeta W_{Q(t)}(\vec{H},\vec{a})) \]
As in section 6.2. we assume that at least $L$ labels of $\xi$ and  $\eta$ are of the form $(0,h_i)$.

\begin{cor}\label{polyfor} Fix $m,m'$ and $L$. Then there exists a degree  $D=D(m,m',L)$ such that for $q\in [a,b]$ and $a\le t_1<\cdots t_{D}\le b<1$  there are coefficients $\gamma_l$ such that
 \[ E_M(\xi \zeta \eta)
  \lel  \sum_{\si}
  \sum_l (\frac{q}{t_l})^{(k-m-m')L}\gamma_l F_{\si}(t_l)  \pl \]
holds for $k=|\zeta|\gl 2(m+m')$. Moreover, for some possibly different coefficients $\tilde{\gamma_l}$
 \[ E_M(\xi \zeta \eta) \lel \sum_{\si} \sum_l \tilde{\gamma_l} F_{\si}(t_l)  \]
holds for $|\zeta|\le 2(m+m')$.
 \end{cor}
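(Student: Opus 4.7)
My plan is a direct application of Lagrange interpolation to the family of polynomials $F_\sigma(\tilde q)$ produced in the preceding lemma. The starting observation is that evaluating at $\tilde q = q$ recovers $E_M(\xi \zeta \eta)$ after summing over $\sigma$: by Lemma~\ref{Q} the algebra $\Gamma_{Q(q)}(B,S\otimes H)$ embeds trace-preservingly into $\Gamma_q(B,S\otimes H)$, while the formal labels $e_p$ attached to the vectors in the pairs of $\sigma_{0\times H}$ force every surviving pairing in the moment formula to respect exactly $\sigma_{0\times H}$, so $\sum_\sigma F_\sigma(q) = \sum_\alpha E_M(\Phi_\alpha(\zeta)) = E_M(\xi\zeta\eta)$.

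Next I would use the degree bounds from the previous lemma: for each fixed $\sigma$, $F_\sigma(\tilde q)$ is a polynomial whose nonzero monomials have degree between $(k-m-m')L$ and $(k+m+m')L$. In the regime $k \ge 2(m+m')$ (so that the lower bound is nonnegative), I factor
\[ G_\sigma(t) \;=\; F_\sigma(t)\,/\,t^{(k-m-m')L}, \]
obtaining a genuine polynomial of degree at most $2(m+m')L$, a bound independent of $k$. Taking $D_1 = 2(m+m')L+1$ and applying Lemma~\ref{interval} to $G_\sigma$ at points $t_1<\cdots<t_{D_1}$, I obtain universal Lagrange weights $\lambda_l$ depending only on $q$ and the $t_l$'s (not on $\sigma$, $\zeta$, or $k$) such that $G_\sigma(q) = \sum_l \lambda_l\,G_\sigma(t_l)$. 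Multiplying through by $q^{(k-m-m')L}$ and summing over $\sigma$ gives the first formula with $\gamma_l = \lambda_l$. For the complementary regime $k \le 2(m+m')$ the polynomial $F_\sigma$ itself has bounded degree at most $3(m+m')L$, so Lemma~\ref{interval} applies directly with $D_2 = 3(m+m')L+1$ interpolation points and yields weights $\tilde\lambda_l$ such that $F_\sigma(q) = \sum_l \tilde\lambda_l\,F_\sigma(t_l)$. Setting $D = \max(D_1, D_2)$ handles both cases simultaneously.

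There is no real obstacle in this step; all the conceptual content has been invested in Lemma~\ref{Q} and the preceding lemma, which together encode the polynomial dependence on $\tilde q$ with a controlled degree interval. What makes the corollary substantive is the separation of all $k$-dependence into the single explicit monomial $t^{(k-m-m')L}$, so that the number of interpolation points $D$ is \emph{uniform} in $|\zeta|$. This uniformity is precisely what is needed in the next section in order to weakly contain the deformation bimodules in a bimodule of the form $L^2(M)\otimes_B\mathcal{K}$ with $\mathcal{K}$ of controlled $B$-dimension.
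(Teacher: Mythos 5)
Your proof is correct and follows essentially the same route as the paper: both factor out the $k$-dependent monomial $t^{(k-m-m')L}$ to produce a polynomial whose degree is bounded by $2(m+m')L$ uniformly in $k$, apply the Vandermonde/Lagrange interpolation of Lemma~\ref{interval} at a fixed family of points, and treat the finitely many low-degree cases $|\zeta|\le 2(m+m')$ directly. Your bookkeeping is slightly more explicit than the paper's (spelling out that $\sum_\sigma F_\sigma(q)=E_M(\xi\zeta\eta)$, and noting that in the low-$k$ regime the degree bound from Lemma~6.5 is $3(m+m')L$ rather than $2(m+m')L$ as the paper writes), but the mechanism and conclusion are identical.
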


\begin{proof} We fix $\si$ and $k\gl m+m'$. Let  $[a,b]\subset (-1,1)$ be an interval and $a=q$.
 The $t_i$'s are all chosen bigger than $a$. We define the polynomial  $p_k(t)=t^{-(k-m-m')L}F(t)$ which has degree at most $(k+m'+m-(k-m-m'))L\le (2m+2m')L$ and hence
 \[ p_k(t) \lel \sum_{0\le j\le (2m+2m')L} a_jt^j \quad \mbox{and}\quad    a_j \lel \sum_{i} c_{ij} p_k(t_i) \pl \]
holds for mutually different points $a\le t_1,...,t_d \le b$ where $d\le (2m+2m')L+1$ are independent of $k$. Hence we get
 \begin{align*}
 & F_{\si}(q) \lel  q^{(k-m-m')L}p_k(q)
\lel q^{(k-m-m')L}  \sum_{j,i} c_{ij}q^j
 p_k(t_i) \\
 &= q^{(k-m-m')L} \sum_{j,i} c_{ij}q^j
 t_i^{-(k-m-m')L} F_{\si}(t_i)  \lel
  \sum_i (\sum_j c_{ij}q^j) (\frac{q}{t_i})^{(k-m-m')L} F_{\si}(t_i)
 \pl .
   \end{align*}
This defines the coefficients $\gamma_i$.
For $k\le 2(m+m')$ we work directly with the polynomial $F(t)$ of degree at most $2(m+m')L$.
\qd

Let $M=\Gamma_q(B,S\ten H)$, $\tilde M=\Gamma_q(B,S\ten(H \oplus H))$. Define the $M-M$ bimodule $\F_m \subset L^2(\tilde M)$ as the $\|\cdot\|_2$-closed linear span of the reduced Wick words $W_{\si}(x_1,\ldots, x_t,h_1, \ldots, h_N), N \geq 1$ such that $h_i \in H \times \{0\} \cup \{0\} \times H$ for all $i$ and at least $m$ of the vectors $h_i$ belong to $\{0\} \times H$. This bimodule will play a crucial role in our deformation-rigidity arguments in the next section.

\begin{theorem} Let $M=\Gamma_q(B,S \ten H)$ and let $C>0, d>0$ be two constants such that the dimension of $L^2_k(M)$ over $B$ is smaller than $C d^k$ for all $k$. Let $|q|<1$. Then there exists an $L_0\in \nz$ and a $B$-$M$ bimodule $\K$ such that $\mathcal{F}_l$ is weakly contained in $L_2(M)\ten_B \K$ for all $l\gl L_0$.
\end{theorem}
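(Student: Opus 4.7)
The plan is to apply Lemma \ref{ne11} with $K=\F_l$ to reduce the weak containment to a norm estimate. For a reduced Wick word $\xi\in\F_l$, put $\Phi_\xi(x)=E_M(\xi^*x\xi)$; since $\tau(\Phi_\xi(x)y)=\lan x\xi y,\xi\ran$, it suffices to exhibit a single $B$-$M$ bimodule $\K$ and a threshold $L_0$ such that $\|\Phi_\xi\|_{L^2(M)\ten_B\K}<\infty$ uniformly over reduced Wick words $\xi\in\F_l$ for every $l\gl L_0$. By linearity we may restrict to Wick words $\xi$ with exactly $L\gl l$ singletons labelled in $\{0\}\times H$ and the remaining singletons in $H\times\{0\}$.

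The core estimate runs through a basis expansion and the interpolation formula of Corollary \ref{polyfor}. Fix a bounded orthonormal basis $\{\zeta_{k,s}\}$ of $L^2_k(M)$ over $B$ of cardinality $\kl Cd^k$, available by Proposition \ref{fingen}. Applying Corollary \ref{polyfor} with $\eta=\xi^*$ yields, for every $k\gl 2m$ with $m=|\xi|$,
\begin{equation*}
E_M(\xi^*\zeta_{k,s}\xi)\lel\sum_\si\sum_r(q/t_r)^{(k-2m)L}\gamma_rF_{\si,r}(\zeta_{k,s}),
\end{equation*}
where the points $t_r\in(|q|,b)$, $b<1$, are fixed (in number depending only on $m,L$), and $F_{\si,r}(\zeta)$ is the analogous expression computed in $\Gamma_{Q(t_r)}(B,S\ten(H\oplus H))$. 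The decisive feature is the decay factor $(|q|/t_r)^{(k-2m)L}$; for $k\kl 2m$ the alternative formula from Corollary \ref{polyfor} applies and contributes only finitely many terms.

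I then construct $\K$ from the generalized $Q(t)$-gaussian setting: take $\K$ to be the $B$-$M$ bimodule obtained as the $\|\cdot\|_2$-closure, inside $L^2(\Gamma_{Q(t)}(B,S\ten(H\oplus H)))$, of $\overline{B\cdot W\cdot M}$ where $W$ ranges over reduced $Q(t)$-Wick words whose labels all lie in $\{0\}\times H$, the right $M$-action coming from the trace-preserving inclusion $M\hookrightarrow\Gamma_{Q(t)}(B,S\ten(H\oplus H))$ of Lemma \ref{Q}. The crucial structural claim is that each $F_{\si,r}(\zeta)$ can be written as $\sum_j\lan\zeta\cdot u_{\si,r,j},v_{\si,r,j}\ran_{L^2(M)\ten_B\K}$ with vectors $u_{\si,r,j},v_{\si,r,j}$ depending on $\xi,\si,r$ but not on $\zeta$, and whose norms are bounded by a constant $C(m,L,t)$ independent of $k$. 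Schematically, in the $Q(t_r)$-moment formula the pairs among the $\{0\}\times H$-singletons of $\xi^*,\xi$ carry weight $t_r$ and aggregate into the $\K$-side, whereas every other pair involves either $H\times\{0\}$-labels or touches $\zeta$, carries weight $q$, and produces $B$-valued moments that reassemble, via axiom (3) of symmetric copies and the inner-product formula of Lemma \ref{basis1}, into an element of $L^2(M)$ acting on $\zeta$.

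Summing over the basis then yields
\begin{equation*}
\|\Phi_\xi\|_{L^2(M)\ten_B\K}\kl C'\sum_{k\gl 0}d^k(|q|/t_0)^{(k-2m)L},
\end{equation*}
where $t_0=\min_r|t_r|>|q|$. Fixing $t_0\in(|q|,b)$ and choosing $L_0$ so that $d(|q|/t_0)^{L_0}<1$ turns the right-hand side into a convergent geometric series for every $L\gl L_0$; Lemma \ref{ne11} then produces $\F_l\prec L^2(M)\ten_B\K$ for every $l\gl L_0$. The main obstacle, absorbing most of the technical work, is the precise factorization of $F_{\si,r}(\zeta)$ through $L^2(M)\ten_B\K$ with $k$-independent norm bounds: one must carefully separate the $Q$-moment expansion into a $\zeta$-dependent $L^2(M)$-piece and a $\zeta$-free $\K$-piece, using a reduction procedure adapted from Lemma \ref{elim} to the $Q(t)$-gaussian setting (as noted in the remark following Lemma \ref{Q}), and verify that the $B$-bimodularity required by the relative tensor product inner product is preserved throughout.
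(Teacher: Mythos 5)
Your proposal follows essentially the same route as the paper's proof: apply Lemma~\ref{ne11} to reduce to a finiteness estimate for $\|\Phi_\xi\|_{L^2(M)\ten_B\K}$, decompose along the orthonormal $B$-basis of each $L^2_k(M)$, invoke the polynomial interpolation of Corollary~\ref{polyfor} to replace the $q$-Wick words by $Q(t_r)$-Wick words, extract the decay factor $(q/t_r)^{(k-m-m')L}$, and close the geometric series against the $Cd^k$ growth by choosing $L_0$ so that $(|q|/t_0)^{L_0}d<1$. The one concrete gap is the construction of $\K$: you take it inside $L^2(\Gamma_{Q(t)}(B,S\ten(H\oplus H)))$ for a \emph{single} $t$, but the interpolation requires a family of points $t_1<\cdots<t_D$ whose cardinality $D=D(m,m',L)$ varies with the Wick word $\xi$, so the vectors $v_{\si,r,j}$ live in different $\Gamma_{Q(t_r)}$'s; one must therefore take $\K$ to be a direct sum over an interval of $t$'s (the paper uses $\K=\bigoplus_{q/q_0\le t<1}L^2(\Gamma_{Q(t)}(B,S\ten H))$). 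Also, the step you single out as the ``main obstacle'' --- factoring $F_{\si,r}(\zeta)$ through $L^2(M)\ten_B\K$ --- is handled cleanly in the paper via the duality formula $\langle \zeta\ten(\zeta')^{op}(a\ten_B b),\alpha\ten_B\beta\rangle=\tau(E_B(\alpha^*\zeta a)E_B(b\zeta'\beta^*))$ combined with $P_k(\zeta)=\sum_{i\in I_k}\xi_iE_B(\xi_i^*\zeta)$, which directly produces vectors $1\ten_B W_{Q(t_l)}(\vec{H}',\vec{a}')$ and $\xi_i\ten_B(W_{Q(t_l)}(\vec{H},\vec{a})\xi_i)^*$ of $k$-independent norm; no separate factorization lemma is needed.
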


\begin{proof} Let us recall that
 \[  \langle \zeta\ten (\zeta')^{op}(a\ten_B b),\al\ten_B \beta\rangle
 \lel \tau(\beta^*E_B(\al^*\zeta a)b\zeta')
 \lel \tau(E_B(\al^*\zeta a)E_B(b\zeta'\beta^*)) \pl .\] Now we may assume that $(\xi_i)_{i\in I_k}$ is a basis of dimension $d_k$ over $B$ so that
  \[ P_k(\zeta) \lel \sum_{i\in I_k} \xi_i E_B(\xi_i^*\zeta) \pl  \quad \mbox{and} \quad E_B(\xi_i^*\xi) \le 1\pl. \]
This implies
  \begin{align*}
   & \tau(\zeta'\xi \zeta \eta)
    \lel \sum_{i\in I_k} \tau(\zeta' \xi \xi_i E_B(\xi_i^*\zeta)\eta)  \lel
       \sum_{i\in I_k} \tau(\xi_i E_B(\xi_i^*\zeta)\eta \zeta') \\
 &=
   \sum_{i\in I_k} \langle \zeta\ten (\zeta')^{op} (1 \ten_B \eta)  ,\xi_i \ten_B (\xi \xi_i)^*\rangle \pl .
   \end{align*}
Let $q_0<1$ so that $q/q_0<1$. Then we define the $B-M$ bimodule 
\[\K =\bigoplus_{q/q_0\le t <1} L^2(\Gamma_{Q(t)}(B,S\ten H))\] 
with the natural left and right actions. For fixed  $\xi$, $\eta$ we choose $a=\pm q$ and $|q|/q_0\le t_0<\cdots t_D <b $ for some $b<1$. This allows us to define $W_{Q(t_i)}(\vec{h},\vec{a})$ and $W_{Q(t_i)}(\vec{h'},\vec{a'})$ in $\K$. With the help of Corollary \ref{polyfor} we deduce that  the map
 $\Phi^+(\zeta)=\sum_{k\gl 2(m+m')} E_M(\xi P_k(\zeta)\eta)$  satisfies
 \begin{align*}
 &\|\Phi^+\|_{L_2(M)\ten_B\K} \kl \\
  &\sum_{\si} \sum_l |\gamma_l| \sum_{k\gl 2(m+m')}
   q_0^{(k-m-m')L}
  \sum_{i\in I_k}  \|1\ten_B W_{Q(t_l)}(\vec{H'},\vec{a'})\| \|\xi_i\ten_B(
  (W_{Q(t_l)}(\vec{H},\vec{a})
   \xi_i)^*\|   \pl .
  \end{align*}
Now we note that
 \[   \|1\ten_B W_{Q(t_l)}(\vec{H'},\vec{a'})
 \| \lel \|W_{Q(t_l)}(\vec{H'},\vec{a'})\|_{L_2(\Gamma_{Q(t_l)})}
 \kl c(t_l) \]
and
  \begin{align*}
&  \|\xi_i \ten_B (W_{Q(t_l)}(\vec{H},\vec{a})\xi_i)^*\|
 \lel  \tau(W_{Q(t_l)}(\vec{H},\vec{a})\xi_i E_B(\xi_i^*\xi_i) (W_{Q(t_l)}(\vec{H},\vec{a}) \xi_i)^*) \\
 & \kl \tau(\xi_i\xi_i^*W_{Q(t_l)}(\vec{H},\vec{a})^*W_{Q(t_l)}(\vec{H},\vec{a}))
 \kl \|W_{Q(t_l)}(\vec{H},\vec{a})
 \|_{\Gamma_{Q(t_l)}}^2  \kl c(t_l) \pl .
 \end{align*}
Thus it suffices to know that $\sum_k  q_0^{(k-m-m')L} Cd^k$ is finite. Note here that $m$ and $m'$ depend on the Wick word and that we may assume $l\gl L_0$.  Thus $q_0^{L_0}d<1$ and $b<1$  is enough to achieve summability . Using the second part of Corollary \eqref{polyfor} we also have summability for $k\le 2(m+m')$.  Lemma \ref{ne11} then yields the assertion. \qd

\begin{cor} Let $M=\Gamma_q(B,S\ten H)$ and assume that $H$ is finite dimensional and $dim_B(D_k(S)) \leq Cd^k$ for some constants $C, d>0$. Then there exists an $B-M$ bimodule $\K$ such that for $m\geq 1$ large enough we have $\F_m \prec L^2(M) \ten_B \K$. In particular, for $m$ large enough, $\F_m$ is weakly contained into $L^2(M) \ten_B L^2(M)$.
\end{cor}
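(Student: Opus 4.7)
The plan is to reduce the first assertion to the preceding Theorem and then derive the ``in particular'' statement from amenability of $B$ via standard bimodule manipulations. First, I would invoke Corollary~3.22 to translate the hypothesis on $D_k(S)$ into the one needed by the Theorem: since $H$ is finite dimensional and $\dim_B D_k(S)\le Cd^k$, that corollary yields $\dim_B L^2_k(M)\le C(\dim(H)\,d)^k$. With $d'=\dim(H)\,d$ in place of $d$, the preceding Theorem applies directly and produces an integer $L_0$ together with the $B$--$M$ bimodule $\K=\bigoplus_{q/q_0\le t<1}L^2(\Gamma_{Q(t)}(B,S\ten H))$ such that $\F_m\prec L^2(M)\ten_B\K$ as $M$--$M$ bimodules for every $m\ge L_0$. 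This settles the first claim with essentially no additional work.

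For the ``in particular'' statement, assuming $B$ amenable, I would use the standard characterization that amenability of $B$ is equivalent to the weak containment $L^2(B)\prec L^2(B)\ten L^2(B)$ as $B$--$B$ bimodules. Functoriality of weak containment under the Connes tensor product then yields
\[ L^2(M)\ten_B \K \;=\; L^2(M)\ten_B L^2(B)\ten_B \K \;\prec\; L^2(M)\ten_B \bigl(L^2(B)\ten L^2(B)\bigr)\ten_B \K \;=\; L^2(M)\ten\K \]
as $M$--$M$ bimodules, where on the right-hand side $L^2(M)\ten\K$ carries the external bimodule structure (left $M$-action on the first factor, right $M$-action on the second, coming from the right $M$-module structure of $\K$). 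Finally, every right Hilbert $M$-module embeds isometrically into an amplification $L^2(M)^{\oplus I}$, and in particular $\K\prec L^2(M)^{\oplus I}$ as right $M$-modules. Chaining these weak containments gives $\F_m\prec L^2(M)\ten L^2(M)^{\oplus I}$, i.e.\ weak containment in a multiple of the coarse bimodule.

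The genuinely hard work has already been carried out in the preceding Theorem; the only step in the present corollary requiring real care is the Connes tensor product manipulation in the second assertion, since one must track the $B$--$B$ bimodule structures on both sides in order to invoke functoriality of weak containment. This is however standard material on Connes tensor products and amenable inclusions, so I would not expect a genuine obstacle here.
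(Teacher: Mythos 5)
Your proof is correct, and it is the natural derivation that the paper leaves implicit (the corollary is stated without proof). The first assertion is exactly the chain: the growth hypothesis $\dim_B D_k(S)\le Cd^k$ plus Prop.~3.21 / Cor.~3.22 gives $\dim_B L^2_k(M)\le C(\dim(H)d)^k$, and the preceding theorem (6.9) then supplies $L_0$ and $\K$ with $\F_m\prec L^2(M)\ten_B\K$ for $m\ge L_0$. For the amenable case, your Connes-tensor-product manipulation with the coarse $B$-$B$ bimodule in the middle is the standard argument that relative weak containment over an amenable $B$ upgrades to external tensor weak containment, and the final absorption of the right $M$-module $\K$ into $L^2(M)^{\oplus I}$ is likewise routine for a tracial $M$. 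One small bookkeeping point worth making explicit: the right $M$-module structure on $\K=\bigoplus_t L^2(\Gamma_{Q(t)}(B,S\ten H))$ comes from Lemma 6.5, which embeds $M=\Gamma_q$ into each $\Gamma_{Q(t)}$ because $Q(t)$ restricts to $q$ on the $H\times 0$ labels; once that right action is acknowledged, the isometric embedding $\K\hookrightarrow L^2(M)^{\oplus I}$ and the conclusion $\F_m\prec (L^2(M)\ten L^2(M))^{\oplus I}$ follow exactly as you say.
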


\section{The proof of the main theorem and its applications} 
We first need some preliminaries. Throughout this section we use the notations $M=M(H)=\Gamma_q(B,S\ten H)$, $\tilde M=\Gamma_q(B,S\ten (H \oplus H))=M(H\oplus H)$. Let  
\[\mathcal M=(D \bar{\ten} \Gamma_q(\ell^2 \ten H))\vee M \subset (D \bar{\ten} \Gamma_q(\ell^2 \ten H))^{\om}.\]
As in Section 5, let 
\[\mathcal H \subset ((L^2(\mathcal M)\ten_{\mathcal A} L^2(P))\ten \mathcal F_q(\ell^2 \ten H))^{\om}\]
be the norm closed linear span of the sequences 
\[(n^{-\frac{m}{2}}\sum_{(j_1,\ldots,j_m)=\si} (\pi_{j_1}(x_1)\cdots \pi_{j_m}(x_m)y \ten_{\mathcal A} z) \ten s_{j_1}(h_1)\cdots s_{j_m}(h_m)),\]
for $m\geq 1, \si \in P_{1,2}(m), x_i \in BSB, h_i \in H, y\in M, z \in P$.
Take the representations 
\[\pi:M \to B(\mathcal H), \theta:P^{op}\to B(\mathcal H)\] 
introduced in Section 5 and define $\mathcal N=\pi(M) \vee \theta(P^{op})\subset B(\mathcal H)$. As seen is Section 5, we choose a normal faithful state $\phi$ on $B_P=\pi(B)\vee \theta(P^{op})\subset B(\mathcal H)$ and then define a normal faithful state $\psi$ on $\mathcal N$ by $\psi=\phi \circ E_{B_P}$, where $E_{B_P}:\mathcal N \to B_P$ is the normal faithful conditional expectation. Let $d_{\psi} \in L^1(\mathcal N)$ be the density of $\psi$ and $\xi_{0}=d_{\psi}^{\frac{1}{2}}$. Then $L^2(\mathcal N)$ is the norm closed span of the elements $\pi(x_{\si})\theta(y^{op})\xi_0$, for $x_{\si} \in M$ a Wick word and $ y \in P$. Let $W_k(M)$ be the linear span of the Wick words of degree $k$ in $M$ and $L^2(B_P)=L^2(B_P,\phi)$ be the standard form for $B_P \subset B(\mathcal H)$. Then $\mathcal N$ is standardly represented on
\[L^2(\mathcal N)=\overline{\bigoplus_{k\geq 0} W_k(M)L^2(B_P)}\]
by the formulas
\[\pi(x_{\si})\theta(y^{op})(\pi(x_{\nu})\theta(z^{op})\xi_0) = \pi(x_{\si}x_{\nu})\theta((zy)^{op})\xi_0, x_{\si}, x_{\nu}\in M, y,z \in P.\]
The conjugation $\mathcal J:L^2(\mathcal N)\to L^2(\mathcal N)$ associated to the standard representation of $\mathcal N$ is given by 
\[\mathcal J(\pi(x_{\si})\theta(y^{op})\xi_0)=\si_{-\frac{i}{2}}^{\psi}(\pi(x_{\si}^*)\theta(\bar{y}))\xi_0, x_{\si} \in M, y \in P,\]
where $\si_t^{\psi}$ is the modular group on $\mathcal N$ associated to $\psi$.
We will also consider $\tilde{\mathcal N}=\mathcal{N}(\tilde H)$ constructed in the same way as $\mathcal N$ by using $\tilde H=H \oplus H$ instead of $H$. Thus take 
\[\tilde{\mathcal H} \subset ((L^2(\mathcal M)\ten_{\mathcal A} L^2(P))\ten \mathcal F_q(\ell^2 \ten \tilde H))^{\om}\]
to be the norm closed linear span of the sequences
\[(n^{-\frac{m}{2}}\sum_{(j_1,\ldots,j_m)=\si}(\pi_{j_1}(x_1)\cdots \pi_{j_m}(x_m)y \ten_{\mathcal A} z)\ten s_{j_1}(\tilde h_1)\cdots s_{j_m}(\tilde h_m)),\]
for $m\geq 1, \si \in P_{1,2}(m), x_i \in BSB, \tilde h_i \in \tilde H$. Exactly as in Section 5, define the *-representations 
\[\pi:\tilde M \to B(\tilde{\mathcal H}), \theta:P^{op}\to B(\tilde{\mathcal H})\]
and then define $\tilde{\mathcal N}=\pi(\tilde M)\vee \theta(P^{op})$. Then $\tilde{\mathcal N}$ is standardly represented on
\[L^2(\tilde{\mathcal N})=\overline{\bigoplus_{k\geq 0} W_k(\tilde M)L^2(B_P)},\]
and the associated conjugation $\tilde{\mathcal J}:L^2(\tilde{\mathcal N})\to L^2(\tilde{\mathcal N})$ is given by the formula
\[\tilde{\mathcal J}(\pi(x_{\si})\theta(y^{op})\xi_0)=\si_{-\frac{i}{2}}^{\psi}(\pi(x_{\si}^*)\theta(\bar{y}))\xi_0, x_{\si} \in \tilde M, y \in P,\]
where $\si_t^{\psi}$ is the modular automorphisms group on $\tilde{\mathcal N}$ associated to $\psi$. For every angle $t$ define the unitary $V_{t}$ on $L^2(\tilde{\mathcal N})$ by
\[\pi(x_{\si}(x_1,\tilde h_1,\ldots,x_m,\tilde h_m))\theta(y^{op})\xi_0 \mapsto \pi(x_{\si}(x_1,o_{t}(\tilde h_1),\ldots,x_m,o_{t}(\tilde h_m)))\theta(y^{op})\xi_0. \]
Then the one parameter group of *-automorphisms Ad$(V_{t})$ of $B(L^2(\tilde{\mathcal N}))$ restricts to a group $\al_{t}$ of *-automorphisms of $\tilde{\mathcal N}$, acting according to the formula
\[\al_{t}(\pi(x_{\si}(x_1,\tilde h_1,\ldots,x_m,\tilde h_m)\theta(y^{op}))=\pi(x_{\si}(x_1,o_{t}(\tilde h_1),\ldots,x_m,o_{t}(\tilde h_m)))\theta(y^{op}).\] 
When further restricted to $\tilde M =\Gamma_q(B,S\ten \tilde H)$ this group of *-automorphisms coincides with the one introduced in Theorem 3.16 and we have the following identity
\[T_t(x)=E_M(\al_{s}(x)), x \in M, 0 \leq s < \frac{\pi}{2},\]
where $T_t$ is the heat semigroup introduced in Theorem 3.16 and $t=-$ln$(\cos(s))$. We finally introduce the bimodules needed in the deformation argument. To do this, recall that $\tilde M=M(H\oplus H)$ is the generalized $q$-gaussian algebra generated by $B$, $s_q(a,h,0)$ and $s_q(a,0,h)$, where $a \in S$ runs through the generating set and $h\in H$ are unit vectors. Let $F \subset H$ be an orthonormal basis.
Then we define an $M-M$ bimodule $\mathcal{F}_{=m}\subset L^2(\tilde M)$ by
 \[ \mathcal{F}_{=m}
 \lel \overline{{\rm span}}^{\|\cdot\|_2}\{ W_{\si}(k_1,...,k_N,a_1,....,a_{N'}) | k_i\in F\times \{0\}\cup \{0\}\times F, 
 \#\{i| k_i\in \{0\}\times F\}=m \} \pl . \]
Note that we use reduced Wick words. This means $N=|\si_s|$ and the vectors $(k_1,...,k_N)$ are the ones obtained after contracting the pairs. Here $\si\in P_{1,2}(N')$ and $a_1,..,a_{N'}$ are the original coefficients from $S$. One can see that $\mathcal{F}_{=m}$ is exactly the eigenspace of vectors $\xi \in L^2(\tilde M)$ such that  $E_{M(0 \oplus H)}(\al_{t}(\xi))=e^{-tm}\xi$ for all (some) $t>0$. Likewise we define the $M-M$ bimodule $\F_{=m}^P \subset L^2(\tilde{\mathcal N})$ as the $\|\cdot\|_2$-closed span of the elements
\[\pi(W_{\si}(x_1,h_1,\ldots,x_m,h_m))\theta(y^{op})\xi_0, x_i \in BSB, h_i \in F\times \{0\}\cup \{0\} \times F,\]
such that exactly $m$ of the vectors $h_i$ belong to $\{0\}\times F$. It's easy to see that $\mathcal F_{=m}^P$ can be described by
\[ \F_{=m}^P \lel \{\xi\in L_2(\tilde{\mathcal N})| E_{\mathcal N(0 \oplus H)}(\al_{t}(\xi))=e^{-tm}\xi, \forall t>0  \} \pl .\] 
Finally, we set 
 \[ \mathcal{F}_{m} \lel \bigoplus_{m'\gl m} \mathcal{F}_{=m'}\subset L^2(\tilde M) \quad, \quad 
 \mathcal{F}_{m}^P \lel \bigoplus_{m'\gl m} \mathcal{F}_{=m'}^P \subset L^2(\tilde{\mathcal N})\pl .\] 
Let's remark that we have the following transversality property, whose proof is virtually the same as that of Prop. 5.1. in \cite{Avsec}.
\begin{lemma} There exists a constant $C=C(m)>0$ such that for $0<t<2^{-m-1}$ we have
\[\|V_{t^{m+1}}(\xi)-\xi\| \leq C \|e^{\perp}V_t (\xi)\| \quad \mbox{for all} \quad \xi \in \bigoplus_{k \geq m+1}L^2_k(\N)\subset L^2(\tilde{\mathcal N}).\]
\end{lemma}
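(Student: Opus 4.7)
My plan is to reduce the statement to a scalar inequality in the variables $k$ and $t$ via the orthogonal decomposition $\bigoplus_{k\geq m+1} L^2_k(\mathcal N)$, using the way $V_s$ and $E_{\mathcal N}$ act on Wick words. Concretely, first I would write $\xi = \sum_{k\geq m+1}\xi_k$ with $\xi_k \in L^2_k(\mathcal N)$. Recall that $L^2_k(\mathcal N) = W_k(M)L^2(B_P)$ sits inside $L^2(\tilde{\mathcal N})=\overline{\bigoplus_k W_k(\tilde M)L^2(B_P)}$, and different Wick-degree summands are mutually orthogonal (Prop. \ref{L2space}). Since $V_s$ comes from an orthogonal rotation on $\tilde H$, it preserves the total singleton count of a Wick word; hence $V_s\xi_k$ and $V_s\xi_{k'}$ (together with $\xi_k,\xi_{k'}$) sit in orthogonal degree components for $k\neq k'$, so the $\xi_k$'s decouple in both norms appearing in the claim.

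Next I would compute the two scalar coefficients. For $\xi_k \in L^2_k(\mathcal N)$, applying $V_s$ replaces each singleton $h_i\in H$ by $\cos(s)(h_i,0)+\sin(s)(0,h_i)$; multilinearity of the Wick word expansion yields a sum over subsets, and $E_{\mathcal N}$ (which corresponds to $E_{M}\colon \tilde M\to M$ on the Wick part) retains only the purely-$(h,0)$ term. This gives the key identity
\[
E_{\mathcal N}(V_s \xi_k) \;=\; \cos^k(s)\,\xi_k.
\]
Consequently $\langle V_s\xi_k,\xi_k\rangle = \cos^k(s)\|\xi_k\|^2$, and therefore
\[
\|V_{t^{m+1}}\xi-\xi\|^2 \;=\; \sum_{k\geq m+1} 2\bigl(1-\cos^k(t^{m+1})\bigr)\|\xi_k\|^2,
\]
\[
\|e^\perp V_t \xi\|^2 \;=\; \sum_{k\geq m+1}\bigl(1-\cos^{2k}(t)\bigr)\|\xi_k\|^2.
\]

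The proof then reduces to the scalar inequality: there exists $C(m)>0$ such that
\[
2\bigl(1-\cos^k(t^{m+1})\bigr) \;\leq\; C(m)^2\bigl(1-\cos^{2k}(t)\bigr)\qquad\text{for all } k\geq m+1,\ 0<t<2^{-m-1}.
\]
I would prove this by case distinction on whether $kt^2\leq 1$ or $kt^2>1$. Using the elementary bound $1-\cos^k(s)\leq k(1-\cos s)\leq k s^2/2$ with $s=t^{m+1}$, the left side is at most $kt^{2(m+1)}$. For the right side, write $1-\cos^{2k}(t)=1-(1-\sin^2 t)^k$ and apply $1-(1-x)^k\geq c_0\min(1,kx)$ (valid for some universal $c_0>0$ and $x\in[0,1]$), with $x=\sin^2 t\asymp t^2$. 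In the regime $kt^2\leq 1$ this yields denominator $\gtrsim kt^2$, giving ratio $\lesssim t^{2m}\leq 1$ by the assumption $t<2^{-m-1}$; in the regime $kt^2>1$ the denominator is $\gtrsim c_0$, while the numerator is bounded above by $1$ trivially, so the ratio is $\lesssim 1/c_0$.

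The main obstacle is the scalar estimate in the high-degree regime $kt^2\gg1$: one cannot use the first-order expansion of $1-\cos^k(t^{m+1})$ naively, because $k$ is unbounded in $\xi$. The hypothesis $t<2^{-m-1}$ forces $t^{m+1}$ to be genuinely small so that the trivial upper bound $1-\cos^k(t^{m+1})\leq 1$ is the correct one to use, matched against the lower bound $1-\cos^{2k}(t)\gtrsim 1$ which already kicks in when $kt^2\gtrsim 1$. Once both cases are matched, taking $C(m)^2$ equal to the maximum of the two constants produced above completes the proof.
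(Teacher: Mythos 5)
The paper does not include a proof of this lemma (it is stated to be ``virtually the same as that of Prop.~5.1 in \cite{Avsec}''), so I will evaluate your argument on its own. The overall strategy --- decompose $\xi$ into its Wick-degree components, use $\langle V_s\xi_k,\xi_k\rangle = \cos^k(s)\|\xi_k\|^2$ to reduce to a family of scalar inequalities, and split into the regimes $kt^2\le 1$ and $kt^2>1$ --- is the right shape of argument. However, there is a concrete error in your computation of $\|e^\perp V_t\xi\|^2$, and it is not a cosmetic one.

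The projection $e^\perp$ is the orthogonal projection of $L^2(\tilde{\mathcal N})$ onto $\mathcal{F}_m^P = \bigoplus_{m'\ge m}\mathcal{F}_{=m'}^P$, the span of Wick words having \emph{at least $m$} singletons from the second copy $\{0\}\times H$. The formula you write,
\[
\|e^\perp V_t\xi\|^2 \;=\; \sum_{k\ge m+1}\bigl(1-\cos^{2k}(t)\bigr)\|\xi_k\|^2\,,
\]
is the one obtained when $e^\perp$ projects onto $\mathcal{F}_1^P$, the orthogonal complement of $L^2(\mathcal N)$ inside $L^2(\tilde{\mathcal N})$; that is, you have silently replaced $m$ by $1$. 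The correct expression follows from the observation that, for $\xi_k\in L^2_k(\mathcal N)$, the vector $V_t\xi_k$ decomposes orthogonally over the eigenspaces of the ``second-copy'' number operator $N_0$ with
\[
\|P_{N_0=j}V_t\xi_k\|^2 \;=\; \binom{k}{j}\cos^{2(k-j)}(t)\,\sin^{2j}(t)\,\|\xi_k\|^2\qquad (0\le j\le k),
\]
which one can check directly from the moment/reduction formula for reduced Wick words (and which is consistent: summing over $j$ recovers $\|\xi_k\|^2$; the $j=0$ term recovers your $\cos^{2k}(t)\|\xi_k\|^2$). Therefore
\[
\|e^\perp V_t\xi_k\|^2 \;=\; \sum_{j\ge m}\binom{k}{j}\cos^{2(k-j)}(t)\sin^{2j}(t)\,\|\xi_k\|^2\,,
\]
a binomial tail, which for $m\ge 2$ is strictly smaller than $1-\cos^{2k}(t)$. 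Since you are trying to bound $\|V_{t^{m+1}}\xi-\xi\|$ \emph{above} by $C\|e^\perp V_t\xi\|$, replacing the true $\|e^\perp V_t\xi\|$ by the larger quantity $\bigl(\sum_k(1-\cos^{2k}(t))\|\xi_k\|^2\bigr)^{1/2}$ proves only a strictly weaker inequality; it does not yield the lemma for any $m\ge 2$. Note that if $e^\perp$ really were the projection onto $L^2(\mathcal N)^\perp$, the statement would essentially be Popa's ordinary transversality and would hold with no restriction $t<2^{-m-1}$ and no power $t^{m+1}$; the unusual features of the hypothesis are precisely a signal that the projection is onto the much smaller subspace $\mathcal{F}_m^P$.

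The fix keeps your case split but requires a genuine binomial tail lower bound. Writing $p=\sin^2 t$, one must show
\[
2\bigl(1-\cos^k(t^{m+1})\bigr)\;\le\; C(m)^2\sum_{j\ge m}\binom{k}{j}p^j(1-p)^{k-j}
\]
for $k\ge m+1$ and $0<t<2^{-m-1}$. In the regime $kp\le 1$, the left side is $\lesssim kt^{2(m+1)}$, while the right side is $\gtrsim \binom{k}{m}p^m(1-p)^{k}\gtrsim k\,t^{2m}e^{-2}$ (using $\binom{k}{m}\ge k$ for $m+1\le k$), and the ratio is controlled by $t^2(\pi/2)^{2m}e^2\le 4^{-(m+1)}(\pi/2)^{2m}e^2$. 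In the regime $kp>1$ (which with $p\le 4^{-m-1}$ forces $k\ge 2m$), the left side is $\le 2$ and the right side is the probability that a $\mathrm{Binom}(k,p)$ exceeds $m-1$, which is bounded below by a constant depending only on $m$ once the mean $kp$ is $\ge 1$ (e.g.~Chebyshev when $kp\ge 4m$, and a direct estimate of $\binom{k}{m}p^m(1-p)^{k-m}$ when $1\le kp<4m$). Your present argument bypasses this entirely, which is precisely the nontrivial content of the lemma.
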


\begin{theorem} Let $M=\Gamma_q(B,S\ten H)$ associated to a sequence of symmetric independent copies $(\pi_j,B,A,D)$ and assume that the dimension of $D_k(S)$ over $B$ is finite for every $k$ and that $H$ is finite dimensional. Let $\mathcal A \subset M$ be a von Neumann subalgebra which is amenable relative to $B$ and denote $P=\mathcal N_M(\mathcal A)''$. Let $m \geq 1$ be fixed. Then at least one of the following statements holds:
\begin{enumerate}
\item The $M-M$ bimodule $\mathcal F_m$ is left $P$-amenable;
\item there exist $t, \delta >0$ such that $\inf_{a \in \mathcal{U}(\mathcal A)}\|T_t(a)\|_2 \geq \delta$,
\end{enumerate}

\end{theorem}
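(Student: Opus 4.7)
The plan is to carry out the Popa-Vaes deformation/rigidity dichotomy in our non-crossed-product setting. Invoke Theorem 5.1 to produce normal states $(\om_n) \subset \N_*$ that are asymptotically invariant under $\mathrm{Ad}(\pi(u)\theta(\bar u))$ for $u \in \N_M(\A)$ and satisfy $\om_n(\pi(a)\theta(\bar a)) \to 1$ for $a \in \mathcal U(\A)$, together with the deformation group $\al_t$ on $\tilde{\N}$ implemented by the unitaries $V_t$ on $L^2(\tilde{\N})$. Via the standard-form description of Proposition 5.9, represent each $\om_n$ by a positive unit vector $\xi_n \in L^2(\N)_+ \hookrightarrow L^2(\tilde{\N})_+$; the asymptotic near-invariance of $\om_n$ then translates into $\|\pi(u)\theta(\bar u)\xi_n - \xi_n\| \to 0$ for $u \in \N_M(\A)$. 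Let $e$ denote the projection onto the low-degree ``undeformed'' subspace tailored to Lemma 7.1, so that $e^\perp V_t\xi_n$ measures the genuinely deformed high-degree part, and dichotomize on $c(t) := \limsup_n \|e^\perp V_t \xi_n\|$.

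Case A ($c(t_0) > 0$ for some $t_0 > 0$): Set $\zeta_n := e^\perp V_{t_0} \xi_n$; after normalization and passage to a subsequence, these are unit vectors lying in $\F_m^P$. The covariance $V_{t_0} \pi(u)\theta(\bar u) V_{t_0}^* = \pi(\al_{t_0}(u))\theta(\bar u)$ combined with the asymptotic $\pi(u)\theta(\bar u)$-invariance of $\xi_n$ yields the asymptotic near-invariance of $\zeta_n$ for all $u \in \N_M(\A)$. Taking an ultralimit $\Om(T) := \lim_{n \to \om}\langle T\zeta_n,\zeta_n\rangle$ and using $\om_n(\pi(x)) \to \tau(x)$, one obtains a $P$-central state $\Om$ on $B(\F_m^P) \cap (P^{op})'$ with $\Om|_{\pi(M)} = \tau$, so $\F_m^P$ is left-$P$-amenable as an $M$-$P$ bimodule. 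Finally, the standard-form identification of Proposition 5.9 realizes $\F_m^P$ as $\F_m \ten_M L^2(P)$ in the category of $M$-$P$ bimodules, and Connes-tensor functoriality of weak containment promotes this to left-$P$-amenability of $\F_m$ itself.

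Case B ($c(t) \to 0$ for every $t > 0$): Split $\xi_n$ into its degree-$\leq m$ and degree-$\geq m+1$ components. The latter is almost $V_{t^{m+1}}$-invariant by Lemma 7.1; the former lies in finitely many $\al_t$-eigenspaces on each of which $V_t$ acts as multiplication by $\cos(s)^k$ with $k \leq m$, hence is uniformly close to the identity for small $t$. Combining, $\|V_s\xi_n - \xi_n\| \to 0$ for $s$ in a fixed small interval. For $a \in \mathcal U(\A)$, using $E_\N \circ \al_s \circ \pi |_M = \pi \circ T_s$ and $\xi_n \in L^2(\N)$,
\[
\om_n\big(\pi(T_s(a))\theta(\bar a)\big) = \langle \al_s(\pi(a))\theta(\bar a)\xi_n, \xi_n\rangle = \langle \pi(a)\theta(\bar a)V_{-s}\xi_n, V_{-s}\xi_n\rangle,
\]
which is asymptotically close to $\om_n(\pi(a)\theta(\bar a)) \to 1$. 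Passing to the limit using $\om_n(\pi(x)) \to \tau(x)$ on polynomial approximations of $T_s(a)a^*$, we obtain $|\tau(T_s(a)a^*)| \geq 1 - \eps(s)$; Cauchy-Schwarz then gives $\|T_s(a)\|_2 \geq 1 - \eps(s)$, establishing conclusion (2) with $t := s$ and $\delta := 1 - \eps(s) > 0$ once $s$ is small enough.

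The main obstacle is Case A, specifically the identification of $\F_m^P$ as the induced $M$-$P$ bimodule $\F_m \ten_M L^2(P)$ and the verification that the ultralimit state $\Om$ is genuinely $P$-central on the full relative commutant rather than merely on $\pi(M)\theta(P^{op})$. In the Popa-Vaes cross-product setting co-multiplication provides a clean reduction to the tensor-product case, but here we have no such tool and must instead rely on the precise standard-form analysis of $\N$ from Section 5 together with the moment-formula identifications for reduced Wick words from Section 3. A secondary subtlety is the uniform-in-$n$ control of the low-degree remainder in Case B, which uses the finite dimensionality of $L^2_k(M)$ over $B$ for the finitely many relevant $k \leq m$ to keep the action of $V_t$ on this remainder under norm control.
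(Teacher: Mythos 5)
Your high-level blueprint matches the paper's: deformation states from Theorem 5.1 realized as vectors $\xi_n\in L^2(\N)\subset L^2(\tilde\N)$, the dichotomy on $\|e^\perp V_t\xi_n\|$, left $P$-amenability of $\F_m$ via the standard-form analysis of $\N$ in Case~1, and transversality plus an approximate-invariance computation yielding a spectral gap in Case~2. You have also correctly identified the key supporting lemmas. However, there are two concrete gaps.

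First, the dichotomy itself. You split on whether $c(t):=\limsup_n\|e^\perp V_t\xi_n\|$ vanishes; these two cases are not complementary, and even if you read Case~B as $c(t)=0$ for all $t$, the alternative the paper actually uses is quantified by central projections: either for \emph{every} non-zero $p\in\mathcal Z(P)$ and every $t>0$ one has $\limsup_n\|e^\perp V_t\pi(p)\xi_n\|>\frac{\|p\|_2}{8C}$, or there \emph{exist} $p\in\mathcal Z(P)$ and $t>0$ with $\limsup_n\|e^\perp V_t\pi(p)\xi_n\|\le\frac{\|p\|_2}{8C}$, where $C$ is the transversality constant of Lemma 7.1. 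The central projection is not a cosmetic feature: Case~1 must deliver left $P$-amenability of the full bimodule $\F_m$, so you need the lower bound to hold under every corner cut by $\mathcal Z(P)$; and in Case~2 the precise threshold $\|p\|_2/8C$, combined with the $\frac{\|p\|_2}{8}$-control of $\|V_s\zeta_n-\zeta_n\|$ on the degree-$\le m$ component, is what produces the fixed bound $\limsup_n\|V_s\pi(p)\xi_n-\pi(p)\xi_n\|\le\frac{3\|p\|_2}{8}$ that the Popa--Vaes spectral gap argument requires. Your looser version ($\|V_s\xi_n-\xi_n\|\to 0$) does not survive careful bookkeeping of the low-degree part, since for any fixed $s>0$ the term $\|V_s\zeta_n-\zeta_n\|$ only becomes \emph{small}, never $o(1)$ in $n$.

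Second, the bimodule identification in Case~A. You propose $\F_m^P\cong\F_m\ten_M L^2(P)$ as $M$-$P$ bimodules. What the paper actually proves (Lemma 7.3 together with Proposition 5.9 iii)) is $\F_m^P\cong X\ten_M L^2(\N)$, where $X$ is the self-dual $W^*$-Hilbert $M$-module completion of $\F_m$ and $\N=\pi(M)\vee\theta(P^{op})$ is \emph{non-tracial}. This matters because the map one needs is a normal $^*$-homomorphism $\Psi\colon B(\F_m)\cap(M^{op})'\to B(\F_m^P)\cap(\theta(P^{op})\vee\tilde{\mathcal J}\pi(M)\tilde{\mathcal J}\vee\tilde{\mathcal J}\theta(P^{op})\tilde{\mathcal J})'$ with $\Psi(\lambda(x))=\pi(x)$, and its existence and normality rest on the Hilbert-module structure of $X$ and the standard-form/conditional-expectation analysis of $\N$ from Section~5, not on a tensor decomposition against $L^2(P)$. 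Relatedly, your claim that the ultralimit state $\Omega$ built from the renormalized deformed vectors satisfies $\Omega|_{\pi(M)}=\tau$ is exactly the delicate point for which Popa and Vaes run the argument corner-by-corner in $\mathcal Z(P)$; it does not follow directly from $\omega_n(\pi(x))\to\tau(x)$ after projecting by $e^\perp$, normalizing, and deforming.
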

\begin{proof}
The approximately invariant states $\om_n \in \mathcal N_*$ constructed in Thm. 5.1 are implemented by unit vectors $\xi_n \in L^2(\mathcal N)\subset L^2(\tilde{\mathcal N})$. Using the Powers-Stormer inequalities we see that the vectors $\xi_n$ have the following properties
\begin{enumerate}
\item $\lan \pi(x)\xi_n,\xi_n \ran \to \tau(x), x \in M$;
\item $\|\pi(a)\theta(\bar{a})\xi_n -\xi_n\| \to 0, a \in \mathcal{U}(\mathcal A)$;
\item $\|\pi(u)\theta(\bar{u})\mathcal J \pi(u)\theta(\bar{u})\mathcal J\xi_n-\xi_n\|\to 0, u \in \mathcal N_M(\mathcal A)$.
\end{enumerate}
Let $e^{\perp}:L^2(\tilde{\mathcal N})\to \mathcal{F}_m^P$ be the orthogonal projection. We have the following alternative: \\
{\bf Case 1.} For every non-zero projection $p \in \mathcal Z(P)$ and for every $t>0$ we have
\[\limsup_n \|e^{\perp}V_t\pi(p)\xi_n\|>\frac{\|p\|_2}{8C}.\]
We will prove that in this case the $M-M$ bimodule $\F_m$ is left $P$-amenable.
\begin{lemma} Let $X$ be the the strong operator topology completion of $\mathcal F_m$ as a right $M$-module with respect to the $M$-valued inner product $\lan x,y\ran=E_M(x^*y), x,y \in \mathcal F_m$. Let $\mathcal L(X)$ be the von Neumann algebra of adjointable operators on $X$. Then there exists a normal $^*$-homomorphism $\Psi:\mathcal L(X)\to B(L^2(\F_m^P))$ such that $\Psi(\mathcal L(X))\subset B(L^2(\F_m^P))\cap (\N^{op})' \cap (\theta(P^{op}))'$.
\end{lemma}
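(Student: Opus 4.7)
The plan is to realize $\F_m^P$ as the balanced Connes tensor product $\F_m \ten_B L^2(B_P)$, and then define $\Psi(T) = T \ten \mathrm{id}_{L^2(B_P)}$. The commutation requirements will then follow automatically, since $\Psi(T)$ acts only on the first tensor leg.

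First I would establish the isometric isomorphism $U : \F_m \ten_B L^2(B_P) \to \F_m^P$ given on elementary tensors by $U(x_\si \ten_B \theta(y^{op})\xi_0) = \pi(x_\si)\theta(y^{op})\xi_0$. The inner product identity
\[
\lan \pi(x_\si)\theta(y^{op})\xi_0,\, \pi(x_\nu)\theta(z^{op})\xi_0 \ran = \phi\bigl(\theta(y^{op})^*\theta(z^{op})\,\pi(E_B(x_\si^*x_\nu))\bigr),
\]
derived in \eqref{ppp} together with the equality $\E|_{\pi(M)} = E_B$ from Proposition \ref{L2space}, matches exactly the Connes inner product on $\F_m \ten_B L^2(B_P)$ where $B$ acts on $L^2(B_P)$ through the inclusion $\pi|_B \subset B_P$; the $B$-bilinearity is visibly built into this formula, so $U$ is well-defined, isometric, and extends by density to a unitary.

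Next, for $T \in \mathcal L(X)$, right $M$-linearity forces right $B$-linearity, and adjointability with respect to $\lan\cdot,\cdot\ran_M = E_M(\cdot^*\cdot)$ forces adjointability with respect to $\lan\cdot,\cdot\ran_B = E_B \circ E_M(\cdot^*\cdot)$ via the tower property. Hence $T \ten \mathrm{id}_{L^2(B_P)}$ extends to a bounded operator on the balanced tensor product with norm at most $\|T\|$. Set $\Psi(T) := U(T \ten \mathrm{id})U^*$. The formula makes it clear that $\Psi$ is a $*$-homomorphism, and normality follows from subsection 2.4: the tensor functor on self-dual Hilbert $W^*$-modules is ultraweakly continuous in its first argument.

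For the commutation properties: since $\Psi(T)$ acts trivially on the second tensor leg, it commutes with the left action of $\theta(P^{op}) \subset B_P$, which transports under $U$ to $\mathrm{id}_{\F_m} \ten L_{\theta(P^{op})}$. The right $\N$-action on $L^2(\tilde{\N})$, namely $\tilde{\mathcal J}\N\tilde{\mathcal J}$, preserves $\F_m^P$, as can be checked on reduced Wick words using that right multiplication by elements of $\pi(M)$ cannot destroy the $\{0\}\times F$-singleton count (vectors in $M$ lie in $F\times\{0\}$, so any new pairings created are among $F\times\{0\}$-vectors). Being in the commutant of the left $\pi(M)$-action (which through $U$ becomes $L_{\pi(M)}\ten\mathrm{id}$), this right action must factor as $\mathrm{id}\ten S$ on each summand $W_k(\tilde M)\ten_B L^2(B_P)$ of the orthogonal decomposition $L^2(\tilde{\N}) = \overline{\bigoplus_k W_k(\tilde M)L^2(B_P)}$ from Proposition \ref{L2space}, and therefore commutes with $T\ten\mathrm{id}$.

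The principal obstacle will be this last verification: rigorously showing that the $\tilde{\mathcal J}$-implemented right $\N$-action factors through the second tensor leg on each $W_k(\tilde M)\ten_B L^2(B_P)$. This requires invoking the Takesaki commutation relation $\E\circ\si_t^\psi = \si_t^\phi\circ\E$ to ensure that the standard-form decomposition is preserved by $\tilde{\mathcal J}$, and careful tracking of the interaction between $\tilde{\mathcal J}$ and the $B$-balancing in the tensor product.
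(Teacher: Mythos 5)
Your isometric identification $\F_m^P \cong \F_m \ten_B L^2(B_P)$ is fine, and the inner product computation matches \eqref{ppp}. The commutation with $\theta(P^{op})$ also works for the reason you give. But the crucial step — showing $\Psi(T) \in (\N^{op})'$ — does not go through, and you correctly flag this as the principal obstacle. The trouble is not merely technical: the right $\N$-action $\tilde{\mathcal J}\N\tilde{\mathcal J}$ genuinely does \emph{not} factor as $\mathrm{id}\ten S$ on the $B$-balanced tensor product. Right multiplication by $\pi(x_\nu)$, $x_\nu \in M$, convolves with the Wick word $x_\si$ sitting in the \emph{first} tensor leg (it pairs $F\times\{0\}$ singletons of $x_\si$ against those of $x_\nu$ and changes the $B$-coefficients arising from reduction), so the map does not act trivially there. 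It also fails to preserve the degree summands $W_k(\tilde M)L^2(B_P)$, so restricting to one summand is not available. And even if one could restrict, membership in the commutant of $L_{\pi(M)}\ten\mathrm{id}$ on a Connes tensor product $\H\ten_B\K$ does not force an operator to lie in $\mathrm{id}\ten B(\K)$ — that commutant is generically much larger (e.g.\ it contains any right $M$-module action on $\H$). Invoking $\E\circ\si_t^\psi=\si_t^\phi\circ\E$ cannot repair this: the interaction with the first leg is there already at the level of Wick-word multiplication, before any modular correction.

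The paper avoids all of this by balancing over $M$ rather than $B$: it identifies $\F_m^P = X\ten_M L^2(\N)$, where the balancing uses the left $\pi(M)$-action on $L^2(\N)$. The right $\N$-action on $L^2(\N)$ commutes with that left $M$-action and hence descends to the Connes tensor product as $\mathrm{id}\ten R_{\N}$, living entirely in the second leg. Then $\Psi(T)=T\ten_M\mathrm{id}$ commutes with $\mathrm{id}\ten R_{\N}$ for free. The paper's computation $\Psi(\xi\ten\bar\eta)=L_{\pi(\xi)}\,e_{\N}\,L_{\pi(\eta^*)}$ on rank-one operators is just the concrete form of this, visibly a right $\N$-module map. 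Your decomposition places $B_P$ rather than $\N$ in the second leg, which is exactly too small: $B_P$ does not carry the right $\N$-action, so there is no hope of pushing $\tilde{\mathcal J}\N\tilde{\mathcal J}$ into that leg.
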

\begin{proof} The condition \eqref{L2space}iii) implies that $\F_m^{P}=X\ten_{M}L^2(\N)$, where the left action on $\N$ is that of $\pi(M)$. Therefore the map $\Psi:\mathcal L(X) \to B(\F_m^P)$ given by
\[\mathcal L(X) \ni T \mapsto T \ten_{M} id \in B(\F_m^P)\] 
is a well-defined normal $*$-homomorphism. Let us consider a rank one operator
$\xi \ten \bar{\eta} \in \mathcal L(X)$ with $\xi,\eta$ Wick words in $\tilde M$. Then we calculate 
 \[\Psi(\xi \ten \bar{\eta})(\pi(x_{\si})\theta(y^{op})\xi_{0})
 \lel \pi(\xi)\pi(E_M(\eta^*x_{\si}))\theta(y^{op})\xi_{0} \pl .\] 
Let $e_{\N}$ be the orthogonal projection of $\tilde{\mathcal N}$ onto the closure of $\N\xi_0$, which exists thanks to the fact that $E_{B_P}^{\tilde{\mathcal N}}$ is faithful, see Remark \ref{apost}. Then we note that for $\tilde{x}_{\tilde{\si}}\in M$ we have 
 \begin{align*}
 &\lan \pi(E_M(\eta^*x_{\si})\theta(y^{op})\xi_{0},\tilde{x}_{\tilde{\si}}\tilde{y}^{op}\xi_{0}\ran
 \lel  \psi(\theta(\tilde{y}^{op})^*\theta(y^{op})E^{\tilde{\mathcal N}}_{B_P}(\tilde{x}_{\tilde{\si}}^*x_{\si})) \\
 &=   \psi(\theta(\tilde{y}^{op})^*\theta(y^{op})(E_{\pi(B)}^{\pi(M)} \circ E_{\pi(M)}^{\tilde{\mathcal N}}) (\tilde{x}_{\tilde{\si}}^*x_{\si})) \lel 
\lan \pi(E_M(\eta^*x_{\si}))\theta(y^{op})\xi_{0},\pi(\tilde{x}_{\tilde{\si}})\theta(\tilde{y}^{op})\xi_{0} \ran
 \pl .
 \end{align*} 
This shows that  
 \[\pi(E_M(\eta^*x_{\si}))\theta(y^{op})\xi_{0}\lel e_{\N}(\pi(\eta^*x_{\si})\theta(y^{op})\xi_{0}) \pl .\] 
Thus we deduce that for all the rank one operators $\xi \ten \bar{\eta} \in \mathcal L(X)$ 
 \[ \Psi(\xi \ten \bar{\eta}) \lel L_{\pi(\xi)} e_{\N} L_{\pi(\eta^*)} \] 
is a right $\N$-module map, hence belongs to $B(L^2(\F_m^P))\cap (\N^{op})'$. It's also trivial to check that $\Psi(\xi \ten \bar{\eta})$ commutes with the operators $L_{\theta(y^{op})}$, for all $y \in P$. Since $\Psi$ is normal and the linear span of the rank one operators is so-dense in $\mathcal L(X)$ we have $\Psi(\mathcal L(X)) \subset B(L^2(\F_m^P))\cap (\N^{op})' \cap (\theta(P^{op}))'$, as desired. \qd 
The lemma provides a normal *-homomorphism
\[\Psi:B(\mathcal{F}_m)\cap (M^{op})'\to B(\mathcal{F}_m^P)\cap (\theta(P^{op})\vee \tilde{\mathcal J}\pi(M)\tilde{\mathcal J}\vee \tilde{\mathcal J}\theta(P^{op})\tilde{\mathcal J})'\]
such that $\Psi(\lambda(x))=\pi(x)$ for $x \in M$, where $\lambda$ is the natural left action of $M$ on $L^2(\tilde M)$. From this point on, the proof proceeds verbatim as in \cite{PoVaI}, proof of Case 1 in Thm. 3.1.\\
{\bf Case 2.} There exist a non-zero central projection $p \in \mathcal Z(P)$ and $t>0$ such that
\[\limsup_n \|e^{\perp}V_t\pi(p)\xi_n\| \leq \frac{\|p\|_2}{8C}.\]
In this case we prove that there exist $s,\delta>0$ such $\|T_s(a)\|_2\geq \delta$ for all $a \in \mathcal U(\mathcal A)$. Write $\pi(p)\xi_n=\zeta_n +\eta_n$, where $\zeta_n \in \bigoplus_{k \leq m} L^2_k(\mathcal N)$, $\eta_n \in \bigoplus_{k \geq m+1} L^2_k(\mathcal N)$. Note that $\|\zeta_n\| \leq 1, \|\eta_n\| \leq 1$. Since $V_t$ converges uniformly on $(\bigoplus_{k \leq m}L^2_k(\mathcal N))_1$, there exists a $t_0>0$ such that for $0<s<t_0$ we have
\[\|V_s\xi -\xi\| \leq \mbox{min}\{\frac{\|p\|_2}{8},\frac{\|p\|_2}{8C}\} \quad for \quad \xi \in(\bigoplus_{k \leq m} L^2_k(\mathcal N))_1.\]
Fix $0<s<$min$\{t^{m+1},t_0,t_0^{m+1},2^{-(m+1)^2}\}$. For every $n \geq 1$ we have the following estimate:
\begin{align*}
& \|V_s\pi(p)\xi_n-\pi(p)\xi_n\| \leq \|V_s\zeta_n-\zeta_n\|+\|V_s\eta_n-\eta_n\|\leq \frac{\|p\|_2}{8}+\|V_s\eta_n-\eta_n\|\leq \\ 
& \frac{\|p\|_2}{8}+C\|e^{\perp}V_{\sqrt[m+1]{s}}\eta_n\|\leq \frac{\|p\|_2}{8}+C\|e^{\perp}V_{\sqrt[m+1]{s}}\pi(p)\xi_n\|+C\|e^{\perp}V_{\sqrt[m+1]{s}}\zeta_n\| \leq \\
& \frac{\|p\|_2}{8}+C\|e^{\perp}V_{\sqrt[m+1]{s}}\pi(p)\xi_n\|+C\|e^{\perp}(V_{\sqrt[m+1]{s}}\zeta_n-\zeta_n)\|+C\|e^{\perp}\zeta_n\| \leq \\
& \frac{\|p\|_2}{8}+C\|e^{\perp}V_{\sqrt[m+1]{s}}\pi(p)\xi_n\|+C\|V_{\sqrt[m+1]{s}}\zeta_n-\zeta_n\| \leq \frac{\|p\|_2}{4}+C\|e^{\perp}V_{t}\pi(p)\xi_n\|.
\end{align*}
Taking limsup with respect to $n$ we obtain
\[\limsup_n \|V_s\pi(p)\xi_n-\pi(p)\xi_n\|\leq \frac{3\|p\|_2}{8}.\]
From this point on, the proof proceeds verbatim as in \cite{PoVaI}, proof of Case 2 in Thm. 3.1.
\end{proof}

\noindent {\bf Proof of Theorem \ref{main1}.} For the first alternative, we use (2) in Theorem 7.2, Proposition 3.20 and Proposition 2.3. For the second alternative, we use (1) in Theorem 7.2, Corollary 6.10 and Remark 2.7.
 $\hfill\square$

\noindent {\bf Proof of Corollaries \ref{maincor1}, \ref{maincor2}, \ref{maincor3}.} Immediate from Theorem \ref{main1}.
 $\hfill\square$

\bibliographystyle{amsplain}

\bibliographystyle{amsplain}
\bibliography{thebibliography}
\end{document}